\DeclareMathOperator{\CC}{\mathbf{C}}
\DeclareMathOperator{\DD}{\mathbf{D}}
\DeclareMathOperator{\f}{\mathbf{f}}
\DeclareMathOperator{\g}{\mathbf{g}}
\DeclareMathOperator{\Ass}{\mathbf{Assoc}}
\DeclareMathOperator{\Cat}{\mathbf{Cat}}
\DeclareMathOperator{\Pol}{\mathbf{Pol}}
\DeclareMathOperator{\WPol}{\mathbf{WPol}}
\DeclareMathOperator{\sCat}{\mathbf{sCat}}
\DeclareMathOperator{\WCat}{\mathbf{WCat}}
\DeclareMathOperator{\sOrd}{\mathbf{sOrd}}
\DeclareMathOperator{\Graph}{\mathbf{Graph}}
\newcommand{\C}{\mathcal C}
\newcommand{\A}{\mathcal A}
\newcommand{\D}{\mathcal D}
\newcommand{\E}{\mathcal E}
\newcommand{\B}{\mathcal B}
\newcommand{\F}{\mathcal F}
\newcommand{\R}{\mathcal R}
\newcommand{\Q}{\mathcal Q}
\newcommand{\N}{\mathbb N}
\renewcommand{\L}{\mathcal L}
\renewcommand{\t}{\mathbf t}
\newcommand{\s}{\mathbf{s}}
\newcommand{\co}{\mathbf{co}}
\newcommand{\op}{\mathbf{op}}
\newcommand{\w}{\mathbf{w}}
\renewcommand{\l}{\ell}
\newcommand{\p}{\mathbf{p}}
\renewcommand{\v}{\mathbf{v}}
\newcommand{\Bi}{\mathbf{BiCat}}
\newcommand{\PF}{\mathbf{PFonct}}
\newcommand{\TPN}{\mathbf{PNTrans}}
\newcommand{\Alg}{\mathit{Alg}}
\renewcommand{\restriction}{\mathord{\upharpoonright}}
\tikzset{>=Implies}
\newlength{\myline}
\newcommandx*{\doublearrow}[2]{
  \draw[line width=rule_thickness,double equal sign distance,#1] #2;
}
\newcommandx*{\triplearrow}[4][1=0, 2=1]{
  \draw[line width=\myline,double distance=5\myline,#3] #4;
  \draw[line width=\myline,shorten <=#1\myline,shorten >=#2\myline,#3] #4;
}
\newcommandx*{\quadarrow}[4][1=0, 2=2.5]{
  \draw[line width=\myline,double distance=8\myline,#3] #4;
  \draw[line width=\myline,double distance=2\myline,shorten <=#1\myline,shorten >=#2\myline,#3] #4;
}
	\edef\Drop@@{%
		\dimen@=#1\relax
		\dimen@=.5\dimen@
		\A@=-\sinDirection\dimen@
		\B@=\cosDirection\dimen@
		\setboxz@h{%
			\setbox2=\hbox{\kern3\A@\raise3\B@\copy\z@}%
			\dp2=\z@ \ht2=\z@ \wd2=\z@ \box2
			\setbox2=\hbox{\kern\A@\raise\B@\copy\z@}%
			\dp2=\z@ \ht2=\z@ \wd2=\z@ \box2
			\setbox2=\hbox{\kern-\A@\raise-\B@\copy\z@}%
			\dp2=\z@ \ht2=\z@ \wd2=\z@ \box2
			\setbox2=\hbox{\kern-3\A@\raise-3\B@ \noexpand\boxz@}%
			\dp2=\z@ \ht2=\z@ \wd2=\z@ \box2
		}%
		\ht\z@=\z@ \dp\z@=\z@ \wd\z@=\z@ \noexpand\styledboxz@
	}%
\xydef@\Tttip@{\kern2pt \vrule height2pt depth2pt width\z@
	\Tttip@@ \kern2pt \egroup
	\U@c=0pt \D@c=0pt \L@c=0pt \R@c=0pt \Edge@c={\circleEdge}%
	\def\Leftness@{.5}\def\Upness@{.5}%
	\def\Drop@@{\styledboxz@}\def\Connect@@{\straight@{\dottedSpread@\jot}}}
\xydef@\Tttip@@{%
	\dimen@=.25\dimen@
 	\B@=\cosDirection\dimen@
	\setboxz@h\bgroup\reverseDirection@\line@ \wdz@=\z@ \ht\z@=\z@ \dp\z@=\z@
	{\vDirection@(1,-1)\xydashl@ \xyatipfont\char\DirectionChar}%
	{\vDirection@(1,+1)\xydashl@ \xybtipfont\char\DirectionChar}%
}
\xydef@\ar@form{
	\ifx \space@\next \expandafter\DN@\space{\xyFN@\ar@form}%
	\else\ifx ^\next \DN@ ^{\xyFN@\ar@style}\edef\arvariant@@{\string^}%
	\else\ifx _\next \DN@ _{\xyFN@\ar@style}\edef\arvariant@@{\string_}%
	\else\ifx 0\next \DN@ 0{\xyFN@\ar@style}\def\arvariant@@{0}%
	\else\ifx 1\next \DN@ 1{\xyFN@\ar@style}\def\arvariant@@{1}%
	\else\ifx 2\next \DN@ 2{\xyFN@\ar@style}\def\arvariant@@{2}%
	\else\ifx 3\next \DN@ 3{\xyFN@\ar@style}\def\arvariant@@{3}%
	\else\ifx 4\next \DN@ 4{\xyFN@\ar@style}\def\arvariant@@{4}%
	\else\ifx \bgroup\next \let\next@=\ar@style
	\else\ifx [\next \DN@[##1]{\ar@modifiers{[##1]}}
	\else\ifx *\next \DN@ *{\ar@modifiers}%
	\else\addLT@\ifx\next \let\next@=\ar@slide
	\else\ifx /\next \let\next@=\ar@curveslash
	\else\ifx (\next \let\next@=\ar@curveinout 
	\else\addRQ@\ifx\next \addRQ@\DN@{\ar@curve@}%
	\else\addLQ@\ifx\next \addLQ@\DN@{\xyFN@\ar@curve}%
	\else\addDASH@\ifx\next \addDASH@\DN@{\defarstem@-\xyFN@\ar@}%
	\else\addEQ@\ifx\next \addEQ@\DN@{\def\arvariant@@{2}\defarstem@-\xyFN@\ar@}%
	\else\addDOT@\ifx\next \addDOT@\DN@{\defarstem@.\xyFN@\ar@}%
	\else\ifx :\next \DN@:{\def\arvariant@@{2}\defarstem@.\xyFN@\ar@}%
	\else\ifx ~\next \DN@~{\defarstem@~\xyFN@\ar@}%
	\else\ifx !\next \DN@!{\dasharstem@\xyFN@\ar@}%
	\else\ifx ?\next \DN@?{\ar@upsidedown\xyFN@\ar@}%
	\else \let\next@=\ar@error
	\fi\fi\fi\fi\fi\fi\fi\fi\fi\fi\fi\fi\fi\fi\fi\fi\fi\fi\fi\fi\fi\fi\fi \next@}
\newtheorem*{rep@theorem}{\rep@title}
\newcommand{\newreptheorem}[2]{%
\newenvironment{rep#1}[1]{%
 \def\rep@title{#2 \ref{##1}}%
 \begin{rep@theorem}}%
 {\end{rep@theorem}}}
\newcommand{\comment}[1]{}
\newcommand{\qfl}{
  \xymatrix@1@C=10pt{\ar@4 [r] &}}
\author{Maxime LUCAS\footnote{Univ Paris Diderot, Sorbonne Paris Cit\'e, IRIF, UMR 8243 CNRS, PiR2, INRIA Paris-Rocquencourt, F-75205 Paris, France - \texttt{maxime.lucas@pps.univ-paris-diderot.fr}}}
\title{A coherence theorem for pseudonatural transformations}
\date{}
\begin{document}

\maketitle

\begin{abstract} 
We prove coherence theorems for bicategories, pseudofunctors and pseudonatural transformations. These theorems boil down to proving the coherence of some free $(4,2)$-categories. In the case of bicategories and pseudofunctors, existing rewriting techniques based on Squier's Theorem allow us to conclude. In the case of pseudonatural transformations this approach only proves the coherence of part of the structure, and we use a new rewriting result to conclude. To this end, we introduce the notions of white-categories and partial coherence.
\end{abstract}

\tableofcontents

\newpage

\theoremstyle{plain}
\newtheorem{thm}{Theorem}[subsection]
\newtheorem{prop}[thm]{Proposition}
\newtheorem{lem}[thm]{Lemma}
\newtheorem{cor}[thm]{Corollary}
\newtheorem{conj}[thm]{Conjecture}
\newtheorem{quest}[thm]{Question}
\newreptheorem{prop}{Proposition}
\newreptheorem{thm}{Theorem}

\definecolor{cyan}{RGB}{150,50,50}
\definecolor{red}{RGB}{200,255,200}
\definecolor{tr}{RGB}{175,175,175}

\theoremstyle{definition}
\newtheorem{defn}[thm]{Definition}
\newtheorem{ex}[thm]{Example}
\newtheorem{remq}[thm]{Remark}
\newtheorem{nota}[thm]{Notation}

\deftwocell[polygon, white]{prodC: 2 -> 1}
\deftwocell[polygon, black]{prodD: 2 -> 1}
\deftwocell[box, red]{fonctF: 2 -> 2}
\deftwocell[box, cyan]{fonctG: 2 -> 2}
\deftwocell[circle, white]{unitC: 0 -> 1}
\deftwocell[circle, black]{unitD: 0 -> 1}
\deftwocell[rectangle, tr]{transfo: 1 -> 2}

\deftwocell[polygon, white]{assocC: 3 -> 1}
\deftwocell[polygon, black]{assocD: 3 -> 1}
\deftwocell[lefthalfcircle, white]{lunitC: 1 -> 1}
\deftwocell[lefthalfcircle, black]{lunitD: 1 -> 1}
\deftwocell[righthalfcircle, white]{runitC: 1 -> 1}
\deftwocell[righthalfcircle, black]{runitD: 1 -> 1}
\deftwocell[polygon, red]{img_prodF: 3 -> 2}
\deftwocell[polygon, cyan]{img_prodG: 3 -> 2}
\deftwocell[circle, red]{img_unitF: 1 -> 2}
\deftwocell[circle, cyan]{img_unitG: 1 -> 2}
\deftwocell[box, tr]{transfo_nat: 2 -> 2}

\deftwocell[polygon, white]{pentaC: 4 -> 1}
\deftwocell[circle, white]{trianC: 2 -> 1}
\deftwocell[polygon, black]{pentaD: 4 -> 1}
\deftwocell[circle, black]{trianD: 2 -> 1}
\deftwocell[polygon, red]{img_assocF: 4 -> 2}
\deftwocell[polygon, cyan]{img_assocG: 4 -> 2}
\deftwocell[righthalfcircle, red]{img_runitF : 2 -> 2}
\deftwocell[lefthalfcircle, red]{img_lunitF: 2 -> 2}
\deftwocell[righthalfcircle, cyan]{img_runitG: 2 -> 2}
\deftwocell[lefthalfcircle, cyan]{img_lunitG: 2 -> 2} 
\deftwocell[polygon, tr]{transfo_prod: 3 -> 2}
\deftwocell[circle, tr]{transfo_unit: 1 -> 2}

\deftwocell[circle, white]{bicat_source: 0 -> 0}
\deftwocell[circle, black]{bicat_but: 0 -> 0}
\deftwocell[circle, red]{fonct_source: 0 -> 0}
\deftwocell[circle, cyan]{fonct_but: 0 -> 0}

\deftwocell[cap]{eta: 0 -> 2}
\deftwocell[cup]{epsilon : 2 -> 0}
\deftwocell[circle]{tau : 1 -> 1}
\deftwocell[box]{sigma: 1 -> 1}
\deftwocell[polygon, white]{inv_prodC : 1 -> 2}
\deftwocell[circle, white]{conjugC : 1 -> 1}
\deftwocell[circle, black]{inv_conjugC : 1 -> 1}
\deftwocell[box, white]{conjugD : 1 -> 1}
\deftwocell[box, black]{inv_conjugD : 1 -> 1}
\deftwocell[circle, gray]{erase : 1 -> 0}
\deftwocell[circle, gray]{appear : 0 -> 1}

\deftwocell[rectangle, white]{exchange_z: 4 -> 2}
\deftwocell[rectangle, white]{exchange_u: 5 -> 3}

\deftwocell[dots]{dots: 2 -> 2}

\section*{Introduction}

\subsection*{An overview of coherence theorems}

A mathematical structure, such as the notion of monoid or algebra, is often defined in terms of some data satisfying relations. In the case of monoids, the data is a set and a binary application, and the relations are the associativity and the unit axioms. In category theory, one often considers relations that only hold \emph{up to isomorphism}. One of the simplest example of such a structure is that of monoidal categories, in which the product is not associative, but instead there exist isomorphisms $\alpha_{A,B,C} : (A \otimes B) \otimes C \to A \otimes (B \otimes C)$. This additional data must also satisfy some relation, known as Mac-Lane's pentagon:
\[
\xymatrix @C = .5em @R = 4em {
&
(A \otimes (B \otimes C)) \otimes D
\ar [rr] ^-{\alpha_{A,B \otimes C,D }}
& 
\ar@{} [dd] |-{=}
&
A \otimes ((B \otimes C) \otimes D)
\ar [rd] ^-{A \otimes \alpha_{B,C,D}}
&
\\
((A \otimes B) \otimes C) \otimes D
\ar [ru] ^-{\alpha_{A,B,C} \otimes D}
\ar [rrd] _-{\alpha_{A \otimes B, C , D}}
& & & &
A \otimes (B \otimes (C \otimes D))
\\
& &
(A \otimes B) \otimes (C \otimes D)
\ar [rru] _{\alpha_{A, B, C \otimes D}}
}
\]

The intended purpose of this relation is that, between any two bracketings of $A_1 \otimes A_2 \otimes \ldots \otimes A_{n-1} \otimes A_n$, there exists a unique isomorphism constructed from the isomorphisms $\alpha_{A,B,C}$. This statement was made precise  and proved by Mac Lane in the case of monoidal categories \cite{ML63}.
In general a \emph{coherence theorem} contains a description of a certain class of diagrams that are to commute. Coherence theorems exist for various other structures, e.g. bicategories \cite{ML85}, or $V$-natural transformations for a symmetric monoidal closed category $V$ \cite{K72}.

Coherence results are often a consequence of (arguably more essential \cite{K74}) \emph{strictification theorems}. A strictification theorem states that a ``weak'' structure is equivalent to a ``strict'' (or at least ``stricter'') one. For example any bicategory is biequivalent to a $2$-category, and the same is true for pseudofunctors (this is a consequence of this general strictification result \cite{P89}). It does not hold however for pseudonatural transformations.

\subsection*{Free categories and rewriting}
Coherence theorems can also be proven through rewriting techniques. The link between coherence and rewriting goes back to Squier's homotopical Theorem \cite{S94}, and has since been expanded upon \cite{G09}. Squier's theory is constructive, which means that the coherence conditions can be calculated from the relations, in a potentially automatic way. It can also be expanded to higher dimensions \cite{GM12}, a feature that may prove useful when studying weaker structures. In \cite{G12}, the authors use Squier's theory to prove the coherence of monoidal categories. Let us give an outline of the proof in the case of categories equipped with an associative tensor product. 

Polygraphs are presentations for higher-dimensional categories and were introduced by Burroni \cite{B93}, and by Street under the name of computads \cite{S76} \cite{S87}. In this paper we use Burroni's terminology. For example, a $1$-polygraph is given by a graph $G$, and the free $1$-category it generates is the category of paths on $G$. If $\Sigma$ is an $n$-polygraph, we denote by $\Sigma^*$ the free $n$-category generated by $\Sigma$.

An \emph{$(n,p)$-category} is a category where all $k$-cells are invertible, for $k>p$. In particular, $(n,0)$-categories are commonly called $n$-groupoids, and $(n,n)$-categories are just $n$-categories. There is a corresponding notion of $(n,p)$-polygraph. If $\Sigma$ is an $(n,p)$-polygraph, we denote by $\Sigma^{*(p)}$ the free $(n,p)$-category generated by $\Sigma$.

The structure of category equipped with an associative tensor product is encoded into a $4$-polygraph $\Ass$, which generates a free $(4,2)$-category $\Ass^{*(2)}$. The $4$-polygraph $\Ass$ contains one generating $2$-cell $\twocell{prodC}$ coding for product, one generating $3$-cell $\twocell{assocC} :
\twocell{(prodC *0 1) *1 prodC} \Rrightarrow \twocell{(1 *0 prodC) *1 prodC}$ coding for associativity and one generating $4$-cell $\twocell{pentaC}$ corresponding to Mac Lane's pentagon:
\[
\begin{tikzpicture}
\matrix (m) [matrix of math nodes, 
			nodes in empty cells,
			column sep = 1cm, 
			row sep = .6cm] 
{
&
\twocell{(1 *0 prodC *0 1) *1 (prodC *0 1) *1 prodC}
& 
&
\twocell{(1 *0 prodC *0 1) *1 (1 *0 prodC) *1 prodC}
&
\\
\twocell{(prodC *0 2) *1 (prodC *0 1) *1 prodC} 
& & & &
\twocell{(2 *0 prodC) *1 (1 *0 prodC) *1 prodC}
\\
& &
\twocell{(prodC *0 prodC) *1 prodC}
& & \\
};
\triplearrow{->, gray}
{(m-1-2) -- node [above] {$\twocell{(1 *0 prodC *0 1) *1 assocC}$} (m-1-4)}
\triplearrow{->}
{(m-2-1) -- node [above left] {$\twocell{(assocC *0 1) *1 prodC}$} (m-1-2)}
\triplearrow{->, gray}
{(m-1-4) -- node [above right] {$\twocell{(1 *0 assocC) *1 prodC}$} (m-2-5)}
\triplearrow{->}
{(m-2-1) -- node [below left] {$\twocell{(prodC *0 2) *1 assocC}$} (m-3-3)}
\triplearrow{->, gray}
{(m-3-3) -- node [below right] {$\twocell{(2 *0 prodC) *1 assocC}$} (m-2-5)}

\quadarrow{->}
{(m-1-3|-m-1-2.south) -- node [right] {$\twocell{pentaC}$} (m-3-3|-m-2-1.south)}
\end{tikzpicture}
\]
 
The coherence result for categories equipped with an associative product is now reduced to showing that, between every parallel $3$-cells $A$, $B$ in $\Ass^{*(2)}$, there exists a $4$-cell $\alpha: A \qfl B$ in $\Ass^{*(2)}$. A $4$-category satisfying this property is said to be \emph{$3$-coherent}. 

Let us denote by $\Ass^*$ the free $4$-category generated by $\Ass$. We have the following properties:
\begin{itemize}
\item Starting from any given $2$-cell in $\Ass^*$, it is impossible to form an infinite sequence of non-identity $3$-cells. This property is known as \emph{$3$-termination}.
\item If $A$ and $B$ are two $3$-cells in $\Ass^*$ with the same source, there exists $3$-cells $A'$ and $B'$ in $\Ass^*$ such that the composites $A \star_2 A'$ and $B \star_2 B'$  are well-defined and have the same target. This property is known as \emph{$3$-confluence}.
\end{itemize}
The conjunction of these two properties make $\Ass$ into a \emph{$3$-convergent} $4$-polygraph.

\subsection*{Squier's theory and coherence}

A generating $3$-cell composed with some lower dimensional context is called a \emph{rewriting step} of $\Ass$. A local branching in $\Ass$ is a pair of rewriting step of same source. Local branching are ordered by adjunction of context, that is a branching $(f,g)$ is smaller than a branching $(u \star_i f \star_i v,u \star_i g \star_i v)$ for any $2$-cells $u$ and $v$ and $i = 0,1$. There are three types of local branchings:
\begin{itemize}
\item A branching of the form $(f,f)$ is called \emph{aspherical} .
\item A branching of the form $(f \star_i \s(g),\s(f) \star_i g)$ for $i = 0$ or $1$ is called a \emph{Peiffer branching}.
\item Otherwise, $(f,g)$ is called an \emph{overlapping branching}.
\end{itemize}
Overlapping branchings that are also minimal are called \emph{critical branchings}.

There is exactly one critical branching in $\Ass$, of source $\twocell{(prodC *0 2) *1 (prodC *0 1) *1 prodC}$. Note that the critical pair appears as the source of the generating $4$-cell of $\Ass$. In particular there is a one-to-one correspondence between $4$-cells and critical pairs. A $3$-convergent $4$-polygraph that satisfies this property is said to satisfy the \emph{$3$-Squier condition}.

Proposition 4.3.4 in \cite{G09} states that a $4$-polygraph satisfying the $3$-Squier condition is $3$-coherent (and more generally, that any $(n+1)$-polygraph satisfying the $n$-Squier condition is $n$-coherent). In particular, the $4$-polygraph $\Ass$ satisfies the $3$-Squier condition, so it is $3$-coherent.

\comment{
We say that an $(n+1)$-polygraphs $\Sigma$ satisfies the \emph{$n$-Squier condition} if it is $n$-terminating, $n$-confluent and if $\Sigma_{n+1}$ consists of $(n+1)$-cells $A_{f,g}$ of the following shape, for any critical pair $(f,g)$:

\[
\xymatrix @C = 4em @R = 1.5em{
& 
\ar @/^/ [rd] ^{ f' }
\ar@2 [dd] ^{A_{f,g}}
& \\
\ar @/^/ [ru] ^{f}
\ar @/_/ [rd] _{g} 
& 
& \\
&  
\ar @/_/ [ru] _{g'}
& \\
}
\]

Proposition 4.3.4 in \cite{G09} states that an $(n+1)$-polygraph satisfying the $n$-Squier condition is $n$-coherent. In particular, the $4$-polygraph $\Ass$ satisfies the $3$-Squier condition, the only critical pair being the one corresponding to Mac Lane's pentagon:

As a consequence, the $4$-polygraph $\Ass$ satisfies the $3$-Squier condition, and $\Ass^{*(2)}$ is $3$-coherent.}

In Section \ref{sec:application}, we exhibit, for any sets $\CC$ and $\DD$ and any application $\f : \CC \to \DD$ two $4$-polygraphs $\Bi[\CC]$ and $\PF[\f]$ presenting respectively the structures of bicategory and pseudofunctor. Applying the reasoning we just presented, we prove our first two results:

\begin{repthm}{thm:BiCat_coh}[Coherence for bicategories]
Let $\CC$ be a set.

The $4$-polygraph $\Bi[\CC]$ is $3$-convergent and the free $(4,2)$-category $\Bi[\CC]^{*(2)}$ is $3$-coherent.
\end{repthm}

\begin{repthm}{thm:PFonct_coh}[Coherence for pseudofunctors]
Let $\CC$ and $\DD$ be sets, and $\f:\CC \to \DD$ an application.

The $4$-polygraph $\PF[\f]$ is $3$-convergent and the free $(4,2)$-category $\PF[\f]^{*(2)}$ is $3$-coherent.
\end{repthm}

However, this approach fails to work in the case of pseudonatural transformations, because the $(4,2)$-polygraph $\TPN[\f,\g]$ (where $\f$ and $\g$ are applications $\CC \to \DD$) encoding the structure of pseudonatural transformation is not $3$-confluent. 

\subsection*{The $2$-Squier condition of depth $2$}

In order to circumvent this difficulty, we introduce the notion of \emph{$2$-Squier condition of depth $2$}. We say that a $(4,2)$-polygraph $\Sigma$ satisfies the $2$-Squier condition of depth $2$ if it satisfies the $2$-Squier condition, and if the $4$-cells of $\Sigma$ correspond to the critical triples induced by the $2$-cells (with a prescribed shape). 

For example, the $4$-polygraph $\Ass$ satisfies the $2$-Squier condition of depth $2$: its underlying $2$-polygraph is both $2$-terminating and $2$-confluent. Moreover the only critical pair corresponds to the associativity $3$-cell. Finally, Mac Lane's pentagon can be written as follows, which shows that it corresponds to the only critical triple:

\[
\xymatrix @C = 2.5em @R = 3em {
&
\twocell{3}
\ar@2 [rr] ^-{\twocell{prodC *0 1}}
\ar@{} [rd] |-{\twocell{assocC *0 1}}
& &
\twocell{2}
\ar@2 [rd] ^-{\twocell{prodC}}
\ar@{} [dd] |-{\twocell{assocC}}
& & &
\twocell{3}
\ar@2 [rr] ^-{\twocell{prodC *0 1}}
\ar@{} [rrrd] |-{\twocell{assocC}}
\ar@2 [rd] |-{\twocell{1 *0 prodC}}
\ar@{} [dd] |-{=}
& &
\twocell{2}
\ar@2 [rd] ^-{\twocell{prodC}}
&
\\
\twocell{4}
\ar@2 [ru] ^-{\twocell{prodC *0 2}}
\ar@2 [rr] |-{\twocell{1 *0 prodC *0 1}}
\ar@2 [rd] _-{\twocell{2 *0 prodC}}
& &
\twocell{3}
\ar@{} [ld] |-{\twocell{1 *0 assocC}}
\ar@2 [ru] |-{\twocell{prodC *0 1}}
\ar@2 [rd] |-{\twocell{1 *0 prodC}}
&
&
\twocell{1} \,
\ar@4 [r] ^-*+{\twocell{pentaC}}
&
\, \twocell{4}
\ar@2 [ru] ^-{\twocell{prodC *0 2}}
\ar@2 [rd] _-{\twocell{2 *0 prodC}}
& & 
\twocell{2}
\ar@2 [rr] |-{\twocell{prodC}}
\ar@{} [rd] |-{\twocell{assocC}}
& &
\twocell{1}
\\
&
\twocell{3}
\ar@2 [rr] _-{\twocell{1 *0 prodC}}
& &
\twocell{2}
\ar@2 [ru] _-{\twocell{prodC}}
& & &
\twocell{3}
\ar@2 [rr] _-{\twocell{1 *0 prodC}}
\ar@2 [ru] |-{\twocell{prodC *0 1}}
& &
\twocell{2}
\ar@2 [ru] _-{\twocell{prodC}}
&
}
\]

We prove the following result about $(4,2)$-polygraph satisfying the $2$-Squier condition of depth $2$:
\begin{repthm}{thm:main_theory}
Let $\Sigma$ be a $(4,2)$-polygraph satisfying the $2$-Squier condition of depth $2$. 

For every parallel $3$-cells $A,B \in \Sigma_{3}^{*(2)}$ whose $1$-target is a normal form, there exists a $4$-cell $\alpha : A \qfl B$ in the free $(4,2)$-category $\Sigma_{4}^{*(2)}$.
\end{repthm}

Note in particular that the $2$-Squier condition of depth $2$ does not imply the $3$-coherence of the $(4,2)$-category generated by the polygraph, but only a partial coherence, "above the normal forms". For example in the case of $\Ass$, the only normal form is the $1$-cell $\twocell{1}$. So Theorem \ref{thm:main_theory} only expresses the coherence of the $3$-cells of $\Ass^{*(2)}$ whose $1$-target is $\twocell{1}$. On the other hand, Squier's Theorem as extended in \cite{G09} concerns all the $3$-cells of $\Ass^{*(2)}$, regardless of their $1$-target.

The $(4,2)$-polygraph $\TPN[\f,\g]$ does not satisfy the $2$-Squier condition. However, we identify in Section \ref{subsec:tpnat_coh_final} a sub-$(4,2)$-polygraph $\TPN^{++}[\f,\g]$ of $\TPN[\f,\g]$ that does. By Theorem \ref{thm:main_theory}, we get a partial coherence result in $\TPN^{++}[\f,\g]^{*(2)}$. The rest of Section \ref{sec:proof_of_coherence} is spent extending this partial coherence result to the rest of $\TPN[\f,\g]^{*(2)}$. To do so, we define a weight application from $\TPN[\f,\g]^{*(2)}$ to $\mathbb N$ to keep track of the condition on the $1$-targets of the $3$-cells considered. We thereby prove the following result:

\begin{repthm}{thm:TPNat_coh}[Coherence for pseudonatural transformations]
Let $\CC$ and $\DD$ be sets, and $\f,\g: \CC \to \DD$ applications.

Let $A,B \in \TPN[\f,\g]^{*(2)}_3$ be two parallel $3$-cells whose $1$-target is of weight $1$.

There is a $4$-cell $\alpha:A \qfl B \in \TPN[\f,\g]^{*(2)}_4$.
\end{repthm}

\subsection*{White-categories and partial coherence}

Let $j < k < n$ be integers. In an $n$-category $\C$, one can define the $j$-composition of $(k+1)$-cells $A$ and $B$ using the $k$-composition and whiskering by setting: 
\[
A \star_j B := (A \star_j \s_k(B)) \star_{k} (\t_k(A) \star_j B)
= (\s_k(A) \star_j B) \star_k (A \star_j \t_k(B)).
\] 
This is made possible by the exchange axiom between $\star_k$ and $\star_j$. An \emph{$n$-white-category} is an $n$-category in which the exchange axioms between $\star_k$ and $\star_0$ need not hold (even up to isomorphism) for any $k > 0$. As a result, $0$-composition is not defined for $k$-cells, for $k > 1$. The notion of $2$-white-category coincides with the notion of sesquicategory (see \cite{S96}).

Most concepts from rewriting have a straightforward transcription in the setting of white-categories. In particular in Section \ref{subsec:white_categories}, we define the notions of $(n,k)$-white-category and $(n,k)$-white-polygraph.  We also give an explicit description of the free $(n,k)$-white-category $\Sigma^{\w(k)}$ generated by an $(n,k)$-white-polygraph $\Sigma$. 

In this setting, we give a precise definition to the notion of partial coherence. Let $\C$ be a $(4,3)$-white-category and $S$ be a set of distinguished $2$-cells of $\C$. We call such a pair a \emph{pointed $(4,3)$-white-category}. We say that $\C$ is $S$-coherent if for any parallel $2$-cells $f, g \in S$ and any $3$-cells $A,B : f \Rrightarrow g \in \C$, there exists a $4$-cell $\alpha: A \qfl B \in \C$. In particular any $(4,3)$-white category is $\emptyset$-coherent, and a $(4,3)$-white category $\C$ is $\C_2$-coherent if and only if it is $3$-coherent (where $\C_2$ i sthe set of all the $2$-cells of $\C$). Theorem \ref{thm:main_theory} amounts to showing that the free $(4,2)$-category $\Sigma^{*(2)}$ is $S_\Sigma$-coherent, where $S_\Sigma$ is the set of all $2$-cells whose target is a normal form.

Finally, we give a way to modify partially coherent categories while retaining information about the partial coherence. Let $(\C,S)$ and $(\C',S')$ be pointed $(4,3)$-white-categories. We define a relation of \emph{strength} between pointed $(4,3)$-white-categories. We show that if $(\C,S)$ is stronger than $(\C',S')$, then the $S$-coherence of $\C$ implies the $S'$-coherence  of $\C'$.

\subsection*{Sketch of the proof of Theorem \ref{thm:main_theory}}

We now give an overview of the proof of Theorem \ref{thm:main_theory}. Let us fix a $(4,2)$-polygraph $\A$ satisfying the $2$-Squier condition of depth $2$, and denote by $S_\A$ the set of $2$-cells whose target is a normal form. In particular, $(\A^{*(2)},S_\A)$ is a pointed $(4,3)$-white-category. The first half of the proof (Section \ref{sec:transfo_polygraph}) consists in applying to $(\A^{*(2)},S_\A)$ a series of transformations. At each step, we verify that the new pointed $(4,3)$-white-category we obtain is stronger than the previous one. In the end, we get a pointed $(4,3)$-white-category $(\F^{\w(3)},S_\E)$, where $\F$ is a $4$-white-polygraph. In dimension $2$, the  $2$-cells of $\F$ consists of the union of the $2$-cells of $\A$ together with their formal inverses. We denote by $\bar f$ the formal inverse of a $2$-cell $f \in \A^*$. Let $\F_3$ be the set of $3$-cells of $\F$. It contains $3$-cells $C_{f,g}$ for any minimal local branching $(f,g)$, and cells $\eta_f$ for any $2$-cell $f \in \A$ of the following shape:
\[
\xymatrix @C = 4em @R = 1.5em{
& 
\ar@2 @/^/ [rd] ^{ g}
\ar@3 [dd] ^{C_{f,g}}
& \\
\ar@2 @/^/ [ur] ^{\bar f}
\ar@2 @/_/ [rd] _{f'}
& 
& 
\\
&  
\ar@2 @/_/ [ur] _{\overline{g'}} 
& \\
}
\qquad \qquad
\xymatrix @!C=4em @!R = 2em{
\ar@2 @/^/ [rr] ^{1_{\s(f)}}
\ar@2 @/_/ [rd] _f
&
\ar@3 [];[d]!<0pt,15pt> ^{\eta_f}
&
\\
& 
\ar@2 @/_/ [ru] _{\bar f}
&
}
\]
The purpose of this transformation is that in $\F^{\w(3)}$, for any $2$-cells $f,g \in S_E$, $3$-cells of the form $f \Rrightarrow g$ (and $4$-cells between them) are in one-to-one correspondence with $3$-cells of the form $\bar g \star_1 f \Rrightarrow 1_{\hat u}$ (and $4$-cells between them), where $\hat u$ is the common target of $f$ and $g$. More generally we study cells of the form $h \Rrightarrow 1_{\hat u}$, and $4$-cells between them.

We start by studying the rewriting system induced by the $3$-cells. Note that the $4$-white-polygraph $\F$ is not $3$-terminating, so we cannot use a Squier-like Theorem to conclude. However, let $\mathbb N[\A_1^*]$ be the free monoid on $\A_1^*$, the set of $1$-cells of $\A^*$. There is a well-founded ordering on $\A_1^*$ induced by the fact that $\A$ is $2$-terminating. This order induces a well-founded ordering on $\mathbb N[\A_1^*]$ called the \emph{multiset order}. We define an application $\p: \F_2^\w \to \mathbb N[\A_1^*]$ which induces a well-founded ordering on $\F_2^\w$, the set of $2$-cells of $\F^\w$, and show that the cells $C_{f,g}$ are compatible with this ordering (that is, the target of a cell $C_{f,g}$ is always smaller than the source). Thus, the fragment of $\F_3$ consisting of the cells $C_{f,g}$ is $3$-terminating. 

Thus the $\eta_f$ cells constitute the non-terminating part of $\F_3^\w$. To control their behaviour, we introduce a \emph{weight} application $\w_\eta : \F_3^\w \to \mathbb N[\A_1^*]$, that essentially counts the number of $\eta_f$ cells present in a $3$-cell. In section \ref{subsec:proof_final}, using the applications $\p$ and $\w_\eta$, we prove that for any $h \in \F_2^\w$ whose source and target are normal forms (for $\A_2$), and for any $3$-cells $A,B : h \Rrightarrow 1_{\hat u}$ in $\F_3^{\w}$, there is a $4$-cell $\alpha: A \qfl B$ in $\F^{\w(3)}$. Finally, we prove that this implies that $\F^{\w(3)}$ is $S_\E$-coherent, which concludes the proof.


\subsection*{Organisation}

In Section \ref{sec:rewriting}, we recall some classical definitions and results from rewriting theory, and we enunciate (without proof) Theorem \ref{thm:main_theory}. Section \ref{sec:white_category} contains the definitions of white-categories and white-polygraphs, together with the study of the notion of partial coherence. 
In Section \ref{sec:application}, we construct the free categories encoding the structures we want to study, and prove the coherence Theorems for bicategories (Theorem \ref{thm:BiCat_coh}) and pseudofunctors (Theorem \ref{thm:PFonct_coh}). The proof uses a lot of notions defined in Section \ref{sec:rewriting} and relies in particular on Squier's Theorem to conclude. There remains to show the coherence of pseudonatural transformations (Theorem \ref{thm:TPNat_coh}), which is done in Section \ref{sec:proof_of_coherence}. To prove Theorem \ref{thm:TPNat_coh}, we show that a fragment of the structure of pseudonatural transformations satisfies the hypotheses of Squier's Theorem while an other satisfies the hypotheses of Theorem \ref{thm:main_theory}, which is temporarily admitted. The following sections contain the proof of Theorem \ref{thm:main_theory}. The first half of the proof is contained in Section \ref{sec:transfo_polygraph} and consists in applying a series of transformations to a $(4,3)$-polygraph satisfying the hypotheses of Theorem \ref{thm:main_theory}. The combinatorics of the result of these transformations is analysed in Section \ref{sec:proof_final} where we conclude the proof.

\paragraph*{Acknowlegments}

This work was supported by the Sorbonne-Paris-Cité IDEX grant Focal and
the ANR grant ANR-13-BS02-0005-02 CATHRE.

\newpage
\section{Higher-dimensional rewriting}
\label{sec:rewriting}
We recall definitions and results from rewriting theory. Section \ref{subsec:poly} is devoted to polygraphs, which are presentations of higher-dimensional categories. In Section \ref{subsec:termination}, we define termination and enunciate Theorem \ref{thm:terminaison} which we will use throughout Sections \ref{sec:application} and \ref{sec:proof_of_coherence} in order to prove the $3$-termination of polygraphs. In Section \ref{subsec:branchings} we define the notion of branchings and classify them, which allows for a simple criterion to prove the $n$-confluence of a polygraph. Finally in Section \ref{subsec:coherence}, we define the $n$-Squier condition, and recall Squier's homotopical theorem, in a generalized form proven in \cite{G09}. We conclude this section by enunciating Theorem \ref{thm:main_theory}, whose proof will occupy Sections \ref{sec:transfo_polygraph} and \ref{sec:proof_final}. Except for Theorem \ref{thm:main_theory}, the proof of every result in this section can be found in \cite{G09}.

\subsection{Polygraphs}
\label{subsec:poly}

\begin{defn}
Let $n$ be a natural number. Let $\C$ be a (strict, globular) $n$-category. For $k \leq n$, we denote by $\C_k$ both the set of $k$-cells of $\C$ and the $k$-category obtained by deleting the cells of dimension greater than $k$. For $x \in \C_k$ and $i < k$, we denote by $\s_i(x)$ and $\t_i(x)$ respectively the $i$-source and $i$-target of $x$. Finally we write $\s(x)$ and $\t(x)$ respectively for $\s_{k-1}(x)$ and $\t_{k-1}(x)$.

For $\C$ a $2$-category, we denote by $\C^{\op}$ the $2$-category obtained by reversing the direction of the $1$-cells, and by $\C^\co$ the $2$-category obtained by reversing the direction of the $2$-cells.
\end{defn}

We recall the definition of Polygraphs from \cite{B93}. For $n \in \mathbb N$, we denote by $\Cat_n$ the category of $n$-categories and by $\Graph_n$ the category of $n$-graphs. The category of $n$-categories equipped with a cellular extension, denoted by $\Cat_n^+$, is the limit of the following diagram:

\[
\xymatrix{
  \Cat_n^+ \ar[r] \ar[d]
  \ar@{} [dr] |<<<<{\lrcorner}
&
  \Graph_{n+1} \ar[d]
\\
  \Cat_n \ar[r]
&
  \Graph_n
}
\]

where the functor $\Cat_n \to \Graph_n$ forgets the categorical structure and the functor $\Graph_{n+1} \to \Graph_n$ deletes the top-dimensional cells.

Hence an object of $\Cat_n^+$ is a couple $(\C,G)$ where $\C$ is an $n$-category and $G$ is a graph 
$
\xymatrix{
\C_n 
& 
S_{n+1}
\ar@<+0.5ex>[l]^-{\t}
\ar@<-0.5ex>[l]_-{\s}
}
$, such that for any $u,v \in S_{n+1}$, the following equations are verified:
\[
\s(\s(u))= \s(\t(u)) \qquad
\t(\s(u))= \t(\t(u))
\]

Let $\R_n$ be the functor from $\Cat_{n+1}$ to $\Cat_n^+$ that sends an $(n+1)$-category $\C$ on the couple $(\C_n,
\xymatrix{
\C_n 
& 
\C_{n+1}
\ar@<+0.5ex>[l]
\ar@<-0.5ex>[l]
}
)$. This functor admits a left-adjoint $\L_n : \Cat_n^+ \to \Cat_{n+1}$ (see \cite{M08}).

We now define by induction on $n$ the category $\Pol_n$ of $n$-polygraphs together with a functor $\Q_n : \Pol_n \to \Cat_n$.

\begin{itemize}
\item The category $\Pol_0$ is the category of sets, and $\Q_0$ is the identity functor.
\item Assume $\Q_n: \Pol_n \to \Cat_n$ is defined. Then $\Pol_{n+1}$ is the limit of the following diagram:
\[
\xymatrix{
  \Pol_{n+1} \ar[r] \ar[d]
  \ar@{} [dr] |<<<<{\lrcorner}
&
  \Cat^+_{n} \ar[d]
\\
  \Pol_n \ar[r] _{\Q_n}
&
  \Cat_n,
}
\]
and $\Q_{n+1}$ is the composite
\[
\xymatrix{
  \Pol_{n+1} \ar[r]
&
  \Cat^+_{n} \ar[r] ^{\L_n}
&
  \Cat_{n+1}
}
\]
\end{itemize}

\begin{defn}
Given an $n$-polygraph $\Sigma$, the $n$-category $\Q_n(\Sigma)$ is denoted by $\Sigma^*$ and is called \emph{the free $n$-category generated by $\Sigma$}.
\end{defn}

\begin{defn}
Let $\C$ be an $n$-category, and $0 \leq i < n$ and $A \in \C_{i+1}$. If it exists, we denote by $A^{-1}$ the inverse of $A$ for the $i$-composition.

For $k \leq n$, an $(n,k)$-category is an $n$-category which has every $(i+1)$-cell invertible for the $i$-composition, for $i \geq k$. We denote by $\Cat^{(k)}_n$ the full subcategory of $\Cat_n$ whose objects are the $(n,k)$-categories.
\end{defn}

In particular $\Cat^{(0)}_n$ is the category of $n$-groupoids, and $\Cat^{(n)}_n = \Cat_n$.

The functor $\R_n$ restricts to a functor $\R^{(n)}_n$ from  $\Cat^{(n)}_{n+1}$ to $\Cat^+_n$. Once again this functor admits a left-adjoint $\L^{(n)}_n : \Cat_n^+ \to \Cat_{n+1}^{(n)}$. We define categories $\Pol_n^{(k)}$ of $(n,k)$-polygraphs and functors $\Q^{(k)}_n: \Pol_n^{(k)} \to \Cat^{(k)}_n$ in a similar way to $\Pol_n$ and $\Q_n$. See 2.2.3 in \cite{GM12} for an explicit description of this construction.

\begin{defn}
Given an $(n,k)$-polygraph $\Sigma$, the $(n,k)$-category $\Q^{(k)}_n(\Sigma)$ is denoted by $\Sigma^{*(k)}$ and is called \emph{the free $(n,k)$-category generated by $\Sigma$}. For $j \leq n$, we denote by $\Sigma^{*(k)}_j$ both the the of $j$-cells of $\Sigma^{*(k)}$ and the $(j,k)$-category generated by $\Sigma$. Hence an $(n,k)$-polygraph $\Sigma$ consists of the following data:
\[
\begin{tikzpicture}[>=To]
\matrix (m) [matrix of math nodes, 
			nodes in empty cells,
			column sep = 1cm, 
			row sep = 1cm] 
{
\Sigma_0 & 
\Sigma_1 & 
\Sigma_2 & 
(\cdots) & 
\Sigma_k & 
\Sigma_{k+1} &
(\cdots) &
\Sigma_n
 \\ 
\Sigma_0 & 
\Sigma_1^* & 
\Sigma_2^* & 
(\cdots) & 
\Sigma_k^* & 
\Sigma_{k+1}^{*(k)} &
(\cdots) &
 \\ 
};
\doublearrow{-}
{(m-1-1) -- (m-2-1)}
\draw[>->] (m-1-2|- m-1-6.south) to (m-2-2|- m-2-6.north);
\draw[>->] (m-1-3|- m-1-6.south) to (m-2-3|- m-2-6.north);
\draw[>->] (m-1-5|- m-1-6.south) to (m-2-5|- m-2-6.north);
\draw[>->] (m-1-6|- m-1-6.south) to (m-2-6|- m-2-6.north);
\draw[transform canvas={yshift=0.05cm, xshift = -0.05cm}, ->] (m-1-2.west |- m-1-6.south) to (m-2-1.east |- m-2-6.north);
\draw[transform canvas={yshift=-0.05cm, xshift = 0.05cm}, ->] (m-1-2.west |- m-1-6.south) to (m-2-1.east |- m-2-6.north);
\draw[transform canvas={yshift=0.05cm, xshift = -0.05cm}, ->] (m-1-3.west |- m-1-6.south) to (m-2-2.east |- m-2-6.north);
\draw[transform canvas={yshift=-0.05cm, xshift = 0.05cm}, ->] (m-1-3.west |- m-1-6.south) to  (m-2-2.east |- m-2-6.north);
\draw[transform canvas={yshift=0.05cm, xshift = -0.05cm}, ->] (m-1-4.west |- m-1-6.south) to  (m-2-3.east |- m-2-6.north);
\draw[transform canvas={yshift=-0.05cm, xshift = 0.05cm}, ->] (m-1-4.west |- m-1-6.south) to  (m-2-3.east |- m-2-6.north);
\draw[transform canvas={yshift=0.05cm, xshift = -0.05cm}, ->] (m-1-5.west |- m-1-6.south) to  (m-2-4.east |- m-2-6.north);
\draw[transform canvas={yshift=-0.05cm, xshift = 0.05cm}, ->] (m-1-5.west |- m-1-6.south) to (m-2-4.east |- m-2-6.north);
\draw[transform canvas={yshift=0.05cm, xshift = -0.05cm}, ->] (m-1-6.west |- m-1-6.south) to  (m-2-5.east |- m-2-6.north);
\draw[transform canvas={yshift=-0.05cm, xshift = 0.05cm}, ->] (m-1-6.west |- m-1-6.south) to  (m-2-5.east |- m-2-6.north);
\draw[transform canvas={yshift=0.05cm, xshift = -0.05cm}, ->] (m-1-7.west |- m-1-6.south) to  (m-2-6.east |- m-2-6.north);
\draw[transform canvas={yshift=-0.05cm, xshift = 0.05cm}, ->] (m-1-7.west |- m-1-6.south) to  (m-2-6.east |- m-2-6.north);
\draw[transform canvas={yshift=0.05cm, xshift = -0.05cm}, ->] (m-1-8.west |- m-1-6.south) to (m-2-7.east |- m-2-6.north);
\draw[transform canvas={yshift=-0.05cm, xshift = 0.05cm}, ->] (m-1-8.west |- m-1-6.south) to (m-2-7.east |- m-2-6.north);
\draw[transform canvas={yshift=0.1cm}, ->] (m-2-2.west |- m-2-1) to (m-2-1);
\draw[transform canvas={yshift=-0.1cm}, ->] (m-2-2.west |- m-2-1) to (m-2-1);
\draw[transform canvas={yshift=0.1cm}, ->] (m-2-3.west |- m-2-1) to (m-2-2.east |- m-2-1);
\draw[transform canvas={yshift=-0.1cm}, ->] (m-2-3.west |- m-2-1) to (m-2-2.east |- m-2-1);
\draw[transform canvas={yshift=0.1cm}, ->] (m-2-4.west |- m-2-1) to (m-2-3.east |- m-2-1);
\draw[transform canvas={yshift=-0.1cm}, ->] (m-2-4.west |- m-2-1) to (m-2-3.east |- m-2-1);
\draw[transform canvas={yshift=0.1cm}, ->] (m-2-5.west |- m-2-1) to (m-2-4.east |- m-2-1);
\draw[transform canvas={yshift=-0.1cm}, ->] (m-2-5.west |- m-2-1) to (m-2-4.east |- m-2-1);
\draw[transform canvas={yshift=0.1cm}, ->] (m-2-6.west |- m-2-1) to (m-2-5.east |- m-2-1);
\draw[transform canvas={yshift=-0.1cm}, ->] (m-2-6.west |- m-2-1) to (m-2-5.east |- m-2-1);
\draw[transform canvas={yshift=0.1cm}, ->] (m-2-7.west |- m-2-1) to (m-2-6.east |- m-2-1);
\draw[transform canvas={yshift=-0.1cm}, ->] (m-2-7.west |- m-2-1) to (m-2-6.east |- m-2-1);
\end{tikzpicture}
\]
\end{defn}

\begin{remq}
Let $n, j$ and $k$ be integers, with $j \leq k \leq n$. Since an $(n,k)$-category is also an $(n,j)$-category, an $(n,k)$-polygraph gives rise to an $(n,j)$-polygraph. In particular, if $\Sigma$ is an $(n,k)$-polygraph, we denote by $\Sigma^{*(j)}$ the $(n,j)$-category it generates. 
\end{remq}

\begin{defn}
Let $\C$ be an $(n+1,k)$-category. We denote by $\bar \C$ the $(n,k)$-category $\C_n/\C_{n+1}$.

Let $\Sigma$ be an $(n+1,k)$-polygraph. We denote by $\bar \Sigma$ the $(n,k)$-category $\overline{\Sigma^{*(k)}}$ and call it the $(n,k)$-category presented by $\Sigma$.
\end{defn}

\subsection{Termination}
\label{subsec:termination}
\begin{defn}
Let $\Sigma$ be an $n$-polygraph. For $0 < k \leq n$, the binary relation $\rightarrow^{*}_k$ defined by $u \rightarrow^{*}_k v$ if there exists $f: u \to v$ in $ \Sigma_k^*$ is a preorder on $\Sigma_{k-1}^*$ (transitivity is given by composition, and reflexivity by the units). We say that the $n$-polygraph $\Sigma$ is \emph{$k$-terminating} if $\rightarrow^*_k$ is a well-founded ordering. We denote by $\rightarrow^+_k$ the strict ordering associated to $\rightarrow^*_k$.
\end{defn}

We recall Theorem 4.2.1 from \cite{G09}, which we will use in order to show the $3$-termination of some polygraphs.

\begin{defn}
Let $\sOrd$ be the 2-category with one object, whose $1$-cells are partially ordered sets, whose $2$-cells are monotonic functions and which $0$-composition is the cartesian product.
\end{defn}

\begin{defn}
Let $\C$ be a $2$-category, $X : \C_2 \to \sOrd$ and $Y : \C_2^{\co} \to \sOrd$ two $2$-functors, and $M$ a commutative monoid. An \emph{$(X,Y,M)$-derivation} on  $\C$ is given by, for every $2$-cell $f \in \C_2$, an application \[d(f) : X(\s(f)) \times Y(\t(f)) \to M,\]
such that for every $2$-cells $f_1,f_2 \in \C_2$, every $x$, $y$, $z$ and $t$ respectively in $X(\s(f_1))$, $Y(\t(f_1))$, $X(\s(f_2))$ and $Y(\t(f_2))$, the following equalities hold:
\[
d(f_1 \star_1 f_2)[x,t] = d(f_1)[x, Y(f_2)[y]] + d(f_2) [ X(f_1)[x],y] 
\]
\[
d(f_1 \star_0 f_2)[(x,z),(y,t)] = d(f_1)[x,y]  + d(f_2)[z,t] .
\]
\end{defn}

In order to show the $3$-termination of some polygraphs, we are going to use the following result (Theorem 4.2.1 from \cite{G09}).

\begin{thm}\label{thm:terminaison}
Let $\Sigma$ be an $n$-polygraph, $X : \Sigma^*_2 \to \sOrd$ and $Y : (\Sigma_2^*)^{\co} \to \sOrd$ two $2$-functors, and $M$ be a commutative monoid equipped with a well-founded ordering $\geq$, and whose addition is strictly monotonous in both arguments.

Suppose that for every $3$-cell $A \in \Sigma_3$, the following inequalities hold:
\[
X(\s(A)) \geq X(\t(A)) 
\qquad 
Y(\s(A)) \geq Y(\t(A))
\qquad 
d(\s(A)) > d(\t(A)).
\]

Then the $n$-polygraph $\Sigma$ is $3$-terminating.
\end{thm}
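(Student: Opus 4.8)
The plan is to carry out the standard argument by which an $(X,Y,M)$-derivation compatible with the generating $3$-cells forces termination, the one point of care being that the ordering it produces on $2$-cells is well-founded only within each hom-set — which is exactly what is needed here. By definition $\Sigma$ is $3$-terminating precisely when $\rightarrow^+_3$ is well-founded on $\Sigma^*_2$. Since every $3$-cell of $\Sigma^*_3$ is a $\star_2$-composite of rewriting steps, that is, of $3$-cells $\lambda \star_1 (w \star_0 A \star_0 w') \star_1 \mu$ with $A \in \Sigma_3$, $w,w'$ $1$-cells and $\lambda,\mu$ $2$-cells, and since such a step has the same $1$-source and $1$-target as its source and target $2$-cells, an infinite $\rightarrow^+_3$-chain refines to an infinite chain of rewriting steps all of whose terms lie in a single hom-set $\Sigma^*_2[u,v]$, for fixed $1$-cells $u,v$. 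So it suffices to build, for all $u,v$, a well-founded strict order $\succ_{u,v}$ on $\Sigma^*_2[u,v]$ along which every rewriting step strictly decreases.

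For the measure I would use the derivation itself: to $g\colon u\Rightarrow v$ attach the map $d(g)\colon X(u)\times Y(v)\to M$, and set $g\succ_{u,v}g'$ when $X(g)\ge X(g')$ and $Y(g)\ge Y(g')$ pointwise and $d(g)(\xi)>d(g')(\xi)$ in $M$ for every $\xi\in X(u)\times Y(v)$; transitivity and irreflexivity are then immediate. For well-foundedness, fix any $\xi$ — the poset $X(u)\times Y(v)$ being a finite product of the non-empty posets attached to the $1$-cells composing $u$ and $v$ together with terminal posets for the identity parts — and observe that an infinite $\succ_{u,v}$-descending chain would evaluate at $\xi$ to an infinite descending chain in $M$, contradicting well-foundedness of $\ge$. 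This is exactly where it matters that rewriting does not touch the $1$-boundary: across a single hom-set all the maps $d(g)$ share a common domain against which to test.

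The heart of the proof is then to check that a single rewriting step $C\colon g\Rrightarrow g'$, with $g=\lambda\star_1(w\star_0 \s(A)\star_0 w')\star_1\mu$ and $g'=\lambda\star_1(w\star_0 \t(A)\star_0 w')\star_1\mu$ for $A\in\Sigma_3$, satisfies $g\succ_{u,v}g'$. The inequalities $X(g)\ge X(g')$ and $Y(g)\ge Y(g')$ fall out of $2$-functoriality of $X$ and $Y$, the hypotheses $X(\s(A))\ge X(\t(A))$ and $Y(\s(A))\ge Y(\t(A))$, and the monotonicity of composites and products of monotone maps. For the strict part I would expand $d(g)$ and $d(g')$ by the $\star_1$-Leibniz rule twice and then the $\star_0$-Leibniz rule, together with the vanishing of the derivation on identities: each becomes a sum of three terms, one coming from $\lambda$, one from the redex $A$, one from $\mu$. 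The redex terms are $d(\s(A))$ and $d(\t(A))$ evaluated at arguments built only from $X(\lambda)$ applied to the source-side input and $Y(\mu)$ applied to the target-side input — hence at \emph{equal} arguments in the two expansions — so this term strictly decreases by hypothesis; the $\lambda$- and $\mu$-terms decrease only weakly, since $Y(\s(A))\ge Y(\t(A))$ and $X(\s(A))\ge X(\t(A))$ propagate through $Y$, $X$ and then through the monotone maps $d(\lambda)$, $d(\mu)$. Strict monotonicity of $+$ in $M$ then upgrades the total to $d(g)(\xi)>d(g')(\xi)$ for all $\xi$, and with the weak inequalities on $X,Y$ this gives $g\succ_{u,v}g'$.

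The main obstacle, and the step that deserves genuine care, is precisely this last expansion: untangling the Leibniz rules for a redex buried inside an \emph{arbitrary} $2$-cell context $\lambda,\mu$ (not merely a whiskering by $1$-cells), and bookkeeping correctly which arguments stay fixed when $\s(A)$ is replaced by $\t(A)$. The structural fact one must not gloss over is that a derivation sends each $2$-cell $f$ to a \emph{monotone} map $X(\s(f))\times Y(\t(f))\to M$, since this is exactly what carries the weak inequalities on $X$ and $Y$ through the context terms; without it the context-term estimate fails. A minor side point, easily overlooked, is the non-emptiness of the posets $X(u)$ and $Y(v)$ used to extract the descending chain in $M$, which is harmless for the $\sOrd$-valued functors arising in the applications of this theorem.
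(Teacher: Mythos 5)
Your argument is correct, and it is essentially the proof of this statement: the paper itself gives no proof, since Theorem \ref{thm:terminaison} is recalled verbatim from Guiraud--Malbos (\cite{G09}, Theorem 4.2.1), and your reduction to a single hom-set, the pointwise order built from $d$, and the double Leibniz expansion isolating a strictly decreasing redex term between two weakly decreasing context terms is exactly the argument given there. One remark worth keeping: the monotonicity of each $d(f)$ as a map $X(\s(f))\times Y(\t(f))\to M$, which you rightly single out as indispensable for the context terms, is indeed part of the definition of a derivation in \cite{G09} (where $M$ is an ordered commutative monoid and derivations are valued in monotone maps), even though the definition as transcribed in this paper only says ``an application''; so what you flag is a genuine omission in the paper's statement of the definition rather than a gap in your proof. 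Your side remark on non-emptiness of the posets $X(u)\times Y(v)$ is likewise the right caveat and is harmless in all the applications made here ($\mathbb N^*$, $\top$ and their products).
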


\subsection{Branchings and Confluence}
\label{subsec:branchings}
\begin{defn}
Let $\Sigma$ be an $n$-polygraph. A \emph{$k$-fold branching} of $\Sigma$ is a $k$-tuple $(f_1,f_2,\ldots,f_k)$ of $n$-cells in $\Sigma^*$ such that every $f_i$ has the same source $u$, which is called \emph{the source of the branching}.

The symmetric group $S_k$ acts on the set of all $k$-fold branchings of $\Sigma$. The equivalence class of a branching $(f_1,f_2,\ldots,f_k)$ under this action is denoted by $[f_1,f_2,\ldots,f_k]$. Such an equivalence class is called a \emph{$k$-fold symmetrical branching}, and $(f_1,f_2,\ldots,f_k)$ is called a \emph{representative} of $[f_1,f_2,\ldots,f_k]$
\end{defn}

\begin{defn}
Let $\Sigma$ be an $n$-polygraph. We denote by $\mathbb N$ the $n$-category with exactly one $k$-cell for every $k <n$, whose $n$-cells are the natural numbers and whose compositions are given by addition. 

We define an application $\l : \Sigma^* \to \mathbb N$ by setting $\l(f) = 1$ for every $f \in \Sigma_n$. For $f \in \Sigma_n^*$, we call $\l(f)$ the \emph{length} of a $f$.

An $n$-cell of length $1$ in $\Sigma^*_n$ is also called a \emph{rewriting step}.
\end{defn}

\begin{defn}
Let $\Sigma$ be an $n$-polygraph. A \emph{$k$-fold local branching} of $\Sigma$ is a $k$-fold branching $(f_1,f_2,\ldots,f_k)$ of $\Sigma$ where every $f_i$ is a rewriting step.

A $k$-fold local branching $(f_1,\ldots,f_k)$ of source $u$ is a \emph{strict aspherical branching} if there exists an integer $i$ such that $f_{i} = f_{i+1}$. We say that it is an \emph{aspherical branching} if it is in the equivalence class of a strict aspherical branching. 

A $k$-fold local branching $(f_1,\ldots,f_k)$ is a \emph{strict Peiffer branching} if it is not aspherical and there exist $v_1,v_2 \in \Sigma_{n-1}^*$ such that $u = v_1 \star_i v_2$, an integer $m < n$ and $f'_1,\ldots,f'_k \in \Sigma_n^*$ such that for every $j \leq m$, $f_j = f'_j \star_i  v_2$ and for every $j > m$, $f_j = v_1 \star_i f'_j$. It is a \emph{Peiffer branching} if it is in the equivalence class of a strict Peiffer branching.

A local branching that is neither aspherical nor Peiffer is \emph{overlapping}.
\end{defn}

Given an $n$-polygraph $\Sigma$, one defines an order $\subseteq$ on $k$-fold local branchings by saying that $(f_1,\ldots,f_k) \subseteq (u \star_i f_1 \star_i v,\ldots ,u \star_i f_k \star_i v)$ for every $u,v \in \Sigma_{n-1}^*$ and every $k$-fold local branching $(f_1,\ldots,f_k)$.

\begin{defn}
An overlapping branching that is minimal for $\subseteq$ is a \emph{critical branching}.

A $2$-fold (resp. $3$-fold) critical branching is also called a \emph{critical pair} (resp. \emph{critical triple}).
\end{defn}

\begin{defn}
Let $\Sigma$ be an $n$-polygraph. A $2$-fold branching $(f,g)$ is \emph{confluent} if there are $f',g' \in \Sigma_n^*$ of the following shape:

\[
\xymatrix @C = 4em @R = 1.5em{
& 
\ar @/^/ [rd] ^{f'}
& \\
\ar @/^/ [ru] ^{f}
\ar @/_/ [rd] _{g} 
& 
& \\
&  
\ar @/_/ [ru] _{g'}
& \\
}
\]
\end{defn}

\begin{defn}
An $n$-polygraph $\Sigma$ is \emph{$k$-confluent} if every $2$-fold branching of $\Sigma_k$ is confluent.
\end{defn}

\begin{defn}
An $n$-polygraph is \emph{$k$-convergent} if it is $k$-terminating and $k$-confluent.
\end{defn}

The following two propositions are proven in \cite{G09}.

\begin{prop}\label{prop:confluence_local}
Let $\Sigma$ be an $n$-terminating $n$-polygraph. It is $n$-confluent if and only if every $2$-fold critical branching is confluent.
\end{prop}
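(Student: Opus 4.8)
The forward direction is immediate, since a critical branching is a particular $2$-fold branching of $\Sigma_n$; so the content lies entirely in the converse. Assuming that every $2$-fold critical branching is confluent, the plan is to establish $n$-confluence in two stages: first prove \emph{local} $n$-confluence (every $2$-fold local branching of $\Sigma_n$ is confluent), then bootstrap this to full $n$-confluence using $n$-termination, via a Newman-style Noetherian induction.

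For local confluence, I would argue by the trichotomy of local branchings recalled above. An aspherical branching is confluent trivially, completing with identity $n$-cells. For a Peiffer branching $(f,g)$, writing $\s(f) = v_1 \star_i v_2$ with $f = f_0 \star_i v_2$ and $g = v_1 \star_i g_0$ for rewriting steps $f_0,g_0$ (or the symmetric configuration), the two rewriting steps act in disjoint contexts, and the interchange law of the strict $n$-category $\Sigma^*$ produces the common reduct $\t(f_0) \star_i \t(g_0)$ together with the completing cells $\t(f_0) \star_i g_0$ and $f_0 \star_i \t(g_0)$; this is a routine interchange computation. For an overlapping branching $(f,g)$, I would use that the order $\subseteq$ on local branchings is well-founded (adjoining non-trivial context strictly increases the size of the source): the set of overlapping branchings below $(f,g)$ is nonempty, hence has a minimal element, which is by definition a critical branching $(f_0,g_0)$; then $(f,g)$ is obtained from $(f_0,g_0)$ by adjunction of a context, and whiskering the confluence diagram of $(f_0,g_0)$ furnished by hypothesis by that same context yields a confluence diagram for $(f,g)$.

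Once local confluence is in hand, I would conclude by Newman's lemma. Since $\Sigma$ is $n$-terminating, $\rightarrow^+_n$ is a well-founded order on $\Sigma^*_{n-1}$, and I would proceed by Noetherian induction on the source $u$: given a $2$-fold branching $(f,g)$ of source $u$ with $f : u \to v$ and $g : u \to w$, if one of $f,g$ is an identity the claim is immediate; otherwise I factor off leading rewriting steps $f = f_1 \star_{n-1} \tilde f$ and $g = g_1 \star_{n-1} \tilde g$ with $f_1 : u \to u_1$, $g_1 : u \to u_2$, apply local confluence to $(f_1,g_1)$ to reach a common cell $p$, then apply the induction hypothesis three times at the strictly smaller sources $u_1$, $u_2$ and $p$ (the branchings $(\tilde f, h_1)$, $(\tilde g, h_2)$ and then the resulting branching at $p$) to funnel $v$, $w$ and $p$ down to a common reduct, assembling a confluence diagram for $(f,g)$. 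I expect the main obstacle to be the overlapping case of the local-confluence step: making precise that every overlapping branching is an instance of a critical one under adjunction of a (possibly several-dimensional) context, and that whiskering preserves confluence diagrams — this is the only point where the higher-dimensional combinatorics genuinely enters, the Peiffer case being a direct interchange computation and the Newman induction being entirely standard.
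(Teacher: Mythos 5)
Your proof is correct and follows the standard route: the paper itself does not prove this proposition but defers to Guiraud--Malbos \cite{G09}, where the argument is exactly the one you give --- classify local branchings, dispose of the aspherical and Peiffer cases by identities and the interchange law, reduce overlapping branchings to critical ones by minimality for $\subseteq$ and whiskering, and conclude by Newman's lemma via Noetherian induction on $\rightarrow^+_n$. No gaps worth flagging beyond the points you already identify yourself.
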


\begin{prop}
Let $\Sigma$ be a $k$-convergent $n$-polygraph. For every $u \in \Sigma^*_{k-1}$, there exists a unique $v \in \Sigma_{k-1}^*$ such that $u \rightarrow^*_k v$ and $v$ is minimal for $\rightarrow^*_k$.
\end{prop}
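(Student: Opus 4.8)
The plan is to run the classical argument for convergent rewriting systems, splitting the claim into \emph{existence} of a normal form, which I would obtain from $k$-termination, and \emph{uniqueness}, which I would obtain from $k$-confluence.

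For existence, I would fix $u \in \Sigma^*_{k-1}$ and consider the set $D_u = \{w \in \Sigma^*_{k-1} : u \rightarrow^*_k w\}$, which is non-empty because $u \in D_u$ (reflexivity of $\rightarrow^*_k$, witnessed by the identity $k$-cell). Since $\Sigma$ is $k$-terminating, $\rightarrow^*_k$ is a well-founded ordering on $\Sigma^*_{k-1}$, hence so is its strict part $\rightarrow^+_k$; thus $D_u$ has a $\rightarrow^+_k$-minimal element $v$. By construction $u \rightarrow^*_k v$, and $v$ is minimal for $\rightarrow^*_k$ in the whole of $\Sigma^*_{k-1}$: if $v \rightarrow^+_k w$ for some $w$, then $u \rightarrow^+_k w$, so $w \in D_u$, contradicting the minimality of $v$ inside $D_u$.

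For uniqueness, I would suppose $v_1$ and $v_2$ are both minimal for $\rightarrow^*_k$ with $u \rightarrow^*_k v_1$ and $u \rightarrow^*_k v_2$, and pick $k$-cells $f : u \to v_1$ and $g : u \to v_2$ in $\Sigma^*_k$ witnessing these reductions. Then $(f,g)$ is a $2$-fold branching of $\Sigma_k$, so $k$-confluence supplies $f' : v_1 \to c$ and $g' : v_2 \to c$ in $\Sigma^*_k$, whence $v_1 \rightarrow^*_k c$ and $v_2 \rightarrow^*_k c$. Minimality of $v_1$ forces $c = v_1$ and minimality of $v_2$ forces $c = v_2$, so $v_1 = v_2$.

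The only point I would be careful about — and there is no genuine obstacle beyond it — is which confluence hypothesis is being invoked. The definition of $k$-confluence in the excerpt quantifies over \emph{all} $2$-fold branchings of $\Sigma_k$, i.e. arbitrary pairs of $k$-cells with a common source, and not merely over local branchings of rewriting steps; so it already is the ``global'' confluence property and applies directly to the composite cells $f$ and $g$ above, with no appeal to Newman's lemma. Had only local confluence been available, I would first upgrade it to global confluence using $k$-termination (this is Newman's lemma, in the spirit of Proposition~\ref{prop:confluence_local}) and then run the same argument. Otherwise the statement is a routine unwinding of the definitions of $k$-termination and $k$-confluence.
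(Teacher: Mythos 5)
Your proof is correct and is the standard existence-from-termination, uniqueness-from-confluence argument; the paper itself gives no proof of this proposition and simply cites \cite{G09}, where essentially this argument appears. Your side remark is also accurate: since the paper's notion of $k$-confluence quantifies over arbitrary $2$-fold branchings of $\Sigma_k$ (not only local ones), it applies directly to the composite cells $f$ and $g$, and no appeal to Newman's lemma is needed.
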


\begin{defn}
Let $\Sigma$ be an $n$-polygraph. \emph{A normal form} for $\Sigma$ is an $(n-1)$-cell minimal for $\rightarrow^*_n$.

If $\Sigma$ is $n$-convergent, for every $u \in \Sigma^*_{n-1}$, the unique normal form $v$ such that $u \rightarrow^*_n v$ is denoted by $\hat u$ and is called \emph{the normal form of $u$}.
\end{defn}

\subsection{Coherence}
\label{subsec:coherence}
\begin{defn}
Two $k$-cells are \emph{parallel} if they have the same source and the same target.

An $(n+1)$-category $\C$ is \emph{$n$-coherent} if, for each pair $(f,g)$ of parallel $n$-cells in $\C_n$, there exists an $(n+1)$-cell $A: f \to g$ in $\C_{n+1}$. 
\end{defn}

\begin{defn}
Let $\Sigma$ be an $(n+1)$-polygraph, and $(f,g)$ be a local branching of $\Sigma_n$. A \emph{filling} of $(f,g)$ is an $(n+1)$-cell $A \in \Sigma_{n+1}^{*(n)}$ of the shape: 
\[
\xymatrix @C = 4em @R = 1.5em{
& 
\ar @/^/ [rd] ^{  }
\ar@2 [dd] ^{A}
& \\
\ar @/^/ [ru] ^{f}
\ar @/_/ [rd] _{g} 
& 
& \\
&  
\ar @/_/ [ru] _{}
& \\
}
\]
\end{defn}

\begin{defn}
An $(n+1)$-polygraph $\Sigma$ satisfies the \emph{$n$-Squier condition} if:
\begin{itemize}
\item it is $n$-convergent,
\item there is a bijective application from $\Sigma_{n+1}$ to the set of all critical pairs of $\Sigma_n$ that associates to every $A \in \Sigma_{n+1}$, a critical pair $b$ of $\Sigma_n$ such that $A$ is a filling of a representative of $b$.
\end{itemize}
\end{defn}

The following Theorem is due to Squier for $n=2$ \cite{S94} and was extended to any integer $n\geq 2$ by Guiraud and Malbos \cite{G09}.

\begin{thm}\label{thm:squier}
Let $\Sigma$ be an $(n+1)$-polygraph satisfying the $n$-Squier condition. Then the free $(n+1,n-1)$-category $\Sigma^{*(n-1)}$ is $n$-coherent.
\end{thm}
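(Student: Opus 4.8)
The plan is to follow the classical pattern behind Squier's theorem: reduce $n$-coherence to a \emph{coherent} form of confluence, prove the latter by Noetherian induction from local confluence, and dispatch local branchings through their classification. \emph{Step 1 (local branchings admit fillings).} Given a local branching $(f,g)$ of $\Sigma_n$, I would split into the three cases. If $(f,g)$ is aspherical then $f=g$ and the identity $(n+1)$-cell on $f$, with identity legs, is a filling. If it is Peiffer, then $f$ and $g$ rewrite disjoint parts of $u=v_1\star_i v_2$ with $i\le n-2$, and the strict exchange law between $\star_{n-1}$ and $\star_i$ gives a commuting square, i.e.\ a filling by an identity $(n+1)$-cell. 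If it is overlapping, then — since $\subseteq$ is well founded and deleting context from a genuinely overlapping branching leaves it overlapping — there is a critical branching $(f_0,g_0)$ and $(n-1)$-cells $w_1,w_2$ with $(f,g)=(w_1\star_i f_0\star_i w_2,\ w_1\star_i g_0\star_i w_2)$; by the $n$-Squier condition a generating $(n+1)$-cell $A_0\in\Sigma_{n+1}$ fills a representative of the critical pair $[f_0,g_0]$, and $w_1\star_i A_0\star_i w_2$ is then a filling of $(f,g)$.

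\emph{Step 2 (coherent confluence).} Next I would prove, by Noetherian induction on the source $u$ ordered by $\rightarrow^+_n$ (well founded since $\Sigma$ is $n$-terminating), that every branching $(f,g)$ with $f,g\in\Sigma_n^*$ admits a filling in $\Sigma_{n+1}^{*(n)}$. If $f$ or $g$ is an identity this is immediate; otherwise factor $f=f_1\star_{n-1}f'$, $g=g_1\star_{n-1}g'$ with $f_1,g_1$ rewriting steps, apply Step 1 to the local branching $(f_1,g_1)$, then the induction hypothesis at the strictly smaller sources $\t(f_1)$, $\t(g_1)$ to the two resulting branchings, and paste the $(n+1)$-cells together — using invertibility of $(n+1)$-cells in $\Sigma_{n+1}^{*(n)}$ — into a filling of $(f,g)$. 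Specialising to the case where the common target is a normal form (so the continuation legs are forced to be identities) yields the fact $(\star)$: any two parallel reduction paths $u\to\hat u$ in $\Sigma_n^*$ are connected by an $(n+1)$-cell of $\Sigma_{n+1}^{*(n)}$.

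\emph{Step 3 (confluence $\Rightarrow$ coherence).} Using $n$-convergence, fix for each $(n-1)$-cell $u$ a reduction path $\rho_u:u\to\hat u$ in $\Sigma_n^*$. I claim every $n$-cell $f:u\to v$ of $\Sigma^{*(n-1)}$ is connected, by an $(n+1)$-cell of $\Sigma^{*(n-1)}$, to the canonical cell $\rho_u\star_{n-1}\rho_v^{-1}$ (note $\hat u=\hat v$, since $u$ and $v$ are linked by $f$, hence $\rightarrow^*_n$-equivalent, hence share a normal form by uniqueness). Writing $f$ as a $\star_{n-1}$-composite of rewriting steps and their formal inverses, it suffices by a length induction to treat a single building block $x:w\to w'$: applying $(\star)$ to the parallel reduction paths out of $w$ (resp.\ $w'$), then whiskering by the appropriate $\rho$ and using the groupoid identity $\rho\star_{n-1}\rho^{-1}=1$ valid in $\Sigma^{*(n-1)}$, connects $x$ to $\rho_w\star_{n-1}\rho_{w'}^{-1}$; the composite then telescopes via the same identity. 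Finally, given parallel $n$-cells $f,g:u\to v$ in $\Sigma^{*(n-1)}$, both are connected by $(n+1)$-cells to the \emph{same} cell $\rho_u\star_{n-1}\rho_v^{-1}$; composing one with the inverse of the other (invertibility of $(n+1)$-cells) produces the desired $A:f\to g$, so $\Sigma^{*(n-1)}$ is $n$-coherent.

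\emph{Main obstacle.} Conceptually nothing is deep here, but two points need genuine care. The first is the purely combinatorial input to Step 1 that every overlapping local branching lies, for $\subseteq$, above a critical one — and that the witnessing minimal branching is still overlapping. The second is the bookkeeping of Step 2: one must check that the continuation legs produced by Step 1 and by the induction hypothesis are themselves reduction paths (so that the recursion really descends along $\rightarrow^+_n$) and that the $(n+1)$-cells paste coherently in $\Sigma_{n+1}^{*(n)}$. Both amount to careful manipulation of contexts and composites in a free $n$-category rather than to any new idea.
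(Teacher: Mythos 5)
Your proposal is correct and follows the standard Squier--Guiraud--Malbos argument: coherent fillings of local branchings by the aspherical/Peiffer/overlapping trichotomy, Noetherian induction to get coherent confluence (specialising to Proposition~\ref{prop:lem_squier} when the target is a normal form), and a normalisation strategy $\rho_u\star_{n-1}\rho_v^{-1}$ to connect arbitrary parallel $n$-cells of $\Sigma^{*(n-1)}$. The paper does not reprove this theorem but cites Proposition 4.3.4 of \cite{G09}, whose proof has exactly this structure, so there is nothing to object to beyond the minor bookkeeping you already flag (e.g.\ possibly having to invert the generating $(n+1)$-cell when it fills the opposite representative of a critical pair).
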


In the proof of this Theorem appears the following result (Lemma 4.3.3 in \cite{G09}).

\begin{prop}\label{prop:lem_squier}
Let $\Sigma$ be an $(n+1)$-polygraph satisfying the $n$-Squier condition.

For every parallel $n$-cells $f,g \in \Sigma_{n}^*$ whose target is a normal form, there exists an $(n+1)$-cell $A : f \to g$ in $\Sigma_{n+1}^{*(n)}$.
\end{prop}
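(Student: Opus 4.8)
The plan is to prove the slightly more general statement that for every $(n-1)$-cell $u$ and every pair of $n$-cells $f,g : u \to \hat u$ in $\Sigma_n^*$ ending at the normal form of $u$, there is an $(n+1)$-cell $f \Rrightarrow g$ in $\Sigma_{n+1}^{*(n)}$; the proposition follows because the common target $w$ of two parallel cells $f,g$, being a normal form reachable from $u = \s(f)$, must equal $\hat u$ by uniqueness of normal forms in an $n$-convergent polygraph. Write $f \equiv g$ when such an $(n+1)$-cell exists. Since $(n+1)$-cells are invertible and composable in $\Sigma_{n+1}^{*(n)}$, the relation $\equiv$ is a congruence on parallel $n$-cells: reflexive via identities, symmetric via inverses, transitive via $\star_n$, and stable under $\star_i$-whiskering by lower-dimensional cells for $i<n$.

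First I would establish a local statement: \emph{every local branching $(f_1,g_1)$ of $\Sigma_n$ admits a filling}, i.e.\ there are $n$-cells $f_1',g_1'$ and an $(n+1)$-cell $B : f_1 \star_{n-1} f_1' \Rrightarrow g_1 \star_{n-1} g_1'$ in $\Sigma_{n+1}^{*(n)}$. This uses the trichotomy of Section~\ref{subsec:branchings}. If $(f_1,g_1)$ is aspherical then $f_1=g_1$ and the identity $(n+1)$-cell works. If it is Peiffer, say $f_1 = \phi \star_i v_2$ and $g_1 = v_1 \star_i \psi$ with $u = v_1 \star_i v_2$, the interchange law of the strict ambient category gives $(\phi \star_i v_2)\star_{n-1}(\t(\phi)\star_i \psi) = \phi \star_i \psi = (v_1 \star_i \psi)\star_{n-1}(\phi \star_i \t(\psi))$ as $n$-cells, so again an identity $(n+1)$-cell fills the diamond. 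If it is overlapping, then by $\subseteq$-minimality it is a critical branching read inside a context, $f_1 = w_1 \star_i f_0 \star_i w_2$ and $g_1 = w_1 \star_i g_0 \star_i w_2$ for a critical branching $(f_0,g_0)$; the $n$-Squier condition supplies a generating $(n+1)$-cell $A : f_0 \star_{n-1} f_0' \Rrightarrow g_0 \star_{n-1} g_0'$, and whiskering, $w_1 \star_i A \star_i w_2$ fills $(f_1,g_1)$.

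Then I would prove the general statement by well-founded induction on $u$ for $\rightarrow^+_n$, which is well-founded since $\Sigma$ is $n$-terminating. If $u$ is a normal form, any $n$-cell out of $u$ of positive length would begin with a rewriting step, which is impossible, so $f = 1_u = g$. Otherwise write $f = f_1 \star_{n-1} f'$ and $g = g_1 \star_{n-1} g'$ with $f_1 : u \to u_1$, $g_1 : u \to u_2$ rewriting steps (of positive length, as $\hat u \neq u$); since a generating $n$-cell has distinct source and target — otherwise $\Sigma$ is not $n$-terminating — we get $u \rightarrow^+_n u_1$ and $u \rightarrow^+_n u_2$, so $u_1,u_2$ are strictly smaller than $u$. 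Apply the local statement to $(f_1,g_1)$, obtaining $f_1',g_1'$ with a common target $(n-1)$-cell $v$ and a filling $B : f_1 \star_{n-1} f_1' \Rrightarrow g_1 \star_{n-1} g_1'$. Choose an $n$-cell $h : v \to \hat v$; by uniqueness of normal forms $\hat v = \hat u = \hat{u_1} = \hat{u_2}$. Then $f'$ and $f_1' \star_{n-1} h$ are parallel $n$-cells from $u_1$ to $\hat{u_1}$, so $f' \equiv f_1' \star_{n-1} h$ by the induction hypothesis, and similarly $g' \equiv g_1' \star_{n-1} h$; whiskering $B$ by $h$ gives $(f_1 \star_{n-1} f_1')\star_{n-1} h \equiv (g_1 \star_{n-1} g_1')\star_{n-1} h$. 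Chaining with the congruence properties,
\[
f = f_1 \star_{n-1} f' \equiv f_1 \star_{n-1} f_1' \star_{n-1} h \equiv g_1 \star_{n-1} g_1' \star_{n-1} h \equiv g_1 \star_{n-1} g' = g,
\]
which closes the induction.

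The main obstacle is the local statement, and within it the overlapping case: one must carefully justify that every $\subseteq$-minimal overlapping local branching genuinely is a critical branching inside a context, and that whiskering a filling by a context is again a filling (stability of the diamond shape under $\star_i$ for $i<n$). The Peiffer case also quietly relies on strictness, so that interchange is an on-the-nose equality rather than a coherence cell; everything else is bookkeeping with the congruence $\equiv$ and the termination order.
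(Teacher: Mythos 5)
Your proof is correct and follows essentially the argument of the source the paper cites for this result (Lemma 4.3.3 of \cite{G09}, the paper giving no proof of its own): Noetherian induction on the source along $\rightarrow^+_n$, with local branchings filled by identities (aspherical, Peiffer via interchange) or by whiskered generating $(n+1)$-cells (critical, using invertibility for $\star_n$ to fix orientation), then closed up using the congruence generated by $\star_n$-composition and whiskering. The only point you rightly flag — that every overlapping local branching is a critical branching placed in a context, and that fillings are stable under that context — is the standard polygraphic bookkeeping the cited proof also relies on, so there is no genuine gap.
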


Let us compare those two last results. Let $\Sigma$ be an $(n+1)$-polygraph satifying the $n$-Squier relation, and let $f,g \in \Sigma_{n}^*$ be two parallel $n$-cells whose target is a normal form. According to Theorem \ref{thm:squier}, there exists an $(n+1)$-cell $A : f \to g$ in the free $(n+1,n-1)$-category $\Sigma_{n+1}^{*(n-1)}$. Proposition \ref{prop:lem_squier} shows that such an $A$ can be chosen in the free $(n+1,n)$-category $\Sigma_{n+1}^{*(n)}$, where the $n$-cells are not invertible. Hence for cells $f,g \in \Sigma_{n}^*$ whose target is a normal form, Proposition \ref{prop:lem_squier} is more precise than Theorem \ref{thm:squier}.

\begin{defn}
Let $\Sigma$ be an $(n+1)$-polygraph, and $(f,g)$ a local branching in $\Sigma_n$. Depending on the nature of $(f,g)$, we define the notion of \emph{canonical filling} of $(f,g)$.
\begin{itemize}
\item If $(f,g)$ is an aspherical branching, then its canonical filling is the identity $1_f$.
\item If $(f,g)$ is a Peiffer branching, if $(f,g) = (f' \star_i v_1, v_2 \star_i g')$ (resp. $(f,g) = (v_1 \star_i f', g' \star_i v_2)$), then its canonical filling is $1_{f' \star_i g'}$ (resp. $1_{g' \star_i f'}$). 
\item Assume that $\Sigma$ satisfies the $n$-Squier condition, and let $(f,g)$ be a critical pair. Let $A$ be the $(n+1)$-cell associated to $[f,g]$. If $A$ is a filling of $(f,g)$ then the canonical filling of $(f,g)$ is $A$. Otherwise, $A$ is a filling of $(g,f)$ and the canonical filling of $(f,g)$ is $A^{-1}$.
\item Assume that the branching $(f,g)$ admits a canonical filler $A$. Then the canonical filler of $(u \star_i f \star_i v,u \star_i f \star_i v)$ is $u \star_i A \star_i v$.
\end{itemize}
\end{defn}

\begin{defn}
Let $\Sigma$ be an $(n+2,n)$-polygraph satisfying the $n$-Squier condition, and $(f,g,h)$ be a local branching of $\Sigma_n$. A \emph{filling} of $(f,g,h)$ is an $(n+2)$-cell $\alpha \in \Sigma_{n+2}^{*(n)}$ of the shape: 
\[
\xymatrix @C = 3em @R = 3em {
&
\ar [rr]
\ar@{} [rd] |-{A_{f,g}}
& &
\ar [rd]
\ar@{} [dd] |-{A}
& & &
\ar [rr]
\ar@{} [rrrd] |-{B_1}
\ar [rd]
\ar@{} [dd] |-{A_{f,h}}
& &
\ar [rd]
&
\\
\ar [ru] ^f
\ar [rr] |g
\ar [rd] _h
& &
\ar@{} [ld] |-{A_{g,h}}
\ar [ru]
\ar [rd]
&
&
\ar@3 [r] ^{\alpha}
&
\ar [ru] ^f
\ar [rd] _h
& & 
\ar [rr]
\ar@{} [rd] |-{B_2}
& &
\\
&
\ar [rr]
& &
\ar [ru]
& & &
\ar [ru]
\ar [rr]
& &
\ar [ru]
&
}
\]
where $A,A_{f,g},A_{g,h},A_{f,h},B_1$ and $B_2$ are $(n+1)$-cells in $\Sigma^{*(n)}_{n+1}$, and $A_{f,g}$, $A_{g,h}$ and $A_{f,h}$ are the canonical fillings of respectively $(f,g)$, $(g,h)$ and $(f,h)$.
\end{defn}

\begin{defn}
An $(n+2,n)$-polygraph $\Sigma$ satisfies the \emph{$n$-Squier condition of depth $2$} if:
\begin{itemize}
\item it satisfies the $n$-Squier condition,
\item there is a bijective application from $\Sigma_{n+2}$ to the set of all critical triples of $\Sigma_n$ that associates to every $\alpha \in \Sigma_{n+2}$ a critical triple $b$ of $\Sigma_n$ such that $\alpha$ is a filling of a representative of $b$.
\end{itemize}
\end{defn}

We now enunciate the theorem whose proof will occupy Sections \ref{sec:transfo_polygraph} and \ref{sec:proof_final}.
\begin{thm}\label{thm:main_theory}
Let $\Sigma$ be a $(4,2)$-polygraph satisfying the $2$-Squier condition of depth $2$. 

For every parallel $3$-cells $A,B \in \Sigma_{3}^{*(2)}$ whose $1$-target is a normal form, there exists a $4$-cell $\alpha : A \qfl B$ in the free $(4,2)$-category $\Sigma_{4}^{*(2)}$.
\end{thm}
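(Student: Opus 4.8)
The plan is to leverage the structure of the $(4,2)$-polygraph $\Sigma$ rather than attack the statement directly, since $\Sigma_4^{*(2)}$ is generally not $3$-terminating (the $\eta_f$-type phenomenon mentioned in the overview). I would follow the two-phase strategy outlined in the sketch. \textbf{Phase 1 (transformations).} Start from the pointed $(4,3)$-white-category $(\Sigma^{*(2)},S_\Sigma)$, where $S_\Sigma$ is the set of $2$-cells whose $1$-target is a normal form, and apply a finite chain of transformations, at each step checking the new pointed white-category is \emph{stronger} (in the sense to be defined in Section~\ref{sec:white_category}) than the previous one. The point of these moves is twofold: first, adjoin formal inverses $\bar f$ for the $2$-cells $f \in \Sigma_2$, together with $3$-cells $\eta_f : 1_{\s(f)} \Rrightarrow \bar f \star_1 f$ witnessing the adjunction, so that for $f,g \in S_\Sigma$ parallel, $3$-cells $f \Rrightarrow g$ (and $4$-cells between them) correspond bijectively to $3$-cells $\bar g \star_1 f \Rrightarrow 1_{\hat u}$ (and $4$-cells between them); second, replace the generating $3$-cells by the family $C_{f,g}$ indexed by minimal local branchings $(f,g)$, oriented so that each $C_{f,g}$ goes from $\bar f \star_1 g$ to $f' \star_1 \overline{g'}$. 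The output is a pointed $(4,3)$-white-category $(\F^{\w(3)}, S_\E)$ with $\F$ a $4$-white-polygraph whose combinatorics is tractable. By the strength implications, it suffices to prove $S_\E$-coherence of $\F^{\w(3)}$.

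\textbf{Phase 2 (combinatorial analysis).} Here the $2$-Squier condition of depth $2$ enters in earnest. First establish $3$-termination of the sub-polygraph of $\F$ consisting only of the $C_{f,g}$ cells: using that $\Sigma$ is $2$-terminating, equip $\A_1^*$ with a well-founded order, pass to the multiset order on $\mathbb N[\A_1^*]$, and define $\p : \F_2^\w \to \mathbb N[\A_1^*]$ so that each $C_{f,g}$ strictly decreases $\p$; this is the analogue of a termination argument via Theorem~\ref{thm:terminaison}. The remaining generators, the $\eta_f$, form the non-terminating part; to tame them introduce the weight $\w_\eta : \F_3^\w \to \mathbb N[\A_1^*]$ counting (with multiplicity weighted by $\p$) the $\eta_f$ occurrences in a $3$-cell. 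Then prove, by a double induction on $\p$ of the source and on $\w_\eta$, that for every $h \in \F_2^\w$ with $\s(h)$ and $\t(h)$ normal forms and every pair of parallel $3$-cells $A,B : h \Rrightarrow 1_{\hat u}$ in $\F_3^{\w}$, there is a $4$-cell $\alpha : A \qfl B$. The base case $h = 1_{\hat u}$ should reduce to the $3$-coherence of the $C$-fragment, which follows from the depth-$2$ hypothesis via (the white-category analogue of) Proposition~4.3.4 in~\cite{G09} applied in dimension $3$ — this is where the bijection between $4$-cells of $\Sigma$ and critical triples is consumed, guaranteeing enough $4$-cells to fill the confluence diagrams of the $C_{f,g}$. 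The inductive step handles an outermost $\eta_f$ (or its formal inverse): either it can be cancelled against a matching $\bar\eta$-like configuration, lowering $\w_\eta$, or it can be pushed past a $C_{f,g}$ using a $4$-cell and then cancelled, again lowering the weight; coherence of the reshuffling is supplied by the induction hypothesis at lower $\p$.

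\textbf{Finishing up.} Finally, translate this ``coherence above $1_{\hat u}$'' statement back to $S_\E$-coherence of $\F^{\w(3)}$ via the adjunction bijection of Phase~1 (a $4$-cell between $A,B : f \Rrightarrow g$ corresponds to one between their transposes $\bar g \star_1 f \Rrightarrow 1_{\hat u}$), and then back along the strength chain to $S_\Sigma$-coherence of $\Sigma^{*(2)}$, which is exactly the assertion that parallel $3$-cells $A,B$ with normal-form $1$-target admit a connecting $4$-cell in $\Sigma_4^{*(2)}$.

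\textbf{Main obstacle.} I expect the crux to be Phase~2's inductive step: controlling how an $\eta_f$ interacts with the rewriting system of the $C_{f,g}$'s. One must show that the weight $\w_\eta$ genuinely decreases under the normalisation moves and that the ``path-reshuffling'' $4$-cells one needs exist and are themselves coherent — i.e. that the naive strategy does not loop. Getting the two well-founded parameters $(\p \text{ of the source}, \w_\eta)$ to interact correctly in a single well-founded order, and verifying that every elementary move on $3$-cells respects it, is the delicate heart of the argument; the transformations of Phase~1 are comparatively bookkeeping, though one must be careful that each is provably strength-increasing so the final implication is valid.
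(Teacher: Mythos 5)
Your proposal follows essentially the same route as the paper: the chain of strength-preserving transformations to a $4$-white-polygraph equipped with formal inverses, $\eta/\epsilon$ connections and $C_{f,g}$ cells indexed by minimal local branchings, followed by the reduction to $3$-cells $h \Rrightarrow 1_{\hat u}$ and the induction governed by $\p$ and $\w_\eta$ (the paper orders the pair lexicographically with the total $\eta$-weight $\w_\eta(A)+\w_\eta(B)$ first and $\p(h)$ second). The one misplacement is that the $2$-Squier condition of depth $2$ is not consumed in the base case $h = 1_{\hat u}$ — which is trivial, since no rewriting step applies to an identity over a normal form — but in the inductive step, via the coherence of $3$-fold local branchings used to reconcile two different fillings of a cavity.
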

This Theorem can be compared with Proposition 4.4.4 in \cite{GM12}. There, for every parallel $A, B \in \Sigma_{3}^{*(1)}$, a $4$-cell $\alpha$ is constructed in the free $(4,1)$-category $\Sigma_{4}^{*(1)}$. Hence Theorem \ref{thm:main_theory} gives a more precise statement, at the cost of restricting the set of $3$-cells allowed.
\newpage

\section{Partial coherence and transformation of polygraphs}
\label{sec:white_category}

In Section \ref{subsec:white_categories} we define the notion of white-category together with the associated notion of white-polygraph. The $2$-white-categories are also known as \emph{sesquicategories} (see \cite{S96}). White-categories are strict categories in which the interchange law between the compositions $\star_0$ and $\star_i$ need not hold, for every $i > 0$. That is, strict $n$-categories are exactly the $n$-white-categories satisfying the additional condition that for every $i$-cells $f$ and $g$ of $1$-sources (resp. $1$-targets) $u$ and $v$ (resp. $u'$ and $v'$): $(f \star_0 v) \star_i (u' \star_0 g ) = (u \star_0 g ) \star_i (f \star_0 v)$.

In Section \ref{subsec:partial_coh}, we define a notion of partial coherence for $(4,3)$-white-categories. We show a simple criterion in order to deduce the partial coherence of a $(4,3)$-white-category from that of an other one. This criterion will be used throughout Section \ref{sec:transfo_polygraph}. We also adapt the notion of Tietze-transformation from \cite{GGM15} to our setting of partial coherence in white-categories, in preparation for Section \ref{subsec:retournement}.

In Section \ref{subsec:inj_functors}, we study injective functors between free white-categories. In particular, we give a sufficient condition for a morphism of white-polygraphs to yield an injective functor between the white-categories they generate. This result will be used in Section \ref{subsec:adjunc_2cell}.

Note that, although Sections \ref{subsec:partial_coh} and \ref{subsec:inj_functors} are expressed in terms of white-categories (since this is how they will be used throughout Section \ref{sec:transfo_polygraph}), all the definitions and results in these Sections also hold in terms of strict categories, \emph{mutatis mutandis}. 

\subsection{White-categories and White-polygraphs}
\label{subsec:white_categories}

\begin{defn}
Let $n \in \mathbb N$. An \emph{$(n+1)$-white-category} is given by:
\begin{itemize}
\item a set $\C_0$,
\item for every $x,y \in \C_0$, an $n$-category $\C(x,y)$. We denote by $\star_{k+1}$ the $k$-composition in this category,
\item for every $z \in \C_0$ and every $u:x \to y \in \C_1$, functors $u\star_0 \_ : \C(y,z)\to \C(x,z)$  and $\_ \star_0 u : \C(z,x) \to \C(z,y)$, so that for every composable $1$-cells $u,v \in \C_1$, their composite $u \star_0 v$ is defined in a unique way,
\item for every $x \in \C_0$, a $1$-cell $1_x \in \C(x,x)$.
\end{itemize}
Moreover, this data must satisfy the following axioms:
\begin{itemize}
\item For every $x \in \C_0$, and every $y \in \C_0$, the functors $1_x \star_0 \_ : \C(x,y) \rightarrow \C(x,y)$ and $\_ \star_0 1_y: \C(x,y) \rightarrow \C(x,y)$ are identities.
\item For every $u,v \in \C_1$, the following equalities hold:
\begin{itemize}
\item $u \star_0 (v \star_0 \_) = (u \star_0 v) \star_0 \_$,
\item $u \star_0 (\_ \star_0 v) = (u \star_0 \_) \star_0 v$,
\item $\_ \star_0 (u \star_0 v) = (\_ \star_0 u) \star_0 v$,
\end{itemize}
\end{itemize}

An \emph{$(n,k)$-white-category} is an $n$-white-category in which every $(i+1)$-cell is invertible for the $i$-composition, for every $i \geq k$.

\end{defn}

Let $n$ be a natural number. Let $\C$ be an $n$-white-category. For $k \leq n$, we denote by $\C_k$ both the set of $k$-cells of $\C$ and the $k$-white-category obtained by deleting the cells of dimension greater than $k$. For $x \in \C_k$ and $i < k$, we denote by $\s_i(x)$ and $\t_i(x)$ respectively the $i$-source and $i$-target of $x$. Finally we write $\s(x)$ and $\t(x)$ respectively for $\s_{k-1}(x)$ and $\t_{k-1}(x)$.

\begin{defn}
Let $\mathcal C$ and $\mathcal D$ be $n$-white-categories. An \emph{$n$-white-functor} is given by: 
\begin{itemize}
\item an application $F_0: \C_0 \to \D_0$,
\item for every $x,y \in \C_0$, a functor $F_{x,y}: \C(x,y) \to \D(F_0(x),F_0(y))$.
\end{itemize}
Moreover, this data must satisfy the following axioms:
\begin{itemize}
\item for every $x \in \C_0$, $F(1_x) = 1_{F_0(x)}$,
\item for every $z \in \C_0$ and $u:x \to y \in \C_1$, the following equalities hold between functors:
\begin{itemize}
\item $F(u) \star_0 F(\_) = F(u \star_0 \_) : \C(y,z) \to \D(F_0(x),F_0(z))$
\item $F(\_) \star_0 F(u) = F(\_ \star_0 u) : \C(z,x) \to \D(F_0(z),F_0(y))$
\end{itemize}
\end{itemize}

\end{defn}

This makes $n$-white-categories into a category, that we denote by $\WCat_n$.

\begin{remq}
\label{remq:struct_monoidale}
Let us define a structure of monoidal category $\otimes$ on $\Cat_n$, in such a way that $\WCat_{n+1}$ is the category of categories enriched over $(\Cat_n,\otimes)$.

Let $\C, \D$ be two $n$-categories. The $n$-categories $\C \times \D_0$ and $\C_0 \times \D$ are defined as follows:
\[
\C \times \D_0 := \bigsqcup_{y \in \D_0} \C, \qquad \C_0 \times \D := \bigsqcup_{x \in \C_0} \D 
\]

Let $\C_0 \times \D_0$ be the $n$-category whose $0$-cells are couples $(x,y) \in \C_0 \times \D_0$, and whose $i$-cells are identities for every $i>0$. Let $F: \C_0 \times \D_0 \to \C \times \D_0$ (resp. $G: \C_0 \times \D_0 \to \C_0 \times \D$) be the $n$-functor which is the identity on $0$-cells. Then $\C \otimes \D$ is the pushout $(\C \times \D_0 )\oplus_{\C_0 \times \D_0} (\C_0 \times \D)$:

\[
\xymatrix{
  \C_0 \times \D_0 \ar[r]^F \ar[d]_G
  \ar@{} [dr] |>>>>{\ulcorner}
&
  \C \times \D_0 \ar[d]
\\
  \C_0 \times \D \ar[r]
&
  \C \otimes \D.
}
\]

\end{remq}
\bigskip 

The category of $n$-white-categories equipped with a cellular extension, denoted by $\WCat_n^+$, is the limit of the following diagram:
\[
\xymatrix{
  \WCat_n^+ \ar[r] \ar[d]
  \ar@{} [dr] |<<<<{\lrcorner}
&
  \Graph_{n+1} \ar[d]
\\
  \WCat_n \ar[r]
&
  \Graph_n
}
\]
where the functor $\WCat_n \to \Graph_n$ forgets the white-categorical structure and the functor $\Graph_{n+1} \to \Graph_n$ deletes the top-dimensional cells.

Let $\R^\w_n$ be the functor from $\WCat_{n+1}$ to $\WCat_n^+$ that sends an $(n+1)$-white-category $\C$ on the couple $(\C_n,
\xymatrix{
\C_n 
& 
\C_{n+1}
\ar@<+0.5ex>[l]
\ar@<-0.5ex>[l]
}
)$.
\begin{prop}
The functor $\R_n^\w$ admits a left-adjoint $\L^\w_n : \WCat_n^+ \to \WCat_{n+1}$.
\end{prop}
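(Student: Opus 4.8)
The plan is to mimic the classical construction of the left adjoint to the truncation-with-cellular-extension functor in the strict-category case (as in \cite{M08}), adapting it to the white setting where interchange between $\star_0$ and $\star_i$ for $i>0$ is dropped. Concretely, given an object $(\C, G)$ of $\WCat_n^+$ — so $\C$ is an $n$-white-category and $G = \bigl(\xymatrix@1{\C_n & S_{n+1}\ar@<+0.5ex>[l]^-{\t}\ar@<-0.5ex>[l]_-{\s}}\bigr)$ a cellular extension — I want to build an $(n+1)$-white-category $\L_n^\w(\C,G)$ whose $n$-truncation is $\C$ and whose set of $(n+1)$-cells is freely generated by $G$ over $\C$.

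First I would recall from Remark \ref{remq:struct_monoidale} that $\WCat_{n+1}$ is the category of categories enriched over $(\Cat_n, \otimes)$. This is the key structural reduction: to build an $(n+1)$-white-category with fixed $0$-cells $\C_0$ and fixed hom-$n$-categories up to dimension $n$, together with freely adjoined $(n+1)$-cells from $G$, it suffices to work hom-wise. For each pair $x,y \in \C_0$, the hom-$n$-category $\C(x,y)$ together with the part of $G$ lying over it (i.e. those cells of $S_{n+1}$ whose $0$-source is $x$ and $0$-target is $y$) forms an object of $\Cat_n^+$, and I apply the classical left adjoint $\L_n : \Cat_n^+ \to \Cat_{n+1}$ hom-wise to get an $(n+1)$-category $\bigl(\L_n^\w(\C,G)\bigr)(x,y)$. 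The $0$-composition functors $u \star_0 \_$ and $\_ \star_0 u$ on the old cells extend uniquely to the freely added $(n+1)$-cells by the universal property of $\L_n$ (whiskering a generator by a $1$-cell is just another cell that must be sent somewhere, and freeness forces the choice), and one checks the associativity-type axioms for $\star_0$ listed in the definition of $(n+1)$-white-category hold because they hold at dimension $\le n$ in $\C$ and are then propagated by freeness. Crucially, one does \emph{not} need to impose any interchange between $\star_0$ and the higher $\star_i$: that is exactly the freedom a white-category allows, so the hom-wise construction glues together without obstruction. This gives the functor $\L_n^\w$ on objects; on morphisms it is again defined hom-wise using functoriality of $\L_n$.

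To verify the adjunction $\L_n^\w \dashv \R_n^\w$, I would exhibit the unit $\eta_{(\C,G)} : (\C,G) \to \R_n^\w \L_n^\w(\C,G)$, which is the identity on $\C$ together with the canonical inclusion $S_{n+1} \hookrightarrow \bigl(\L_n^\w(\C,G)\bigr)_{n+1}$, and check the universal property: given an $(n+1)$-white-category $\D$ and a map $(\C,G) \to \R_n^\w\D$ in $\WCat_n^+$, it consists of an $n$-white-functor $\C \to \D_{\le n}$ plus a compatible assignment of $(n+1)$-cells of $\D$ to elements of $S_{n+1}$; hom-wise this is precisely a morphism in $\Cat_n^+$ out of each $(\C(x,y), G|_{x,y})$, and the universal property of $\L_n$ gives a unique $(n+1)$-functor on each hom, which assemble (using that the source map respects $0$-composition in $\D$ since $\D$ is a white-category, so no interchange is needed) into a unique $(n+1)$-white-functor $\L_n^\w(\C,G) \to \D$ extending the data.

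The main obstacle I anticipate is the bookkeeping of the $\star_0$-action on freely adjoined cells: one must be careful that whiskering a generator $A \in S_{n+1}$ on the left by $u$ and then on the right by $v$ agrees regardless of order, that whiskering by a composite $u \star_0 u'$ agrees with iterated whiskering, and that these identities are consistent with the $(n+1)$-categorical structure produced hom-wise by $\L_n$ — all of which must be derived purely from the white-category axioms of the old structure plus freeness, since no exchange law with $\star_0$ is available. Once one trusts Remark \ref{remq:struct_monoidale} (enrichment over $(\Cat_n,\otimes)$) and the existence of $\L_n$, however, this is a routine but slightly tedious diagram chase, and I would organise it by first constructing the $\otimes$-enriched structure abstractly and only then unwinding it into the explicit white-category data.
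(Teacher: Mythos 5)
There is a genuine gap in the construction of the hom-objects. You propose to form, for each pair $x,y \in \C_0$, the object of $\Cat^+$ consisting of the hom-category $\C(x,y)$ together with only those generators of $\Sigma$ whose $0$-source is $x$ and whose $0$-target is $y$, and to apply the classical free construction hom-wise. The resulting hom-categories are not closed under $\star_0$-whiskering: if $A \in \Sigma$ lies over $(y,z)$ and $u : x \to y$ is a $1$-cell, then $u \star_0 A$ must be an $(n+1)$-cell of the hom over $(x,z)$ with source $u \star_0 \s(A)$ and target $u \star_0 \t(A)$, but the free category you have built over $(x,z)$ contains no generator (and in general no cell at all) with that source and target. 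Your claim that the whiskering functors ``extend uniquely to the freely added $(n+1)$-cells by the universal property'' and that ``freeness forces the choice'' is therefore not justified: the universal property requires you to \emph{name} an image cell for each generator, and there is nothing to name. This is precisely the point the paper's proof handles by taking as generating constants the pre-whiskered symbols $c_{uAv}$, indexed by all pairs of $1$-cells $u,v$ with $\t_0(u)=\s_0(A)$ and $\t_0(A)=\s_0(v)$, so that the hom over $(x,z)$ is freely generated by \emph{all} whiskered instances of \emph{all} generators of $\Sigma$, and $\star_0$ acts simply by rewriting the indices $u$ and $v$. Without this ``free bimodule'' step your hom-wise gluing does not assemble into an $(n+1)$-white-category.

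The rest of your outline (unit given by the inclusion of generators, universal property checked hom-wise, no interchange with $\star_0$ needed) would go through once the hom-objects are corrected, and the enriched-category framing via the monoidal structure $\otimes$ on $\Cat_n$ is a legitimate alternative to the paper's explicit syntactic construction (formal expressions, typing, and quotient by an explicit congruence). There is also a minor indexing slip: the hom-objects of an $n$-white-category are $(n-1)$-categories, so the functor you would apply hom-wise is $\L_{n-1} : \Cat_{n-1}^+ \to \Cat_n$, not $\L_n$. But the essential missing idea is the enlargement of each hom's generating set to the whiskered generators $c_{uAv}$.
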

\begin{proof}
Let $(\C,\Sigma) \in \WCat_n^+$ be an $n$-white-category equipped with a cellular extension. The construction of $\L^\w_n(\C,\Sigma)$ is split into three parts:
\begin{itemize}
\item First, we define a formal language $E_\Sigma$.
\item Then, we define a typing system $T_\C$ on $E_\Sigma$. We denote by $E^T_\Sigma$ the set of all typable expressions of $E_\Sigma$.
\item Finally, we define an equivalence relation $\equiv_\Sigma^*$ on $E_\Sigma^T$. The set of $(n+1)$-cell of $\L^\w_n(\C,\Sigma)$ is then the quotient $E_\Sigma^T / \equiv_\Sigma^*$.
\end{itemize}
Let $E_\Sigma$ be the formal language consisting of:
\begin{itemize}
\item For every $1$-cells $u, v \in \C_1$, and every $(n+1)$-cell $A \in \Sigma_{n+1}$, such that $\t_0(u) = \s_0(A)$ and $\t_0(A) = \s_0(v)$, a constant symbol $c_{uAv}$.
\item For every $n$-cell $f \in \C_n$, a constant symbol $i_f$.
\item For every $ 0 < i \leq n$, a binary function symbol $\star_i$.
\end{itemize}
Thus $E_\Sigma$ is the smallest set of expressions containing the constant symbols and such that $e \star_i f \in \Sigma$ whenever $e,f \in E_\Sigma$.

Let $T_\C$ be the set of all $n$-spheres of $\C$, that is of couples $(f,g)$ in $\C_n$ such that $\s(f)=\s(g)$ and $\t(f) = \t(g)$. For $e \in E_\Sigma$ and $t \in T_\C$, we define $e : t$ (read as "$e$ is of type $t$") as the smallest relation satisfying the following axioms:
\begin{itemize}
\item For every $1$-cells $u$ and $v$ in $\C_1$, and every $(n+1)$-cell $A \in \Sigma$, such that $\t_0(u) = \s_0(A)$ and $\t_0(A) = \s_0(v)$
\[ c_{uAv} : (u\s(A)v,u\t(A)v) \]
\item For every $n$-cell $f \in \C_n$  \[ i_f: (f,f) \]
\item For every $e_1,e_2 \in E_\Sigma$ and $i < n$, if $e_1 : (s_1,t_1)$, $e_2 : (s_2,t_2)$ and $\t_i(t_1) = \s_i(s_2)$, then \[e_1 \star_i e_2 : (s_1 \star_i s_2,t_1 \star_i t_2)\]
\item For every $e_1,e_2 \in E_\Sigma$, if $e_1 : (s_1,t_1)$, $e_2 : (s_2,t_2)$ and $t_1 = s_2$, then \[e_1 \star_n e_2 : (s_1 , t_2)\]
\end{itemize}
An expression $e \in E_\Sigma$ is said to be \emph{typable} if $e : (s,t)$ for some $n$-sphere $(s,t) \in T_\C$. Moreover there is only one such $n$-sphere, so the operations $\s(e) := s$ and $\t(e) := t$ are well defined. We denote by $E^T_\Sigma$ be the set of all typable expressions.

Let $\equiv_\Sigma$ be the symmetric relation generated by the following relations on   $E^T_\Sigma$:
\begin{itemize}
\item For every $A,B,C,D \in E^T_\Sigma$, and every $i_1,i_2 \leq n $ non-zero distinct natural numbers, \[(A \star_{i_1} B) \star_{i_2} (C \star_{i_1} D) \equiv_\Sigma (A \star_{i_2} C) \star_{i_1} (B \star_{i_2} D)\]
\item For every $A,B,C  \in E^T_\Sigma$, and every $0 < i \leq n $, \[(A \star_i B) \star_i C \equiv_\Sigma A \star_i (B \star_i C)\]
\item For every $A \in E^T_\Sigma$ and $f \in \C_n$:
\[i_{f} \star_n A \equiv_\Sigma A 
\qquad
A \star_n i_{f} \equiv_\Sigma A \]
\item For every $f_1,f_2 \in \C_n$ and every $i <n$, \[i_{f_1} \star_i i_{f_2} \equiv_\Sigma i_{f_1 \star_i f_2}\]
\item For every $A,A',B \in E^T_\Sigma$ and every $0 < i \leq n $, if $A \equiv_\Sigma A'$, then
\[
A \star_i B \equiv_\Sigma A' \star_i B
\]
\item For every $A,B,B' \in E^T_\Sigma$ and every $0 < i \leq n $, if $B \equiv_\Sigma B'$, then
\[
A \star_i B \equiv_\Sigma A \star_i B'
\]
\end{itemize} 

Let $\equiv^*_\Sigma$ be the reflexive closure of $\equiv_\Sigma$. The $(n+1)$-cells of $\L^\w_n(\C,\Sigma)$ are given by the quotient $E^T_\Sigma /\equiv_\Sigma^*$. The $i$-composition is given by the one of $E^T_\Sigma$, and identities by $i_f$.
\end{proof}
\begin{defn}
We now define by induction on $n$ the category $\WPol_n$ of $n$-white-polygraphs together with a functor $\Q^\w_n : \WPol_n \to \WCat_n$.

\begin{itemize}
\item The category $\WPol_0$ is the category of sets, and $\Q^\w_0$ is the identity functor.
\item Assume $\Q^\w_n: \WPol_n \to \WCat_n$ defined. Then $\WPol_{n+1}$ is the limit of the following diagram:
\[
\xymatrix{
  \WPol_{n+1} \ar[r] \ar[d]
  \ar@{} [dr] |<<<<{\lrcorner}
&
  \WCat^+_{n} \ar[d]
\\
  \WPol_n \ar[r] _{\Q^\w_n}
&
  \WCat_n,
}
\]
and $\Q^\w_{n+1}$ is the composite
\[
\xymatrix{
  \WPol_{n+1} \ar[r]
&
  \WCat^+_{n} \ar[r] ^{\L^\w_n}
&
  \WCat_{n+1}
}
\]
\end{itemize}

Given an $n$-white-polygraph $\Sigma$, the $n$-white-category $\Q^\w_n(\Sigma)$ is denoted by $\Sigma^\w$ and is called \emph{the free $n$-white-category generated by $\Sigma$}.
\end{defn}

\begin{defn}
Let $\WCat_{n+1}^{\w(n)}$ be the category of $(n+1,n)$-white-categories. Once again we have a functor $\R_n^{\w(n)} : \WCat_{n+1}^{\w(n)} \to \WCat_n^+$, and we are going to describe its left-adjoint $\L_{n+1}^{\w(n)}$. Let $(\C,\Sigma)$ be an $n$-white-category together with a cellular extension. To construct $\L_{n+1}^{\w(n)}(\C,\Sigma)$, we adapt the construction of the free $n$-white-categories as follows:
\begin{itemize}
\item Let $F_\Sigma$ be the formal language  $E_{\Sigma \cup \bar \Sigma}$, where $\bar \Sigma$ consists of formal inverses to the elements of $\Sigma$ (that is their source and targets are reversed).
\item The type system is extended by setting, for every $1$-cells $u, v$ in $\C_1$ and every $(n+1)$-cell $A \in \Sigma$ such that $\t_0(u) = \s_0(v)$ and $\t_0(A) = \s_0(A)$:
\[ c_{u\bar Av} : (u\t(A)v,u\s(A)v). \]
We denote by $F_\Sigma^T$ the set of all typable expressions for this new typing system.
\item We extend $\equiv_\Sigma$ into a relation denoted by $\cong_\Sigma$ by adding the following relations:
\[
c_{uAv} \star_n c_{u \bar A v} \cong_\Sigma i_{u\s(A)v}
\qquad
c_{u\bar Av} \star_n c_{u A v} \cong_\Sigma i_{u\t(A)v}
\]
for every $u,v$ in $\C_1$ and every $(n+1)$-cell $A \in \Sigma$, such that $\t_0(u) = \s_0(A)$ and $\t_0(A) = \s_0(v)$.
\end{itemize}

We define categories $\WPol_n^{(k)}$ of $(n,k)$-white-polygraphs and functors $\Q^{\w(k)}_n: \WPol_n^{(k)} \to \WCat^{(k)}_n$ in a similar way to $\Pol^{(k)}_n$ and $\Q^{(k)}_n$.
\end{defn}

\begin{defn}
Given an $(n,k)$-white-polygraph $\Sigma$, the $(n,k)$-white-category $\Q^{\w(k)}_n(\Sigma)$ is denoted by $\Sigma^{\w(k)}$ and is called \emph{the free $(n,k)$-white-category generated by $\Sigma$}. For $j \leq n$, we denote by $\Sigma^{\w(k)}_j$ both the the of $j$-cells of $\Sigma^{\w(k)}$ and the $(j,k)$-category generated by $\Sigma$. Hence an $(n,k)$-polygraph $\Sigma$ consists of the following data:
\[
\begin{tikzpicture}[>=To]
\matrix (m) [matrix of math nodes, 
			nodes in empty cells,
			column sep = 1cm, 
			row sep = 1cm] 
{
\Sigma_0 & 
\Sigma_1 & 
\Sigma_2 & 
(\cdots) & 
\Sigma_k & 
\Sigma_{k+1} &
(\cdots) &
\Sigma_n
 \\ 
\Sigma_0 & 
\Sigma_1^\w & 
\Sigma_2^\w & 
(\cdots) & 
\Sigma_k^\w & 
\Sigma_{k+1}^{\w(k)} &
(\cdots) &
 \\ 
};
\doublearrow{-}
{(m-1-1) -- (m-2-1)}
\draw[>->] (m-1-2|- m-1-6.south) to (m-2-2|- m-2-6.north);
\draw[>->] (m-1-3|- m-1-6.south) to (m-2-3|- m-2-6.north);
\draw[>->] (m-1-5|- m-1-6.south) to (m-2-5|- m-2-6.north);
\draw[>->] (m-1-6|- m-1-6.south) to (m-2-6|- m-2-6.north);
\draw[transform canvas={yshift=0.05cm, xshift = -0.05cm}, ->] (m-1-2.west |- m-1-6.south) to (m-2-1.east |- m-2-6.north);
\draw[transform canvas={yshift=-0.05cm, xshift = 0.05cm}, ->] (m-1-2.west |- m-1-6.south) to (m-2-1.east |- m-2-6.north);
\draw[transform canvas={yshift=0.05cm, xshift = -0.05cm}, ->] (m-1-3.west |- m-1-6.south) to (m-2-2.east |- m-2-6.north);
\draw[transform canvas={yshift=-0.05cm, xshift = 0.05cm}, ->] (m-1-3.west |- m-1-6.south) to  (m-2-2.east |- m-2-6.north);
\draw[transform canvas={yshift=0.05cm, xshift = -0.05cm}, ->] (m-1-4.west |- m-1-6.south) to  (m-2-3.east |- m-2-6.north);
\draw[transform canvas={yshift=-0.05cm, xshift = 0.05cm}, ->] (m-1-4.west |- m-1-6.south) to  (m-2-3.east |- m-2-6.north);
\draw[transform canvas={yshift=0.05cm, xshift = -0.05cm}, ->] (m-1-5.west |- m-1-6.south) to  (m-2-4.east |- m-2-6.north);
\draw[transform canvas={yshift=-0.05cm, xshift = 0.05cm}, ->] (m-1-5.west |- m-1-6.south) to (m-2-4.east |- m-2-6.north);
\draw[transform canvas={yshift=0.05cm, xshift = -0.05cm}, ->] (m-1-6.west |- m-1-6.south) to  (m-2-5.east |- m-2-6.north);
\draw[transform canvas={yshift=-0.05cm, xshift = 0.05cm}, ->] (m-1-6.west |- m-1-6.south) to  (m-2-5.east |- m-2-6.north);
\draw[transform canvas={yshift=0.05cm, xshift = -0.05cm}, ->] (m-1-7.west |- m-1-6.south) to  (m-2-6.east |- m-2-6.north);
\draw[transform canvas={yshift=-0.05cm, xshift = 0.05cm}, ->] (m-1-7.west |- m-1-6.south) to  (m-2-6.east |- m-2-6.north);
\draw[transform canvas={yshift=0.05cm, xshift = -0.05cm}, ->] (m-1-8.west |- m-1-6.south) to (m-2-7.east |- m-2-6.north);
\draw[transform canvas={yshift=-0.05cm, xshift = 0.05cm}, ->] (m-1-8.west |- m-1-6.south) to (m-2-7.east |- m-2-6.north);
\draw[transform canvas={yshift=0.1cm}, ->] (m-2-2.west |- m-2-1) to (m-2-1);
\draw[transform canvas={yshift=-0.1cm}, ->] (m-2-2.west |- m-2-1) to (m-2-1);
\draw[transform canvas={yshift=0.1cm}, ->] (m-2-3.west |- m-2-1) to (m-2-2.east |- m-2-1);
\draw[transform canvas={yshift=-0.1cm}, ->] (m-2-3.west |- m-2-1) to (m-2-2.east |- m-2-1);
\draw[transform canvas={yshift=0.1cm}, ->] (m-2-4.west |- m-2-1) to (m-2-3.east |- m-2-1);
\draw[transform canvas={yshift=-0.1cm}, ->] (m-2-4.west |- m-2-1) to (m-2-3.east |- m-2-1);
\draw[transform canvas={yshift=0.1cm}, ->] (m-2-5.west |- m-2-1) to (m-2-4.east |- m-2-1);
\draw[transform canvas={yshift=-0.1cm}, ->] (m-2-5.west |- m-2-1) to (m-2-4.east |- m-2-1);
\draw[transform canvas={yshift=0.1cm}, ->] (m-2-6.west |- m-2-1) to (m-2-5.east |- m-2-1);
\draw[transform canvas={yshift=-0.1cm}, ->] (m-2-6.west |- m-2-1) to (m-2-5.east |- m-2-1);
\draw[transform canvas={yshift=0.1cm}, ->] (m-2-7.west |- m-2-1) to (m-2-6.east |- m-2-1);
\draw[transform canvas={yshift=-0.1cm}, ->] (m-2-7.west |- m-2-1) to (m-2-6.east |- m-2-1);
\end{tikzpicture}
\]
\end{defn}

\subsection{Partial coherence in pointed $(4,3)$-white-categories}
\label{subsec:partial_coh}
\begin{defn}
A \emph{pointed} $(4,3)$-white-category is a couple $(\C,S)$, where $\C$ is a $4$-white-category, and $S$ is a subset of $\C_2$.
\end{defn}

\begin{defn}
Let $(\C,S)$ be a pointed $(4,3)$-white-category. The \emph{restriction of $\C$ to $S$}, denoted by $\C \restriction S$, is the following $(2,1)$-category:
\begin{itemize}
\item its $0$-cells are the  $2$-cells of $\C_2$ that lie in $S$,
\item its $1$-cells are the $3$-cells of $\C_3$ with source and target in $S$,
\item its $2$-cells are the $4$-cells of $\C_4$ with $2$-source and $2$-target in $S$,
\item its $0$-composition and $1$-composition are respectively induced by the compositions $\star_2$ and $\star_3$ of $\C$.
\end{itemize}
\end{defn}

\begin{defn}
Let $(\C,S)$ be a pointed $(4,3)$-white-category. We say that $\C$ is \emph{$S$-coherent} if for every parallel $1$-cells $A$, $B$ in the $(2,1)$-category $\C \restriction S$, there exists a $2$-cell $\alpha : A \Rightarrow B \in \C \restriction S$.
\end{defn}

\begin{ex}
Every $(4,3)$-white-category is $\emptyset$-coherent. A $(4,3)$-white-category $\C$ is $\C_2$-coherent if and only if it is $3$-coherent.
\end{ex}

We now rephrase Theorem \ref{thm:main_theory} in the setting of partial coherence.

\begin{repthm}{thm:main_theory}
Let $\A$ be a $(4,2)$-polygraph satisfying  the $2$-Squier condition of depth $2$, and let $S_\A$ be the set of all $2$-cells whose target is a normal form.

Then $\A$ is $S_\A$-coherent.
\end{repthm}

\begin{defn}
Let $\C$ and $\D$ be two $2$-categories, $F : \C \to \D$ a $2$-functor.

We say that $F$ is \emph{$0$-surjective} if the application $F : \C_0 \to \D_0$ is surjective.

Let $0 < k < 2$. We say that $F$ is \emph{$k$-surjective} if, for every $(k-1)$-parallel cells $s,t \in \C_{k-1}$, the application $F : \C_k(s,t) \to \D_k(F(s),F(t))$ is surjective.
\end{defn}

\begin{defn}
Let $(\C,S)$ and $(\C',S')$ be two pointed $(4,3)$-categories. We say that $(\C',S')$ is \emph{stronger} than $(\C,S)$ if there is a functor  $F : \overline{\C' \restriction S'} \to \overline{\C \restriction S}$ which is $0$-surjective and $1$-surjective.
\end{defn}

\begin{lem}\label{lem:strength_lemma}
Let $(\C,S)$, $(\C',S')$ be two pointed $(4,3)$-white-categories. If there exists a $2$-functor $F : \C' \restriction S' \to \C \restriction S$ which is $0$-surjective and $1$-surjective, then $(\C',S')$ is stronger than $(\C,S)$.
\end{lem}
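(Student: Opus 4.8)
The plan is to show that the given $2$-functor $F : \C' \restriction S' \to \C \restriction S$ descends to a functor $\bar F : \overline{\C' \restriction S'} \to \overline{\C \restriction S}$ between the quotient categories, and then transport the two surjectivity hypotheses along this descent.

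First I would recall that, since $\C' \restriction S'$ and $\C \restriction S$ are $(2,1)$-categories, each $\overline{\C \restriction S}$ is the quotient of the category of $1$-cells of $\C \restriction S$ by the relation ``being connected by a $2$-cell'': invertibility of the $2$-cells makes this relation symmetric, the identities make it reflexive, and $\star_1$-composition makes it transitive, so it is an equivalence relation compatible with $\star_0$-composition. A $2$-functor preserves $2$-cells, so whenever $A,B$ are parallel $1$-cells of $\C' \restriction S'$ with a $2$-cell $\alpha : A \Rightarrow B$, then $F(\alpha) : F(A) \Rightarrow F(B)$ witnesses $[F(A)] = [F(B)]$ in $\overline{\C \restriction S}$. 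Hence $F$ is compatible with the two equivalence relations and induces a well-defined functor $\bar F$ on the quotients, which agrees with $F$ on $0$-cells and sends a class $[A]$ to $[F(A)]$; functoriality of $\bar F$ follows from that of $F$.

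Next I would verify the two surjectivity properties of $\bar F$. Passing to $\overline{(-)}$ does not change the set of $0$-cells, so $\bar F$ acts on objects exactly as $F$ does, and $0$-surjectivity of $F$ gives $0$-surjectivity of $\bar F$ at once. For $1$-surjectivity, fix objects $x,y$ of $\overline{\C' \restriction S'}$ and a morphism of $\overline{\C \restriction S}(\bar F x, \bar F y)$, i.e.\ a class $[B]$ of some $1$-cell $B : F x \to F y$ of $\C \restriction S$; applying the $1$-surjectivity of $F$ to the $0$-cells $x,y$ yields a $1$-cell $A : x \to y$ of $\C' \restriction S'$ with $F(A) = B$, whence $\bar F([A]) = [B]$. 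Thus $\bar F$ is $0$-surjective and $1$-surjective, which is precisely the condition for $(\C',S')$ to be stronger than $(\C,S)$.

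There is no genuine obstacle here; the only point needing a little care is checking that $\bar F$ is well defined on morphisms, which is exactly where one uses that a $2$-functor preserves the ($\star_1$-invertible) $2$-cells. Everything else is routine bookkeeping about quotient categories, and in fact the same argument works verbatim for strict categories in place of white-categories.
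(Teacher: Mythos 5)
Your proof is correct and follows essentially the same route as the paper: descend $F$ to $\bar F$ on the quotient categories, note that $\bar F$ equals $F$ on objects (giving $0$-surjectivity), and obtain $1$-surjectivity by lifting a class along the canonical projection and then along $F$. The extra detail you supply about well-definedness of $\bar F$ on morphism classes is a correct elaboration of what the paper leaves implicit.
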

\begin{proof}
The functor $F$ induces a functor $\bar F :  \overline{\C' \restriction S'} \to \overline{\C \restriction S}$. Since it is equal to $F$ on objects, it is $0$-surjective. On $1$-cells $\bar F$ is the composition of $F$ with the canonical projection associated to the quotient, hence it is $1$-surjective, and so $(\C',S')$ is stronger than $(\C,S)$.
\end{proof}

\begin{lem}\label{lem:translation_coh}
Let $(\C,S)$,  $(\C',S')$ be two pointed $(4,3)$-white-categories, and assume $(\C',S')$ is stronger than $(\C,S)$.

If $\C'$ is $S'$-coherent, then $\C$ is $S$-coherent.
\end{lem}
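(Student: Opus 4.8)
The plan is to reformulate $S$-coherence as a \emph{thinness} property of the $1$-category $\overline{\C\restriction S}$ and then transport that property along the functor witnessing strength. First I would record the reformulation. Since $\C$ is a $(4,3)$-white-category its $4$-cells are invertible for $\star_3$ and composable for $\star_3$ whenever the types match, so in the quotient $\overline{\C\restriction S}=(\C\restriction S)_1/(\C\restriction S)_2$ two parallel $1$-cells $A,B$ of $\C\restriction S$ represent the same morphism exactly when there is a single $4$-cell $\alpha:A\qfl B$ in $\C$ (such an $\alpha$ automatically being a $2$-cell of $\C\restriction S$, since $\s_2(\alpha)=\s(A)$ and $\t_2(\alpha)=\t(A)$ lie in $S$). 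Hence $\C$ is $S$-coherent if and only if $\overline{\C\restriction S}$ is \emph{thin}, i.e.\ has at most one morphism between any two objects; likewise $\C'$ is $S'$-coherent iff $\overline{\C'\restriction S'}$ is thin.

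Granting this, the lemma is a short diagram chase. By the definition of strength there is a functor $F:\overline{\C'\restriction S'}\to\overline{\C\restriction S}$ that is $0$-surjective and $1$-surjective (i.e.\ surjective on objects and on each hom-set), and by $S'$-coherence $\overline{\C'\restriction S'}$ is thin. Let $\phi,\psi:x\to y$ be two parallel morphisms of $\overline{\C\restriction S}$. Using $0$-surjectivity pick $x',y'$ with $F(x')=x$ and $F(y')=y$; using $1$-surjectivity of $F:\overline{\C'\restriction S'}(x',y')\to\overline{\C\restriction S}(x,y)$ pick $\phi',\psi':x'\to y'$ with $F(\phi')=\phi$ and $F(\psi')=\psi$. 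Thinness of $\overline{\C'\restriction S'}$ gives $\phi'=\psi'$, whence $\phi=F(\phi')=F(\psi')=\psi$. So $\overline{\C\restriction S}$ is thin, i.e.\ $\C$ is $S$-coherent.

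I do not expect a serious obstacle: the whole content lies in the reformulation of coherence as thinness of the quotient $1$-category, after which the transfer along $F$ is automatic. The one point needing a little care is checking that a zigzag of $4$-cells joining two parallel $3$-cells of $\C\restriction S$ collapses to a single such $4$-cell — this is precisely where invertibility of the $4$-cells under $\star_3$ (valid in a $(4,3)$-white-category) and their $\star_3$-composability are used — so that ``equal in $\overline{\C\restriction S}$'' really is equivalent to ``joined by one $4$-cell''. It is also worth noting that the hom-set $\overline{\C'\restriction S'}(x',y')$ is nonempty in the chase above because it surjects onto $\overline{\C\restriction S}(x,y)\ni\phi$, so the elements $\phi',\psi'$ genuinely exist.
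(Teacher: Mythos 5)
Your proof is correct and follows essentially the same route as the paper: the paper's argument is exactly your diagram chase — lift parallel cells along the $0$- and $1$-surjective functor $F:\overline{\C'\restriction S'}\to\overline{\C\restriction S}$, use $S'$-coherence to identify the lifts in the quotient, and conclude $\bar A=\bar B$, hence the existence of a $4$-cell $A\qfl B$. The "thinness" reformulation you make explicit (including the collapse of a zigzag of invertible $4$-cells to a single one) is used implicitly in the paper's final step.
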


\begin{proof}
Let $F: \overline{\C' \restriction S'} \to \overline{\C \restriction S}$ be a functor that is $0$-surjective and $1$-surjective.
Let $A,B: f \to g \in (\C \restriction S)_1$ be parallel $1$-cells, and $\bar A$, $\bar B$ be their projections in $\overline{\C \restriction S}$.

Since $F$ is $0$-surjective, there exists $f', g' \in (\C'\restriction S')_0$ in the preimage of $f$ and $g$ under $F$. Since $F$ is $1$-surjective, there exists $ A', B' \in (\C' \restriction S')_1$ of source $f'$ and of target $g'$ such that $F(\bar A') = \bar A$ and $F(\bar B') = \bar B$. 

Since $\C' \restriction S'$ is $2$-coherent, there exists $\alpha': A' \Rightarrow B' \in (\C' \restriction S')_2$. Thus $\bar A' = \bar B'$ and $\bar A = \bar B$. Hence there exists $\alpha: A \Rightarrow B \in \C \restriction S$. This shows that $\C \restriction S$ is $1$-coherent, and therefore that $\C$ is $S$-coherent.
\end{proof}

We are going to define four families of Tietze-transformations on $(4,3)$-white-polygraphs. Tietze transformations originates from combinatorial group theory \cite{LS15}, and was adapted for $(3,1)$-categories in \cite{GGM15}, as a way to modify a $(3,1)$-polygraph without modifying the $2$-categories it presents. In particular, they preserve the $2$-coherence.  Here we adapt these transformations to our setting of $(4,3)$-white-polygraphs and show that they preserve the partial coherence. This will be used in Section \ref{subsec:retournement}. We fix a $4$-white-polygraph $\A$.

\begin{defn}
Let $A \in \A_3^{\w(3)}$. We define a $4$-white-polygraph $\A(A)$ by adding to $\A$ a $3$-cell $B$ and a $4$-cell $\alpha$, whose sources and targets are given by:
\begin{itemize}
\item $\s(B) = \s(A)$,
\item $\t(B) = \t(A)$,
\item $\s(\alpha) = A$,
\item $\t(\alpha) = B$.
\end{itemize}

The inclusion induces a functor between $(4,3)$-white-categories $\iota_A : \A^{\w(3)} \to (\A(A))^{\w(3)}$. We call this operation the \emph{adjunction of a $3$-cell with its defining $4$-cell}.
\end{defn}

\begin{defn}
Let $\alpha \in \A_4$ and $A \in \A_3$ such that:
\begin{itemize}
\item $\t(\alpha) =A$
\item $\s(\alpha) \in (\A\setminus{\{\t(\alpha)\}})^{\w(3)}_3$.
\end{itemize}

The $4$-cell $\alpha$ induces an application $\A_3 \to (\A_3\setminus \{\t(\alpha)\})^{\w(3)}$, by sending $\t(\alpha)$ on $\s(\alpha)$ and that is the identity on the other cells of $\A_3$. This application extends into a $3$-functor $\pi_\alpha: \A_3^{\w} \to (\A_3\setminus \{\t(\alpha)\})^\w$. 

Let $\A/(A;\alpha)$ be the following $4$-white-polygraph:
\[
\xymatrix{
\A_0 
& 
\A_1^{\w(3)}
\ar@<+0.5ex>[l]^-{\t}
\ar@<-0.5ex>[l]_-{\s}
&
\A_2^{\w(3)}
\ar@<+0.5ex>[l]^-{\t}
\ar@<-0.5ex>[l]_-{\s}
&
(\A_3\setminus \{\t(\alpha)\})^{\w(3)}
\ar@<+0.5ex>[l]^-{\t}
\ar@<-0.5ex>[l]_-{\s}
&
\A_4\setminus \{\alpha\}
\ar@<+0.5ex>[l]^-{\pi_\alpha \circ \t}
\ar@<-0.5ex>[l]_-{\pi_\alpha \circ \s}
}
\]

Then $\pi_\alpha$ induces a functor $\A^{\w(3)} \to (\A/(A;\alpha))^{\w(3)}$, which sends $\alpha$ on the identity of $\s(\alpha)$, and which is the identity on the other cells of $\A_4$. We call this operation the \emph{removal of a $3$-cell with its defining $4$-cell}.
\end{defn}

\begin{defn}
Let $\alpha$ be a $4$-cell in $\A_4^{\w(3)}$. We define a $4$-white-polygraph $\A(\alpha)$ by adding to $\A$ a $4$-cell $\beta:\s(\alpha) \qfl \t(\alpha)$. The inclusion of $\A$ into $\A(\alpha)$ induces a functor $\iota_\alpha : \A^{\w(3)} \to \A(\alpha)^{\w(3)}$. We call this operation the \emph{adjunction of a superfluous $4$-cell}.
\end{defn}

\begin{defn}
Let $\beta \in \A_4$ such that there exists a $4$-cell $\alpha \in (\A\setminus\{\beta\})^{\w(3)}$ parallel to $\beta$. Let $\A/\beta$ be the $4$-white-polygraph obtained by removing $\beta$ from $\A$. There exists a functor $\pi_\beta : \A^{\w(3)} \to (\A/\beta)^{\w(3)}$, that sends $\beta$ on $\alpha$ and which is the identity on the other cells of $\A$. We call this operation the \emph{removal of a superfluous $4$-cell}.
\end{defn}

\begin{remq}
Note that, in those four cases, the set of $2$-cells is left unchanged. In particular, let $\A$ be a $4$-white-polygraph, and $\B$ a $4$-white-polygraph  constructed from $\A$ through a series of Tietze-transformations. If $S$ is a sub-set of $\A^{\w}_2$, then $S$ still is a subset of $\B^\w_2$. 
\end{remq}

\begin{prop}\label{prop:tietze_invariance}
Let $\A$ be a $4$-white-polygraph, $S$ a sub-set of $\A^\w_2$, and $\B$ a $4$-white-polygraph constructed from $\A$ through a series of Tietze-transformations. 

If $\B^{\w(3)}$ is $S$-coherent, then $\A^{\w(3)}$ is $S$-coherent.
\end{prop}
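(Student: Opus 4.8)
The plan is to induct on the number of Tietze-transformations used to obtain $\B$ from $\A$. Since a composite of $0$-surjective and $1$-surjective functors is again of this kind, the relation ``is stronger than'' is transitive, so by Lemma~\ref{lem:translation_coh} it suffices to treat a single Tietze-transformation and, in each of the four cases, to show that $(\B^{\w(3)},S)$ is stronger than $(\A^{\w(3)},S)$. Throughout I use two elementary remarks. First, the set of $2$-cells is left unchanged, so $S\subseteq\A^\w_2=\B^\w_2$ and the restrictions $\A^{\w(3)}\restriction S$ and $\B^{\w(3)}\restriction S$ have the same $0$-cells; in particular any comparison functor that fixes the $2$-cells is automatically $0$-surjective. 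Second, since $4$-cells are invertible, $\overline{\C^{\w(3)}\restriction S}$ identifies two parallel $3$-cells whose source and target lie in $S$ precisely when they are joined by some $4$-cell of $\C^{\w(3)}$, with no condition on the intermediate $2$-cells. I invoke Lemma~\ref{lem:strength_lemma} whenever an honest $1$-surjective $2$-functor $\B^{\w(3)}\restriction S\to\A^{\w(3)}\restriction S$ is at hand, and argue at the level of the categories $\overline{(-)\restriction S}$ otherwise.

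For the adjunction of a $3$-cell with its defining $4$-cell, $\B=\A(A)$, the map $r:\A(A)^{\w(3)}\to\A^{\w(3)}$ that fixes the cells of $\A$, sends the adjoined $3$-cell $B$ to $A$, and sends the adjoined $4$-cell $\alpha:A\Rrightarrow B$ to $1_A$ is a well-defined white-functor (using that $B$ is parallel to $A$), fixes the $2$-cells, and $r\circ\iota_A$ is the identity; hence $r$ restricts to a $2$-functor $\B^{\w(3)}\restriction S\to\A^{\w(3)}\restriction S$, which is $1$-surjective because $r(\iota_A(C))=C$ for every $3$-cell $C:f\Rrightarrow g$ with $f,g\in S$, so Lemma~\ref{lem:strength_lemma} applies. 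The adjunction of a superfluous $4$-cell, $\B=\A(\alpha)$, is treated identically, with $r$ sending the new $4$-cell $\beta$ to the parallel cell $\alpha$. For the removal of a superfluous $4$-cell, $\B=\A/\beta$, the inclusion of white-polygraphs induces a functor $\B^{\w(3)}\to\A^{\w(3)}$ (no relation being added) which fixes all cells of dimension $\le 3$; since $\A$ and $\B$ have the same $3$-cells, its restriction to the $S$-parts is a $1$-surjective $2$-functor and Lemma~\ref{lem:strength_lemma} again applies.

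The remaining case, the removal of a $3$-cell with its defining $4$-cell ($\B=\A/(A;\alpha)$, where $\alpha:\s(\alpha)\Rrightarrow A$ and $\s(\alpha)$ is already a $3$-cell of $\B$), is the genuine point, because the natural comparison functor $\pi_\alpha$ runs from $\A$ to $\B$ rather than back. Here every generating cell of $\B$ is a cell of $\A^{\w(3)}$; in particular every $3$-cell of $\B^{\w(3)}$ is tautologically a $3$-cell of $\A^{\w(3)}$ not involving the generator $A$. I would define $\bar\Xi:\overline{\B^{\w(3)}\restriction S}\to\overline{\A^{\w(3)}\restriction S}$ to be the identity on $0$-cells and to send the class of a $3$-cell $D$ to the class of the same $3$-cell of $\A^{\w(3)}$. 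The one nontrivial point is that $\bar\Xi$ is well defined on morphisms, i.e.\ that a $4$-cell $\Theta:D\Rrightarrow D'$ of $\B^{\w(3)}$ yields a $4$-cell $D\Rrightarrow D'$ of $\A^{\w(3)}$: this is obtained by reading off a decomposition of $\Theta$ into whiskered generating $4$-cells of $\B$ and conjugating each occurring generating $4$-cell $\gamma$ by the canonical $4$-cell, built from whiskered copies of $\alpha$, that corrects the discrepancy between the boundary of $\gamma$ in $\B$ (where $A$ has been replaced by $\s(\alpha)$) and its boundary in $\A$. Then $\bar\Xi$ is $0$-surjective by construction, and $1$-surjective because $\pi_\alpha$ is a section of it up to these canonical $4$-cells: for a $3$-cell $C:f\Rrightarrow g$ of $\A^{\w(3)}$, the $3$-cell $\pi_\alpha(C)$ lies in $\B^{\w(3)}$, has source $f$ and target $g$, and is joined to $C$ by a $4$-cell of $\A^{\w(3)}$ obtained by substituting $\alpha$ for each occurrence of $A$; for $f,g\in S$ this gives $\bar\Xi[\pi_\alpha(C)]=[C]$ in $\overline{\A^{\w(3)}\restriction S}$. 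Hence $(\B^{\w(3)},S)$ is stronger than $(\A^{\w(3)},S)$, and Lemma~\ref{lem:translation_coh} concludes.

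The main obstacle is precisely this last case. There is no honest $1$-surjective $2$-functor $\B^{\w(3)}\restriction S\to\A^{\w(3)}\restriction S$ (its image would miss every $3$-cell that genuinely uses the generator $A$), so Lemma~\ref{lem:strength_lemma} must be bypassed in favour of the quotient-level argument. What makes that argument run is the second elementary remark above: $\overline{\C^{\w(3)}\restriction S}$ only records whether two $3$-cells with endpoints in $S$ are joined by a $4$-cell of $\C^{\w(3)}$, imposing no constraint on the $2$-cells appearing inside a decomposition of that $4$-cell; in particular the ``$\alpha$-correcting'' $4$-cells used above, although assembled as $\star_2$-composites that may pass through $2$-cells outside $S$, are legitimate $4$-cells of $\C^{\w(3)}\restriction S$, their only relevant data being the global source $f$ and target $g$. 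Granting the routine (if somewhat technical) check that these corrections assemble coherently, the four cases combine, via transitivity of strength and Lemma~\ref{lem:translation_coh}, into Proposition~\ref{prop:tietze_invariance}.
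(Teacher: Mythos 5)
Your proof is correct, but it takes a different route from the paper's. The paper's own proof is very short: it asserts that each Tietze-transformation induces an isomorphism between the $3$-white-categories \emph{presented} by $\A$ and $\B$ (the quotients $\A_3^{\w(3)}/\A_4$ and $\B_3^{\w(3)}/\B_4$), compatible with the projections; then, for parallel $A,B\in\A^\w_3$ with endpoints in $S$, the $S$-coherence of $\B$ forces their images in $\bar\B$ to coincide, and the isomorphism $\bar\B\cong\bar\A$ transports this back to the existence of a $4$-cell $A\qfl B$ in $\A^{\w(3)}$. You instead run the argument through the strength relation and Lemma~\ref{lem:translation_coh}, treating the four transformations separately: explicit retraction $2$-functors for the two adjunctions and for the removal of a superfluous $4$-cell (via Lemma~\ref{lem:strength_lemma}), and a quotient-level functor $\bar\Xi$ for the removal of a $3$-cell with its defining $4$-cell. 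The substance is the same --- both proofs ultimately rest on the fact that replacing $A$ by $\s(\alpha)$ changes a $4$-cell only up to composition with whiskered copies of $\alpha$ --- but your version makes explicit exactly the point the paper leaves entirely to the reader in its opening sentence (``we check that \dots the presented $3$-white-categories are isomorphic''), and it correctly isolates the removal of a $3$-cell as the only case where no honest $1$-surjective functor $\B^{\w(3)}\restriction S\to\A^{\w(3)}\restriction S$ exists, so that the comparison must be made after passing to $\overline{(\cdot)\restriction S}$. Your observation that well-definedness of $\bar\Xi$ on morphisms only requires producing \emph{some} $\A$-$4$-cell $D\qfl D'$ from a $\B$-$4$-cell (independence of the chosen decomposition being irrelevant, since the value is the class $[D']$ either way) is what keeps the ``routine check'' genuinely routine; that, together with your remark that intermediate $2$-cells of a $\star_2$-composite need not lie in $S$, closes the only places where the argument could have gone wrong.
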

\begin{proof}
We check that if $\B$ is constructed from $\A$ through a Tietze-transformation, then the $3$-white-categories presented by $\A$ and $\B$ are isomorphic. 

Suppose now that $\B$ is $S$-coherent, and let $A,B \in \A^\w_3$ be parallel $3$-cells, whose source and target are in $S$. Since $\B^{\w(3)}$ is $S$-coherent, the images of $A$ and $B$ in the $3$-white-category presented by $\B$ are equal. Since it is isomorphic to the $3$-white-category presented by $\A$, there exists a $4$-cell $\alpha: A \qfl B \in \A^{\w(3)}_4$, which proves that $\A$ is $S$-coherent.
\end{proof}

\subsection{Injective functors between white-categories}
\label{subsec:inj_functors}

\begin{defn}
Let $\Sigma$ and $\Gamma$ be two $(n,k)$-polygraphs (resp. $(n,k)$-white-polygraphs), and let $F : \Sigma \to \Gamma$ be a morphism of $(n,k)$-polygraphs (resp. $(n,k)$-white-polygraphs). We say that $F$ is \emph{injective} if for all $j \leq n$ it induces an injective application from $\Sigma_n$ to $\Gamma_n$.
\end{defn}

\begin{defn}
Let $\C$ and $\D$ be two $n$-white-categories, and let $F : \C \to \D$ be a morphism of $n$-white-categories. We say that $F$ is \emph{injective} if for all $j \leq n$ it induces an injective application from $\C$ to $\D$.
\end{defn}

\begin{remq}
An injective morphism between $(n,k)$-polygraphs does not always induce an injective functor between the free $(n,k)$-categories they generate. To show that, we are going to define two $2$-polygraphs $\Sigma$ and $\Gamma$,  an injective morphism of $2$-polygraphs $F : \Sigma \to \Gamma$, and two distinct $2$-cells $f,g \in \Sigma^{*(1)}$ such that $F^{*(1)}(f) = F^{*(1)}(g)$.

Let $\Sigma$ be the following $2$-polygraph:
\[
\Sigma_0 = \{*\} 
\qquad
\Sigma_1 = \{\twocell{1} : * \to *\}
\qquad
\Sigma_2 := \{ \twocell{conjugC} , \twocell{conjugD} : \twocell{1} \Rightarrow \twocell{1}\}
\]
and $\Gamma$:
\[
\Gamma_0 = \{*\} 
\qquad
\Gamma_1 = \{\twocell{1} : * \to *\}
\qquad
\Gamma_2 := \{ \twocell{conjugC} , \twocell{conjugD}: \twocell{1} \Rightarrow \twocell{1} , \twocell{erase} :  \twocell{1} \to 1_*\}
\] 

Let $F$ be the inclusion of $\Sigma$ into $\Gamma$, $f = \twocell{conjugC *1 conjugD}$ and $g = \twocell{conjugD *1 conjugC}$. They are distinct elements of $\Sigma^{*(1)}_2$. However using the exchange law, the following equality holds in $\Gamma^{*(1)}_2$, where $\twocell{appear}$ denotes the inverse of $\twocell{erase}$:

\[
F(f)=
\twocell{conjugC *1 conjugD} = 
\twocell{erase *1 appear *1 conjugC *1 erase *1 appear *1 conjugD *1 erase *1 appear} =
\twocell{erase *1 ((appear *1 conjugC *1 erase) *0 (appear *1 conjugD *1 erase)) *1 appear} =
\twocell{erase *1 appear *1 conjugD *1 erase *1 appear *1 conjugC *1 erase *1 appear} =
\twocell{conjugD *1 conjugC} = F(g)
\]
\end{remq}

\bigskip

In what follows, we prove some sufficient conditions so that a morphism between two $(n,k)$-white-polygraphs induces an injective fucntor between the $(n,k)$-categories they present. This is achieved in Proposition \ref{prop:inject_libres_gen}. This result will be used in Section \ref{subsec:adjunc_2cell}.

To prove this result, we start by studying the more general case of an injective morphism $I$ between $(n,k)$-white-categories equipped with a cellular extension. When its image is \emph{closed by divisors} (see Definition \ref{def:closed_by_divisors}), we show a simple sufficient condition so that $I$ induces an injective $(n+1)$-white-functor. We also show that the image of the $(n+1)$-white-functor induced by $I$ is then automatically closed by divisors. Hence this hypothesis disappears when we go back to morphisms of $(n,k)$-white-polygraphs. In particular we show that every injective morphism of $n$-white-polygraphs induces an injective white-functor between $n$-white-categories.

For the rest of this section, we fix two $n$-white-categories equipped with cellular extensions $(\C,\Sigma), (\C',\Sigma') \in \WCat^+$, and a morphism $I : (\C,\Sigma) \to (\C',\Sigma') \in \WCat^+$. That is, $I$ is given by an $n$-white-functor $I: \C \to \C'$ together with an application $I_{n+1}: \Sigma \to \Sigma'$ such that the following squares are commute:
\[
\xymatrix @R=5em @C=5em{
\Sigma
\ar@1 [r] ^{I_{n+1}} 
\ar@1 [d] _{\s} 
\ar@{} [rd] |-{=}
&
\Sigma'
\ar@1 [d] ^{\s} 
\\
\C
\ar@1 [r] _{I} &
\C'
}
\qquad
\xymatrix @R=5em @C=5em{
\Sigma
\ar@1 [r] ^{I_{n+1}} 
\ar@1 [d] _{\t} 
\ar@{} [rd] |-{=}
&
\Sigma'
\ar@1 [d] ^{\t} 
\\
\C
\ar@1 [r] _{I} &
\C'
}
\]

We denote by $I^\w$ (resp. $I^{\w(n)}$) the $(n+1)$-white-functor $\L^\w(I)$ (resp. $\L^{\w(n)}(I)$). By definition, $I^\w$ (resp. $I^{\w(n)}$) is induced by an application from  $E^T_{\Sigma}$ to $E^T_{\Sigma'}$ (resp. from $F^T_{\Sigma}$ to $F^T_{\Sigma'}$), that we again denote by $I^\w$ (resp. $I^{\w(n)}$).

Using their explicit definitions, the following properties of $I^\w$ (resp.  $I^{\w(n)}$) hold:
\begin{itemize}
\item Any element of $E^T_\Sigma$ (resp. $F^T_\Sigma$) whose image is an $i$-composite is an $i$-composite.
\item Any element of $E^T_\Sigma$ (resp. $F^T_\Sigma$) whose image is a an identity is an identity.
\item Any element of $E^T_\Sigma$ (resp. $F^T_\Sigma$) whose image is a $c_{u'A'v'}$ is a $c_{uAv}$.
\item Any element of $F^T_\Sigma$ whose image by $I^{\w(n)}$ is a $c_{u'\bar A'v'}$ is a $c_{u\bar Av}$.
\end{itemize} 

\begin{lem}\label{lem:equiv_inject_lg_0}
Assume that the application $I_{n+1}$ is injective, and that $I$ induces an injection on $\C$.

Then the applications $I^\w : E^T_{\Sigma} \to E^T_{\Sigma'}$ and $I^{\w(n)}:  F^T_{\Sigma} \to F^T_{\Sigma'}$ are injective.
\end{lem}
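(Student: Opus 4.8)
The statement is about the two applications $I^\w : E^T_\Sigma \to E^T_{\Sigma'}$ and $I^{\w(n)} : F^T_\Sigma \to F^T_{\Sigma'}$ defined by induction on the structure of the expressions (since $E_\Sigma$, $F_\Sigma$ are formal languages built inductively from constant symbols and the binary operators $\star_i$). The natural strategy is to prove injectivity by induction on the structure of an element $e \in E^T_\Sigma$ (respectively $F^T_\Sigma$), using the four bulleted ``syntactic reflection'' properties listed just before the lemma: the image under $I^\w$ of a composite is a composite, of an identity is an identity, of a constant $c_{uAv}$ is a constant $c_{u'A'v'}$, and (for $I^{\w(n)}$) of a $c_{u\bar A v}$ is a $c_{u'\bar A'v'}$. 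These properties say that the map $I^\w$ respects the ``outermost constructor'' of an expression, which is exactly what makes a structural induction go through.

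Concretely, I would argue as follows. Suppose $I^\w(e_1) = I^\w(e_2)$ for $e_1, e_2 \in E^T_\Sigma$. By the reflection properties, $e_1$ and $e_2$ have the same outermost constructor. If both are identities $i_f$, $i_g$, then $I^\w(i_f) = i_{I(f)}$ and $I^\w(i_g) = i_{I(g)}$, so $I(f) = I(g)$; since $I : \C \to \C'$ is injective on $n$-cells, $f = g$, hence $e_1 = e_2$. If both are constants $c_{uAv}$, $c_{u'A'v'}$, then comparing images forces $I(u) = I(u')$, $I(v) = I(v')$ in $\C_1$ and $I_{n+1}(A) = I_{n+1}(A')$ in $\Sigma'$; injectivity of $I$ on $1$-cells and of $I_{n+1}$ on $\Sigma$ gives $u = u'$, $v = v'$, $A = A'$, so the constants coincide. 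If both are $i$-composites $e_1 = a_1 \star_i b_1$ and $e_2 = a_2 \star_i b_2$ (note the reflection property pins down the \emph{dimension} $i$ as well, since the type of the image records the source/target dimensions), then $I^\w(a_1) \star_i I^\w(b_1) = I^\w(a_2) \star_i I^\w(b_2)$; because the formal language $E_{\Sigma'}$ has \emph{freely} generated expressions (no quotient is applied at the level of $E^T$ — the relation $\equiv_\Sigma$ is only imposed when passing to the white-category), the decomposition of an expression as an $i$-composite is unique, so $I^\w(a_1) = I^\w(a_2)$ and $I^\w(b_1) = I^\w(b_2)$, and the induction hypothesis finishes the case. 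The case of $F^T_\Sigma$ and $I^{\w(n)}$ is identical, with the extra constant symbols $c_{u\bar A v}$ handled by the fourth reflection property in the same way as $c_{uAv}$.

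The one point requiring a little care — and the main (mild) obstacle — is justifying that an element of $E_{\Sigma'}$ has a \emph{unique} reading as a constant, an identity, or an $i$-composite for a \emph{single} $i$. Uniqueness of the outermost constructor and of the index $i$ is a standard unique-readability property of the freely generated term language $E_{\Sigma'}$ (constants are distinct symbols from the binary $\star_i$'s, and the $\star_i$ are distinct binary symbols for distinct $i$), so there is no ambiguity in the inductive decomposition; I would state this explicitly as the reason the case split above is exhaustive and disjoint. Everything else is a routine structural induction, and no deep input beyond the injectivity hypotheses on $I$ and $I_{n+1}$ and the four reflection properties is needed.
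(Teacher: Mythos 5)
Your proposal is correct and follows essentially the same route as the paper: a structural induction on the expression (the paper phrases it as induction on the structure of the image $I^\w(a_1)$, which is equivalent here), using the constructor-reflection properties to match cases and the injectivity of $I$ and $I_{n+1}$ in the base cases, with the extra constant $c_{u\bar A v}$ handled separately for $I^{\w(n)}$. Your explicit remark on unique readability of the freely generated term language is a point the paper leaves implicit, but it is the same argument.
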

\begin{proof}
Let $a_1,a_2 \in E^T_{\Sigma}$ such that $I^\w(a_1) = I^\w(a_2)$. We reason by induction on the structure of $I^\w(a_1)$.

If $I^\w(a_1) = c_{u'A'v'}$, with $u',v' \in \C'_1$ and $A' \in \Sigma'$. Then there are $u_1,v_1,u_2,v_2 \in \C_1$ and $A_1,A_2 \in \Sigma$ such that $a_1 = c_{u_1A_1v_1}$ and $a_2 = c_{u_2A_2v_2}$, and so:
\[
I(u_1) = I(u_2) = u'
\qquad 
I_{n+1}(A_1) = I_{n+1}(A_2) = A'
\qquad 
I(v_1) = I(v_2) = v'.
\]
Since $I$ and $I_{n+1}$ are injective, we get:
\[
u_1 = u_2
\qquad
A_1 = A_2
\qquad
v_1 = v_2,
\]
which proves that $a = b$.

If $I^\w(a_1) = i_f$, with $f' \in \C'_n$. Then there exist $f_1,f_2 \in \C_n$ such that:
\[
a_1 = i_{f_1}
\qquad
a_2 = i_{f_2}
\qquad 
I(f_1) = f'
\qquad
I(f_2) = f.
\]
Since $I$ is injective, $f_1 = f_2$, and so $a_1 = a_2$.

If $I^\w(a_1) = A' \star_i B'$, with $i < n$, and $A',B' \in E^T_{\Sigma'}$. Then there exist $A_1,A_2,B_1,B_2 \in E^T_{\Sigma}$ such that:
\[
a_1 = A_1 \star_i B_1
\qquad
a_2 = A_2 \star_i B_2
\qquad 
I^\w(A_1) = I^\w(A_2) = A'
\qquad 
I^\w(B_1) = I^\w(B_2) = B'.
\]
Using the induction hypothesis, we get that $A_1 = A_2$ and $B_1 = B_2$, and so $a_1 = a_2$.

In the case of $I^{\w(n)}$, we reason as previously, and we have one more case to check: if $I^{\w(n)}(a_1) = c_{u'\bar A'v'}$, with $u',v' \in \C'_1$ and $A' \in \Sigma'$. Then there are $u_1,v_1,u_2,v_2 \in \C_1$ and $A_1,A_2 \in \Sigma$ such that $a_1 = c_{u_1\bar A_1v_1}$ and $a_2 = c_{u_2\bar A_2v_2}$, and so:
\[
I(u_1) = I(u_2) = u'
\qquad 
I_{n+1}(A_1) = I_{n+1}(A_2) = A'
\qquad 
I(v_1) = I(v_2) = v'.
\]
Using the injectivity of $I$ and $I_{n+1}$, we get:
\[
u_1 = u_2
\qquad
A_1 = A_2
\qquad
v_1 = v_2,
\]
and finally  $a_1 = a_2$.
\end{proof}

\begin{defn}\label{def:closed_by_divisors}
Let $\C$ be an $n$-white-category, and $E$ be a subset of $\C_n$. We say that $E$ is \emph{closed by divisors} if, for any $f \in E$, if $f = f_1 \star_i f_2$, then $f_1$ and $f_2$ are in $E$.
\end{defn}

\begin{lem}\label{lem:equiv_inject_lg_1}
Assume the image of $I$ in $\C_n$ is closed by divisors, and that $I$ and $I_{n+1}$ are injective.

Then, for every $a',b' \in E^T_{\Sigma'}$ such that $a' \equiv_{\Sigma'} b'$, and for every $a \in E^T_\Sigma$ such that $I^{\w}(a) = a'$, there exists $b \in E^T_\Sigma$ such that
\[
I^\w(b) = b'
\qquad
a \equiv_\Sigma b.
\]

Assume moreover that the application $I_{n+1}$ is bijective and that $I$ is bijective on the $1$-cells of $\C$.

Then, for every $a',b' \in F^T_{\Sigma'}$ such that $a'\cong_{\Sigma'} b'$, and for every $a \in F^T_\Sigma$ such that $I^{\w(n)}(a) = a'$, there exists $b \in F^T_\Sigma$ such that \[I^{\w(n)}(b)= b' \qquad a \cong_\Sigma b\].
\end{lem}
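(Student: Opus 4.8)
The plan is to proceed by induction on the derivation of $a' \equiv_{\Sigma'} b'$ (resp.\ $a' \cong_{\Sigma'} b'$) from the generating relations used to define $\equiv_{\Sigma'}$ (resp.\ $\cong_{\Sigma'}$). The two congruence rules ``$A \equiv B \Rightarrow A \star_i C \equiv B \star_i C$'' and ``$B \equiv B' \Rightarrow A \star_i B \equiv A \star_i B'$'' supply the induction step, while the remaining generators (exchange, associativity in dimension $\geq 1$, the two $\star_n$-unit laws, $i_{f_1}\star_i i_{f_2} \equiv i_{f_1 \star_i f_2}$, and, for $\cong$, the two inverse laws) are the base cases. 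The main tools are the structural properties of $I^\w$ and $I^{\w(n)}$ recorded just above --- namely that an expression whose image is an $i$-composite, an identity, a $c_{u'A'v'}$, or a $c_{u'\bar A'v'}$ is itself of that form --- together with the injectivity of $I$ and $I_{n+1}$.

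For the induction step, if $a' = A'\star_i B'$, $b' = A''\star_i B'$ and $A' \equiv_{\Sigma'} A''$, then since $I^\w(a)$ is an $i$-composite we may write $a = A \star_i B$ with $I^\w(A) = A'$ and $I^\w(B) = B'$; applying the induction hypothesis to $A' \equiv_{\Sigma'} A''$ and $A$ yields $A_1$ with $I^\w(A_1) = A''$ and $A \equiv_\Sigma A_1$, and we set $b := A_1 \star_i B$. That $b$ is typable follows because the matching condition $I(\t_i(\t(A_1))) = I(\s_i(\s(B)))$ holds in $\C'$ and $I$ is injective, so the required matching holds in $\C$; then $I^\w(b) = b'$ by functoriality and $a \equiv_\Sigma b$ by the congruence rule. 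Every base case follows the same template: the structural properties of $I^\w$ let us decompose $a$ so that it matches one side of the generator whose image is $a'$ (reading the symmetric generators from either side), we \emph{define} $b$ to be the expression obtained by the corresponding rearrangement, we check that $b$ is typable using injectivity of $I$ (which reflects the source/target matchings from $\C'$ to $\C$), and we read off $I^\w(b) = b'$ from the commuting squares for $I$ together with functoriality of $I^\w$, and $a \equiv_\Sigma b$ from the corresponding generator of $\equiv_\Sigma$. The $\equiv$-generator cases of the $F^T$ version are literally the same argument, bijectivity being used only where injectivity is.

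The only genuinely delicate base cases --- and the expected main obstacle --- are those in which a generator is read ``from the simpler side'', so that $a$ is forced to be an identity $i_f$ but $b'$ is structurally richer, and one must reflect a decomposition living in $\C'$ (resp.\ in $\Sigma'$) back into $\C$ (resp.\ $\Sigma$). For the instance $i_{f_1'\star_i f_2'} \equiv_{\Sigma'} i_{f_1'}\star_i i_{f_2'}$ with $a = i_f$ and $I(f) = f_1'\star_i f_2'$: since $f_1'\star_i f_2'$ lies in the image of $I$ in $\C_n$ and that image is closed by divisors, we obtain $f_1, f_2 \in \C_n$ with $I(f_j) = f_j'$; injectivity of $I$ then forces $f_1$ and $f_2$ to be $i$-composable with $f_1\star_i f_2 = f$, so $b := i_{f_1}\star_i i_{f_2}$ works. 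In the $F^T$ setting, for the instance $i_{u'\s(A')v'} \cong_{\Sigma'} c_{u'A'v'}\star_n c_{u'\bar A'v'}$ with $a = i_f$: here we invoke the extra hypotheses that $I_{n+1}$ is bijective and $I$ is bijective on $1$-cells to choose preimages $u, v \in \C_1$ and $A \in \Sigma$ of $u', v', A'$; injectivity of $I$ shows that $u\s(A)v$ is defined and equals $f$, whence $b := c_{uAv}\star_n c_{u\bar Av}$ works, and the companion relation $c_{u\bar Av}\star_n c_{uAv} \cong_{\Sigma} i_{u\t(A)v}$ is handled identically. Once these two reflections are in hand, all remaining generators are symmetric enough that decomposing $a$ and rearranging it suffices, and the surviving checks (typability of $b$ and $I^\w(b)=b'$) are routine consequences of the commuting squares and the injectivity of $I$.
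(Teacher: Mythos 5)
Your proposal is correct and follows essentially the same route as the paper: induction over the generating relations of $\equiv_{\Sigma'}$ (resp.\ $\cong_{\Sigma'}$), using the structural reflection properties of $I^\w$ to decompose $a$, with the closed-by-divisors hypothesis invoked exactly for the case $a'=i_{f'_1\star_i f'_2}$, $b'=i_{f'_1}\star_i i_{f'_2}$, and the bijectivity hypotheses exactly for the inverse-law cases of $\cong_{\Sigma'}$ where $a$ is an identity. Your explicit use of injectivity to verify typability of the reassembled $b$ (e.g.\ that the divisors $f_1,f_2$ recompose to $f$) is slightly more careful than the paper's write-up but is the same argument.
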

\begin{proof}
To show the result on $I^\w$ we reason by induction on the structure of $a'$.

If there exist $A',B',C',D' \in E_{\Sigma'}^T$, $0<i_1<i_2\leq n$ and $a \in E^T_{\Sigma}$ such that:
\[
a' = (A' \star_{i_1} B') \star_{i_2} (C' \star_{i_1} D')
\qquad
b' = (A' \star_{i_2} C') \star_{i_1} (B' \star_{i_2} D')
\qquad
I^\w( a) = a',
\]
then, $a = ( A \star_{i_1}  B) \star_{i_2} ( C \star_{i_1}  D)$, with $ A,  B, C,  D \in E^T_{\Sigma}$. Let $b := ( A \star_{i_2}  C)\star_{i_1} ( B \star_{i_2}  D)$: by construction, we have $I^\w( b) = b'$ and $a \equiv_{\Sigma}  b$. The case where the roles of $a'$ and $b'$ are reversed is symmetrical.

If there exist $A',B',C' \in E_{\Sigma'}^T$, $0<i\leq n$ and $a \in E^T_{\Sigma}$ such that:
\[
a' = (A' \star_{i} B') \star_{i} C'
\qquad
b' = A' \star_{i} (B' \star_{i} C')
\qquad
I^\w( a) = a',
\]
then, $a = ( A \star_{i}  B) \star_{i}  C$, with $A,  B, C \in E^T_{\Sigma}$. Let $ b :=  A \star_{i}  (B\star_{i}  C)$: By construction, we have $I^\w( b) = b'$ and $a \equiv_{\Sigma}  b$. The case where the roles of $a'$ and $b'$ are reversed is symmetrical.

If there exist $A' \in E_\Sigma^T$, $f' \in \C'_n$ and $a \in E^T_\Sigma$ such that
\[
a' = i_{f'} \star_n A'
\qquad
b' = A'
\qquad 
I^\w(a) = a',
\]
then $a = i_f \star_n A$, with $f \in \C_n$ and $A \in E_\Sigma^T$. Let $b := A'$: by construction, we have $I^\w( b) = b'$ and $a \equiv_{\Sigma}  b$.

If there exist $A' \in E_\Sigma^T$, $f' \in \C'_n$ and $a \in E^T_\Sigma$ such that
\[
a' = A'
\qquad
b' =  i_{f'} \star_n A'
\qquad
I^\w(a) = a',
\]
let $b := i_{\s(A)} \star_n a$. Since $b'$ is well typed, we have $f' = \s(A')$, hence $I(\s(A)) = \s(I^\w(A)) = \s(A') = f'$, and so $I^\w( b) = b'$ and $a \equiv_{\Sigma}  b$. The case of the right-unit is symmetrical.

If there are $f'_1,f'_2 \in \C'_n$, $i <n$ and $a \in E_\Sigma^T$ such that:
\[
a' = i_{f'_1} \star_i i_{f'_2} 
\qquad
b' = i_{f'_1 \star_i f'_2}
\qquad
I^\w(a) = a',
\]
then $a = i_{f_1} \star_i i_{f_2}$, with $f_1,f_2 \in \C_n$. Let $b := i_{f_1 \star_i f_2}$: by construction, we have $I^\w( b) = b'$ and $a \equiv_{\Sigma}  b$.

If there are $f'_1,f'_2 \in \C'_n$, $i <n$ and $a \in E_\Sigma^T$ such that:
\[
a' =  i_{f'_1 \star_i f'_2}
\qquad
b' = i_{f'_1} \star_i i_{f'_2}
\qquad
I^\w(a) = a'
\]
then $a = i_{f}$, with $f \in \C_n$. Since the image of $I$ in $\C_n$ is closed by divisors, there exist $f_1,f_2 \in \C_n$ such that 
\[
I(f_1) = f'_1 
\qquad
I( f_2) = f'_2
\qquad
 f =  f_1 \star_i  f_2.
\]
Let us define $b' := i_{f_1} \star_i i_{f_2}$: By construction, we have
$I^\w( b) = b'$ and $a \equiv_{\Sigma}  b$.

If there are $A'_1,A'_2,B' \in E^T_{\Sigma'}$, $i \leq n$ and $a \in E^T_{\Sigma}$ such that:
\[
a' = A'_1 \star_i B'
\qquad
A'_1 \equiv_{\Sigma'} A'_2
\qquad
b' = A'_2 \star_i B'
\qquad
I^\w(a)=a'
\]
then $a = A_1 \star_i B$, with $A_1,B \in E^T_\Sigma$. Using the induction hypothesis, there exist $A_2 \in E^T_\Sigma$ such that $I^\w(A_2) = A'_2$ and $A_1 \equiv_{\Sigma} A_2$. Let us define $b := A_2 \star_i B$: by construction, we have $I^\w( b) = b'$ and $a \equiv_{\Sigma}  b$. The last case is symmetric.

In the case of $I^{\w(n)}$, we reason as previously, and we have two more cases to check. If there exist $u',v' \in \C'_1$, $A \in \Sigma'$ and $a \in F^T_{\Sigma}$ such that:
\[
a' = c_{u'A'v'} \star_n c_{u' \bar A' v'}
\qquad
b' = i_{u'\s(A')v'}
\qquad
I^{\w(n)}(a) = a'
\]
then $a = c_{u_1A_1v_2} \star_n c_{u_2 \bar A_2 v_2}$, with $u_1,u_2,v_1,v_2 \in \C_1$ and $A_1,A_2 \in \Sigma$ such that:
\[
I(u_1) = I(u_2) = u'
\qquad
I(v_1) = I(v_2) = v'
\qquad
I_{n+1}(A_1) = I_{n+1}(A_2) = A.
\]
Let $b:= i_{u_1\s(A_1)v_1}$. Since $I$ and $I_{n+1}$ are injective, we have $I^\w( b) = b'$ and $a \cong_{\Sigma}  b$.

If there exist $u',v'\in \C'_1$,  $A \in \Sigma'$ and $a \in F^T_{\Sigma}$ such that
\[
a' = i_{u'\s(A')v'}
\qquad
b' = c_{u'A'v'} \star_n c_{u' \bar A' v'}
\qquad
I^{\w(n)}(a) = a'
\]
Then $a = i_f$, with $f \in \C_n$. Let $b' := c_{uAv} \star_i c_{u\bar A v}$, with $u = I^{-1}(u')$, $v = I^{-1}(v')$ and $A = I_{n+1}^{-1}(A')$: by construction, we have $I^\w( b) = b'$ and $a \cong_{\Sigma}  b$. The final case is symmetrical.
\end{proof}

\begin{lem}\label{lem:inject_pour_libres}
Assume that $I_{n+1}$ and $I$ are injective, and that the image of $I$ in $\C_n$ is closed by divisors. Then the functor $I^\w : L^\w(\C,\Sigma) \to L^\w(\C',\Sigma')$ is injective, and its image is closed by divisors.

Assume moreover that $I_{n+1}$ is bijective, and that $I$ is bijective on the $1$-cells of $\C$. Then the functor $I^{\w(n)} : L^{\w(n)}(\C,\Sigma) \to L^{\w(n)}(\C',\Sigma')$ is injective and its image is closed by divisors.
\end{lem}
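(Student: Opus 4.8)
The plan is to argue at the level of the typable expressions presenting the free white-categories, reducing the statement about the quotients to the two preceding lemmas. Recall that the $(n+1)$-cells of $\L^\w(\C,\Sigma)$ are the classes of $E^T_\Sigma$ modulo $\equiv^*_\Sigma$, that $I^\w$ sends the class of $a$ to the class of the expression $I^\w(a)$, and that on cells of dimension $\leq n$ the functor $I^\w$ coincides with $I$; so it suffices to treat the $(n+1)$-cells. Since $I_{n+1}$ and $I$ are injective, Lemma~\ref{lem:equiv_inject_lg_0} already gives that the induced map $I^\w : E^T_\Sigma \to E^T_{\Sigma'}$ is injective; what must be added is that no $\equiv^*$-class is collapsed.

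For injectivity on cells, suppose $a,b \in E^T_\Sigma$ satisfy $I^\w(a) \equiv^*_{\Sigma'} I^\w(b)$ and fix a chain $I^\w(a) = c_0, c_1, \dots, c_m = I^\w(b)$ with $c_{j-1} \equiv_{\Sigma'} c_j$ for each $j$. I would lift it inductively: put $a_0 = a$, and given $a_{j-1} \in E^T_\Sigma$ with $I^\w(a_{j-1}) = c_{j-1}$, apply Lemma~\ref{lem:equiv_inject_lg_1}---symmetry of $\equiv_{\Sigma'}$ makes the orientation of the step immaterial---to obtain $a_j \in E^T_\Sigma$ with $I^\w(a_j) = c_j$ and $a_{j-1} \equiv_\Sigma a_j$. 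At the end $I^\w(a_m) = I^\w(b)$, hence $a_m = b$ by the expression-level injectivity, so $a \equiv^*_\Sigma b$ and the two classes agree. For closure by divisors, suppose the class of $a$ maps under $I^\w$ to an $i$-composite $g_1 \star_i g_2$, with $g_\ell$ the class of $b_\ell$; then $I^\w(a) \equiv^*_{\Sigma'} b_1 \star_i b_2$, and the same lifting produces $\tilde a \in E^T_\Sigma$ with $I^\w(\tilde a) = b_1 \star_i b_2$ and $a \equiv^*_\Sigma \tilde a$. By the structural property recorded before Lemma~\ref{lem:equiv_inject_lg_0} (an expression whose image is an $i$-composite is itself one), $\tilde a = a_1 \star_i a_2$ with $a_1, a_2 \in E^T_\Sigma$; since $\star_i$ is a formal binary symbol and $I^\w$ commutes with it, unique readability gives $I^\w(a_\ell) = b_\ell$. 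Thus $g_\ell$ is the image of the class of $a_\ell$, and as $\equiv^*_\Sigma$ is a congruence the class of $a$ equals the class of $a_1 \star_i a_2$, i.e. the $i$-composite of these two preimages; so the image of $I^\w$ is closed by divisors.

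The case of $I^{\w(n)}$ is verbatim the same, working with $F^T_\Sigma = E^T_{\Sigma\cup\bar\Sigma}$, the relation $\cong_\Sigma$ and its closure $\cong^*_\Sigma$, the second halves of Lemmas~\ref{lem:equiv_inject_lg_0} and~\ref{lem:equiv_inject_lg_1}, and the additional structural property concerning the symbols $c_{u\bar A v}$; this is exactly where the hypotheses that $I_{n+1}$ is bijective and that $I$ is bijective on $1$-cells are consumed. I do not expect a genuine obstacle: the only delicate points are that each step of the chain must be lifted regardless of its orientation (handled by symmetry of $\equiv_\Sigma$, resp.\ $\cong_\Sigma$) and that divisibility must be lifted componentwise (handled by the freeness of the symbol $\star_i$ together with the listed structural properties of the induced maps of expressions); the rest is bookkeeping.
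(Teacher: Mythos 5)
Your proposal is correct and follows essentially the same route as the paper: lift the chain of elementary $\equiv_{\Sigma'}$-steps one at a time via Lemma~\ref{lem:equiv_inject_lg_1}, close the loop with the expression-level injectivity of Lemma~\ref{lem:equiv_inject_lg_0}, and handle divisors by the same lifting plus the structural fact that an expression whose image is an $i$-composite is itself an $i$-composite. Your phrasing of that last step (unique readability of the formal symbol $\star_i$) is in fact cleaner than the paper's, which slightly circularly invokes ``the image of $I^\w$ is closed by divisors'' at that point.
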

\begin{proof}
Let $f_1,f_2 \in  L^\w(\C,\Sigma)$ and $a_1,a_2 \in E^T_{\Sigma}$ such that: 
\[
I^\w(f_1) = I^\w(f_2)
\qquad
[a_1] = f_1
\qquad
[a_2] = f_2.
\]
Then $[I^\w(a_1)] = [I^\w(a_2)]$, that is $I^\w(a_1) \equiv^*_{\Sigma'} I^\w(a_2)$.
 Hence by definition, there exist $n >0$ and $t'_1, \ldots , t'_n \in E^T_{\Sigma'}$ such that:
\[
t'_1 = I^\w(a_1)
\qquad
t'_{i} \equiv_{\Sigma'} t'_{i+1}
\qquad
t'_n = I^\w(a_2).
\]

Applying Lemma \ref{lem:equiv_inject_lg_1} successively, we get $t_1,\ldots,t_n \in E^T_{\Sigma'}$ such that:
\[
t_1 = a_1
\qquad
t_i \equiv_{\Sigma} t_{i+1}
\qquad
I^\w(t_i) = t'_i.
\]
In particular $a_1 \equiv^*_\Sigma t_n$ and $I^\w(t_n) = t'_n = I^\w(a_2)$. Using Lemma \ref{lem:equiv_inject_lg_0}, this implies that $t_n = a_2$, and so $a_1 \equiv^*_\Sigma a_2$, which proves that $f_1 = [a_1] = [a_2] = f_2$.

It remains to show that the image of $I^\w$ is closed by divisors. Let $f',f'_1,f'_2 \in L^\w(\C',\Sigma')$ and $i \leq n$ such that $f' = f'_1 \star_i f'_2$, and assume that there is an $f \in L^\w(\C',\Sigma')$ such that $I^\w(f) = f'$. Let $a \in E^T_\Sigma$ and $b'_1,b'_2 \in E^T_{\Sigma'}$ such that:
\[
[a] = f
\qquad
[b'_1] = f'_1
\qquad
[b'_2] = f'_2.
\]

In particular, we have $I^\w(a) \equiv^*_{\Sigma'} b'_1 \star_i b'_2$. Using both Lemmas \ref{lem:equiv_inject_lg_0} and \ref{lem:equiv_inject_lg_1} as before, we get an element $b \in E^T_\Sigma$ such that:
\[
a \equiv_\Sigma^* b
\qquad
I^\w(b) = b'_1 \star_i b'_2.
\]
Since the image of $I^\w$ is closed by divisors, there exists $b_1,b_2 \in E^T_\Sigma$ such that $b = b_1 \star_i b_2$. Let $f_1 := [b_1]$ and $f_2 := [f_2]$: by construction we have:
\[
I^\w(f_1) = f'_1
\qquad
I^\w(f_2) = f'_2
\qquad
f_1 \star_i f_2 = f.
\]

The case of $I^{\w(n)}$ is identical, the only difference lying in the hypothesis needed to apply Lemma \ref{lem:equiv_inject_lg_1}.
\end{proof}

\begin{prop}\label{prop:inject_libres_gen}
Let $\Sigma$ and $\Gamma$ be two $(n,k)$-white-polygraphs and $I : \Sigma \to \Gamma$ be an injective morphism of $(n,k)$-polygraphs. Then for every $j \leq k$ the functor $I_j^\w : \Sigma_j^\w \to \Gamma_j^\w$ is injective, and its image is closed by divisors.

Assume moreover that $I_0$ and $I_1$ are bijections, and that
for every $j > k$ the application $I_j  :\Sigma_j \to \Gamma_j$ is bijective. Then for every $j$ the functor $I_j^{\w(k)} : \Sigma_j^{\w(k)} \to \Gamma_j^{\w(k)}$ is injective, and its image is closed by divisors.
\end{prop}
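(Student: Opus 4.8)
The plan is to prove both assertions by induction on the dimension $j$, using Lemma~\ref{lem:inject_pour_libres} as the inductive step. The crucial point is to carry through the induction \emph{both} the injectivity of $I_j^\w$ (resp.\ $I_j^{\w(k)}$) \emph{and} the fact that its image is closed by divisors: the latter property is a hypothesis of Lemma~\ref{lem:inject_pour_libres}, which is precisely why that lemma records it in its conclusion.

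For the first assertion, I would begin at $j = 0$, where $I_0^\w = I_0$ is injective by hypothesis and its image is vacuously closed by divisors, since dimension $0$ carries no composition. For the step from $j$ to $j+1$ with $j + 1 \le k$, recall that $\Sigma_{j+1}^\w$ is obtained by applying $\L_j^\w$ to the $j$-white-category with cellular extension $(\Sigma_j^\w, \Sigma_{j+1})$, and that the induced functor $I_{j+1}^\w$ is $\L_j^\w$ applied to the pair $(I_j^\w, I_{j+1})$, which is a morphism in $\WCat_j^+$ because $I$ is a morphism of white-polygraphs. Now $I_{j+1} : \Sigma_{j+1} \to \Gamma_{j+1}$ is injective since $I$ is injective, $I_j^\w$ is injective by the inductive hypothesis, and the image of $I_j^\w$ is closed by divisors by the inductive hypothesis; hence the first part of Lemma~\ref{lem:inject_pour_libres} applies and yields that $I_{j+1}^\w$ is injective with image closed by divisors. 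This closes the induction, proving the claim for all $j \le k$.

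For the second assertion, assume in addition that $I_0$ and $I_1$ are bijections and that $I_j$ is bijective for every $j > k$. I would first observe that $I_1^\w : \Sigma_1^\w \to \Gamma_1^\w$ is then a bijection, being the functor between path categories induced by the isomorphism of graphs $(I_0, I_1)$; consequently, for every $j$ the functor $I_j^{\w(k)}$ restricts on $1$-cells to the bijection $I_1^\w$, which supplies the ``bijective on $1$-cells'' hypothesis of the second part of Lemma~\ref{lem:inject_pour_libres}. In dimensions $j \le k$ the free $(j,k)$-white-category coincides with the free $j$-white-category, so the first assertion already gives the claim there; I would then continue the induction to $j > k$. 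In the step from $j$ to $j + 1$ with $j \ge k$, the white-category $\Sigma_{j+1}^{\w(k)}$ is obtained by applying $\L_{j+1}^{\w(j)}$ to $(\Sigma_j^{\w(k)}, \Sigma_{j+1})$, and $I_{j+1}^{\w(k)}$ is $\L_{j+1}^{\w(j)}$ applied to the pair $(I_j^{\w(k)}, I_{j+1})$; here $I_{j+1}$ is bijective because $j + 1 > k$, $I_j^{\w(k)}$ is injective with image closed by divisors by the inductive hypothesis, and $I_j^{\w(k)}$ is bijective on $1$-cells as just noted. Thus the second part of Lemma~\ref{lem:inject_pour_libres} applies and gives that $I_{j+1}^{\w(k)}$ is injective with image closed by divisors, completing the induction.

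I do not expect a genuine obstacle: the argument is essentially bookkeeping. The only points needing care are (i) threading the ``closed by divisors'' property through the induction so that Lemma~\ref{lem:inject_pour_libres} is always applicable, (ii) invoking the correct left adjoint ($\L^\w$ strictly below dimension $k$, $\L^{\w(j)}$ from dimension $k$ onward), and (iii) checking once and for all that the ``bijective on $1$-cells'' requirement reduces to the bijectivity of $I_1^\w$, which follows from the hypotheses on $I_0$ and $I_1$.
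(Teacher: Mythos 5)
Your proof is correct and follows essentially the same route as the paper: an induction on the dimension $j$ with base case $j=0$, threading injectivity together with the closed-by-divisors property through each step, and applying the first (resp.\ second) part of Lemma~\ref{lem:inject_pour_libres} for $j \leq k$ (resp.\ $j > k$). Your explicit remark that the ``bijective on $1$-cells'' hypothesis reduces to the bijectivity of $I_1^\w$, induced by those of $I_0$ and $I_1$, is a detail the paper leaves implicit but is entirely consistent with its argument.
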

\begin{proof}
We reason by induction on $j$. The case $j = 0$ is true by hypothesis.

Let $1 \leq j \leq k$. By hypothesis, the application $I_j$ is injective, and by induction hypothesis, the functor $I^\w_{j-1}$ is injective with image closed by divisors. Hence $I_j$ satisfies the hypothesis of Lemma \ref{lem:inject_pour_libres}, and $I^\w_{j}$ is injective with image closed by divisors.

Let $j > k$. Again, using the hypothesis and induction hypothesis, we get that $I_j$ satisfies the hypotheses of Lemma \ref{lem:inject_pour_libres}. Hence $I^{\w(k)}_{j}$ is injective and its image is closed by divisors.
\end{proof}

In what follows, we use the fact that the image of a functor generated by a morphism of polygraphs is closed by divisors in order to prove a characterisation of the image of such a functor. 

\begin{defn}
Let $\C, \D$ be two $n$-white-categories, $F : \C \to \D$ be an $n$-functor and $f$ be an $n$-cell of $\D$. We say that $F$  \emph{$k$-discriminates} $f$ if the following are equivalent:
\begin{enumerate}
\item \label{enum:source} The $k$-source of $f$ is in the image of $F$.
\item \label{enum:but} The $k$-target of $f$ is in the image of $F$.
\item \label{enum:image} The $n$-cell $f$ is in the image of $F$.
\end{enumerate}

Given a subset $D$ of $\D_n$, we say that $F$ is \emph{$k$-discriminating on $D$} if for every  $n$-cell $f$ in $D$, $F$ $k$-discriminates $f$.
\end{defn}

\begin{lem}\label{lem:img_foncteur_libre}
Assume that the image of $I$ is closed by divisors, that the application $I_n$ is injective, and that $I$ is $n$-discriminating on $\Sigma'$.

Then, $I^\w$ (resp. $I^{\w(n)}$) is $n$-discriminating on  $L^\w(\C',\Sigma')$ (resp. $L^{\w(n)}(\C',\Sigma')$).

\end{lem}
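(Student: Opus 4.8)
We must show that, for every $(n+1)$-cell $f'$ of $L^\w(\C',\Sigma')$ (resp.\ $L^{\w(n)}(\C',\Sigma')$), the three assertions ``$\s(f')\in\operatorname{Im}(I)$'', ``$\t(f')\in\operatorname{Im}(I)$'' and ``$f'\in\operatorname{Im}(I^\w)$'' (resp.\ ``$f'\in\operatorname{Im}(I^{\w(n)})$'') are equivalent. The implications ``$f'$ in the image $\Rightarrow$ its source (resp.\ target) in the image'' are immediate: if $f'=I^\w(f)$ then $\s(f')=I(\s(f))$ and $\t(f')=I(\t(f))$, since $I^\w$ restricts to $I$ on $\C$. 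So it suffices to prove that if $\s(f')$ lies in $\operatorname{Im}(I)$ then $f'$ lies in $\operatorname{Im}(I^\w)$ (resp.\ $\operatorname{Im}(I^{\w(n)})$); the statement for targets then follows by the symmetric argument exchanging sources and targets — which in the $\w(n)$-version amounts to exchanging the constants $c_{uAv}$ and $c_{u\bar Av}$, or equivalently to replacing $f'$ by its inverse.

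The plan is to prove this implication by induction on the structure of a representative $a'\in E^T_{\Sigma'}$ (resp.\ $F^T_{\Sigma'}$) of $f'$, using that $\s(f')=\s(a')$ because the underlying $n$-category of $L^\w(\C',\Sigma')$ is $\C'$. First I would record the elementary fact that injectivity of $I_n$ entails injectivity of $I_{n-1}$, hence by iteration of every $I_j$ with $j\le n$: if $I(x)=I(y)$ with $x,y\in\C_{n-1}$, then the identity $n$-cells satisfy $I(1_x)=1_{I(x)}=1_{I(y)}=I(1_y)$, so $1_x=1_y$ and $x=y$. Then the induction. If $a'=i_{g'}$, then $\s(f')=g'$, and $g'=I(g)$ gives $f'=I^\w([i_g])$. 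If $a'=c_{u'A'v'}$, then $\s(f')=u'\star_0\s(A')\star_0 v'$; since $\operatorname{Im}(I)$ is closed by divisors this forces $u'$, $v'$ and $\s(A')$ to lie in $\operatorname{Im}(I)$, and since $I$ is $n$-discriminating on $\Sigma'$ the condition $\s(A')\in\operatorname{Im}(I)$ upgrades to $A'=I_{n+1}(A)$ for some generator $A$; writing $u'=I(u)$, $v'=I(v)$ yields $f'=I^\w([c_{uAv}])$. (In the $\w(n)$-version there is the extra case $a'=c_{u'\bar A'v'}$, with $\s(f')=u'\t(A')v'$; one argues identically, now using the equivalence ``$\t(A')\in\operatorname{Im}(I)\Leftrightarrow A'\in\operatorname{Im}(I_{n+1})$'' provided by $n$-discrimination.) If $a'=b'\star_n c'$, then $\s(f')=\s(b')$, so by induction $b'=I^\w(b)$; then $\s(c')=\t(b')=I(\t(b))\in\operatorname{Im}(I)$, so by induction $c'=I^\w(c)$, and from $I(\t(b))=I(\s(c))$ and injectivity of $I_n$ we get $\t(b)=\s(c)$, so $b\star_n c$ is well typed and $I^\w(b\star_n c)=a'$. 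If $a'=b'\star_i c'$ with $1\le i<n$, then $\s(f')=\s(b')\star_i\s(c')$, so $\s(b')$ and $\s(c')$ lie in $\operatorname{Im}(I)$ by closure under divisors; induction gives $b'=I^\w(b)$ and $c'=I^\w(c)$, and $\t_i(b)=\s_i(c)$ follows from $\t_i(b')=\s_i(c')$ by injectivity of $I$ in dimension $i$, so $b\star_i c$ is well typed and maps to $a'$. Taking classes for the appropriate equivalence relation concludes.

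I expect the main obstacle to be the constant case $a'=c_{u'A'v'}$: this is the only place where the hypothesis that $I$ be $n$-discriminating on $\Sigma'$ is used, and it is what lets us lift not merely the boundary of $f'$ but the generator $A'$ itself. It also requires care with the white-categorical (sesquicategorical) nature of the $0$-composition, namely checking that closure by divisors of $\operatorname{Im}(I)$ really does let one peel off the whiskers $u'$, $v'$ from the $n$-cell $u'\star_0\s(A')\star_0 v'$ and recover $\s(A')\in\operatorname{Im}(I)$. Once the injectivity-in-all-dimensions remark is in place, the composition cases are routine bookkeeping, and the whole argument is a straightforward structural induction.
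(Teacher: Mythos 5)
Your proof is correct and follows essentially the same route as the paper: a structural induction over the free construction, using closure by divisors to peel the source apart, the $n$-discrimination hypothesis on $\Sigma'$ to lift the generating cells, and injectivity of $I$ to check that the lifted factors compose. The only differences are organizational (the paper phrases the induction as closure properties of the set of discriminated cells, writing length-$1$ cells in whiskered normal form, while you induct directly on expression representatives), plus your welcome explicit remark that injectivity of $I_n$ propagates down to all lower dimensions via identity cells, which the paper uses tacitly.
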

\begin{proof}
Let us start with $I^\w$. Let $E$ be the set all $(n+1)$-cells of $L^\w(\C',\Sigma')$ which $I^\w$ discriminates. Let us show that $E = L^\w(\C',\Sigma')$. Since $I^\w$ commutes with the source and target applications, the implications $\eqref{enum:image} \Rightarrow  \eqref{enum:source}$ and $\eqref{enum:image} \Rightarrow  \eqref{enum:but}$ hold for any cell in $L^\w(\C',\Sigma')$. So in order to show that a cell is in $E$, it remains to show that it verifies the implications $\eqref{enum:source} \Rightarrow \eqref{enum:image}$ and $\eqref{enum:but} \Rightarrow \eqref{enum:image}$.

The set $E$ contains all units. Indeed, let $A' = 1_{f'}$, with $f' \in \C'$. If $\s(A') = f'$ is in the image of $I^\w$, there exists $f \in \C$ such that $I(f) = f'$. Let us define $A = 1_f \in L^\w(\C,\Sigma)$: by construction we have $I^\w(A) = 1_{I(f)} = 1_{f'} = A'$, hence the implication $\eqref{enum:source} \Rightarrow \eqref{enum:image}$ holds for $A'$. Moreover since $\t(A') = \s(A')$, the implication $\eqref{enum:but} \Rightarrow \eqref{enum:image}$ also holds for $A$.

The set $E$ contains all cells of length $1$. Indeed, given such a cell $A'$, there exist $f'_k,g'_k \in \C'_k$ and $A'_0 \in \Sigma'$ such that 
\[A' = f'_n \star_{n-1} (f'_{n-1} \star_{n-2} \ldots \star_2 (f'_1 A'_0 g'_1) \star_2 \ldots \star_{n-2} g'_{n-1}) \star_{n-1} g'_n.\]
Let $A'_k := f'_k \star_{k-1} (f'_{k-1} \star_{k-2} \ldots \star_2 (f'_1 A'_0 g'_1) \star_2 \ldots \star_{k-2} g'_{k-1}) \star_{k-1} g'_k$. Suppose that the source (resp. target) of $A'$ is in the image of $I$, and let us show that $A'$ is in the image of $I^\w$. Since the image of $I$ is closed by divisors, we get first that $f'_n$, $g'_n$ and $\s(A'_{n-1})$ (resp. $\t(A'_{n-1})$) are in the image of $I$. By iterating this reasoning, we get that, for all $i$, $f'_i$, $g'_i$ and $\s(A'_{i-1})$ (resp. $\t(A'_{i-1})$) are in the image of $I$. Since $I^\w$ discriminates $\Sigma'$, there exist $f_k,g_k \in \C_k$ and $A_0 \in \Sigma$ such that:
\[
I(f_k) = f'_k
\qquad
I(g_k) = g'_k
\qquad
I_{n+1}(A_0) = A'_0.
\]
By induction on $k$ we show that  $A_k := f_k \star_{k-1} A_{k-1} \star_{k-1} g_k$ is well defined and that $I^\w(A_k) = A'_k$. Indeed, assume that it is true at rank $k-1$. Then we have the equalities:
\[
I(\t(f_k)) = \t(f'_{k}) = \s_{k-1}(A'_{k-1}) = I(\s_{k-1}(A_{k-1}))
\qquad
I(\t_{k-1}(A_{k-1})) = \t_{k-1}(A'_{k-1}) = \s(g'_k) = I(\s(g_k))
\]
Using the injectivity of $I$ we get that $\t(f_k) = \s_{k-1}(A_{k-1})$ and $\t_{k-1}(A_{k-1}) = \s(g_k)$, which shows that $A_k$ is well defined, and finally:
\[
I^\w(A_k) = f'_k \star_{k-1} A'_{k-1} \star_{k-1} g'_k = A'_k.
\]
In particular, we have $A'_n = I^\w(A_n)$.

The set $E$ is stable by $n$-composition. Indeed let $A',B' \in E$, and assume that the source of $A' \star_n B'$ is in the image of $I$. Let us show that $A' \star_n B'$ is in the image of $I^\w$. The source of $A' \star_n B'$ is none other that the one of $A'$. Since $A'$ is in $E$, there exists $A \in \L^\w(\C,\Sigma)$ such that $I^\w(A) = A'$. Hence the source of $B'$ is in the image of $I$, and since $B' \in E$, there exists $B \in \L^\w(\C,\Sigma)$ such that $I^\w(B) = B'$. Moreover we have $I(\t(A)) = \t(A') = \s(B') = I(\s(B))$, so using the injectivity of $I$ we get $\t(A) = \s(B)$. Hence the cell $A \star_n B$ is well defined and satisfies:
\[
I^\w(A \star_n B) = I^\w(A) \star_n I^\w(B) = A' \star_n B'.
\]
The case where the target of  $A' \star_n B'$ is in the image of $I$ is symmetrical.

This concludes the proof for $I^\w$. Concerning $I^{\w(n)}$, the reasoning is the same except that we also have to show that $E$ is stable under inversion. Indeed let $A' \in E$ and assume that the source (resp. target) of $(A')^{-1}$ is in the image of $I$. Then the target (resp. source) of $A'$ is in the image of $I$ and since $A'$ is in  $E$, there exists $A \in \L^{\w(n)}(\C,\Sigma)$ such that $I^{\w(n)}(A) = A'$, and so $I^{\w(n)}(A^{-1}) = (A')^{-1}$.
\end{proof}

\begin{prop}\label{prop:img_foncteur_gen}
Let $\Sigma$ and $\Gamma$ be two $(n,k)$-white-polygraphs, and $I : \Sigma \to \Gamma$ be a morphism of polygraphs. Let $k_0$ such that for every $j > k_0$, $I_j$ is a bijection.

Assume that $I$ satisfies the hypothesis of Proposition \ref{prop:inject_libres_gen}, and that, for every $j > k_0$, $I_j$ is $k_0$-discriminating on $\Gamma_j$. Then for every $j \geq k_0$, $I_j^{\w(k)}$ is $k_0$-discriminating on $\Gamma_j^{\w(k)}$.
\end{prop}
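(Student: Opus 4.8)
The plan is to prove the statement by induction on $j \ge k_0$, invoking Lemma~\ref{lem:img_foncteur_libre} one dimension at a time. The subtle point the induction must absorb is a mismatch of heights: applied with its parameter ``$n$'' equal to $j-1$, Lemma~\ref{lem:img_foncteur_libre} only delivers $(j-1)$-discrimination in dimension $j$, whereas the statement asks for $k_0$-discrimination. The bridge is an elementary observation, which I would isolate first: if a white-functor $F$ is $(j-1)$-discriminating on a $j$-cell $f$ and $k_0$-discriminating on the $(j-1)$-cell $\s_{j-1}(f)$ (resp. $\t_{j-1}(f)$), then $F$ is $k_0$-discriminating on $f$; indeed $\s_{k_0}(f) = \s_{k_0}(\s_{j-1}(f))$, while the implications from ``$f$ in the image of $F$'' to ``$\s_{k_0}(f)$, resp. $\t_{k_0}(f)$, in the image of $F$'' are automatic since $F$ commutes with sources and targets. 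In short, discrimination propagates downward along iterated source and target maps.

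For $j = k_0$ there is nothing to prove, so I would fix $j > k_0$ and assume that $I_{j-1}^{\w(k)}$ is $k_0$-discriminating on $\Gamma_{j-1}^{\w(k)}$. By the inductive construction of the free $(n,k)$-white-categories, $\Gamma_j^{\w(k)}$ is freely generated over $\Gamma_{j-1}^{\w(k)}$ by the cellular extension $\Gamma_j$ (and likewise for $\Sigma$), and $I_j^{\w(k)}$ is the white-functor induced by the pair consisting of $I_{j-1}^{\w(k)}$ on the underlying categories and $I_j$ on the cellular extensions. I would apply Lemma~\ref{lem:img_foncteur_libre} to this pair with ``$n$'' $= j-1$. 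Two of its three hypotheses, namely that the image of $I_{j-1}^{\w(k)}$ is closed by divisors and that $I_{j-1}^{\w(k)}$ is injective, are exactly what Proposition~\ref{prop:inject_libres_gen} yields (it applies since $I$ satisfies its hypotheses by assumption). The third, that $I_{j-1}^{\w(k)}$ is $(j-1)$-discriminating on the cellular extension $\Gamma_j$, I would derive from the assumption that $I_j$ is $k_0$-discriminating on $\Gamma_j$: for a generating $j$-cell $A$, if $\s_{j-1}(A)$ lies in the image of $I_{j-1}^{\w(k)}$ then $\s_{k_0}(A) = \s_{k_0}(\s_{j-1}(A))$ lies in the image of $I_{k_0}^{\w(k)}$ by functoriality, whence $A$ lies in the image of $I_j$ by $k_0$-discrimination; the converse implication and the variant with $\t$ are immediate. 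Lemma~\ref{lem:img_foncteur_libre} then gives that $I_j^{\w(k)}$ is $(j-1)$-discriminating on $\Gamma_j^{\w(k)}$.

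Finally I would promote this to $k_0$-discrimination using the observation of the first paragraph together with the induction hypothesis. Let $f$ be a $j$-cell of $\Gamma_j^{\w(k)}$; it suffices to show that if $\s_{k_0}(f)$ lies in the image of $I_{k_0}^{\w(k)}$ then $f$ lies in the image of $I_j^{\w(k)}$, and symmetrically for $\t_{k_0}$. Since $\s_{j-1}(f)$ is a $(j-1)$-cell of $\Gamma_{j-1}^{\w(k)}$ with $\s_{k_0}(\s_{j-1}(f)) = \s_{k_0}(f)$ in the image of $I_{k_0}^{\w(k)}$, the induction hypothesis places $\s_{j-1}(f)$ in the image of $I_{j-1}^{\w(k)}$, and then the $(j-1)$-discrimination just established places $f$ in the image of $I_j^{\w(k)}$; the $\t_{k_0}$ case is identical. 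This closes the induction. I expect the main obstacle to be bookkeeping rather than genuine mathematics: matching, at each dimension, the hypotheses of Lemma~\ref{lem:img_foncteur_libre} to what Proposition~\ref{prop:inject_libres_gen} supplies, and keeping the two heights $j-1$ and $k_0$ of discrimination rigorously apart. The actual content --- that discrimination at height $j-1$ in dimension $j$ composes with discrimination at height $k_0$ in dimension $j-1$ to give discrimination at height $k_0$ in dimension $j$ --- is a short diagram chase.
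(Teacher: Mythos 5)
Your proposal is correct and follows essentially the same route as the paper: Proposition \ref{prop:inject_libres_gen} supplies injectivity and closure by divisors, the $k_0$-discrimination hypothesis on the generating cells $\Gamma_j$ is upgraded to $(j-1)$-discrimination so that Lemma \ref{lem:img_foncteur_libre} applies at each level, and the induction hypothesis at dimension $j-1$ then converts $(j-1)$-discrimination on $\Gamma_j^{\w(k)}$ into $k_0$-discrimination. The only cosmetic difference is that you fold the base case $j=k_0+1$ into the uniform inductive step (with a trivial hypothesis at $j=k_0$), whereas the paper treats it as a separate direct application of the lemma.
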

\begin{proof}
Since $I$ satisfies the hypotheses of Proposition \ref{prop:inject_libres_gen}, we know that for every $j$, the functor $I^{\w(k)}_j$ is injective, and that its image is closed by divisors.

We reason by induction on $j > k_0$. For $j = k_0 +1$, the result is a direct application of Lemma \ref{lem:img_foncteur_libre}.

Let $j > k_0+1$: let us show that $I_j^{\w(k)}$ is $(j-1)$-discriminating on $\Gamma_j$. Let $A \in \Gamma_j$. If $\s(A)$ (resp. $\t(A)$) is in the image of $I_{j-1}^{\w(k)}$ then in particular, the $k_0$-source (resp. $k_0$-target) of $A$ is in the image of $I_{k_0}^{\w(k)}$. Since $I_{j}^{\w(k)}$ is $k_0$-discriminating on $\Gamma_j$, $A$ is in the image of $I_j^{\w(k)}$. Hence we can use Lemma \ref{lem:img_foncteur_libre}, and we get that $I_j^{\w(k)}$ is $(j-1)$-discriminating on $\Gamma_j^{\w(k)}$. Let $A \in \Gamma_j^{\w(k)}$. If its $k_0$-source (resp. $k_0$-target) is in the image of $I^{\w(k)}_{k_0}$ then, by induction hypothesis, the source (rep. target) of $A$ is in the image of $I^{\w(k)}_{j-1}$, and so $A$ is in the image of $I_{j}^{\w(k)}$, which proves that $I_{j}^{\w(k)}$ is $k_0$-discriminating.
\end{proof}

\section{Application to the coherence of pseudonatural transformations}
\label{sec:application}

We now study the coherence problem successively for bicategories, pseudofunctors and pseudonatural transformations. In Section \ref{subsec:bicat}, we start by recalling the usual definition of bicategories (see \cite{B67}). We then give an alternative description of bicategories in terms of algebras over a certain $4$-polygraph $\Bi[\CC]$, and show that the two definitions coincide. The coherence problem for bicategories is now reduced to showing the $3$-coherence of $\Bi[\CC]$, and we use the techniques introduced in the previous section (especially Theorems \ref{thm:terminaison} and \ref{thm:squier}) to conclude. In Section \ref{subsec:pseudo_functors} and \ref{subsec:pseudo_nat_transfo}, we apply the same reasoning to pseudofunctors and pseudonatural transformations. However in the case of pseudonatural transformations, we get a $(4,3)$-polygraph $\TPN[\f,\g]$ which is not $3$-confluent, and so we cannot directly apply Theorem \ref{thm:squier}. The proof of the coherence theorem for pseudonatural transformations will take place in Section \ref{sec:proof_of_coherence} and will make use of Theorem \ref{thm:main_theory}.

\subsection{Coherence for bicategories}
\label{subsec:bicat}


Let $\Cat$ be the category of (small) categories. We denote by $\top$ the terminal category in $\Cat$. Let $\sCat$ be the $3$-category with one $0$-cell, (small) categories as $1$-cells, functors as $2$-cells, and natural transformations as $3$-cells, where $0$-composition is given by the cartesian product, $1$-composition by functor composition, and $2$-composition by composition of natural transformations.

\begin{defn}
A \emph{bicategory} $\B$ is given by:
\begin{itemize}
\item A set $\B_0$.
\item For every $a,b \in \B_0$, a category  $\B(a,b)$. The objects and arrows of $\B(a,b)$ are respectively called the $1$-cells $\B$ and $2$-cells of $\B$.
\item For every $a,b,c \in \B_0$, a functor $\star_{a,b,c} :  \B(a,b) \times \B(b,c) \to \B(a,c)$.
\item For every $a \in \B_0$, a functor $I_a : \top \to \B(a,a)$, that is to say a  $1$-cell $I_a : a \to a$.
\item For every $a,b,c,d \in \B_0$, a natural isomorphism $\alpha_{a,b,c,d}$:
\[
\xymatrix @R = 1em  @C = 3em{
\B(a,b) \times \B(b,c) \times \B(c,d) 
  \ar [rrr] ^{\B(a,b) \times \star_{b,c,d}}
  \ar [ddd] _{\star_{a,b,c} \times \B(c,d)}
& & &
\B(a,b) \times \B(b,d)
  \ar [ddd] ^{\star_{a,b,d}}
\\
& & &
\\
& 
  \ar@2 [ru] ^-{\alpha_{a,b,c,d}}
& &
\\
\B(a,c) \times \B(c,d)
  \ar [rrr] _{\star_{a,c,d}}
& & &
\B(a,d)
}
\]
of components $\alpha_{f,g,h} : (f \star g) \star h \Rightarrow f \star (g \star h)$, for every triple $(f,g,h) \in \B(a,b) \times \B(b,c) \times \B(c,d)$.
\item For every $a,b \in \B_0$, natural isomorphisms $R_{a,b}$ and $L_{a,b}$:
\[
\xymatrix @R=6em @C=6em{
\B(a,b) 
  \ar@2{-} [rd] |-{}="eq"
  \ar [d] _{I_a \times \B(a,b)}
  \ar [r] ^{\B(a,b) \times I_b}
& 
\B(a,b) \times \B(b,b)
  \ar [d] ^{\star_{a,b,b}}
  \ar@2 []!<-5pt,10pt>;"eq"!<5pt,5pt> ^-{R_{a,b}}
\\
\B(a,a) \times \B(a,b)
  \ar [r] _{\star_{a,a,b}}
  \ar@2 []!<5pt,-10pt>;"eq"!<-5pt,-5pt> ^-{L_{a,b}}
&
\B(a,b)
}
\]
of components $L_f : I_a \star f \Rightarrow f$ and $R_f : f \star I_b \Rightarrow f$ for every $1$-cell $f \in \B(a,b)$.
\end{itemize}

This data must also satisfy the following axioms:
\begin{itemize}
\item For every composable $2$-cells $f,g,h,i$ in $\B$:

\begin{equation} \label{eq:penta}
\begin{gathered}
\xymatrix {
& 
((f \star g) \star h) \star i
  \ar@2 [rd] ^-{\alpha_{f,g,h} \star i}
  \ar@2 [ldd] _-{\alpha_{f \star g, h , i}}
  \ar@{} [dddd] |-{=}
& \\
& & 
(f \star (g \star h)) \star i
  \ar@2 [dd] ^-{\alpha_{f,g \star h,i }}
\\
(f \star g) \star (h \star i)
  \ar@2 [rdd] _{\alpha_{f, g, h \star i}}
& & \\
& & 
f \star ((g \star h) \star i)
  \ar@2 [ld] ^-{f \star \alpha_{g,h,i}} 
\\
& 
f \star (g \star (h \star i))
& 
}
\end{gathered}
\end{equation}

\item For every couple $(f,g) \in \B(a,b) \times \B(b,c)$:
\begin{equation} \label{eq:trian}
\begin{gathered}
\begin{tikzpicture}
\matrix (m) [matrix of math nodes, 
			nodes in empty cells,
			column sep = 1cm, 
			row sep = .7cm] 
{
& 
(f \star I_b) \star g
& \\
f \star (I_b \star g)  & & f \star g \\
};
\doublearrow{->}
{(m-1-2) -- node [above left] {$\alpha_{f,I_b,g}$}(m-2-1)}
\doublearrow{->}
{(m-1-2) -- node [above right] {$R_f \star g$}(m-2-3)}
\doublearrow{->}
{(m-2-1) -- node [below] {$f \star L_g$}(m-2-3)}
\path (m-1-2) -- node {$=$} (m-2-2);
\end{tikzpicture}
\end{gathered}
\end{equation}
\end{itemize}
\end{defn}

\begin{defn}\label{def:poly_bicat}
Let $\CC$ be a set. Let us describe dimension by dimension a  $4$-polygraph $\Bi[\CC]$, so that bicategories correspond to algebras on $\Bi[\CC]$, that is to $4$-functors from $\Bi[\CC]$ to $\sCat$ (see Proposition \ref{prop:alg_bicat}).

\paragraph*{Dimension $0$:}
Let $\Bi[\CC]_0$ be the set $\CC$. 
\paragraph*{Dimension $1$:}
The set $\Bi[\CC]_1$ contains, for every $a,b \in \CC$, a $1$-cell ${}_a\twocell{1}{}_b : a \to b$.
\paragraph*{Dimension $2$:}

The set $\Bi[\CC]_2$ contains the following $2$-cells:
\begin{itemize}
\item For every $a,b,c \in \CC$, a $2$-cell $\twocell{prodC}_{a,b,c}: {}_a\twocell{1}{}_b \twocell{1} {}_c \Rightarrow {}_a \twocell{1} {}_c$.
\item For every $a \in \CC$, a $2$-cell $\twocell{unitC}_a: 1_a \Rightarrow {}_a \twocell{1} {}_a$.
\end{itemize}

Note that the indices are redundant with the source of a generating $2$-cell. In what follows, we will therefore omit them when the context is clear. For example, the $2$-cell $\twocell{(1 *0 prodC) *1 prodC}$ of source ${}_a\twocell{1}{}_b\twocell{1}{}_c\twocell{1}{}_d$ designates the composite $({}_a\twocell{1}{}_b \twocell{prodC}_{b,c,d}) \star_1 \twocell{prodC}_{a,b,d}$. We will use the same notation for higher-dimensional cells.

\paragraph*{Dimension $3$:}
The set $\Bi[\CC]_3$ contains the following $3$-cells:
\begin{itemize}
\item For every $a,b,c,d \in \CC$, a $3$-cell $\twocell{assocC}_{a,b,c,d} :
\twocell{(prodC *0 1) *1 prodC} \Rrightarrow \twocell{(1 *0 prodC) *1 prodC} $ of $1$-source ${}_a \twocell{1} {}_b \twocell{1} {}_c \twocell{1} {}_d$.

\item For every $a,b \in \CC$, $3$-cells $\twocell{lunitC}_{a,b} : \twocell{(unitC *0 1) *1 prodC} \Rrightarrow \twocell{1}$ and $\twocell{runitC}_{a,b} : \twocell{(1 *0 unitC *0) *1 prodC} \Rrightarrow \twocell{1}$ of $1$-source ${}_a \twocell{1} {}_b$.
\end{itemize}

\paragraph*{Dimension $4$:}
The set $\Bi[\CC]_4$ contains the following $4$-cells:
\begin{itemize}
\item For every $a,b,c,d,e \in \CC$, a $4$-cell $\twocell{pentaC}_{a,b,c,d,e}$ of $1$-source ${}_a \twocell{1} {}_b \twocell{1} {}_c \twocell{1} {}_d \twocell{1} {}_e$.

\[
\begin{tikzpicture}
\matrix (m) [matrix of math nodes, 
			nodes in empty cells,
			column sep = 1cm, 
			row sep = .7cm] 
{
&
\twocell{(1 *0 prodC *0 1) *1 (prodC *0 1) *1 prodC}
& 
&
\twocell{(1 *0 prodC *0 1) *1 (1 *0 prodC) *1 prodC}
&
\\
\twocell{(prodC *0 2) *1 (prodC *0 1) *1 prodC} 
& & & &
\twocell{(2 *0 prodC) *1 (1 *0 prodC) *1 prodC}
\\
& &
\twocell{(prodC *0 prodC) *1 prodC}
& & \\
};
\triplearrow{->}
{(m-1-2) -- node [above] {$\twocell{(1 *0 prodC *0 1) *1 assocC}$} (m-1-4)}
\triplearrow{->}
{(m-2-1) -- node [above left] {$\twocell{(assocC *0 1) *1 prodC}$} (m-1-2)}
\triplearrow{->}
{(m-1-4) -- node [above right] {$\twocell{(1 *0 assocC) *1 prodC}$} (m-2-5)}
\triplearrow{->}
{(m-2-1) -- node [below left] {$\twocell{(prodC *0 2) *1 assocC}$} (m-3-3)}
\triplearrow{->}
{(m-3-3) -- node [below right] {$\twocell{(2 *0 prodC) *1 assocC}$} (m-2-5)}

\quadarrow{->}
{(m-1-3|-m-1-2.south) -- node [right] {$\twocell{pentaC}$} (m-3-3|-m-2-1.south)}
\end{tikzpicture}
\]

\item For every $a,b,c \in \CC$, a $4$-cell $\twocell{trianC}_{a,b,c}$ of $1$-source ${}_a \twocell{1}{}_b \twocell{1} {}_c$.

\[
\begin{tikzpicture}
\matrix (m) [matrix of math nodes, 
			nodes in empty cells,
			column sep = 1cm, 
			row sep = .7cm] 
{
&
\twocell{(1 *0 unitC *0 1) *1 (1 *0 prodC) *1 prodC}
& 
\\
\twocell{(1 *0 unitC *0 1) *1 (prodC *0 1) *1 prodC}
& & 
\twocell{prodC}
\\
};
\node (start) [below =.1cm of m-1-2] {};
\triplearrow{->, bend left}
{(m-1-2) to node [above right] {$\twocell{(1 *0 lunitC) *1 prodC}$}(m-2-3)}
\triplearrow{->, bend left}
{(m-2-1) to node [above left] {$\twocell{(1 *0 unitC *0 1) *1 assocC}$}(m-1-2)}
\triplearrow{->, bend right}
{(m-2-1) to node [below] {$\twocell{(runitC *0 1) *1 prodC}$} (m-2-3)}
\quadarrow{->}
{(start) -- node [right] {$\twocell{trianC}$} (m-2-2)}
\end{tikzpicture}
\]

\end{itemize}
\end{defn}

\begin{defn}
We denote by $\Alg(\Bi)$ the set of all couples $(\CC,\Phi)$:
\begin{itemize}
\item where $\CC$ is a set,
\item where $\Phi$ is a functor from $\overline{\Bi}[\CC]$ to $\sCat$.
\end{itemize} 
\end{defn}

\begin{prop}\label{prop:alg_bicat}
There is a one-to-one correspondence between (small) bicategories and $\Alg(\Bi)$.
\end{prop}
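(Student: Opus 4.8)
The plan is to prove the proposition by unwinding the definition of a functor out of the $(3,2)$-category $\overline{\Bi}[\CC]$ presented by $\Bi[\CC]$, reading the resulting data as a bicategory, and checking that this transcription is bijective. First I would note that, since $\overline{\Bi}[\CC]$ is the quotient $\Bi[\CC]^{*(2)}_3 / \Bi[\CC]^{*(2)}_4$, a functor $\Phi : \overline{\Bi}[\CC] \to \sCat$ is exactly a functor of $3$-categories on $\Bi[\CC]^{*(2)}_3$ that equalizes the source and target of each $4$-cell of $\Bi[\CC]^{*(2)}$; by functoriality it suffices to impose this on the two generating $4$-cells $\twocell{pentaC}$ and $\twocell{trianC}$, since the condition propagates along composition and whiskering. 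By the universal property of the free $(3,2)$-category generated by the $(3,2)$-polygraph underlying $\Bi[\CC]$ (forgetting the $4$-cells), such a functor is in turn the same as a choice, for each generating cell of $\Bi[\CC]$ of dimension at most $3$, of a cell of $\sCat$ of matching source, target and dimension, with the generating $3$-cells sent to invertible $3$-cells (as $\overline{\Bi}[\CC]$ is a $(3,2)$-category and functors out of it preserve invertibility).

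Next I would spell out what this data is, using that $\sCat$ has one $0$-cell (so the elements of $\CC$ carry no extra structure), that its $1$-cells are small categories, its $2$-cells functors with $0$-composition the cartesian product and $1$-composition functor composition, and its $3$-cells natural transformations. Unwound, a functor $\Phi$ consists of: a set $\CC = \B_0$; a category $\B(a,b) := \Phi({}_a\twocell{1}{}_b)$ for all $a,b$; composition functors $\star_{a,b,c} := \Phi(\twocell{prodC}_{a,b,c}) : \B(a,b) \times \B(b,c) \to \B(a,c)$ and unit objects $I_a := \Phi(\twocell{unitC}_a)$; and natural isomorphisms $\Phi(\twocell{assocC})$, $\Phi(\twocell{lunitC})$, $\Phi(\twocell{runitC})$. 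Computing the images under $\Phi$ of the (iterated) sources and targets of $\twocell{assocC}$, $\twocell{lunitC}$, $\twocell{runitC}$ — turning $\star_0$ into the cartesian product and $\star_1$ into composition — shows that $\Phi(\twocell{assocC})$ is a natural transformation with components $(f \star g) \star h \Rightarrow f \star (g \star h)$ and that $\Phi(\twocell{lunitC})$, $\Phi(\twocell{runitC})$ are the left and right unitors; their naturality with respect to the $2$-cells of $\B$ is automatic, being part of the statement that $\Phi$ is a functor. Finally the equalizing conditions on $\twocell{pentaC}$ and $\twocell{trianC}$ translate, after the same bookkeeping, into exactly Mac Lane's pentagon~\eqref{eq:penta} and the triangle axiom~\eqref{eq:trian}. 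Thus $(\CC,\Phi)$ gives a bicategory $\B_\Phi$.

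For the converse direction I would, starting from a small bicategory $\B$, put $\CC := \B_0$ and define $\Phi$ on generators by the same dictionary read backwards: ${}_a\twocell{1}{}_b \mapsto \B(a,b)$, $\twocell{prodC} \mapsto \star$, $\twocell{unitC} \mapsto I$, $\twocell{assocC} \mapsto \alpha$, $\twocell{lunitC} \mapsto L$, $\twocell{runitC} \mapsto R$. The same source/target computations, run in reverse, show this assignment is compatible with the source and target maps of $\Bi[\CC]$ — this is exactly where naturality of $\alpha, L, R$ is used — and since $\alpha, L, R$ are invertible the generating $3$-cells land in invertible $3$-cells, so the assignment extends uniquely to a functor of $(3,2)$-categories on $\Bi[\CC]^{*(2)}_3$; it factors through $\overline{\Bi}[\CC]$ because $\B$ satisfies~\eqref{eq:penta} and~\eqref{eq:trian}. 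This produces $(\B_0, \Phi_\B) \in \Alg(\Bi)$, and the assignments $\B \mapsto (\B_0, \Phi_\B)$ and $(\CC,\Phi) \mapsto \B_\Phi$ are visibly mutually inverse, each merely transcribing the same data between the two presentations.

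The step I expect to be the main obstacle — the only one requiring genuine care — is the source/target bookkeeping: verifying that the formal composites of generators appearing as the (iterated) source and target of $\twocell{assocC}$, $\twocell{lunitC}$, $\twocell{runitC}$, $\twocell{pentaC}$ and $\twocell{trianC}$ are sent by $\Phi$ precisely to the composites of functors and natural transformations occurring in the definition of a bicategory and in the pentagon and triangle equations. This rests on the conventions relating $\star_0$ and $\star_1$ in $\Bi[\CC]^{*(2)}$ to the cartesian product and functor composition in $\sCat$ — which is the whole point of choosing $\sCat$ as the target of algebras — and on the strict associativity and unit laws of the product; it is routine but must be carried out attentively, in particular to fix the handedness of whiskering and of the unitors.
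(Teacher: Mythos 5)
Your proposal is correct and follows essentially the same route as the paper: the paper's proof consists precisely of the dictionary you describe (sets to $0$-cells, hom-categories to $1$-cells, $\star$ and $I$ to the generating $2$-cells, $\alpha$, $L$, $R$ to the generating $3$-cells, and the pentagon and triangle axioms to compatibility with the quotient by $\twocell{pentaC}$ and $\twocell{trianC}$), presented as a correspondence table with the source/target bookkeeping left implicit. Your write-up merely makes explicit the universal-property argument and the verification that the iterated sources and targets match, which the paper treats as routine.
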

\begin{proof}

The correspondence between a bicategory $\B$ and an algebra $(\CC,\Phi)$ over $\Bi$ is given by:

\begin{itemize}
\item At the level of sets: $\CC = \B_0$.
\item For every $a,b \in \B_0$, $\Phi({}_a \twocell{1} {}_b) = \B(a,b)$.
\item For every $a,b,c \in \B_0$, $\Phi(\twocell{prodC}_{a,b,c}) = \star_{a,b,c}$.
\item For every $a \in \B_0$, $\Phi(\twocell{unitC}_a) = I_a$.
\item For every $a,b,c,d \in \B_0$, $\Phi(\twocell{assocC}_{a,b,c,d}) = \alpha_{a,b,c,d}$.
\item For every $a,b \in \B_0$, $\Phi(\twocell{runitC}_{a,b}) = R_{a,b}$ and $\Phi(\twocell{lunitC}_{a,b}) = L_{a,b}$.
\item The axioms that a bicategory must satisfy correspond to the fact that $\Phi$ is compatible with the quotient by the $4$-cells $\twocell{pentaC}$ and $\twocell{trianC}$.
\end{itemize}

This correspondence between the structures of bicategory and of algebra over $\Bi$ is summed up by the following table: 

\begin{table}[H]
\centering
\begin{tabular}{|c|c||c|c|}
  \hline
\multicolumn{2}{|c||}{Bicategory} & \multicolumn{2}{c|}{$\Alg(\Bi)$} \\
\hline
Sets & $\B_0$ & $\CC$ & $0$-cells \\
Categories & $\B(\_,\_)$ & $\twocell{1}$ & $1$-cells \\
Functors & $\star$, $I$ & $\twocell{prodC}$,  $\twocell{unitC}$ & $2$-cells \\
Natural transformations & $\alpha$, $L$, $R$ & $\twocell{assocC}$, $\twocell{lunitC}$, $\twocell{runitC}$ & $3$-cells \\
Equalities & \eqref{eq:penta} \eqref{eq:trian} & $\twocell{pentaC}$ $\twocell{trianC}$ & $4$-cells \\
\hline
\end{tabular}
\caption{Correspondence for bicategories} 
\label{table:corresp_bicat}
\end{table}
\end{proof}

We are going to show the coherence theorem for bicategories, using Theorem \ref{thm:squier}.

\begin{prop}\label{prop:Bicat_term}
For every set $\CC$, the $4$-polygraph $\Bi[\CC]$ $3$-terminates.
\end{prop}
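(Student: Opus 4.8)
The plan is to apply Theorem~\ref{thm:terminaison}, so the work consists in exhibiting two $2$-functors $X : \Bi[\CC]^*_2 \to \sOrd$ and $Y : (\Bi[\CC]^*_2)^{\co} \to \sOrd$ together with a commutative monoid $M$ (equipped with a well-founded, strictly monotone addition) and an $(X,Y,M)$-derivation $d$, and then to check the three inequalities $X(\s(A)) \geq X(\t(A))$, $Y(\s(A)) \geq Y(\t(A))$, $d(\s(A)) > d(\t(A))$ for each of the three generating $3$-cells $\twocell{assocC}$, $\twocell{lunitC}$, $\twocell{runitC}$. Since a $2$-functor out of $\Bi[\CC]^*_2$ is determined by its values on the $0$-, $1$- and $2$-generators, it suffices to assign to each $1$-generator ${}_a\twocell{1}{}_b$ a poset, to each $2$-generator ($\twocell{prodC}$, $\twocell{unitC}$) a monotone map between the appropriate products of these posets, and likewise for $Y$ on the co-dual.

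First I would take, for both $X$ and $Y$, the poset attached to every $1$-cell ${}_a\twocell{1}{}_b$ to be $\N$ with its usual order, so that $X(u) = Y(u) = \N^{\l(u)}$ for a $1$-cell $u$ that is a composite of $\l(u)$ generators. For the $2$-cells I would let $X(\twocell{prodC}) : \N^2 \to \N$ be addition $(m,n) \mapsto m+n$ (this records, roughly, the ``load'' carried by a leaf), and $X(\twocell{unitC}) : \{*\} \to \N$ be the constant $1$; dually for $Y$ on $(\Bi[\CC]^*_2)^{\co}$ one takes $Y(\twocell{prodC})$ and $Y(\twocell{unitC})$ to be essentially the same maps (the co-dual reverses $2$-cells, so $Y(\twocell{prodC})$ goes $\N \to \N^2$, and one can take the diagonal, or a suitable splitting). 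The role of $X$ and $Y$ is only to guarantee the weak inequalities; these hold because all the maps involved are built from $+$ and constants, which are monotone, and because on both sides of each $3$-cell the underlying numerical data match up (for $\twocell{assocC}$ the two reassociations of a sum are equal, for the unit cells one checks the obvious identity). For the monoid I would take $M = \N$ with addition and the usual order — strictly monotone and well-founded, as required.

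The heart of the proof is the derivation $d$. The idea (this is the standard derivation used in \cite{G09} to prove termination of the associativity polygraph, adapted to also kill the unit rules) is to define $d(\twocell{prodC})\big[(m,n),*\big]$ to be something like $n$ (or $m$, depending on orientation conventions) so that moving a bracket ``to the right'' via $\twocell{assocC}$ strictly decreases the accumulated value $d$, exactly as in the pentagon calculation sketched in the introduction; and to set $d(\twocell{unitC})$ to a positive constant so that each application of $\twocell{lunitC}$ or $\twocell{runitC}$ erases a $\twocell{unitC}$ and thereby strictly decreases $d$. Then one must verify the two bilinearity-type equations that a derivation must satisfy (these are forced once the values on generators are fixed, since $d$ is extended to composites by those very equations — so there is nothing to check there), and finally verify $d(\s(A)) > d(\t(A))$ for $A \in \{\twocell{assocC},\twocell{lunitC},\twocell{runitC}\}$. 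For $\twocell{assocC}$ this is the same inequality that makes the associativity rewriting system terminate; for the two unit $3$-cells it is immediate because the source contains a $\twocell{unitC}$ contributing a strictly positive amount that is absent from the target.

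The main obstacle I expect is bookkeeping rather than conceptual: one has to choose the numerical weights in $d(\twocell{prodC})$ and $d(\twocell{unitC})$ carefully enough that \emph{all three} $3$-cells strictly decrease $d$ simultaneously while the weak monotonicity of $X$, $Y$ still holds, and one has to be careful about the variance of $Y$ (the $\co$ in its domain) so that the second equation of a derivation, $d(f_1 \star_1 f_2)[x,t] = d(f_1)[x,Y(f_2)[y]] + d(f_2)[X(f_1)[x],y]$, is consistent with the chosen $Y$. Once a consistent choice is fixed, the verification is a short finite check, one line per generating $3$-cell, after which Theorem~\ref{thm:terminaison} yields $3$-termination of $\Bi[\CC]$.
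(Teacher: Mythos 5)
Your proposal follows exactly the paper's strategy: apply Theorem~\ref{thm:terminaison} with $X$ and $Y$ valued in $\N^*$ (addition for $\twocell{prodC}$, the constant $1$ for $\twocell{unitC}$, the diagonal for $Y(\twocell{prodC})$), $M = \N$, and a derivation that strictly decreases on the three generating $3$-cells; the paper's concrete choice is $d_{\CC}(\twocell{prodC})[i,j,k] = i+k+1$ and $d_{\CC}(\twocell{unitC})[i] = i$. The only point to pin down is the weight you hedge on for $d(\twocell{prodC})$: with the orientation $\twocell{(prodC *0 1) *1 prodC} \Rrightarrow \twocell{(1 *0 prodC) *1 prodC}$, putting the positive weight on the \emph{left} source argument gives $2i+j$ versus $i+j$ (a strict decrease since $i \geq 1$), whereas weighting the right argument gives $j+k$ versus $j+2k$, which increases, so that choice would fail.
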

\begin{proof}
In order to apply Theorem \ref{thm:terminaison} we construct two functors $X_{\CC}: \Bi[\CC]^*_2 \to \sOrd$ and $Y_{\CC} : (\Bi[\CC]^*_2)^\co \to \sOrd$ by setting, for every $a,b \in \CC$:
\[
X_{\CC}({}_a \twocell{1} {}_b) = Y_{\CC}({}_a \twocell{1} {}_b) = \mathbb N^*
\]
and, for every $i,j \in \mathbb N^*$:
\[
X_{\CC}(\twocell{prodC})[i,j] =  i+j,
\qquad
X_{\CC}(\twocell{unitC})  = 1,
\qquad
Y_{\CC}(\twocell{prodC})[i] = (i,i).
\]

We now define an $(X_{\CC},Y_{\CC},\N)$-derivation $d_{\CC}$ on $\Bi[\CC]^*_2$ by setting, for every $i,j,k \in \N^*$:

\[
d_{\CC}(\twocell{prodC})[i,j,k] = i+k+1,
\qquad
d_{\CC}(\twocell{unitC})[i] = i,
\]

It remains to show that the required inequalities are satisfied. Concerning $X_{\CC}$ and $Y_{\CC}$, we have for every $i,j,k \in \N^*$:

\[
X_{\CC}(\twocell{(prodC *0 1) *1 prodC})[i,j,k] = i+j+k \geq i+j+k = X_{\CC}(\twocell{(1 *0 prodC) *1 prodC})[i,j,k] 
\]

\[
X_{\CC}(\twocell{(unitC *0 1) *1 prodC})[i] = i+1 \geq i = X_{\CC}(\twocell{1})[i]
\qquad
X_{\CC}(\twocell{(1 *0 unitC) *1 prodC})[i] = i+1 \geq i = X_{\CC}(\twocell{1})[i]
\]

\[
Y_{\CC}(\twocell{(prodC *0 1) *1 prodC})[i] = (i,i,i) \geq (i,i,i) = Y_C(\twocell{(1 *0 prodC) *1 prodC})[i] 
\]

\[
Y_{\CC}(\twocell{(unitC *0 1) *1 prodC})[i] = i \geq i = Y_{\CC}(\twocell{1})[i]
\qquad
Y_{\CC}(\twocell{(1 *0 unitC) *1 prodC})[i] = i \geq i = Y_{\CC}(\twocell{1})[i].
\]

Concerning $d_{\CC}$, we have for every $i,j,k,l \in \N^*$:

\[
d_{\CC}(\twocell{(prodC *0 1) *1 prodC})[i,j,k,l] = 2i+j+2l+2 > i+j+2l+2 = d_{\CC}(\twocell{(1 *0 prodC) *1 prodC})[i,j,k,l] 
\]

\[
d_{\CC}(\twocell{(unitC *0 1) *1 prodC})[i,j] = 2j+2 > 0 = d_{\CC}(\twocell{1})[i,j]
\qquad
d_{\CC}(\twocell{(1 *0 unitC) *1 prodC})[i,j] = i+2j+1 > 0 = d_{\CC}(\twocell{1})[i,j].
\]
\end{proof}

The following Theorem is a rephrasing of Mac Lane's coherence Theorem (\cite{ML85}) in our setting.
\begin{thm}\label{thm:BiCat_coh}
Let $\CC$ be a set.

The $4$-polygraph $\Bi[\CC]$ is $3$-convergent and the free $(4,2)$-category $\Bi[\CC]^{*(2)}$ is $3$-coherent.
\end{thm}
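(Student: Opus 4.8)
The plan is to apply Theorem~\ref{thm:squier}, so the work reduces to verifying that $\Bi[\CC]$ satisfies the $3$-Squier condition, i.e.\ that it is $3$-convergent and that its generating $4$-cells are in bijection with its critical pairs, each $4$-cell filling a representative of the corresponding critical pair. Once this is established, Theorem~\ref{thm:squier} immediately gives $3$-coherence of $\Bi[\CC]^{*(2)}$, and $3$-convergence is part of the $3$-Squier condition.

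First I would recall that $3$-termination has already been proven in Proposition~\ref{prop:Bicat_term}. So it remains to prove $3$-confluence, and by Proposition~\ref{prop:confluence_local} it suffices to check that every $2$-fold critical branching of $\Bi[\CC]_3$ is confluent. I would enumerate the critical branchings arising from overlaps between the three families of $3$-cells $\twocell{assocC}$, $\twocell{lunitC}$, $\twocell{runitC}$: the $\twocell{assocC}$/$\twocell{assocC}$ overlap on ${}_a\twocell{1}{}_b\twocell{1}{}_c\twocell{1}{}_d\twocell{1}{}_e$ (the source of $\twocell{pentaC}$), the $\twocell{assocC}$/$\twocell{lunitC}$ and $\twocell{assocC}$/$\twocell{runitC}$ overlaps on ${}_a\twocell{1}{}_b\twocell{1}{}_c$ where a unit $\twocell{unitC}$ is one of the middle arguments (the source of $\twocell{trianC}$, plus symmetric variants), and the overlaps where a $\twocell{lunitC}$ or $\twocell{runitC}$ shares its $\twocell{prodC}$ with an adjacent $\twocell{assocC}$ or with another unit cell. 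For each one I would exhibit a confluence diagram; the $\twocell{pentaC}$ and $\twocell{trianC}$ cells are precisely the (non-identity) fillings needed, and the remaining overlaps turn out to be confluent using only the existing $3$-cells and the axioms of a $(4,2)$-category (in several cases the two branches actually become equal after whiskering, yielding aspherical-type confluence). This is the standard critical-pair analysis underlying Mac~Lane's coherence theorem, transposed to the polygraphic language.

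With $3$-convergence in hand, I would then check the second clause of the $3$-Squier condition: by Definition~\ref{def:poly_bicat} the $4$-cells of $\Bi[\CC]$ are exactly the $\twocell{pentaC}_{a,b,c,d,e}$ and $\twocell{trianC}_{a,b,c}$, and the critical-pair analysis above shows these correspond bijectively to the critical pairs of $\Bi[\CC]_3$, each being a filling of a representative of its critical pair (the overlaps that are confluent via identities or existing cells contribute no critical pair in the Squier sense once one uses the correct minimality bookkeeping --- here I would be slightly careful and, if needed, observe that the ``trivial'' overlaps are in fact not overlapping branchings, being Peiffer or aspherical). Hence $\Bi[\CC]$ satisfies the $3$-Squier condition and Theorem~\ref{thm:squier} applies.

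The main obstacle is the bookkeeping in the critical-pair enumeration: one must be sure the list of overlaps is complete (overlaps can occur at the level of the shared $\twocell{prodC}$ $2$-cell between an associativity and a unit cell, and between two unit cells sharing a $\twocell{prodC}$), and one must confirm that exactly the $\twocell{pentaC}$ and $\twocell{trianC}$ overlaps give genuine non-trivially-confluent critical pairs while all others close up trivially; this is where a careful case split, rather than any deep argument, is required. Everything else is routine given Propositions~\ref{prop:Bicat_term}, \ref{prop:confluence_local} and Theorem~\ref{thm:squier}.
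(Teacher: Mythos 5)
There is a genuine gap. Your plan hinges on the claim that the generating $4$-cells $\twocell{pentaC}$ and $\twocell{trianC}$ are in bijection with the critical pairs of $\Bi[\CC]_3$, the remaining overlaps being ``trivially'' confluent or not genuinely overlapping (Peiffer or aspherical). That is not the case: besides the two families of critical pairs with sources $\twocell{(1 *0 unitC *0 1) *1 (prodC *0 1) *1 prodC}$ and $\twocell{(prodC *0 2) *1 (prodC *0 1) *1 prodC}$, which are filled by $\twocell{trianC}$ and $\twocell{pentaC}$, there are three further families of genuinely overlapping minimal branchings, with sources $\twocell{(unitC *0 2) *1 (prodC *0 1) *1 prodC}$, $\twocell{(2 *0 unitC) *1 (prodC *0 1) *1 prodC}$ and $\twocell{(unitC *0 unitC) *1 prodC}$. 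In each of these a unit $3$-cell and an associativity $3$-cell (or two unit $3$-cells) share a $\twocell{prodC}$, so the branching is neither aspherical nor Peiffer. They are confluent, so $3$-convergence does hold as you argue, but the two reduction paths are \emph{distinct parallel $3$-cells}, and exhibiting a $4$-cell between them is exactly the nontrivial content: these fillings are not generators but composites $\omega_i \in \Bi[\CC]_4^{*(2)}$ that must be built out of $\twocell{pentaC}$ and $\twocell{trianC}$, following the classical Kelly-style derivations used for monoidal categories (Proposition 3.5 of \cite{G12}). Your fallback suggestion --- that these overlaps are in fact Peiffer or aspherical --- would make the argument collapse, since it is false.

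A second, related point: because there are five families of critical pairs but only two generating $4$-cells, $\Bi[\CC]$ does \emph{not} satisfy the $3$-Squier condition in the literal bijective form of the definition. What the argument actually needs (and what the paper verifies) is the weaker hypothesis that every critical pair admits a filling in the free $(4,2)$-category $\Bi[\CC]_4^{*(2)}$, possibly by a composite $4$-cell; Theorem \ref{thm:squier} is invoked in that form. So the correct structure of the proof is: $3$-termination from Proposition \ref{prop:Bicat_term}; enumeration of the five families of critical pairs and their confluence, giving $3$-convergence via Proposition \ref{prop:confluence_local}; and then, for coherence, the explicit construction of the composite fillings $\omega_i$ for the three families not covered by the generators. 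That last construction is the step your proposal omits.
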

\begin{proof}
We already know that $\Bi[\CC]$ is $3$-terminating.  Using Proposition \ref{prop:confluence_local} and Theorem \ref{thm:squier}, it remains to show that every critical pair admits a filling.

There are five families of critical pairs, of sources:

\[
\twocell{(1 *0 unitC *0 1) *1 (prodC *0 1) *1 prodC}
\qquad
\twocell{(prodC *0 2) *1 (prodC *0 1) *1 prodC} 
\qquad
\twocell{(unitC *0 2) *1 (prodC *0 1) *1 prodC}
\qquad
\twocell{(2 *0 unitC) *1 (prodC *0 1) *1 prodC}
\qquad
\twocell{(unitC *0 unitC) *1 prodC} 
\]

The first two families are filled by the $4$-cells $\twocell{trianC}$ and $\twocell{pentaC}$, whereas the last three are filled by $4$-cells $\omega_i \in \Bi[\CC]_4^{*(2)}$, which are constructed in a similar fashion as in the case of monoidal categories (see Proposition 3.5 in \cite{G12}).
\end{proof}

\subsection{Coherence for pseudofunctors}
\label{subsec:pseudo_functors}
\begin{defn}
A \emph{pseudofunctor} $F$ is given by:
\begin{itemize}
\item Two bicategories $\B$ and $\B'$.
\item A function $F_0 : \B_0 \to \B'_0$.
\item For every $a,b \in \B_0$, a functor $F_{a,b} : \B(a,b) \to \B'(F_0(a),F_0(b))$.
\item For every $a,b,c \in \B_0$, a natural isomorphism $\phi_{a,b,c}$:
\[
\xymatrix @R = 1em  @C = 3em{
\B(a,b) \times \B(b,c)
  \ar [rrr] ^-{\star_{a,b,c}}
  \ar [ddd] _-{F_{a,b} \times F_{b,c}}
& & &
\B(a,c)
  \ar [ddd] ^-{F_{a,c}}
\\
& & &
\\
& 
  \ar@2 [ru];[] _-{\phi_{a,b,c}}
& &
\\
\B'(F_0(a),F_0(b)) \times \B'(F_0(b),F_0(c))
  \ar [rrr] _-{\star'_{F_0(a),F_0(b),F_0(c)}}
& & &
\B'(F_0(a),F_0(c))
}
\]
of components $\phi_{f,g} : F(f \star g) \Rightarrow F(f) \star' F(g)$, for every couple $(f,g) \in \B(a,b) \times \B(b,c)$.

\item For every $a \in \B_0$, a natural isomorphism $\psi_a$:
\[
\xymatrix @R = 1em {
\top
  \ar [rrr] ^-{I_a}
  \ar@2{-} [ddd] 
& & &
\B(a,a)
  \ar [ddd] ^-{F_{a,a}}
\\
& & 
\ar@2 [ld] _-{\psi_{a}}
&
\\
& 
& &
\\
\top
  \ar [rrr] _-{I'_{F_0(a),F_0(a)}}
& & &
\B'(F_0(a),F_0(a))
}
\]
of components $\psi_a : F(I_a) \Rightarrow I'_{F_0(a)}$, for every $a \in \B_0$
\end{itemize}

This data must satisfy the following axioms:
\begin{itemize}
\item For every composable $1$-cells $f,g$ and $h$ in $\B$:
\begin{equation} \label{eq:img_assoc}
\begin{gathered}
\xymatrix{
& 
F((f \star g) \star h)
  \ar@2 [rd] ^-{\phi_{f \star g,h}}
  \ar@2 [ld] _-{F(\alpha_{f,g,h})}
  \ar@{} [ddd] |-{=}
& \\
F(f \star (g \star h))
  \ar@2 [d] _-{\phi_{f,g \star h}}
& & 
F(f \star g) \star' F(h)
  \ar@2 [d] ^-{\phi_{f,g} \star' F(h)}
\\
F(f) \star' F(g \star h)
  \ar@2 [rd] _-{F(f) \star' \phi_{g,f}}
& & 
(F(f) \star' F(g)) \star' F(h)
  \ar@2 [ld] ^-*+{\alpha'_{F(f),F(g),F(h)}}
\\
& 
F(f) \star' (F(g) \star' F(h))
&
}
\end{gathered}
\end{equation}

\item For every $1$-cell $f: a \to b$ in $\B$:
\begin{equation} \label{eq:img_lunit}
\begin{gathered}
\xymatrix @C = 4em @R = 3em {
&
F(I_a) \star' F(f)
\ar@2 [r] ^-{\psi_a \star' F(f)}
\ar@{} [rd] |-{=}
&
I'_{F_0(a)} \star' F(f)
\ar@2 [rd] ^-{L'_{F(f)}}
&
\\
F(I_a \star f)
\ar@2 [ru] ^-{\phi_{I_a,f}}
\ar@2 @/_/ [rrr] _-{F(L_f)}
& & &
F(f)
}
\end{gathered}
\end{equation}
\item For every $1$-cell $f: a \to b$ in $\B$:
\begin{equation} \label{eq:img_runit}
\begin{gathered}
\xymatrix @C = 4em @R = 3em {
&
F(f) \star' F(I_b)
\ar@2 [r] ^-{F(f) \star' \psi_b}
\ar@{} [rd] |-{=}
&
F(f) \star' I'_{F_0(b)} 
\ar@2 [rd] ^-{R'_{F(f)}}
&
\\
F(f \star I_b)
\ar@2 [ru] ^-{\phi_{f,I_b}}
\ar@2 @/_/ [rrr] _-{F(R_f)}
& & &
F(f)
}
\end{gathered}
\end{equation}
\end{itemize}
\end{defn}

\begin{defn}
Let $\CC$ and $\DD$ be sets, and $\f$ an application from $\CC$ to $\DD$. Let us describe dimension by dimension a $4$-polygraph $\PF[\f]$. We will prove in Proposition \ref{prop:alg_pfonct} that pseudofunctors correspond to algebras over $\PF[\f]$. 

The polygraph $\PF[\f]$ contains the union of:
\begin{itemize}
\item the polygraph $\Bi[\CC]$, whose cells are denoted by $\twocell{prodC}$, $\twocell{unitC}$, $\twocell{assocC}$, $\twocell{runitC}$, $\twocell{lunitC}$, $\twocell{pentaC}$ and $\twocell{trianC}$, defined as in Definition \ref{def:poly_bicat},
\item the polygraph $\Bi[\DD]$, whose cells are denoted by $\twocell{prodD}$, $\twocell{unitD}$, $\twocell{assocD}$, $\twocell{runitD}$, $\twocell{lunitD}$, $\twocell{pentaD}$ and $\twocell{trianD}$, defined as in Definition \ref{def:poly_bicat},
\end{itemize}
together with the following cells:

\paragraph*{Dimension $1$:}
For every $a \in \CC$, the set $\PF[\f]_1$ contains a $1$-cell ${}_a\twocell{1} {}_{\f(a)} : a \to \f(a)$.

\paragraph*{Dimension $2$:}
For every $a,b \in \CC$, the set $\PF[\f]_2$ contains a $2$-cell $\twocell{fonctF}_{a,b}: {}_a \twocell{1} {}_b \twocell{1} {}_{\f(b)} \Rightarrow {}_a \twocell{1} {}_{\f(a)} \twocell{1} {}_{\f(b)}$.
\paragraph*{Dimension $3$:}
The set $\PF[\f]_3$ contains the following $3$-cells:
\begin{itemize}
\item For every  $a,b,c \in \CC$, a $3$-cell $\twocell{img_prodF}_{a,b,c} :\twocell{(prodC *0 1) *1 fonctF}  \Rrightarrow  \twocell{(1 *0 fonctF) *1 (fonctF *0 1) *1 (1 *0 prodD)}$ of $1$-source ${}_a \twocell{1} {}_b \twocell{1} {}_c \twocell{1} {}_{\f(c)}$.

\item For every $a \in \CC$, a $3$-cell $\twocell{img_unitF}_{a} : \twocell{(unitC *0 1) *1 fonctF} \Rrightarrow  \twocell{1 *1 (1 *0 unitD)} $ of $1$-source ${}_a \twocell{1} {}_{\f(a)}$.
\end{itemize}

\paragraph*{Dimension $4$:}
The $\PF[\f]_4$ contains the following $4$-cells:
\begin{itemize}
\item For every $a,b,c,d \in \CC$, a $4$-cell $\twocell{img_assocF}_{a,b,c,d}$ of $1$-source ${}_a \twocell{1} {}_b \twocell{1} {}_c \twocell{1} {}_d \twocell{1} {}_{\f(d)}$
\[
\xymatrix @C = 4em @R = 1.5em {
&
\twocell{(1 *0 prodC *0 1) *1 (prodC *0 1) *1 fonctF}
\ar@3 [r] ^-{\twocell{(1 *0 prodC *0 1) *1 img_prodF}}
&
\twocell{(1 *0 prodC *0 1) *1 (1 *0 fonctF) *1 (fonctF *0 1) *1 (1 *0 prodD)}
\ar@3 [rd] ^-{\twocell{(1 *0 img_prodF) *1 (fonctF *0 1) *1 (1 *0 prodD)}}
&
\\
\twocell{(prodC *0 2) *1 (prodC *0 1) *1 fonctF}
\ar@3 [rd] _-{\twocell{(prodC *0 2) *1 img_prodF}}
\ar@3 [ru] ^-{\twocell{(assocC *0 1) *1 fonctF}}
\ar@{} [rrr] |-{\twocell{img_assocF}}
& & &
\twocell{(2 *0 fonctF) *1 (1 *0 fonctF *0 1) *1 (fonctF *0 prodD) *1 (1 *0 prodD)}
\\
&
\twocell{(prodC *0 fonctF) *1 (fonctF *0 1) *1 (1 *0 prodD)}
\ar@3 [r] _-{\twocell{(2 *0 fonctF) *1 (img_prodF *0 1) *1 (1 *0 prodD)}}
&
\twocell{(2 *0 fonctF) *1 (1 *0 fonctF *0 1) *1 (fonctF *0 2) *1 (1 *0 prodD *0 1) *1 (1 *0 prodD)}
\ar@3 [ru] _-{\twocell{(2 *0 fonctF) *1 (1 *0 fonctF *0 1) *1 (fonctF *0 2) *1 (1 *0 assocD)}}
&
}
\]
\item For every $a,b \in \CC$, $4$-cells $\twocell{img_runitF}_{a,b}$ and $\twocell{img_lunitF}_{a,b}$ of $1$-source $ {}_a \twocell{1} {}_b \twocell{1} {}_{\f(b)}$
\end{itemize}
\[
\xymatrix @C = 4em @R = 1.5em {
&
\twocell{(1 *0 unitC *0 1) *1 (1 *0 fonctF) *1 (fonctF *0 1) *1 (1 *0 prodD)}
\ar@3 [r] ^-{\twocell{(1 *0 img_unitF) *1 (fonctF *0 1) *1 (1 *0 prodD)}}
\ar@{} [rd] |-{\twocell{img_runitF}}
&
\twocell{(fonctF *0 unitD) *1 (1 *0 prodD)}
\ar@3 [rd] ^-{\twocell{fonctF *1 (1 *0 runitD)}}
&
\\
\twocell{(1 *0 unitC *0 1) *1 (prodC *0 1) *1 fonctF}
\ar@3 [ru] ^-{\twocell{(1 *0 unitC *0 1) *1 img_prodF}}
\ar@3 @/_/ [rrr] _-{\twocell{(runitC *0 1) *1 fonctF}}
& & &
\twocell{fonctF}
}
\quad
\xymatrix @C = 4em @R = 1.5em {
&
\twocell{(unitC *0 fonctF) *1 (fonctF *0 1) *1 (1 *0 prodD)}
\ar@3 [r] ^--{\twocell{fonctF *1 (img_unitF *0 1) *1 (1 *0 prodD)}}
\ar@{} [rd] |-{\twocell{img_lunitF}}
&
\twocell{fonctF *1 (1 *0 unitD *0 1) *1 (1 *0 prodD)}
\ar@3 [rd] ^-{\twocell{fonctF *1 (1 *0 lunitD)}}
&
\\
\twocell{(unitC *0 2) *1 (prodC *0 1) *1 fonctF}
\ar@3 [ru] ^-{\twocell{(unitC *0 2) *1 img_prodF}}
\ar@3 @/_/ [rrr] _-{\twocell{(lunitC *0 1) *1 fonctF}}
& & &
\twocell{fonctF}
}
\]
\end{defn}

\begin{defn}
Let $\Alg(\PF)$ be the set of all tuples $(\CC,\DD,\f, \Phi)$:
\begin{itemize}
\item where $\CC$ and $\DD$ are sets,
\item where $\f$ is an application from $\CC$ to $\DD$,
\item where $\Phi$ is a functor from $\overline{\PF}[\f]$ to $\sCat$ such that, for every $c \in \CC$ the following equality holds:
\[
\Phi({}_c \twocell{1} {}_{\f(c)}) = \top
\] 
\end{itemize} 
\end{defn}


\begin{remq}
Let $\f : \CC \to \DD$ be an application. Since $\Bi[\CC]$ (resp. $\Bi[\DD]$) is a sub-$4$-polygraph of $\PF[\f]$, every functor $\Phi: \overline{\PF}[\f] \to \sCat$ induces by restriction two functors: \[\Phi_{\twocell{bicat_source}}: \overline{\Bi}[\CC] \to \sCat \qquad \Phi_{\twocell{bicat_but}}: \overline{\Bi}[\DD] \to \sCat\]
\end{remq}

\begin{prop}\label{prop:alg_pfonct}
Pseudofunctors between (small) categories are in one to one correspondence with elements of $\Alg(\PF)$.
\end{prop}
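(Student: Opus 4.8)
──────────────────────────────────────────
Proof plan for Proposition \ref{prop:alg_pfonct}

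The plan is to mirror exactly the strategy used for bicategories in Proposition \ref{prop:alg_bicat}: exhibit an explicit dictionary between the data and axioms of a pseudofunctor and the structure of a functor $\Phi:\overline{\PF}[\f]\to\sCat$ satisfying the normalisation condition $\Phi({}_c\twocell{1}{}_{\f(c)})=\top$. Since $\PF[\f]$ contains the two sub-polygraphs $\Bi[\CC]$ and $\Bi[\DD]$, a functor $\Phi$ on $\overline{\PF}[\f]$ restricts (as in the remark preceding the statement) to functors $\Phi_{\twocell{bicat_source}}:\overline{\Bi}[\CC]\to\sCat$ and $\Phi_{\twocell{bicat_but}}:\overline{\Bi}[\DD]\to\sCat$, and by Proposition \ref{prop:alg_bicat} these correspond to bicategories $\B$ and $\B'$ with $\B_0=\CC$, $\B'_0=\DD$. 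So the first step is: given $\Phi$, read off $\B$ and $\B'$ from the restrictions; then set $F_0:=\f:\CC\to\DD$, and for $a,b\in\CC$ define $F_{a,b}:\B(a,b)\to\B'(\f(a),\f(b))$ using the $1$-cell ${}_a\twocell{1}{}_{\f(a)}$ and the $2$-cell $\twocell{fonctF}_{a,b}$ — concretely, $F_{a,b}$ sends an object (resp. arrow) $x$ of $\B(a,b)=\Phi({}_a\twocell{1}{}_b)$ to the whiskered composite obtained by applying $\Phi(\twocell{fonctF}_{a,b})$, using the fact that $\Phi({}_a\twocell{1}{}_{\f(a)})=\top$ so that the extra $1$-cell contributes trivially. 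Functoriality of $F_{a,b}$ is automatic because $\twocell{fonctF}_{a,b}$ is a $2$-cell (hence $\Phi$ sends it to a functor).

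The second step is to extract the coherence isomorphisms. The $3$-cell $\twocell{img_prodF}_{a,b,c}$ has source $\twocell{(prodC *0 1) *1 fonctF}$ and target $\twocell{(1 *0 fonctF) *1 (fonctF *0 1) *1 (1 *0 prodD)}$; applying $\Phi$ and using the normalisation at the ${}_a\twocell{1}{}_{\f(a)}$-type cells, this becomes exactly a natural isomorphism $\phi_{a,b,c}:F_{a,c}\circ\star_{a,b,c}\Rightarrow \star'_{\f(a),\f(b),\f(c)}\circ(F_{a,b}\times F_{b,c})$, i.e. with components $\phi_{f,g}:F(f\star g)\Rightarrow F(f)\star' F(g)$; invertibility is guaranteed because $\PF[\f]$ is a $(4,3)$-polygraph, so $3$-cells become invertible in $\overline{\PF}[\f]$ (they are natural transformations whose image under $\Phi$, landing in $\sCat$, are thus natural isomorphisms — more precisely, in $\overline{\PF}[\f]$ the $3$-cell is formally invertible, and $\Phi$ being a functor preserves inverses). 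Similarly $\twocell{img_unitF}_a:\twocell{(unitC *0 1) *1 fonctF}\Rrightarrow\twocell{1 *1 (1 *0 unitD)}$ yields $\psi_a:F(I_a)\Rightarrow I'_{\f(a)}$. Then the three $4$-cells $\twocell{img_assocF}$, $\twocell{img_runitF}$, $\twocell{img_lunitF}$ express precisely that $\Phi$ is compatible with the quotient by these $4$-cells, and matching their pasting diagrams against \eqref{eq:img_assoc}, \eqref{eq:img_runit}, \eqref{eq:img_lunit} shows the equivalence of "$\Phi$ is well-defined on $\overline{\PF}[\f]$" with "$(F_0,F_{a,b},\phi,\psi)$ satisfies the pseudofunctor axioms". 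One summarises this, as before, in a correspondence table (sets $\leftrightarrow$ $0$-cells, the two bicategories' hom-categories $\leftrightarrow$ $1$-cells ${}_a\twocell{1}{}_b$ and the comparison $1$-cell, the functors $\star,I,F$ $\leftrightarrow$ $2$-cells $\twocell{prodC},\twocell{unitC},\twocell{fonctF}$ etc., $\alpha,L,R,\phi,\psi$ $\leftrightarrow$ $3$-cells, and the pseudofunctor equations $\leftrightarrow$ $4$-cells).

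For the converse direction one simply runs the dictionary backwards: given a pseudofunctor $F:\B\to\B'$, set $\CC:=\B_0$, $\DD:=\B'_0$, $\f:=F_0$, define $\Phi$ on the $\Bi[\CC]$- and $\Bi[\DD]$-parts via Proposition \ref{prop:alg_bicat}, send ${}_a\twocell{1}{}_{\f(a)}$ to $\top$, $\twocell{fonctF}_{a,b}$ to $F_{a,b}$ (viewed as a functor out of $\B(a,b)\times\top$), $\twocell{img_prodF}$ and $\twocell{img_unitF}$ to $\phi$ and $\psi$, and check that the defining $4$-cells are respected because $F$ satisfies \eqref{eq:img_assoc}--\eqref{eq:img_runit}; this determines $\Phi$ on all of $\overline{\PF}[\f]$ by functoriality, and the two constructions are mutually inverse by inspection. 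The main obstacle — though it is bookkeeping rather than a genuine difficulty — is the careful translation of the pasting diagrams of the three $4$-cells into the classical axioms, keeping track of the whiskerings by the comparison $1$-cells ${}_a\twocell{1}{}_{\f(a)}$ and the points where the normalisation $\Phi({}_c\twocell{1}{}_{\f(c)})=\top$ collapses an ostensibly more complicated composite to the expected one; this is the step where one must be most attentive, exactly as the handling of the unit $1$-cells was the delicate point in Proposition \ref{prop:alg_bicat}.
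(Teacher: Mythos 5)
Your proposal is correct and follows essentially the same route as the paper, which simply states that the proof is analogous to the bicategory case and records the correspondence table (restrictions to $\Bi[\CC]$ and $\Bi[\DD]$ give the source and target bicategories, $\f \leftrightarrow F_0$, $\twocell{fonctF} \leftrightarrow F$, the $3$-cells $\twocell{img_prodF}, \twocell{img_unitF} \leftrightarrow \phi, \psi$, and the $4$-cells $\leftrightarrow$ the axioms \eqref{eq:img_assoc}--\eqref{eq:img_runit}); you have merely spelled out the dictionary in more detail. One trivial slip: invertibility of the $3$-cells comes from $\PF[\f]$ being taken as a $(4,2)$-polygraph (so that $\overline{\PF}[\f]$ is built from the free $(4,2)$-category), not a $(4,3)$-polygraph.
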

\begin{proof}

The proof is similar to the case of bicategories, using the following correspondence table:

\begin{table}[H]
\centering
\begin{tabular}{|c|c||c|c|}
  \hline
\multicolumn{2}{|c||}{Pseudofunctors} & \multicolumn{2}{c|}{$\Alg(\PF)$} \\
\hline
Source and target & $\B$ and $\B'$ & $(\CC, \Phi_{\twocell{bicat_source}})$ and $(\DD,\Phi_{\twocell{bicat_but}})$ & Restrictions \\
Function & $F_0$ & $\f$ & Function \\
Functors & $F$ & $\twocell{fonctF}$ & $2$-cells \\
Natural transformations & $\psi$, $\phi$ & $\twocell{img_unitF}$, $\twocell{img_prodF}$ & $3$-cells \\
Equalities & \eqref{eq:img_assoc} \eqref{eq:img_lunit} \eqref{eq:img_runit}  & $\twocell{img_assocF}$ $\twocell{img_lunitF}$ $\twocell{img_runitF}$  & $4$-cells \\
\hline
\end{tabular}
\caption{Correspondence for pseudofunctors} 
\end{table}
\end{proof}

\begin{prop}\label{prop:PFonct_term}
For every sets $\CC,\DD$ and every application $\f : \CC \to \DD$,  the $4$-polygraph $\PF[\f]$ $3$-terminates.
\end{prop}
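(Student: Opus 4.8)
The plan is to invoke Theorem \ref{thm:terminaison} exactly as in the proof of Proposition \ref{prop:Bicat_term}, building suitable $2$-functors $X, Y : \PF[\f]^*_2 \to \sOrd$ (resp. on the $\co$-dual) and an $(X,Y,\N)$-derivation $d$ that strictly decreases along every generating $3$-cell. Since $\PF[\f]$ contains two disjoint copies of the bicategory polygraph, $\Bi[\CC]$ and $\Bi[\DD]$, plus the new generating cells $\twocell{fonctF}$ (in dimension $2$) and $\twocell{img_prodF}$, $\twocell{img_unitF}$ (in dimension $3$), the first step is to \emph{reuse} the data $X_\CC, Y_\CC, d_\CC$ of Proposition \ref{prop:Bicat_term} on each copy. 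Concretely, set $X({}_a\twocell{1}{}_b) = Y({}_a\twocell{1}{}_b) = \N^*$ for every $1$-cell, let $X$ and $Y$ act on $\twocell{prodC}, \twocell{unitC}$ (resp. $\twocell{prodD}, \twocell{unitD}$) by the same formulas as in the bicategory case, and similarly for $d$ on those cells. It then only remains to choose the values of $X, Y, d$ on the single new $2$-cell $\twocell{fonctF}$ so that the three inequalities $X(\s A) \geq X(\t A)$, $Y(\s A) \geq Y(\t A)$, $d(\s A) > d(\t A)$ hold for the new $3$-cells $\twocell{img_prodF}$ and $\twocell{img_unitF}$ — the inequalities for the cells coming from $\Bi[\CC]$ and $\Bi[\DD]$ being already established.

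The key computations concern the sources and targets of the two new $3$-cells. For $\twocell{img_prodF}_{a,b,c}$ the source is $\twocell{(prodC *0 1) *1 fonctF}$ and the target $\twocell{(1 *0 fonctF) *1 (fonctF *0 1) *1 (1 *0 prodD)}$; for $\twocell{img_unitF}_a$ the source is $\twocell{(unitC *0 1) *1 fonctF}$ and the target $\twocell{1 *1 (1 *0 unitD)}$. A natural choice that works is to make $X(\twocell{fonctF})$ and $Y(\twocell{fonctF})$ act as (near-)identities on the relevant coordinates — e.g.\ $X(\twocell{fonctF})[i] = (i,1)$ or similar, matching the number of output strands — so that the $X$- and $Y$-inequalities become equalities or trivial, while putting enough ``slack'' into $d(\twocell{fonctF})$ (say $d(\twocell{fonctF})[i] = i + c$ for a constant $c$, or a term linear in $i$) so that pushing $\twocell{fonctF}$ past $\twocell{prodC}$ or $\twocell{unitC}$ strictly decreases $d$. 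One then simply writes out $d$ of both sides of each $3$-cell using the derivation axioms $d(f_1 \star_1 f_2) = d(f_1)[x, Y(f_2)[y]] + d(f_2)[X(f_1)[x], y]$ and $d(f_1 \star_0 f_2) = d(f_1) + d(f_2)$, and checks the strict inequality for all $i,j,k \in \N^*$; since $d_\CC(\twocell{prodC})$ already contributes a strictly positive additive constant, this is where the decrease comes from.

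I expect the main (though still routine) obstacle to be \emph{bookkeeping}: the target of $\twocell{img_assocF}$-style composites involves several nested whiskerings of $\twocell{fonctF}$ and $\twocell{prodD}$, so one must be careful that the chosen formula for $X(\twocell{fonctF})$ keeps $X$ (and $Y$) non-increasing simultaneously for \emph{both} $3$-cells and remains compatible with the values already fixed on $\twocell{prodD}, \twocell{unitD}$ — in particular that no coordinate mismatch or off-by-one breaks the $Y$-inequality on $\twocell{img_unitF}$, whose target $\twocell{1 *1 (1 *0 unitD)}$ has a different strand structure than its source. Once a consistent choice is pinned down (mirroring the constants $+1$ used for $\twocell{prodC}$ in Proposition \ref{prop:Bicat_term}), the verification is a finite check and Theorem \ref{thm:terminaison} gives the $3$-termination of $\PF[\f]$.
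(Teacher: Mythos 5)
Your high-level strategy is exactly the paper's: apply Theorem \ref{thm:terminaison}, extending the data of Proposition \ref{prop:Bicat_term} to the two bicategory copies and choosing values on the new $2$-cell $\twocell{fonctF}$. But the concrete choices you sketch would fail the strict inequality for $\twocell{img_prodF}$, and you have misidentified where the decrease comes from. The source $\twocell{(prodC *0 1) *1 fonctF}$ contains \emph{one} $\twocell{fonctF}$, while the target $\twocell{(1 *0 fonctF) *1 (fonctF *0 1) *1 (1 *0 prodD)}$ contains \emph{two}, plus a $\twocell{prodD}$. Since a derivation is additive, adding a positive constant of ``slack'' to $d(\twocell{fonctF})$ increases the target's value by more than the source's — it goes in the wrong direction for this $3$-cell (it only helps $\twocell{img_unitF}$, whose target has no $\twocell{fonctF}$). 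Likewise, if you reuse $d_\CC$ verbatim on the $\DD$-copy, the $+1$ in $d(\twocell{prodC})[i,j,k]=i+k+1$ is exactly cancelled by the $+1$ in $d(\twocell{prodD})$ appearing in the target, so that constant cannot be the source of the strict decrease. The paper instead takes $d_\f(\twocell{prodD})[i,j,k]=i+k$ (strictly smaller than $d_\CC(\twocell{prodC})$), which forces it to re-verify the $\Bi[\DD]$ $3$-cells with the new values.

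The second, more serious problem is your suggestion that $Y(\twocell{fonctF})$ be a near-identity. Because $Y(\twocell{prodD})[k]=(k,k)$ duplicates its argument, the $Y$-coordinate $k$ is fed into \emph{both} copies of $d(\twocell{fonctF})$ and into $d(\twocell{prodD})$ on the target side (total coefficient $3$ in $k$, with $d(\twocell{fonctF})[i,j]=i+j+1$), whereas on the source side $k$ enters $d(\twocell{fonctF})$ once and $d(\twocell{prodC})$ only through $Y(\twocell{fonctF})[k]$. A direct computation with $Y(\twocell{fonctF})[k]=k$ and your other choices gives $d(\s)=2i+j+2k+2 < 2i+j+3k+3=d(\t)$ for $k\geq 1$: the inequality fails in the wrong direction. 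Any working assignment must make $Y(\twocell{fonctF})$ grow at least like $2k$; the paper takes $Y_\f(\twocell{fonctF})[i]=2i+1$, which together with $d_\f(\twocell{prodD})[i,j,k]=i+k$ and $d_\f(\twocell{fonctF})[i,j]=i+j+1$ yields $d(\s)=2i+j+3k+3>2i+j+3k+2=d(\t)$, a margin of exactly $1$. (The paper also sets $X_\f$ and $Y_\f$ to $\top$ on the new $1$-cells ${}_a\twocell{1}{}_{\f(a)}$, which simplifies the bookkeeping you worry about; your choice of $\N^*$ there is workable but adds coordinates to track.) So the route is right, but the ``finite check'' is not routine with the parameters you propose: it is precisely the tuning of $Y(\twocell{fonctF})$ and $d(\twocell{prodD})$ that makes the argument go through.
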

\begin{proof}
In order to apply Theorem \ref{thm:terminaison}, we define functors $X_{\f} : \PF[\f]_2^* \to \sOrd$ and $Y_{\f} : (\PF[\f]_2^*)^\co \to \sOrd$ as extensions of the functors $X_{\CC}$, $X_{\DD}$, $Y_{\CC}$ and $Y_{\DD}$ from Proposition \ref{prop:Bicat_term}, and by setting for every $a \in \CC$:
\[
X_{\f}({}_a\twocell{1}{}_{\f(a)}) = Y_{\f}({}_a\twocell{1}{}_{\f(a)}) = \top,
\]
where $\top$ is the terminal ordered set, and for every $i \in \mathbb N^*$:
\[
X_{\f}(\twocell{fonctF})[i] = i \qquad Y_{\f}(\twocell{fonctF})[i]= 2i+1.
\]

We now define an $(X_{\f},Y_{\f},\mathbb N)$-derivation $d_{\f}$ on $\PF[\f]_2^*$ as an extension of $d_{\CC}$, by setting for every $i,j,k \in \N^*$:
\[
d_{\f}(\twocell{prodD})[i,j,k] = i+k
\qquad
d_{\f}(\twocell{unitD})[i] = i
\qquad
d_{\f}(\twocell{fonctF})[i,j] = i+j+1 
\]

It remains to show that the inequalities required to apply Theorem \ref{thm:terminaison} are satisfied. Since $X_{\f}$ (resp. $Y_{\f}$) extends $X_{\CC}$ and $X_{\DD}$ (resp. $Y_{\CC}$ and $Y_{\DD}$), the only inequalities that need to be checked are those corresponding to the $3$-cells $\twocell{img_prodF}$ and $\twocell{img_unitF}$. Indeed for every $i,j \in \N^*$, we have:
\[
X_{\f}(\twocell{(unitC *0 1) *1 fonctF}) = 1 \geq 1 = X_{\f}(\twocell{1 *0 unitD})
\]
\[
X_{\f}(\twocell{(prodC *0 1) *1 fonctF})[i,j] = i+j \geq i+j = X_{\f}(\twocell{(1 *0 fonctF) *1 (fonctF *0 1) *1 (1 *0 prodD)})[i,j]
\]
\[
Y_{\f}(\twocell{(prodC *0 1) *1 fonctF})[i] = (2i+1,2i+1) \geq (2i+1,2i+1) = Y_{\f}(\twocell{(1 *0 fonctF) *1 (fonctF *0 1) *1 (1 *0 prodD)})[i]
\]

Concerning $d_{\f}$, the $3$-cells from $\Bi[\CC]$ have already been checked in Proposition \ref{prop:Bicat_term}. For the other $3$-cells, we have, for every $i,j,k \in \N^*$:
\[
d_{\f}(\twocell{(unitD *0 1) *1 prodD})[i,j] = 2j+1 > 0 = d_{\f}(\twocell{1})[i,j]
\qquad
d_{\f}(\twocell{(1 *0 unitD) *1 prodD})[i,j] = i+2j > 0 = d_{\f}(\twocell{1})[i,j]
\]
\[
d_{\f}(\twocell{(unitC *0 1) *1 fonctF})[i] = 3i+2 > i = d_{\f}(\twocell{1 *0 unitD})
\]
\[
d_{\f}(\twocell{(prodC *0 1) *1 fonctF})[i,j,k] = 2i+j+3k+3 > 2i+j+3k+2 = d_{\f}(\twocell{(1 *0 fonctF) *1 (fonctF *0 1) *1 (1 *0 prodD)})[i,j,k].
\]
\end{proof}

\begin{thm}\label{thm:PFonct_coh}
Let $\CC$ and $\DD$ be sets, and $\f:\CC \to \DD$ an application.

The $4$-polygraph $\PF[\f]$ is $3$-convergent and the free $(4,2)$-category $\PF[\f]^{*(2)}$ is $3$-coherent.
\end{thm}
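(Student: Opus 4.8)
The plan is to mimic the proof of Theorem \ref{thm:BiCat_coh}. By Proposition \ref{prop:PFonct_term} the $4$-polygraph $\PF[\f]$ is already $3$-terminating, so by Proposition \ref{prop:confluence_local} the $3$-confluence — hence the $3$-convergence — of $\PF[\f]$ reduces to checking that every critical pair of $\PF[\f]$ is confluent, and by Theorem \ref{thm:squier} the $3$-coherence of $\PF[\f]^{*(2)}$ will follow once every critical pair admits a filling in $\PF[\f]^{*(2)}_4$ (the filling need not be a generating $4$-cell, just as the cells $\omega_i$ in the proof of Theorem \ref{thm:BiCat_coh}).

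First I would classify the critical pairs. The generating $3$-cells of $\PF[\f]$ are those of the sub-polygraphs $\Bi[\CC]$ and $\Bi[\DD]$, together with the two new families $\twocell{img_prodF}$ and $\twocell{img_unitF}$, of sources $\twocell{(prodC *0 1) *1 fonctF}$ and $\twocell{(unitC *0 1) *1 fonctF}$ respectively. In each of these two sources the $2$-cell $\twocell{fonctF}$ occurs exactly once and is the rightmost factor, fed on its first input by the output of $\twocell{prodC}$ (resp.\ $\twocell{unitC}$); consequently no two of the new $3$-cells overlap, none of them overlaps a $3$-cell of $\Bi[\DD]$ (whose sources involve only $\twocell{prodD}$ and $\twocell{unitD}$), and $\twocell{img_unitF}$ overlaps no $3$-cell of $\Bi[\CC]$ either (the sources of $\twocell{assocC}$, $\twocell{lunitC}$, $\twocell{runitC}$ contain no $\twocell{fonctF}$ and a lone $\twocell{unitC}$ is not a redex). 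Hence the critical pairs of $\PF[\f]$ are: the five families internal to $\Bi[\CC]$ and the five internal to $\Bi[\DD]$, confluent by the proof of Theorem \ref{thm:BiCat_coh}; and the new families obtained by overlapping the leading $\twocell{prodC}$ of $\twocell{img_prodF}$ with the associativity, resp.\ left-unit, resp.\ right-unit $3$-cell of $\Bi[\CC]$, namely the families of sources
\[
\twocell{(prodC *0 2) *1 (prodC *0 1) *1 fonctF},
\qquad
\twocell{(unitC *0 2) *1 (prodC *0 1) *1 fonctF},
\qquad
\twocell{(1 *0 unitC *0 1) *1 (prodC *0 1) *1 fonctF}.
\]

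Then I would observe that these three new families are precisely the ones whose fillings are provided by the generating $4$-cells $\twocell{img_assocF}$, $\twocell{img_lunitF}$ and $\twocell{img_runitF}$: the $1$-source and the drawn boundary of each of these $4$-cells match exactly one of the three overlaps above. Combining this with Proposition \ref{prop:confluence_local} gives the $3$-convergence of $\PF[\f]$, and combining it with Theorem \ref{thm:squier} — the remaining critical pairs of the two copies of $\Bi$ being filled by derived $4$-cells exactly as in the proof of Theorem \ref{thm:BiCat_coh} — gives the $3$-coherence of $\PF[\f]^{*(2)}$. I expect the only real work to be the routine diagrammatic verification that each of the three listed branchings is indeed a minimal overlapping branching and that $\twocell{img_assocF}$, $\twocell{img_lunitF}$, $\twocell{img_runitF}$ really are fillings of representatives of them; this is a finite check, entirely parallel to the monoidal-category computation of \cite{G12}, with no conceptual obstacle beyond bookkeeping the indices $a,b,c,d \in \CC$.
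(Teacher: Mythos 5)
Your proposal is correct and follows essentially the same route as the paper: $3$-termination from Proposition \ref{prop:PFonct_term}, reduction to critical pairs via Proposition \ref{prop:confluence_local} and Theorem \ref{thm:squier}, the count of thirteen families (ten inherited from $\Bi[\CC]$ and $\Bi[\DD]$, three new), and the same three new sources filled by $\twocell{img_assocF}$, $\twocell{img_lunitF}$ and $\twocell{img_runitF}$. Your extra justification that $\twocell{img_unitF}$ and the mutual overlaps of the new $3$-cells contribute no further critical pairs is correct and slightly more explicit than the paper's proof.
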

\begin{proof}
We have shown that it is $3$-terminating, so using Proposition \ref{prop:confluence_local} and Theorem \ref{thm:squier}, it remains to show  that every critical pair admits a filler in $\PF[\f]_4$. 

There are thirteen families of critical pairs. Among them, ten come from $\Bi[\CC]$ or $\Bi[\DD]$, and were already dealt with in Theorem \ref{thm:BiCat_coh}. The remaining three have the following sources:

\[
\twocell{(1 *0 unitC *0 1) *1 (prodC *0 1) *1 fonctF}
\qquad
\twocell{(unitC *0 2) *1 (prodC *0 1) *1 fonctF}
\qquad
\twocell{(prodC *0 2) *1 (prodC *0 1) *1 fonctF}
\]

and they are filled respectively by the $4$-cells $\twocell{img_runitF}$, $\twocell{img_lunitF}$ and $\twocell{img_assocF}$.
\end{proof}

\subsection{Coherence for pseudonatural transformations}
\label{subsec:pseudo_nat_transfo}
\begin{defn}
A pseudonatural transformation  $\tau$ consists of the following data:
\begin{itemize}
\item Two pseudofunctors $F,F' : \B \to \B'$, where $\B$ and $\B'$ are bicategories.
\item For every $a \in \B_0$, a functor $\tau_a : \top \to \B'(F_0(a),F'_0(a))$, that is a $1$-cell $\tau_a: F_0(a) \to F'_0(a)$ in $\B'$.
\item For every $a,b \in \B_0$, a natural isomorphism $\sigma_{a,b}$:

\[
\xymatrix @!C=6em {
& 
\B(a,b)
  \ar [dr] ^{F'_{a,b}}
  \ar [ld] _{F_{a,b}}
& \\
\B'(F_0(a),F_0(b))
  \ar [d] _{\B'(F_0(a),F_0(b)) \times \tau_b} ^{\qquad}="tgt"
& & 
\B'(F'_0(a),F'_0(b))
  \ar [d] ^{\tau_a \times \B'(F'_0(a),F'_0(b))} _{\qquad}="src"
\\
\B'(F_0(a),F_0(b)) \times \B'(F_0(b),F'_0(b))
  \ar [rd] _{\star'_{F_0(a),F_0(b),F'_0(b)}}
& & 
\B'(F_0(a),F'_0(a)) \times \B'(F'_0(a),F'_0(b))
  \ar [dl] ^-*+{\star'_{F_0(a),F'_0(a),F'_0(b)}}
\\
& 
\B'(F_0(a),F'_0(b))
& 
\ar@2 "tgt";"src" ^{\sigma_{a,b}}
}
\]
of components $\sigma_f : F(f) \star' \tau_b \Rightarrow  \tau_a \star' F'(f) $, for every $f \in \B(a,b)$.
\end{itemize}

This data must satisfy the following axioms:
\begin{itemize}
\item For every $(f,g) \in \B(a,b) \times \B(b,c)$:
\begin{equation} \label{eq:transfo_prod}
\begin{gathered}
\xymatrix @R=3em{
& 
\tau_a \star' F'(f \star g)
  \ar@2 [rd] ^{\tau_a \star' \phi'_{f,g}}
  \ar@2 [ld];[] ^{\sigma_{f \star g}}
  \ar@{} [dddd] |-{=}
& \\
F(f \star g) \star' \tau_c
  \ar@2 [d] _{\phi_{f,g} \star' \tau_c}
& & 
\tau_a \star' (F'(f) \star' F'(g)) 
  \ar@2 [d];[] _{\alpha'_{\tau_a,F'(f),F'(g)}}
\\
(F(f) \star' F(g)) \star' \tau_c
  \ar@2 [d] _{\alpha'_{F(f),F(g),\tau_c}}
& & 
(\tau_a \star' F'(f)) \star' F'(g)
  \ar@2 [d];[] _{\sigma_f \star' F'(g)}
\\
F(f) \star' (F(g) \star' \tau_c)
  \ar@2 [rd] _*+{F(f) \star' \sigma_g} 
& & 
(F(f) \star' \tau_b) \star' F'(g)
  \ar@2 [ld] ^*+{\alpha'_{F(f),\tau_b,F(g)}}
\\
& 
F(f) \star' (\tau_b \star' F'(g))
&
}
\end{gathered}
\end{equation}

\item For every $a \in \B_0$:

\begin{equation} \label{eq:transfo_unit}
\begin{gathered}
\begin{tikzpicture}
\matrix (m) [matrix of math nodes, 
			nodes in empty cells,
			column sep = 1cm, 
			row sep = .7cm] 
{
& F(I_a) \star' \tau_a & \\
& & \tau_a \star' F'(I_a) \\
I'_{F_0(a)} \star' \tau_a & & \\
& & \tau_a \star' I'_{F'_0(a)} \\
& \tau_a & \\
};
\doublearrow{->}
{(m-1-2) -- node [above right] {$\sigma_{I_a}$} (m-2-3)}
\doublearrow{->}
{(m-1-2) -- node [above left] {$\psi_a \star' \tau_a$} (m-3-1)}
\doublearrow{->}
{(m-2-3) -- node [right] {$\tau_a \star' \psi'_a$} (m-4-3)}
\doublearrow{->}
{(m-3-1) -- node [below left] {$L'_{\tau_a}$} (m-5-2)}
\doublearrow{->}
{(m-4-3) -- node [below right] {$R'_{\tau_a}$} (m-5-2)}

\path (m-1-2) -- node {$=$} (m-5-2);
\end{tikzpicture}
\end{gathered}
\end{equation}
\end{itemize}
\end{defn}
\begin{defn}
Let $\CC$ and $\DD$ be sets, and $\f,\g$ be applications from $\CC$ to $\DD$. Let us define dimension by dimension a $(4,2)$-polygraph $\TPN[\f,\g]$. We will see in Proposition \ref{prop:alg_tpn} that pseudonatural transformations correspond to algebras over $\TPN[\f,\g]$.

The polygraph $\TPN[\f,\g]$ contains the union of the  polygraphs $\PF[\f]$ and $\PF[\g]$. In particular, the following cells are in $\TPN[\f,\g]$:
\begin{itemize}
\item  the cells $\twocell{prodC}$, $\twocell{unitC}$, $\twocell{assocC}$, $\twocell{runitC}$, $\twocell{lunitC}$, $\twocell{pentaC}$ and $\twocell{trianC}$ coming from $\Bi[\CC]$,
\item the cells $\twocell{prodD}$, $\twocell{unitD}$, $\twocell{assocD}$, $\twocell{runitD}$, $\twocell{lunitD}$, $\twocell{pentaD}$ and $\twocell{trianD}$ coming from $\Bi[\DD]$,
\item the cells $\twocell{fonctF}$, $\twocell{img_prodF}$, $\twocell{img_unitF}$, $\twocell{img_assocF}$, $\twocell{img_runitF}$ and $\twocell{img_lunitF}$ coming from $\PF[\f]$,
\item the cells $\twocell{fonctG}$, $\twocell{img_prodG}$, $\twocell{img_unitG}$, $\twocell{img_assocG}$, $\twocell{img_runitG}$ and $\twocell{img_lunitG}$ coming from $\PF[\g]$.
\end{itemize}

Together with the union of $\PF[\f]$ and $\PF[\g]$, $\TPN[\f,\g]$ contains the following cells:
\paragraph*{Dimension $2$:}
For every  $a \in \CC$, the set $\TPN[\f,\g]_2$ contains a $2$-cell $\twocell{transfo}_a: {}_a \twocell{1} {}_{\g(a)} \Rightarrow {}_a \twocell{1} {}_{\f(a)} \twocell{1} {}_{\g(a)}$.

\paragraph*{Dimension $3$:}
For every $a,b \in \CC$, the set $\TPN[\f,\g]_3$ contains a $3$-cell:
 $\twocell{transfo_nat}_{a,b} :
\twocell{(1 *0 transfo) *1 (fonctF *0 1) *1 (1 *0 prodD)}  \Rrightarrow
\twocell{fonctG *1 (transfo *0 1) *1 (1 *0 prodD)} $ of $1$-source ${}_a \twocell{1} {}_b \twocell{1} {}_{\g(b)}$.
\paragraph*{Dimension $4$:}
The set $\TPN[\f,\g]_4$ contains the following $4$-cells:
\begin{itemize}
\item For every $a \in \CC$, a $4$-cell $\twocell{transfo_unit}_a$ of $1$-source ${}_a \twocell{1} {}_{\g(a)}$
\[
\xymatrix @C = 1.5em @R = 2em {
&
\twocell{(unitC *0 1) *1 fonctG *1 (transfo *0 1) *1 (1 *0 prodD)} 
\ar@3 [rr] ^-{\twocell{img_unitG *1 (transfo *0 1) *1 (1 *0 prodD)}}
& 
\ar@{} [dd] |-{\twocell{transfo_unit}}
&
\twocell{(transfo *0 unitD) *1 (1 *0 prodD)}
\ar@3 [rd] ^{\twocell{transfo *1 (1 *0 runitD)}}
&
\\
\twocell{(unitC *0 transfo) *1 (fonctF *0 1) *1 (1 *0 prodD)}
\ar@3 [rrd] _-{\twocell{transfo *1 (img_unitF *0 1) *1 (1 *0 prodD)}}
\ar@3 [ru] ^-{\twocell{(unitC *0 1) *1 transfo_nat}}
& & & &
\twocell{transfo}
\\
& &
\twocell{transfo *1 (1 *0 unitD *0 1) *1 (1 *0 prodD)}
\ar@3 [rru] _-{\twocell{transfo *1 (1 *0 lunitD)}}
}
\]
\item For every $a,b,c \in \CC$, a $4$-cell $\twocell{transfo_prod}_{a,b,c}$ of $1$-source ${}_a \twocell{1} {}_b \twocell{1} {}_c \twocell{1} {}_{\g(c)}$
\[
\xymatrix @!C = 5em @R = 1.5em {
&
\twocell{(1 *0 fonctG) *1 (fonctG *0 1) *1 (transfo *0 prodD) *1 (1 *0 prodD)}
& 
\twocell{(1 *0 fonctG) *1 (fonctG *0 1) *1 (transfo *0 2) *1 (1 *0 prodD *0 1) *1 (1 *0 prodD)}
\ar@3 [l] _{\twocell{(1 *0 fonctG) *1 (fonctG *0 1) *1 (transfo *0 2) *1 (1 *0 assocD)}}
& 
\twocell{(1 *0 fonctG) *1 (1 *0 transfo *0 1) *1 (fonctF *0 2) *1 (1 *0 prodD *0 1) *1 (1 *0 prodD)}
\ar@3 [r] ^{\twocell{(1 *0 fonctG) *1 (1 *0 transfo *0 1) *1 (fonctF *0 2) *1 (1 *0 assocD)}}
\ar@3 [l] _{\twocell{(1 *0 fonctG) *1 (transfo_nat *0 1) *1 (1 *0 prodD)}}
&
\twocell{(1 *0 fonctG) *1 (1 *0 transfo *0 1) *1 (fonctF *0 prodD) *1 (1 *0 prodD)}
&
\\
\twocell{(prodC *0 1) *1 fonctG *1 (transfo *0 1) *1 (1 *0 prodD)}
\ar@3 [ru] ^{\twocell{img_prodG *1 (transfo *0 1) *1 (1 *0 prodD)}}
\ar@{} [rrrrr] |-{\twocell{transfo_prod}}
& & 
& &
&
\twocell{(2 *0 transfo) *1 (1 *0 fonctF *0 1) *1 (fonctF *0 prodD) *1 (1 *0 prodD)}
\ar@3 [ul] _{\twocell{(1 *0 transfo_nat) *1 (fonctF *0 1) *1 (1 *0 prodD)}}
\\
& 
& 
\twocell{(prodC *0 transfo) *1 (fonctF *0 1) *1 (1 *0 prodD)}
\ar@3 [r] _{\twocell{(2 *0 transfo) *1 (img_prodF *0 1) *1 (1 *0 prodD)}}
\ar@3 [llu] ^{\twocell{(prodC *0 1) *1 transfo_nat}}
&
\twocell{(2 *0 transfo) *1 (1 *0 fonctF *0 1) *1 (fonctF *0 2) *1 (1 *0 prodD *0 1) *1 (1 *0 prodD)}
\ar@3 [rru] _{\twocell{(2 *0 transfo) *1 (1 *0 fonctF *0 1) *1 (fonctF *0 2) *1 (1 *0 assocD)}}
&
&
}
\]
\end{itemize}
\end{defn}

\begin{defn}
Let $\Alg(\TPN)$  be the set of tuples $(\CC,\DD,\f,\g,\Phi)$ :
\begin{itemize}
\item where $\CC$ and $\DD$ are sets,
\item where $\f,\g:\CC \to \DD$ are applications,
\item where $\Phi$ is a functor from $\overline{\TPN}[\f,\g]$  to $\sCat$, such that for every $c \in \CC$,  $d \in \DD$ and $1$-cell $\twocell{1} : c \to d$: 
\[
\Phi({}_c\twocell{1}{}_d) = \top
\]
\end{itemize} 
\end{defn}

\begin{remq}
Since $\PF[\f]$ (resp. $\PF[\g]$) is a sub-$4$-polygraph of $\TPN[\f,\g]$, every functor $\Phi: \overline{\TPN}[\f,\g] \to \sCat$ induces by restriction two functors \[\Phi_{\twocell{fonct_source}}: \overline{\PF}[\f] \to \sCat \qquad \Phi_{\twocell{fonct_but}}: \overline{\PF}[\g] \to \sCat\]
\end{remq}

\begin{prop}\label{prop:alg_tpn}
Pseudonatural transformations between pseudofuncteurs are in one to one correspondence with elements of $\Alg(\TPN)$.
\end{prop}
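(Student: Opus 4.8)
The plan is to mimic the proofs of Proposition \ref{prop:alg_bicat} and Proposition \ref{prop:alg_pfonct}, since Proposition \ref{prop:alg_tpn} is of exactly the same nature: it asserts that the combinatorial data packaged in the $(4,2)$-polygraph $\TPN[\f,\g]$, once turned into an algebra (i.e. a functor $\Phi : \overline{\TPN}[\f,\g] \to \sCat$ respecting the normalisation condition $\Phi({}_c\twocell{1}{}_d) = \top$), reproduces precisely the classical notion of pseudonatural transformation. First I would set up the correspondence as a table, in the spirit of Table \ref{table:corresp_bicat}: the $0$-cells of $\overline{\TPN}[\f,\g]$ give the sets $\CC$ and $\DD$, the restrictions $\Phi_{\twocell{fonct_source}}$ and $\Phi_{\twocell{fonct_but}}$ give (via Proposition \ref{prop:alg_pfonct}) the two pseudofunctors $F, F' : \B \to \B'$ between the bicategories $\B$ and $\B'$ determined by $\Phi$ on $\Bi[\CC]$ and $\Bi[\DD]$, the applications $\f,\g$ give $F_0, F'_0$, the $2$-cell $\twocell{transfo}_a$ gives the component $\tau_a : F_0(a) \to F'_0(a)$, the $3$-cell $\twocell{transfo_nat}_{a,b}$ gives the natural isomorphism $\sigma_{a,b}$ with components $\sigma_f : F(f) \star' \tau_b \Rightarrow \tau_a \star' F'(f)$, and finally the two $4$-cells $\twocell{transfo_unit}_a$ and $\twocell{transfo_prod}_{a,b,c}$ correspond to the two coherence axioms \eqref{eq:transfo_unit} and \eqref{eq:transfo_prod}.

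The substance of the proof is then split into two directions. Going from an algebra $(\CC,\DD,\f,\g,\Phi)$ to a pseudonatural transformation: one reads off all the data above, checks that $\tau_a$ is a genuine $1$-cell (which follows from $\Phi({}_a\twocell{1}{}_{\g(a)}) = \top$ so that $\Phi(\twocell{transfo}_a)$ is a functor $\top \to \B'(F_0(a), F'_0(a))$, i.e. an object of that hom-category, i.e. a $1$-cell), checks that $\sigma_{a,b}$ is a natural transformation with the stated source and target (by unwinding the $1$-source and $1$-target of the $3$-cell $\twocell{transfo_nat}_{a,b}$ and applying $\Phi$), and checks that $\sigma_{a,b}$ is invertible — here I would use that the polygraph $\TPN[\f,\g]$ is a $(4,2)$-polygraph, so every $3$-cell, and in particular $\twocell{transfo_nat}$, becomes invertible in $\overline{\TPN}[\f,\g]$, hence $\Phi(\twocell{transfo_nat}_{a,b})$ is a natural \emph{isomorphism}. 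Then the axioms \eqref{eq:transfo_prod} and \eqref{eq:transfo_unit} are obtained by applying $\Phi$ to the two $4$-cells, using that $\Phi$ is compatible with the quotient by $\TPN[\f,\g]_4$: each $4$-cell has a $1$-source and a $1$-target which are parallel $3$-cells, and $\Phi$ sends both to the same natural transformation, which is exactly the commutativity of the corresponding pentagonal/hexagonal diagram. Conversely, given a pseudonatural transformation $\tau$ one defines $\Phi$ on generators by the table, extends it freely, and must verify it is well-defined on the quotient $\overline{\TPN}[\f,\g]$, i.e. respects all $4$-cells: the $4$-cells coming from $\Bi[\CC]$, $\Bi[\DD]$, $\PF[\f]$, $\PF[\g]$ are handled by Propositions \ref{prop:alg_bicat} and \ref{prop:alg_pfonct} applied to the bicategories $\B, \B'$ and pseudofunctors $F, F'$ underlying $\tau$, while $\twocell{transfo_prod}$ and $\twocell{transfo_unit}$ are respectively axioms \eqref{eq:transfo_prod} and \eqref{eq:transfo_unit} for $\tau$. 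One then checks the two assignments are mutually inverse, which is immediate since both are determined by their values on generators.

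The main obstacle, and the only genuinely non-routine point, is bookkeeping: matching the $1$-sources and $1$-targets of the generating $3$-cell $\twocell{transfo_nat}_{a,b}$ and of the two generating $4$-cells against the pasting diagrams defining $\sigma_{a,b}$, \eqref{eq:transfo_prod} and \eqful{eq:transfo_unit}. Concretely one must check that the composite $2$-cell $\twocell{(1 *0 transfo) *1 (fonctF *0 1) *1 (1 *0 prodD)}$ is sent by $\Phi$ to the functor $\B(a,b) \to \B'(F_0(a), F'_0(b))$ whose value on $f$ is $F(f) \star' \tau_b$ — this uses the defining equation $\Phi(\twocell{prodD}) = \star'$, the fact that $\Phi$ restricted to $\PF[\f]$ sends $\twocell{fonctF}$ to the functor underlying $F$, and $\Phi(\twocell{transfo}_b) = \tau_b$ — and similarly for the target $\twocell{fonctG *1 (transfo *0 1) *1 (1 *0 prodD)}$, which is sent to $f \mapsto \tau_a \star' F'(f)$; and likewise each arrow of the two $4$-cell diagrams must be recognised as the corresponding arrow in \eqref{eq:transfo_prod} or \eqref{eq:transfo_unit} (e.g.\ $\twocell{img_prodG}$ gives $\phi'$, $\twocell{assocD}$ gives $\alpha'$, $\twocell{runitD}$ gives $R'$, $\twocell{lunitD}$ gives $L'$, $\twocell{img_unitG}$ gives $\psi'$). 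Once this identification of the two diagram shapes is carried out — and it is entirely mechanical, just lengthy — the proof concludes exactly as in Proposition \ref{prop:alg_pfonct}, so I would simply present the correspondence table and remark that the verification follows the same pattern as the previous two propositions.

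\begin{proof}
The proof follows the same pattern as those of Propositions \ref{prop:alg_bicat} and \ref{prop:alg_pfonct}. Given an algebra $(\CC,\DD,\f,\g,\Phi) \in \Alg(\TPN)$, the restrictions $\Phi_{\twocell{fonct_source}}$ and $\Phi_{\twocell{fonct_but}}$ yield, by Proposition \ref{prop:alg_pfonct}, two pseudofunctors $F, F' : \B \to \B'$ with $F_0 = \f$ and $F'_0 = \g$. One recovers the remaining data of a pseudonatural transformation as follows. Since $\Phi({}_a\twocell{1}{}_{\g(a)}) = \top$, the functor $\Phi(\twocell{transfo}_a) : \top \to \B'(F_0(a),F'_0(a))$ picks out a $1$-cell $\tau_a : F_0(a) \to F'_0(a)$. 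Unwinding the $1$-source and $1$-target of the generating $3$-cell $\twocell{transfo_nat}_{a,b}$ and applying $\Phi$ (using $\Phi(\twocell{prodD}) = \star'$ and the fact that $\Phi$ sends $\twocell{fonctF}$, $\twocell{fonctG}$ to the functors underlying $F$, $F'$), one obtains that $\Phi(\twocell{transfo_nat}_{a,b})$ is a natural transformation $\sigma_{a,b}$ with components $\sigma_f : F(f) \star' \tau_b \Rightarrow \tau_a \star' F'(f)$; as $\TPN[\f,\g]$ is a $(4,2)$-polygraph, the $3$-cell $\twocell{transfo_nat}_{a,b}$ is invertible in $\overline{\TPN}[\f,\g]$, so $\sigma_{a,b}$ is a natural isomorphism. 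Applying $\Phi$ to the $4$-cells $\twocell{transfo_unit}_a$ and $\twocell{transfo_prod}_{a,b,c}$ and using that $\Phi$ is compatible with the quotient by $\TPN[\f,\g]_4$ gives exactly axioms \eqref{eq:transfo_unit} and \eqref{eq:transfo_prod}. Conversely, from a pseudonatural transformation $\tau$ one defines a functor $\Phi : \overline{\TPN}[\f,\g] \to \sCat$ on generators according to the correspondence table below, and extends it; it respects the $4$-cells of $\Bi[\CC]$, $\Bi[\DD]$, $\PF[\f]$, $\PF[\g]$ by Propositions \ref{prop:alg_bicat} and \ref{prop:alg_pfonct} applied to the underlying bicategories and pseudofunctors of $\tau$, and it respects $\twocell{transfo_prod}$ and $\twocell{transfo_unit}$ precisely by axioms \eqref{eq:transfo_prod} and \eqref{eq:transfo_unit}. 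These two assignments are mutually inverse since each is determined by its values on generators.

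\begin{table}[H]
\centering
\begin{tabular}{|c|c||c|c|}
  \hline
\multicolumn{2}{|c||}{Pseudonatural transformations} & \multicolumn{2}{c|}{$\Alg(\TPN)$} \\
\hline
Source and target & $F$ and $F'$ & $(\CC,\DD,\f,\Phi_{\twocell{fonct_source}})$ and $(\CC,\DD,\g,\Phi_{\twocell{fonct_but}})$ & Restrictions \\
$1$-cells & $\tau_a$ & $\twocell{transfo}$ & $2$-cells \\
Natural transformations & $\sigma$ & $\twocell{transfo_nat}$ & $3$-cells \\
Equalities & \eqref{eq:transfo_prod} \eqref{eq:transfo_unit} & $\twocell{transfo_prod}$ $\twocell{transfo_unit}$ & $4$-cells \\
\hline
\end{tabular}
\caption{Correspondence for pseudonatural transformations}
\end{table}
\end{proof}
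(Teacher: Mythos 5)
Your proposal is correct and takes essentially the same approach as the paper, which simply states that the proof is similar to that of Proposition \ref{prop:alg_bicat} and gives the correspondence table; you have merely spelled out the verification that the paper leaves implicit (recovering the pseudofunctors via Proposition \ref{prop:alg_pfonct}, reading off $\tau_a$ from the normalisation condition $\Phi({}_a\twocell{1}{}_{\g(a)}) = \top$, invertibility of $\sigma$ from the $(4,2)$-structure, and the two axioms from the two $4$-cells).
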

\begin{proof}
The proof is similar to that of bicategories, using the following correspondence table:

\begin{table}[H]
\centering
\begin{tabular}{|c|c||c|c|}
  \hline
\multicolumn{2}{|c||}{Pseudonatural transformations} & \multicolumn{2}{c|}{$\Alg(\TPN)$} \\
\hline
Source and target & $F$ and $F'$ & $\Phi_{\twocell{fonct_source}}$ and $\Phi_{\twocell{fonct_but}}$ & Restrictions \\
Functors & $\tau$ & $\twocell{transfo}$ & $2$-cells \\
Natural transformations & $\sigma$ & $\twocell{transfo_nat}$ & $3$-cells \\
Equalities & \eqref{eq:transfo_prod} \eqref{eq:transfo_unit}  & $\twocell{transfo_prod}$ $\twocell{transfo_unit}$ & $4$-cells \\
\hline
\end{tabular}
\caption{Correspondence for pseudonatural transformations} 
\end{table}
\end{proof}

This result induces the following classification of the cells of the $(4,2)$-polygraph $\TPN[f,g]$, depending on which structure they come from. We also distinguish two types of cells: \emph{product cells} and \emph{unit cells}. Moreover, in the following table, every line corresponds to a dimension.

\begin{table}[H]
\centering
\begin{tabular}{|c|c|c c|}
  \hline
  Origin & Dimension & Product cells & Unit cells \\
  \hline
  \hline
  \multirow{3}{*}{Source bicategory} & $2$-cells & $\twocell{prodC}$ & $\twocell{unitC}$  \\
   & $3$-cells & $\twocell{assocC}$ & $\twocell{runitC}$, $\twocell{lunitC}$   \\
   & $4$-cells & $\twocell{pentaC}$ & $\twocell{trianC}$  \\
 \hline
 \multirow{3}{*}{Target bicategory} & $2$-cells & $\twocell{prodD}$ & $\twocell{unitD}$  \\
   & $3$-cells & $\twocell{assocD}$ & $\twocell{runitD}$, $\twocell{lunitD}$   \\
   & $4$-cells & $\twocell{pentaD}$ & $\twocell{trianD}$  \\
  \hline
  \multirow{3}{*}{Source pseudofunctor} & $2$-cells & $\twocell{fonctF}$ &    \\
   & $3$-cells & $\twocell{img_prodF}$ & $\twocell{img_unitF}$  \\
   & $4$-cells & $\twocell{img_assocF}$ & $\twocell{img_runitF}$, $\twocell{img_lunitF}$  \\
  \hline
  \multirow{3}{*}{Target pseudofunctor} & $2$-cells &  $\twocell{fonctG}$ &    \\
   & $3$-cells & $\twocell{img_prodG}$ & $\twocell{img_unitG}$  \\
   & $4$-cells & $\twocell{img_assocG}$ & $\twocell{img_runitG}$, $\twocell{img_lunitG}$ \\
  \hline
  \multirow{3}{*}{Pseudonatural transformation} & $2$-cells & $\twocell{transfo}$ &    \\
   & $3$-cells &$\twocell{transfo_nat}$&   \\
   &  $4$-cells & $\twocell{transfo_prod}$ & $\twocell{transfo_unit}$  \\
  \hline
\end{tabular}
\caption{Classification of the cells of $\TPN[f,g]$} 
\label{table:Liste_cellule}
\end{table}

\begin{prop}\label{prop:TPN_termine}
Let $\f,\g: \CC \to \DD$ be two applications. The $(4,2)$-polygraph $\TPN[\f,\g]$ $3$-terminates.
\end{prop}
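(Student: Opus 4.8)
### Proof proposal for Proposition \ref{prop:TPN_termine}

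The plan is to apply Theorem \ref{thm:terminaison}, exactly as was done for $\Bi[\CC]$ in Proposition \ref{prop:Bicat_term} and for $\PF[\f]$ in Proposition \ref{prop:PFonct_term}. That is, I will exhibit two $2$-functors $X_{\f,\g} : \TPN[\f,\g]_2^* \to \sOrd$ and $Y_{\f,\g} : (\TPN[\f,\g]_2^*)^{\co} \to \sOrd$, together with an $(X_{\f,\g},Y_{\f,\g},\N)$-derivation $d_{\f,\g}$, and check that for every generating $3$-cell $A$ the three inequalities $X(\s(A)) \geq X(\t(A))$, $Y(\s(A)) \geq Y(\t(A))$ and $d(\s(A)) > d(\t(A))$ hold. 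Since $\TPN[\f,\g]$ contains $\PF[\f]$ and $\PF[\g]$ as sub-$4$-polygraphs, the first move is to take $X_{\f,\g}$, $Y_{\f,\g}$ and $d_{\f,\g}$ to \emph{extend} the data $X_{\f},Y_{\f},d_{\f}$ and $X_{\g},Y_{\g},d_{\g}$ already constructed in Proposition \ref{prop:PFonct_term}. (One must first check these two extensions are compatible on the common sub-polygraph $\Bi[\CC] \sqcup \Bi[\DD] \sqcup \{\twocell{prodD},\twocell{unitD},\ldots\}$; this is immediate from the explicit formulas, since the values of $X,Y,d$ on $\twocell{prodC},\twocell{unitC},\twocell{prodD},\twocell{unitD}$ do not involve $\f$ or $\g$.) With that in place, the only new inequalities to verify are those attached to the single new generating $3$-cell $\twocell{transfo_nat}$, because all other $3$-cells of $\TPN[\f,\g]$ come from $\PF[\f]$ or $\PF[\g]$ and were handled already.

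Concretely, I would set $X_{\f,\g}({}_a\twocell{1}{}_{\g(a)}) = Y_{\f,\g}({}_a\twocell{1}{}_{\g(a)}) = \top$ for the new $1$-cells, and on the new generating $2$-cell put something like
\[
X_{\f,\g}(\twocell{transfo})[i] = i, \qquad Y_{\f,\g}(\twocell{transfo})[i] = 2i+1,
\]
mirroring the choice made for $\twocell{fonctF}$ and $\twocell{fonctG}$, and define $d_{\f,\g}(\twocell{transfo})[i,j]$ to be a suitable affine expression (tentatively $i+j+1$), to be tuned once the inequality for $\twocell{transfo_nat}$ is written out. The source and target of $\twocell{transfo_nat}$ are $\twocell{(1 *0 transfo) *1 (fonctF *0 1) *1 (1 *0 prodD)}$ and $\twocell{fonctG *1 (transfo *0 1) *1 (1 *0 prodD)}$; the two sides use the \emph{same} generating $2$-cells ($\twocell{transfo}$, one of $\twocell{fonctF}/\twocell{fonctG}$, and $\twocell{prodD}$), just composed in different orders, so the $X$- and $Y$-inequalities should come out as equalities (as they did for $\twocell{img_prodF}$), provided the numeric values of $\twocell{fonctF}$ and $\twocell{fonctG}$ under $X$ and $Y$ agree — which they do, by construction. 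The genuine work is in choosing $d_{\f,\g}(\twocell{transfo})$, $d_{\f,\g}(\twocell{fonctF})$, $d_{\f,\g}(\twocell{fonctG})$ and $d_{\f,\g}(\twocell{prodD})$ so that $d(\s(\twocell{transfo_nat})) > d(\t(\twocell{transfo_nat}))$ while staying consistent with the strict inequalities already needed for the $\twocell{img_prodF}$, $\twocell{img_unitF}$, $\twocell{img_prodG}$, $\twocell{img_unitG}$ cells; using the derivation axioms one expands both sides and reduces to an arithmetic inequality between affine forms in the variables, which is satisfied by a judicious choice of the constant terms (this is the point where the slack $+1$ in the definition of $d_{\f}(\twocell{fonctF})$ and $d_{\f}(\twocell{prodD})$ is exploited).

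The main obstacle I anticipate is purely bookkeeping: making sure the single global choice of $X_{\f,\g},Y_{\f,\g},d_{\f,\g}$ simultaneously satisfies the strict $d$-inequality for $\twocell{transfo_nat}$ \emph{and} all the strict $d$-inequalities inherited from $\PF[\f]$ and $\PF[\g]$ — in particular that raising or lowering $d(\twocell{prodD})$ or $d(\twocell{fonctG})$ to accommodate $\twocell{transfo_nat}$ does not break the inequality for $\twocell{img_prodG}$. Because the derivation of a composite is additive in the derivations of its factors and the contexts here are small (depth at most one extra whiskering by $\twocell{prodD}$), I expect this to go through with the same style of affine formulas used before, possibly after inflating a couple of constants; there is no conceptual difficulty, only the need to write the formulas down carefully and check perhaps half a dozen inequalities.
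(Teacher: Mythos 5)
Your proposal follows essentially the same route as the paper: apply Theorem \ref{thm:terminaison} by extending the functors and derivation from Proposition \ref{prop:PFonct_term}, reduce the new work to the single $3$-cell $\twocell{transfo_nat}$, and tune the derivation so its strict inequality holds without breaking those inherited from $\PF[\f]$ and $\PF[\g]$. The bookkeeping issue you flag as the main obstacle is resolved exactly as you anticipate — the paper takes $d_{\f,\g}(\twocell{transfo})[i]=i$ and lowers $d_{\f,\g}(\twocell{fonctG})[i,j]$ to $i+j$ (rather than the $i+j+1$ used for $\twocell{fonctF}$), then re-verifies the $\twocell{img_prodG}$ and $\twocell{img_unitG}$ inequalities, which still hold strictly.
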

\begin{proof}
We apply Theorem \ref{thm:terminaison}. To construct the functors $X_{\f,\g} : \TPN[f,g]_2^* \to \sOrd$ and $Y_{\f,\g} : (\TPN[\f,\g]_2^*)^\co \to \sOrd$, we extend the functors $X_{\f}$, $X_{\g}$, $Y_{\f}$ and $Y_{\g}$ from Proposition \ref{prop:PFonct_term}, by setting:

\[
X_{\f,\g}(\twocell{transfo}) = 1
\]

We now define an $(X_{\f,\g},Y_{\f,\g},\mathbb N)$-derivation $d_{\f,\g}$ of the $2$-category $\TPN[\f,\g]_2^*$ as the extension of $d_{\f}$ satisfying, for every $i,j \in \N^*$:

\[
d_{\f,\g}(\twocell{fonctG})[i,j] = i+j
\qquad
d_{\f,\g}(\twocell{transfo})[i] = i
\]

It remains to show that the required inequalities are satisfied. Since $X_{\f,\g}$ (resp. $Y_{\f,\g}$) is an extension $X_{\f}$ and $X_{\g}$ (resp. $Y_{\f}$ and $Y_{\g}$), it only remains to treat the case of the  $3$-cell $\twocell{transfo_nat}$. For every $i,j \in \N^*$, we have:

\[
X_{\f,\g}(\twocell{(1 *0 transfo) *1 (fonctF *0 1) *1 (1 *0 prodD)})[i]  = i+1 \geq i+1= X_{\f,\g}( \twocell{fonctG *1 (transfo *0 1) *1 (1 *0 prodD)}
)[i]
\qquad
Y_{\f,\g}(\twocell{(1 *0 transfo) *1 (fonctF *0 1) *1 (1 *0 prodD)})[i] = 2i+1 \geq 2i+1=   Y_{\f,\g}(\twocell{fonctG *1 (transfo *0 1) *1 (1 *0 prodD)})[i]
\]

Concerning $d_{\f,\g}$, the $3$-cells from $\PF[\f]$ were already treated in Proposition \ref{prop:Bicat_term}. For the others we have, for every $i,j,k \in \N^*$:

\[
d_{\f,\g}(\twocell{(prodC *0 1) *1 fonctG})[i,j,k] = 2i+j+3k+2 > 2i+j+3k = d_{\f,\g}(\twocell{(1 *0 fonctG) *1 (fonctG *0 1) *1 (1 *0 prodD)})[i,j,k]
\]
\[
d_{\f,\g}(\twocell{(unitC *0 1) *1 fonctG}) = 3i+1 > i = d_{\f,\g}(\twocell{1 *0 unitD})
\qquad
d_{\f,\g}(\twocell{(1 *0 transfo) *1 (fonctF *0 1) *1 (1 *0 prodD)})[i,j]  = 2i+3j+1 >  i+3j+1 = d_{\f,\g}(\twocell{fonctG *1 (transfo *0 1) *1 (1 *0 prodD)})[i,j]
\]
\end{proof}

\begin{defn}
We define a \emph{weight} application $\w$ as the $1$-functor from $\TPN[\f,\g]_1^*$ to $\mathbb N$,  defined as follows on  $\TPN[\f,\g]_1$:
\begin{itemize}
\item for all $a,b \in \CC$, $\w({}_a \twocell{1} {}_b) = 1$,
\item for all $a,b \in \DD$, $\w({}_a \twocell{1} {}_b) = 1$,
\item for all $a \in \CC$ and $b \in \DD$, $\w({}_a \twocell{1} {}_b) = 0$.
\end{itemize}
\end{defn}

\begin{thm}[Coherence for pseudonatural transformations]\label{thm:TPNat_coh}
Let $\CC$ and $\DD$ be sets, and $\f,\g: \CC \to \DD$ applications.

Let $A,B \in \TPN[\f,\g]^{*(2)}_3$ be two parallel $3$-cells whose $1$-target is of weight $1$.

There is a $4$-cell $\alpha:A \qfl B \in \TPN[\f,\g]^{*(2)}_4$.
\end{thm}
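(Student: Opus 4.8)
The plan is to split the proof into two parts according to the structure of the paper. First I would identify the sub-$(4,2)$-polygraph $\TPN^{++}[\f,\g]$ of $\TPN[\f,\g]$ mentioned in the introduction, obtained (roughly) by discarding the unit cells $\twocell{transfo_unit}$ and $\twocell{img_unitF}$, $\twocell{img_unitG}$, $\twocell{unitC}$, $\twocell{unitD}$, etc., or more precisely keeping only the fragment that is $2$-terminating and $2$-confluent. The key claim is that $\TPN^{++}[\f,\g]$ satisfies the $2$-Squier condition of depth $2$: its underlying $2$-polygraph is $2$-convergent (using Theorem \ref{thm:terminaison} with the derivations already built in Propositions \ref{prop:Bicat_term}, \ref{prop:PFonct_term}, \ref{prop:TPN_termine}, restricted and slightly adapted, and Proposition \ref{prop:confluence_local} to reduce confluence to critical pairs), its $3$-cells are exactly the critical pairs, and its $4$-cells are exactly the critical triples (this is the combinatorial heart: one enumerates the critical triples induced by the generating $2$-cells $\twocell{prodC}$, $\twocell{prodD}$, $\twocell{fonctF}$, $\twocell{fonctG}$, $\twocell{transfo}$ and checks that $\twocell{pentaC}$, $\twocell{pentaD}$, $\twocell{img_assocF}$, $\twocell{img_assocG}$, $\twocell{transfo_prod}$ account for all of them, each written in the prescribed shape of a filling). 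Granting this, Theorem \ref{thm:main_theory} gives that $\TPN^{++}[\f,\g]^{*(2)}$ is $S$-coherent, where $S$ is the set of $2$-cells whose target is a normal form for $\TPN^{++}$.

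Next I would transfer this partial coherence to all of $\TPN[\f,\g]^{*(2)}$. The point of the weight function $\w$ is that a $1$-cell of $\TPN[\f,\g]^*_1$ has weight $1$ exactly when it is a single arrow $\twocell{1}$ with no room for products, i.e. it is a normal form for $\TPN$; and the condition ``$1$-target of weight $1$'' is precisely the restriction to $S_\TPN$-like cells. So I would argue: given parallel $3$-cells $A,B$ in $\TPN[\f,\g]^{*(2)}_3$ whose common $1$-target $u$ has weight $1$, first use the convergence of $\TPN$ (Propositions \ref{prop:TPN_termine} and the confluence established en route to Theorem \ref{thm:TPNat_coh}) to reduce $A$ and $B$, up to $4$-cells, to $3$-cells whose $2$-source and $2$-target are normal forms — i.e. replace $A,B$ by parallel $3$-cells with the same source and target in the normal-form subcategory. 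The weight-$1$ hypothesis on the $1$-target ensures that this normal form lives in the fragment $\TPN^{++}$ (the unit $3$-cells, being confluent separately, can be pushed out of the way). Then invoke the $S$-coherence of $\TPN^{++}[\f,\g]^{*(2)}$ from the first part to get a $4$-cell between the reduced $3$-cells, and compose back with the reduction $4$-cells to obtain $\alpha : A \qfl B$.

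More carefully, the mechanism would be the one already set up in Section \ref{subsec:partial_coh}: exhibit $(\TPN^{++}[\f,\g]^{*(2)}, S)$ as \emph{stronger than} the relevant pointed $(4,3)$-white-category built from $\TPN[\f,\g]^{*(2)}$ together with the set of $2$-cells of $1$-target of weight $1$, by producing a $0$-surjective and $1$-surjective functor between the associated $2$-categories (Lemma \ref{lem:strength_lemma}), and then apply Lemma \ref{lem:translation_coh}. The $0$-surjectivity comes from the fact that every weight-$1$ $2$-cell of $\TPN$ is equivalent, via the unit $3$-cells and Squier's Theorem \ref{thm:squier} applied to the pseudofunctor/bicategory fragments (Theorems \ref{thm:BiCat_coh}, \ref{thm:PFonct_coh}), to a $2$-cell living in $\TPN^{++}$; the $1$-surjectivity uses that the $3$-cells deleted in passing to $\TPN^{++}$ are handled by the already-proven coherence of the pseudofunctor fragments.

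The main obstacle I expect is the verification that $\TPN^{++}[\f,\g]$ satisfies the $2$-Squier condition of \emph{depth $2$} — that is, the complete and correct enumeration of critical pairs and especially critical triples of the $2$-polygraph underlying $\TPN^{++}$, and matching them bijectively with the $3$-cells and $4$-cells, each presented in the rigid ``filling'' shape demanded by the definitions. This is a finite but delicate bookkeeping exercise (analogous to, but larger than, the pentagon-as-critical-triple computation shown in the introduction for $\Ass$), complicated by the fact that the $2$-cells $\twocell{fonctF}$, $\twocell{fonctG}$, $\twocell{transfo}$ have targets that are non-trivial composites, so overlaps proliferate. A secondary subtlety is checking that the weight-$1$ condition really does single out exactly the normal forms of the $\TPN^{++}$ fragment, so that the hypothesis of Theorem \ref{thm:main_theory} applies verbatim and the strength argument goes through.
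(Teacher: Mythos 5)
Your first half (isolating $\TPN^{++}[\f,\g]$, verifying the $2$-Squier condition of depth $2$, and invoking Theorem \ref{thm:main_theory}) matches the paper, and your plan of eliminating the unit cells by normalizing the $2$-sources of $A$ and $B$ with respect to a separate convergent unit rewriting system is also essentially what the paper does in Sections \ref{subsec:2_unit_elim} and \ref{subsec:3_unit_elim}. However, there is a genuine gap in your treatment of the weight-$1$ hypothesis. It is not true that a $1$-cell of weight $1$ is a single arrow, nor that it is a normal form: the weight-$1$ $1$-cells include composites such as ${}_{a} \twocell{1} {}_{a'} \twocell{1} {}_{\f(a')}$, ${}_{a} \twocell{1} {}_{a'} \twocell{1} {}_{\g(a')}$ and ${}_{a} \twocell{1} {}_{\g(a)} \twocell{1} {}_{b}$, which are sources of (whiskered) $\twocell{fonctF}$, $\twocell{fonctG}$ or $\twocell{transfo}$ and hence are \emph{not} normal forms for $\TPN^{++}[\f,\g]$. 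For a $3$-cell with such a $1$-target, Theorem \ref{thm:main_theory} simply does not apply, and no normalization of the $2$-source can repair this, because the $1$-target of a $3$-cell is invariant under composition with $3$- and $4$-cells. Your reduction therefore only covers the single weight-$1$ form ${}_{a} \twocell{1} {}_{\f(a)} \twocell{1} {}_{b}$, which is indeed a normal form for $\TPN^{++}[\f,\g]$.

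The missing ingredient is the dichotomy of Lemmas \ref{lem:but_ou_PF} and \ref{lem:but_ou_PF_3}: every $3$-cell of $\TPN[\f,\g]^{*(2)}$ whose $1$-target is not of the form \eqref{eq:but_ok} lies entirely in the sub-$4$-polygraph $\PF[\f,\g] = \PF[\f] \cup \PF[\g]$, because the only generating cells outside $\PF[\f,\g]$, namely $\twocell{transfo}$ and $\twocell{transfo_nat}$, force the $1$-target into that form. Since $\PF[\f,\g]$ is $3$-convergent and satisfies the $3$-Squier condition, Theorem \ref{thm:squier} gives its full $3$-coherence with no restriction on the $1$-target, which disposes of all weight-$1$ targets other than ${}_{a} \twocell{1} {}_{\f(a)} \twocell{1} {}_{b}$ (Proposition \ref{prop:TPN_partial_coh}). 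Without this case split the argument cannot be completed. A secondary, non-fatal remark: the paper performs the transfer from $\TPN^{++}$ to $\TPN$ not via the strength machinery of Lemmas \ref{lem:strength_lemma} and \ref{lem:translation_coh} but by explicit $4$-cells commuting arbitrary $3$-cells past the unit $3$-cells (Lemmas \ref{lem:norm_TPN} and \ref{lem:nom_TPN_32}); your packaging could in principle work, but the $0$- and $1$-surjectivity you would need is exactly the content of those normalization lemmas, so nothing is gained.
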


This theorem will be proven in Section \ref{sec:proof_of_coherence}. Contrary to the case of bicategories and pseudofunctors, we cannot directly apply Theorem \ref{thm:squier} to the $(4,2)$-polygraph $\TPN[\f,\g]$, because the following  critical pair is not confluent:

\[
\xymatrix @C = 4em @R = 1.5em {
&
\twocell{(prodC *0 1) *1 fonctG *1 (transfo *0 1) *1 (1 *0 prodD)}
\ar@3 [r] ^{\twocell{img_prodG *1 (transfo *0 1) *1 (1 *0 prodD)}}
& 
\twocell{(1 *0 fonctG) *1 (fonctG *0 1) *1 (transfo *0 prodD) *1 (1 *0 prodD)}
& 
\\
\twocell{(prodC *0 transfo) *1 (fonctF *0 1) *1 (1 *0 prodD)}
\ar@3 [dr] _{\twocell{(2 *0 transfo) *1 (img_prodF *0 1) *1 (1 *0 prodD)}}
\ar@3 [ru] ^{\twocell{(prodC *0 1) *1 transfo_nat}}
& & 
& 
\\
& 
\twocell{(2 *0 transfo) *1 (1 *0 fonctF *0 1) *1 (fonctF *0 2) *1 (1 *0 prodD *0 1) *1 (1 *0 prodD)}
\ar@3 [r] _{\twocell{(2 *0 transfo) *1 (1 *0 fonctF *0 1) *1 (fonctF *0 2) *1 (1 *0 assocD)}}
& 
\twocell{(2 *0 transfo) *1 (1 *0 fonctF *0 1) *1 (fonctF *0 prodD) *1 (1 *0 prodD)}
\ar@3 [r] _{\twocell{(1 *0 transfo_nat) *1 (fonctF *0 1) *1 (1 *0 prodD)}}
&
\twocell{(1 *0 fonctG) *1 (1 *0 transfo *0 1) *1 (fonctF *0 prodD) *1 (1 *0 prodD)}
}
\]
Theorem \ref{thm:main_theory} will be used in order to avoid this difficulty.
\newpage

\section{Proof of the coherence for pseudonatural transformations}
\label{sec:proof_of_coherence}

In this section we prove Theorem \ref{thm:TPNat_coh}. We fix for the rest of this section two sets $\CC$ and $\DD$, together with two applications $\f, \g: \CC \to \DD$. Let $A,B \in \TPN[\f,\g]^{*(2)}$ be $3$-cells whose $1$-target is of weight $1$. We want to build a $4$-cell $\alpha: A \qfl B \in \TPN[\f,\g]^{*(2)}$.

The $1$-cells of weight $1$ are of one of the following forms, with $a, a' \in \CC$ and $b, b' \in \DD$:
\[
{}_{a} \twocell{1} {}_{a'} 
\qquad 
{}_{b} \twocell{1} {}_{b'}
\qquad 
{}_{a} \twocell{1} {}_{a'} \twocell{1} {}_{\f(a')}
\qquad
{}_{a} \twocell{1} {}_{a'} \twocell{1} {}_{\g(a')}
\qquad
{}_{a} \twocell{1} {}_{a'} \twocell{1} {}_{\f(a')}
\qquad
{}_{a} \twocell{1} {}_{\g(a)} \twocell{1} {}_{b}
\qquad
{}_{a} \twocell{1} {}_{\f(a)} \twocell{1} {}_{b}
\]

In Section \ref{subsec:cas_convergent}, we show that if the common $1$-target of $A$ and $B$ is not of the last form, then they are generated by a sub-$4$-polygraph $\PF[\f,\g]$ of $\TPN[\f,\g]$. We then show using Theorem \ref{thm:squier} that this $4$-polygraph is coherent.

There remains to treat the case where the $1$-target of $A$ and $B$ is of the last form. We define two sub-$(4,2)$-polygraphs of $\TPN[\f,\g]$: $\TPN^{++}[\f,\g]$ and $\TPN^{+}[\f,\g]$. The $(4,2)$-polygraph $\TPN^{++}[\f,\g]$ contains all the structure of pseudonatural transformations, except for the axioms concerning the units $\twocell{unitC}$ and $\twocell{unitD}$, while $\TPN^+[\f,\g]$ is an intermediary between $\TPN^{++}[\f,\g]$ and $\TPN[\f,\g]$, which contains the $2$-cells $\twocell{unitC}$ and $\twocell{unitD}$. The inclusions between the $(4,2)$-polygraphs can be seen as follows:
 \[
\TPN^{++}[\f,\g]_2 \subset \TPN^{+}[\f,\g]_2 = \TPN[\f,\g]_2
\]
\[
\TPN^{++}[\f,\g]_3 = \TPN^{+}[\f,\g]_3 \subset \TPN[\f,\g]_3
\] 

In Section \ref{subsec:tpnat_coh_final} we show that $\TPN^{++}[\f,\g]$ satisfies the $2$-Squier condition of depth $2$, which allows us to apply Theorem \ref{thm:main_theory}. Since $1$-cells of the form ${}_{a} \twocell{1} {}_{\f(a)} \twocell{1} {}_{b}$ are normal forms for $\TPN^{++}[\f,\g]$, this concludes the case where $A$ and $B$ are in $\TPN^{++}[\f,\g]$. 

We then define a sub-$3$-polygraph $\TPN^u[\f,\g]$ of $\TPN[\f,\g]$. The rewriting system induced by the $3$-cells $\TPN^u[\f,\g]$ corresponds to simplifying the units out.

Using the properties of this rewriting system, we extend the result of Section \ref{subsec:tpnat_coh_final}, first to $3$-cells $A$ and $B$ in $\TPN^+[\f,\g]$ in Section \ref{subsec:2_unit_elim}, and finally to general $A$ and $B$ whose $1$-target is ${}_{a} \twocell{1} {}_{\f(a)} \twocell{1} {}_{b}$ in Section \ref{subsec:3_unit_elim}, thereby concluding the proof.

\subsection{A convergent sub-polygraph of $\TPN[\f,\g]$}
\label{subsec:cas_convergent}

\begin{defn}
Let $\PF[\f,\g]$ be the $4$-polygraph containing every cell of $\TPN[\f,\g]$, except those corresponding to the pseudonatural transformation. Alternatively, $\PF[\f,\g]$ is the union of $\PF[\f]$ and $\PF[\g]$.
\end{defn}

\begin{lem}\label{lem:but_ou_PF}
For every $h \in \TPN[\f,\g]_2^{*}$, one of the following holds:
\begin{itemize}
\item The target of $h$ is of the form
\begin{equation}\label{eq:but_ok}
{}_{a_1} \twocell{dots} {}_{a_i} \twocell{1} {}_{\f(a_i)} \twocell{1} {}_{b_1} \twocell{dots} {}_{b_j},
\end{equation}

where $i$ and $j$ are non-zero integers, the $a_k$ are in $\CC$ and the $b_k$ are in $\DD$.
\item The $2$-cell $h$ is in $\PF[\f,\g]_2^*$.
\end{itemize}
\end{lem}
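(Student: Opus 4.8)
The statement is about the free $2$-category $\TPN[\f,\g]_2^*$, so I would argue by induction on the structure of $h$ as a composite of generating $2$-cells and whiskerings, tracking the shape of the target $1$-cell $\t(h)$. The key observation is that the set of $1$-cells of the form~\eqref{eq:but_ok} — call them the \emph{mixed-normal} $1$-cells — is closed under the operations that can appear as $\t$ of a rewriting step, in the following precise sense: whenever $u$ is mixed-normal and $g$ is a generating $2$-cell of $\TPN[\f,\g]$ whose source occurs as a subword of $u$, then the target obtained by replacing that subword is again mixed-normal, \emph{or} the whole $2$-cell built so far already lies in $\PF[\f,\g]_2^*$.

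\textbf{First steps.} First I would recall that every $2$-cell $h \in \TPN[\f,\g]_2^*$ decomposes as a vertical composite $h = h_1 \star_1 \cdots \star_1 h_n$ of rewriting steps (length-$1$ $2$-cells), each of the form $u \star_0 \gamma \star_0 v$ with $\gamma$ a generating $2$-cell and $u,v \in \TPN[\f,\g]_1^*$; when $n = 0$, $h$ is an identity and its target is its source, a $1$-cell of $\TPN[\f,\g]_1^*$, hence either mixed-normal or containing no letter of the form ${}_a\twocell{1}{}_{\f(a)}$ pattern forced by $\twocell{transfo}$ — in which case it lies in $\PF[\f,\g]_1^* \subseteq \PF[\f,\g]_2^*$. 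Then I would induct on $n$, applying the closure observation to the last step $h_n$.

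\textbf{The core case analysis.} The substantive work is the case $n \geq 1$: write $h = h' \star_1 h_n$ with $h' = h_1 \star_1 \cdots \star_1 h_{n-1}$, and apply the induction hypothesis to $h'$. If $h' \in \PF[\f,\g]_2^*$: since the only generating $2$-cell of $\TPN[\f,\g]$ outside $\PF[\f,\g]$ is $\twocell{transfo}$, either $h_n$ is a step of $\PF[\f,\g]$, so $h \in \PF[\f,\g]_2^*$ and we are done, or $h_n$ is a whiskering of $\twocell{transfo}_a$; but $\twocell{transfo}_a$ has source ${}_a\twocell{1}{}_{\g(a)}$, so its target ${}_a\twocell{1}{}_{\f(a)}\twocell{1}{}_{\g(a)}$, together with the surrounding context coming from $\t(h')$ (which, being the target of a $\PF[\f,\g]$-cell, is a word in the letters ${}_a\twocell{1}{}_b$, ${}_a\twocell{1}{}_{\f(a)}$ and the $\DD$-letters), yields a target of the form~\eqref{eq:but_ok} — here I would check that the context to the \emph{right} of the inserted ${}_a\twocell{1}{}_{\f(a)}$ consists only of $\DD$-letters, which holds because the $1$-source of $\twocell{transfo}_a$ ends the $\CC$-part. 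If instead $h'$ has mixed-normal target~\eqref{eq:but_ok}: the source of $h_n$ is a subword of~\eqref{eq:but_ok}; I would enumerate which generating-$2$-cell sources can occur as such a subword. The sources of the product-type generators ($\twocell{prodC}$, $\twocell{prodD}$, $\twocell{fonctF}$, $\twocell{fonctG}$) and the unit $\twocell{unitC}$, $\twocell{unitD}$, $\twocell{transfo}$ — one checks that applying any of them inside a word of shape~\eqref{eq:but_ok} produces another word of shape~\eqref{eq:but_ok} (the $\f(a_i)$-hinge is preserved because none of these generators can consume or relocate it), so the target of $h$ is again of the form~\eqref{eq:but_ok}.

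\textbf{Expected obstacle.} The only delicate point is bookkeeping: making sure the ``hinge'' letter ${}_{a_i}\twocell{1}{}_{\f(a_i)}$ separating the $\CC$-block from the $\DD$-block is genuinely stable under every admissible rewriting step, and that no generator can act \emph{across} the hinge in a way that destroys the pattern — in particular that the source ${}_a\twocell{1}{}_b\twocell{1}{}_c$ of $\twocell{prodC}$ cannot straddle the hinge (it cannot, since the hinge letter's target $\f(a_i) \in \DD$ while the letters of the $\CC$-block have both endpoints in $\CC$, so no three consecutive letters mixing the two sides form a valid $\twocell{prodC}$, $\twocell{prodD}$, $\twocell{fonctF}$ or $\twocell{fonctG}$ source). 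Once this compatibility of sources with the two-block structure is established, the induction closes routinely.
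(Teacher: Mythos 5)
Your proposal is correct and follows essentially the same route as the paper: a stability ("closure") lemma showing that $1$-cells of the form~\eqref{eq:but_ok} remain of that form under rewriting by the generators of $\PF[\f,\g]$, combined with an induction on the length of $h$ that peels off the last rewriting step and splits on whether that step is a whiskering of $\twocell{transfo}$ (which creates the hinge) or a $\PF[\f,\g]$-step (which preserves it). The paper's proof carries out exactly the generator-by-generator verification you flag as the "delicate point."
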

\begin{proof}
Let us show first that the set of all $1$-cells of the form \eqref{eq:but_ok} is stable when rewritten by $\PF[\f,\g]_2^*$. To prove this, we examine the case of every cell of $\PF[\f,\g]_2^*$ of length $1$:
\[
{}_{a_1} \twocell{dots} {}_{a_{k-1}} \twocell{prodC} {}_{a_{k+1}} \twocell{dots} {}_{b_j} : {}_{a_1} \twocell{dots} {}_{a_i} \twocell{1} {}_{\f(a_i)} \twocell{1} {}_{b_1} \twocell{dots} {}_{b_j} \Rightarrow {}_{a_1} \twocell{dots} {}_{a_{k-1}} \twocell{1} {}_{a_{k+1}}  \twocell{dots} {}_{b_j}
\]
\[
{}_{a_1}\twocell{dots} {}_{a_k} \twocell{unitC} {}_{a_k} \twocell{dots} {}_{b_j}: {}_{a_1} \twocell{dots} {}_{a_i} \twocell{1} {}_{\f(a_i)} \twocell{1} {}_{b_1} \twocell{dots} {}_{b_j} \Rightarrow {}_{a_1}\twocell{dots} {}_{a_k} \twocell{1} {}_{a_k} \twocell{dots} {}_{b_j}
\]
\[
{}_{a_1} \twocell{dots} {}_{b_{k-1}} \twocell{prodD} {}_{b_{k+1}} \twocell{dots} {}_{b_j} : {}_{a_1} \twocell{dots} {}_{a_i} \twocell{1} {}_{\f(a_i)} \twocell{1} {}_{b_1} \twocell{dots} {}_{b_j} \Rightarrow {}_{a_1} \twocell{dots} {}_{b_{k-1}} \twocell{1} {}_{b_{k+1}}  \twocell{dots} {}_{b_j}
\]
\[
{}_{a_1}\twocell{dots} {}_{b_k} \twocell{unitD} {}_{b_k} \twocell{dots} {}_{b_j}: {}_{a_1} \twocell{dots} {}_{a_i} \twocell{1} {}_{\f(a_i)} \twocell{1} {}_{b_1} \twocell{dots} {}_{b_j} \Rightarrow {}_{a_1} \twocell{dots} {}_{b_k} \twocell{1} {}_{b_k} \twocell{dots} {}_{b_j}
\]
\[
{}_{a_1}\twocell{dots} {}_{a_{i-1}} \twocell{fonctF} {}_{\f(a_i)} \twocell{dots} {}_{b_j}: {}_{a_1} \twocell{dots} {}_{a_i} \twocell{1} {}_{\f(a_i)} \twocell{1} {}_{b_1} \twocell{dots} {}_{b_j} \Rightarrow {}_{a_1} \twocell{dots} {}_{a_{i-1}} \twocell{1} {}_{\f(a_{i-1})} \twocell{1} {}_{\f(a_i)} \twocell{1} {}_{b_1} \twocell{dots} {}_{b_j}
\]

Let us now prove the lemma: we reason by induction on the length of $h$. If $h$ is of length $0$, it is an identity, so  $h$ is in $\PF[\f,\g]^*$.

If $h$ is of length $1$ and $h$ is not in $\PF[\f,\g]^*$, then $h$ has to be of the form $\twocell{dots *0 transfo *0 dots}$. So its target is of the form:
\[
{}_{a_1}\twocell{dots} {}_{a_k} \twocell{1} {}_{\f(a_k)} \twocell{1} {}_{\g(a_k)} \twocell{1} {}_{b_2}  \twocell{dots} {}_{b_j}
\]
which is indeed of the form \eqref{eq:but_ok}, with $b_1 = \g(a_k)$.

Let now $h$ be of length $n > 1$. We can write $h = h_1 \star_1 h_2$, where $h_2$ is of length $1$, and $h_1$ is strictly shorter than $h$. Let us apply the induction hypothesis to $h_2$. If the target of $h_2$ is of the form \eqref{eq:but_ok}, then so is the target of $h$, since $\t(h_2) = \t(h)$. Otherwise, then $h_1 \in \PF[\f,\g]^*$, and we can apply the induction hypothesis to $h_2$. If $h_2$ also is in $\PF[\f,\g]^*$, then so is $h$. 

It remains to treat the case where $\t(h_1)$ is of the form \eqref{eq:but_ok} , and $h_2$ is in $\PF[\f,\g]^*$. But we have shown that the $1$-cells of the form \eqref{eq:but_ok} are stable when rewritten by $\PF[\f,\g]^*$. Thus the target of $h_2$ (which is the target of $h$) is of the form \eqref{eq:but_ok}, which concludes the proof.
\end{proof}

\begin{lem}\label{lem:but_ou_PF_3}
For every $A \in \TPN[\f,\g]_3^{*(2)}$, one of the following holds:
\begin{itemize}
\item The $1$-target of $A$ is of the form \eqref{eq:but_ok}.
\item The $3$-cell $A$ is in $\PF[\f,\g]_3^{*(2)}$.
\end{itemize}
\end{lem}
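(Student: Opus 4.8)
The plan is to mimic the proof of Lemma~\ref{lem:but_ou_PF}, lifting the argument from $2$-cells to $3$-cells by an induction on the length of $A$ (the number of generating $3$-cells it is built from), using Lemma~\ref{lem:but_ou_PF} itself to handle the behaviour of the $1$-targets under $2$-composition. First I would observe that the $1$-target of a $3$-cell $A$ only depends on $\t(\s_2(A)) = \t(\t_2(A))$, so the dichotomy we want is really a statement about the common $1$-target of the source and target $2$-cells of $A$; by Lemma~\ref{lem:but_ou_PF} that $1$-cell is either of the form \eqref{eq:but_ok} or else both $\s_2(A)$ and $\t_2(A)$ lie in $\PF[\f,\g]_2^*$. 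So the only thing that requires work is to show that when the $1$-target is \emph{not} of the form \eqref{eq:but_ok}, the $3$-cell $A$ is actually generated inside $\PF[\f,\g]$, i.e.\ it uses no generating $3$-cell coming from the pseudonatural transformation part (namely $\twocell{transfo_nat}$, together with the whiskering/composition that could reintroduce $\twocell{transfo}$-related cells).

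The key steps, in order, would be: (1)~reduce to length~$1$, i.e.\ to a single generating $3$-cell in context, $A = u \star_? (w_1 \star_0 \rho \star_0 w_2) \star_? v$ with $\rho \in \TPN[\f,\g]_3$; (2)~for such an $A$, note its $1$-target equals $\t(\s_2(A))$, which is a $2$-cell of $\TPN[\f,\g]_2^*$, so Lemma~\ref{lem:but_ou_PF} applies: either this $1$-target is of the form \eqref{eq:but_ok} and we are done, or $\s_2(A) \in \PF[\f,\g]_2^*$; (3)~in the latter case, argue that the $2$-cell $\s_2(A)$, being in $\PF[\f,\g]_2^*$, cannot contain any $\twocell{transfo}$ generator in its interior, hence the single generating $3$-cell $\rho$ it is rewritten by cannot be $\twocell{transfo_nat}$ (whose source $\twocell{(1 *0 transfo) *1 (fonctF *0 1) *1 (1 *0 prodD)}$ manifestly involves $\twocell{transfo}$), so $\rho$ is one of the $3$-cells of $\PF[\f,\g]$ and therefore $A \in \PF[\f,\g]_3^{*(2)}$; (4)~pass to general length by writing $A = A_1 \star_2 A_2$ with $A_2$ of length~$1$ (using that $3$-composition $\star_3$ preserves $1$-targets and $2$-targets, so we only need to track the $\star_2$-decomposition into rewriting-step-like pieces), apply the induction hypothesis to both factors together with the stability result for the $1$-cells \eqref{eq:but_ok} under $\PF[\f,\g]_2^*$-rewriting established in the proof of Lemma~\ref{lem:but_ou_PF}, and conclude exactly as in that lemma: if the $1$-target of $A_1$ is already of the form \eqref{eq:but_ok} so is that of $A$; otherwise $A_1 \in \PF[\f,\g]_3^{*(2)}$ and we recurse on $A_2$, the only remaining case (target of $A_1$ of the form \eqref{eq:but_ok} while $A_2 \in \PF[\f,\g]_3^{*(2)}$) being handled by the stability of \eqref{eq:but_ok}.

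The main obstacle I expect is step~(3): making precise the claim that a $2$-cell lying in $\PF[\f,\g]_2^*$ ``contains no $\twocell{transfo}$ in its interior'' and that therefore no $\twocell{transfo_nat}$-rewriting step can be applied to it. One has to be a little careful because a $3$-cell in context is $A = u \star_2 (\text{something})\star_2 v$ with $u,v$ possibly arbitrary $2$-cells of $\TPN[\f,\g]_2^*$, not of $\PF[\f,\g]_2^*$ --- but since we are already assuming $\s_2(A) = u \star_2 (w_1 \star_0 \s(\rho) \star_0 w_2) \star_2 v$ lies in $\PF[\f,\g]_2^*$, and $\PF[\f,\g]$ is a sub-$4$-polygraph whose $2$-cells are closed under the relevant divisibility (the free $2$-category on a sub-polygraph embeds, and the generators appearing in a given composite are determined), each $\star_2$-factor and in particular $\s(\rho)$ must itself be a composite of $\PF[\f,\g]_2$-generators; since $\twocell{transfo}$ is not among those generators and appears in $\s(\twocell{transfo_nat})$, we get $\rho \neq \twocell{transfo_nat}$, and $\TPN[\f,\g]_3 \setminus \{\twocell{transfo_nat}\} = \PF[\f,\g]_3$ by the classification in Table~\ref{table:Liste_cellule}. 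Everything else is a routine transcription of the proof of Lemma~\ref{lem:but_ou_PF}.
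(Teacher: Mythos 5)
Your proof is essentially the paper's: reduce to a single generating $3$-cell in context, invoke Lemma \ref{lem:but_ou_PF} to place the $2$-source in $\PF[\f,\g]_2^*$ when the $1$-target is not of the form \eqref{eq:but_ok}, and rule out $\twocell{transfo_nat}$ because its source involves $\twocell{transfo}$. Two small remarks. First, you are working in the free $(4,2)$-category $\TPN[\f,\g]^{*(2)}$, so a general $3$-cell is a $\star_2$-composite of whiskered generators \emph{and their formal inverses}; the inverse case must be covered (it is immediate, since $A$ and $A^{-1}$ have the same $1$-target and $\PF[\f,\g]_3^{*(2)}$ is closed under inverses), but your step (1) and step (4) only mention generators. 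Second, step (4) is needlessly heavy: for $\star_2$-composable $3$-cells the $1$-target of every factor coincides with that of $A$, so no stability-under-rewriting argument is required --- if the $1$-target of $A$ is not of the form \eqref{eq:but_ok}, each length-one factor has the same $1$-target, hence lies in $\PF[\f,\g]_3^{*(2)}$ by the length-one case, and so does $A$ by composition. This is exactly how the paper dispatches the general case.
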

\begin{proof}
Let us start by the case where $A$ is a $3$-cell of length $1$ in $\TPN[\f,\g]_3^*$. If the $1$-target of $A$ is not of the form \eqref{eq:but_ok} then, according to Lemma \ref{lem:but_ou_PF}, the $2$-source of $A$ is in $\PF[\f,\g]_2^{*}$. The only $3$-cell in $\TPN[\f,\g]_3$ which is not in $\PF[\f,\g]_3$ is the $3$-cell $\twocell{transfo_nat}$, whose $2$-source $\twocell{(1 *0 transfo) *1 (fonctF *0 1) *1 (1 *0 prodD)}$ is not in $\PF[\f,\g]^*_2$. Thus $A$ is in $\PF[\f,\g]_3^*$.

Suppose now that $A=B^{-1}$, where $B$ is a $3$-cell of $\TPN[\f,\g]_3^*$ of length $1$. The $1$-target of $B$ is the same as the one of $A$. If it is not of the form \eqref{eq:but_ok}, $B$ is in $\PF[\f,\g]_3^{*(2)}$, and so is $A$.

In the general case, $A$ is a composite of $3$-cells of one of the two previous forms, and all of them have the same $1$-target as $A$. Thus if the $1$-target of $A$ is not of the form \eqref{eq:but_ok}, all those $3$-cells are in $\PF[\f,\g]_3^{*(2)}$, and so is $A$.
\end{proof}

\begin{lem}\label{lem:PF_coh}
The $4$-polygraph $\PF[\f,\g]$ is $3$-coherent.
\end{lem}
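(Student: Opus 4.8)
The statement to prove is Lemma~\ref{lem:PF_coh}: the $4$-polygraph $\PF[\f,\g]$ is $3$-coherent.

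\bigskip

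The plan is to apply the same machinery used for $\Bi[\CC]$ (Theorem~\ref{thm:BiCat_coh}) and $\PF[\f]$ (Theorem~\ref{thm:PFonct_coh}), namely: establish $3$-termination, then $3$-confluence by checking critical pairs, then invoke Theorem~\ref{thm:squier}. First I would observe that $\PF[\f,\g]$ is, by definition, the union of the two sub-$4$-polygraphs $\PF[\f]$ and $\PF[\g]$, which share only the copy of $\Bi[\DD]$ (the target bicategory). For $3$-termination, I would build the derivation by gluing the ones already constructed: take $X$, $Y$ and the $(X,Y,\mathbb N)$-derivation $d$ to restrict to $X_\f,Y_\f,d_\f$ on the cells of $\PF[\f]$ and to $X_\g,Y_\g,d_\g$ on the cells of $\PF[\g]$; these agree on the common part $\Bi[\DD]$ since both extend $X_\DD,Y_\DD,d_\DD$ from Proposition~\ref{prop:Bicat_term}. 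One must only recheck the $2$-cell $\twocell{fonctG}$ with the values used in Proposition~\ref{prop:PFonct_term} applied to $\g$; the inequalities for every $3$-cell then hold exactly as in Propositions~\ref{prop:Bicat_term} and~\ref{prop:PFonct_term}, so Theorem~\ref{thm:terminaison} gives $3$-termination.

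\bigskip

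Next, by Proposition~\ref{prop:confluence_local}, $3$-confluence reduces to checking that every critical pair of $\PF[\f,\g]_3$ admits a filling in $\PF[\f,\g]_4$. The key point is that \emph{no new critical pairs appear} beyond those already present in $\PF[\f]$ and in $\PF[\g]$: a critical pair arises from an overlap of the $2$-sources of two $3$-cells, and the $3$-cells specific to the $\f$-side ($\twocell{img_prodF}$, $\twocell{img_unitF}$) have $2$-sources built from $\twocell{fonctF}$ and the $\CC$-bicategory cells, which cannot overlap with the $2$-sources of the $\g$-specific $3$-cells ($\twocell{img_prodG}$, $\twocell{img_unitG}$) built from $\twocell{fonctG}$ (these have disjoint underlying $1$-cells since there is no composable configuration mixing $\twocell{fonctF}$ and $\twocell{fonctG}$). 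The only potential interaction is through the shared $\twocell{prodD}$, $\twocell{unitD}$ cells, but these overlaps are already among the critical pairs of $\Bi[\DD]$ (treated in Theorem~\ref{thm:BiCat_coh}) and of $\PF[\f]$ resp.\ $\PF[\g]$ (treated in Theorem~\ref{thm:PFonct_coh}). Hence the list of critical pairs of $\PF[\f,\g]$ is the union of the thirteen families for $\PF[\f]$ and the three extra families for the $\g$-side, all of which are filled by the corresponding $4$-cells ($\twocell{img_runitF}$, $\twocell{img_lunitF}$, $\twocell{img_assocF}$ and their $\g$-analogues $\twocell{img_runitG}$, $\twocell{img_lunitG}$, $\twocell{img_assocG}$), together with the auxiliary $4$-cells $\omega_i$ from Theorem~\ref{thm:BiCat_coh}. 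By Theorem~\ref{thm:squier}, $\PF[\f,\g]^{*(2)}$ is $3$-coherent.

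\bigskip

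The main obstacle is the bookkeeping claim that there are no cross-side critical pairs: one has to argue carefully that a single $1$-cell cannot simultaneously be rewritten by an $\f$-flavoured and a $\g$-flavoured generating $2$-cell in an overlapping way. This follows from the shape of the $1$-cells involved --- the cells ${}_a\twocell{1}{}_{\f(a)}$ and ${}_a\twocell{1}{}_{\g(a)}$ introduced by the two pseudofunctor structures are distinct generators, and $\twocell{fonctF}$, $\twocell{fonctG}$ act on disjoint length-$2$ patterns --- but it deserves an explicit check rather than being waved through. Once this is granted, the proof is entirely parallel to the pseudofunctor case and requires no genuinely new computation.
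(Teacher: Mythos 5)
Your proof is correct and follows essentially the same route as the paper: no new critical pairs arise beyond those of $\PF[\f]$ and $\PF[\g]$, so Theorem \ref{thm:squier} applies. The only difference is in the termination step, where the paper simply observes that $\PF[\f,\g]$ is a sub-$4$-polygraph of $\TPN[\f,\g]$, already shown $3$-terminating in Proposition \ref{prop:TPN_termine}, instead of regluing the derivations $d_\f$ and $d_\g$; both arguments work.
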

\begin{proof}
It is a sub-$4$-polygraph of $\TPN[\f,\g]$ which is $3$-terminating, therefore it is also $3$-terminating. Moreover, every critical pair in $\PF[\f,\g]$ arises from one either in $\PF[\f]$ or $\PF[\g]$. Since those $4$-polygraphs are confluent and satisfy the Squier condition, so does $\PF[\f,\g]$.

Using Theorem \ref{thm:squier}, this means that $\PF[\f,\g]$ is $3$-coherent.
\end{proof}

\begin{prop}\label{prop:TPN_partial_coh}
Let $\f,\g: \CC \to \DD$ be two applications. 

For every parallel $3$-cells $A,B \in \TPN[\f,\g]^{*(2)}$ whose $1$-target is not of the form \eqref{eq:but_ok}, there exists a $4$-cell $\alpha: A \qfl B \in \TPN[\f,\g]_4^{*(2)}$.

In particular, for every parallel $3$-cells $A,B \in \TPN[\f,\g]^{*(2)}$ whose $1$-target is of weight $1$ and is not of the form ${}_a \twocell{1} {}_{\f(a)} \twocell{1} {}_b$, there exists a $4$-cell $\alpha: A \qfl B \in \TPN[\f,\g]_4^{*(2)}$.
\end{prop}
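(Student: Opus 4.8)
The plan is to deduce the statement directly from the results already obtained in this subsection. First I would use that $A$ and $B$, being parallel, share a common $1$-target $u$, which by hypothesis is not of the form \eqref{eq:but_ok}; applying Lemma \ref{lem:but_ou_PF_3} to $A$ and to $B$ then shows that both are $3$-cells of $\PF[\f,\g]^{*(2)}$, and they remain parallel there. By Lemma \ref{lem:PF_coh} the $4$-polygraph $\PF[\f,\g]$ is $3$-coherent --- equivalently, by Theorem \ref{thm:squier} applied to $\PF[\f,\g]$, the free $(4,2)$-category $\PF[\f,\g]^{*(2)}$ is $3$-coherent --- so there is a $4$-cell $\alpha : A \qfl B$ in $\PF[\f,\g]_4^{*(2)}$.

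It then remains to transport $\alpha$ into $\TPN[\f,\g]^{*(2)}$. Since $\PF[\f,\g]$ is obtained from $\TPN[\f,\g]$ by discarding exactly the generators coming from the pseudonatural transformation (the cells $\twocell{transfo}$, $\twocell{transfo_nat}$, $\twocell{transfo_prod}$ and $\twocell{transfo_unit}$), the inclusion of polygraphs induces a functor $\PF[\f,\g]^{*(2)} \to \TPN[\f,\g]^{*(2)}$ that is the identity on all common generators and respects sources, targets and compositions; pushing $\alpha$ through it produces a $4$-cell $A \qfl B$ in $\TPN[\f,\g]_4^{*(2)}$, which is the first assertion. The one point deserving a little care is that the $3$-cells recovered inside $\PF[\f,\g]^{*(2)}$ really map back to the given cells $A$ and $B$ (and in particular are genuinely parallel in $\PF[\f,\g]^{*(2)}$, not merely have parallel images); this is exactly the sort of statement handled by the injective-functor results of Section \ref{subsec:inj_functors}, notably Proposition \ref{prop:inject_libres_gen}, and for this elementary inclusion it can also be checked directly.

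For the ``in particular'' clause I would simply compare weights. A $1$-cell of the form \eqref{eq:but_ok} is a composite of $i-1$ generating $1$-cells between elements of $\CC$ (each of weight $1$), the generating $1$-cell ${}_{a_i}\twocell{1}{}_{\f(a_i)}$ from $\CC$ to $\DD$ (of weight $0$), and $j$ generating $1$-cells between elements of $\DD$ (each of weight $1$), so its weight equals $i+j-1$. As $i,j \geq 1$ this is at least $1$, with equality precisely when $i=j=1$, in which case the $1$-cell is ${}_a\twocell{1}{}_{\f(a)}\twocell{1}{}_b$. Hence a $1$-target of weight $1$ that is not of the latter form is not of the form \eqref{eq:but_ok}, so the first assertion applies and yields the desired $4$-cell. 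I do not expect any genuine obstacle here: all the substantive work has already been discharged in Lemmas \ref{lem:but_ou_PF}, \ref{lem:but_ou_PF_3} and \ref{lem:PF_coh}, and what remains is the above bookkeeping.
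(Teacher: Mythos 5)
Your proposal is correct and follows essentially the same route as the paper: apply Lemma \ref{lem:but_ou_PF_3} to place $A$ and $B$ in $\PF[\f,\g]_3^{*(2)}$, invoke the $3$-coherence of $\PF[\f,\g]$ from Lemma \ref{lem:PF_coh}, and include the resulting $4$-cell into $\TPN[\f,\g]_4^{*(2)}$, with the weight computation $i+j-1\geq 1$ (equality iff $i=j=1$) settling the second assertion exactly as the paper does. The extra care you take about the inclusion being faithful is more than the paper spells out, but it is harmless and does not change the argument.
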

\begin{proof}
Let $A,B \in \TPN[\f,\g]_3^{*(2)}$ whose $1$-target is not of the form \eqref{eq:but_ok}. We want to build a $4$-cell $\alpha : A \qfl B \in \TPN[\f,\g]_4^{*(2)}$. According to Lemma \ref{lem:but_ou_PF_3}, $A$ and $B$ are actually $3$-cells in $\PF[\f,\g]_3^{*(2)}$. In Lemma \ref{lem:PF_coh} we showed that $\PF[\f,\g]$ is $3$-coherent, hence there exists a $4$-cell $\alpha: A \qfl B \in \PF[\f,\g]_4^{*(2)} \subset \TPN[\f,\g]_4^{*(2)}$.

Moreover, the only $1$-cells of weight $1$ and of the form \eqref{eq:but_ok} are the $1$-cells ${}_a \twocell{1} {}_{\f(a)} \twocell{1} {}_b$, which proves the second part of the Proposition.
\end{proof}

\subsection{A sub-polygraph of $\TPN[\f,\g]$ satifying $2$-Squier condition of depth $2$}

\label{subsec:tpnat_coh_final}

\begin{defn}
Let $\TPN^{++}[\f,\g]$ be the sub-$(4,2)$-polygraph of $\TPN[\f,\g]$ containing every product cell from Table \ref{table:Liste_cellule}.
\end{defn}

\begin{lem}\label{lem:verif_thm_hypothese}
The $(4,2)$-polygraph $\TPN^{++}[\f,\g]$ satisfies the $2$-Squier condition of depth $2$. 
\end{lem}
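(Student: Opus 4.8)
The goal is to verify that $\TPN^{++}[\f,\g]$ satisfies the $2$-Squier condition of depth $2$, which by definition requires three things: (i) the underlying $2$-polygraph is $2$-convergent (i.e. $2$-terminating and $2$-confluent); (ii) the $3$-cells are in bijection with the critical pairs, each $3$-cell being a filling of its critical pair; (iii) the $4$-cells are in bijection with the critical triples, each being a filling of the corresponding canonical diagram. The plan is to check these three points in turn, exploiting the fact that $\TPN^{++}[\f,\g]$ retains exactly the \emph{product cells} of Table~\ref{table:Liste_cellule} and drops all unit-related cells.

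\textbf{Step 1: $2$-termination.} I would reuse the derivation machinery from Proposition~\ref{prop:TPN_termine}. The functors $X_{\f,\g}$, $Y_{\f,\g}$ and the derivation $d_{\f,\g}$ restrict to the sub-polygraph, and since $\TPN^{++}[\f,\g]_3$ is a subset of $\TPN[\f,\g]_3$, the strict inequalities $d(\s(A)) > d(\t(A))$ (and the weak ones for $X$, $Y$) that were checked for all $3$-cells of $\TPN[\f,\g]$ hold a fortiori here. So Theorem~\ref{thm:terminaison} applies directly and gives $2$-termination. (Strictly, the $2$-Squier condition of depth $2$ is stated at level $n=2$, so "$2$-termination" here means termination of the $3$-cells over the $2$-cells — the same derivation argument works since we are working inside the $2$-category $\TPN^{++}[\f,\g]_2^*$.)

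\textbf{Step 2: enumerating critical pairs and $2$-confluence.} This is the combinatorial heart. By Proposition~\ref{prop:confluence_local}, $2$-confluence reduces to confluence of critical pairs. I would enumerate the critical pairs of the $3$-cells $\twocell{assocC}$, $\twocell{assocD}$, $\twocell{img_prodF}$, $\twocell{img_prodG}$, $\twocell{img_assocF}$-source-type $2$-cells, and $\twocell{transfo_nat}$ — taking overlaps between every pair of these $3$-cells' sources. Because we have removed all unit cells ($\twocell{unitC}$, $\twocell{unitD}$, $\twocell{runitC}$, $\twocell{lunitC}$, $\twocell{img_unitF}$, $\twocell{img_unitG}$, $\twocell{trianC}$, $\twocell{trianD}$, etc.), the unit-flavoured critical pairs from Theorems~\ref{thm:BiCat_coh} and~\ref{thm:PFonct_coh} disappear, and what remains are: the single $\twocell{assocC}$-self-overlap (Mac Lane pentagon source), its $\twocell{assocD}$ analogue, the $\twocell{img_prodF}$/$\twocell{img_prodG}$ overlaps with $\twocell{assocC}$/$\twocell{assocD}$ (sources of $\twocell{img_assocF}$, $\twocell{img_assocG}$), and the $\twocell{transfo_nat}$ overlaps with $\twocell{img_prodF}$ and with $\twocell{prodC}$-context — i.e. the source of $\twocell{transfo_prod}$. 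I would check that each such critical pair is precisely the source of one of the remaining $3$... wait — the $3$-cells of $\TPN^{++}$ are $\twocell{assocC}$, $\twocell{assocD}$, $\twocell{img_prodF}$, $\twocell{img_prodG}$, $\twocell{img_assocF}$ — no, those are $4$-cells. Let me restate: the $3$-cells are $\twocell{assocC}$, $\twocell{assocD}$, $\twocell{img_prodF}$, $\twocell{img_prodG}$, $\twocell{transfo_nat}$; the $4$-cells are $\twocell{pentaC}$, $\twocell{pentaD}$, $\twocell{img_assocF}$, $\twocell{img_assocG}$, $\twocell{transfo_prod}$. So for (ii) I check that the critical pairs of these five $3$-cells are exactly five families, matched one-to-one by the $4$-cells' sources, and that each critical pair is confluent — the confluence diagram being exactly the boundary of the associated $4$-cell. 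For $\twocell{pentaC}$, $\twocell{pentaD}$, $\twocell{img_assocF}$, $\twocell{img_assocG}$ this is what was checked in Theorems~\ref{thm:BiCat_coh} and~\ref{thm:PFonct_coh}; for $\twocell{transfo_prod}$ the source is the non-confluent-looking diagram displayed just before Section~\ref{sec:proof_of_coherence} — but here it \emph{is} a genuine critical pair of $\TPN^{++}$ and $\twocell{transfo_prod}$ fills it, so confluence holds within $\TPN^{++(2)}$.

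\textbf{Step 3: the depth-$2$ condition on $4$-cells.} Finally I must show the $4$-cells biject with the critical triples, each being a filling in the prescribed shape. I would enumerate the critical triples: a triple is a minimal overlapping triple of rewriting steps. These arise as "$3$-fold overlaps" and, as the Mac Lane pentagon example in the introduction illustrates, each one is exactly the source of a generating $4$-cell. So I would go through each of $\twocell{pentaC}$, $\twocell{pentaD}$, $\twocell{img_assocF}$, $\twocell{img_assocG}$, $\twocell{transfo_prod}$ and exhibit its source as the unique critical triple arising from a triple overlap of $2$-cells, and check its target/boundary is the canonical filling built from the canonical fillers of the three sub-pairs. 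The genuinely new verification compared to the existing proofs is the $\twocell{transfo_prod}$ case: one must confirm that the triple overlap $\twocell{(prodC *0 transfo) *1 (fonctF *0 1) *1 (1 *0 prodD)}$ (where $\twocell{prodC}$, $\twocell{transfo}$ and a $\twocell{fonctF}$ can each be pushed) yields precisely the hexagonal boundary of $\twocell{transfo_prod}$, with the three canonical $3$-cell fillings being (composites of) $\twocell{img_prodG}$, $\twocell{transfo_nat}$, $\twocell{img_prodF}$ and $\twocell{assocD}$.

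\textbf{Main obstacle.} The hard part is the bookkeeping in Steps 2 and 3 — confirming that \emph{no} critical pair or triple has been missed once the unit cells are removed, and that the ones that remain are in clean bijection with the retained $3$- and $4$-cells. The subtlety is that removing $\twocell{unitC}$ and $\twocell{unitD}$ not only deletes unit-overlap critical pairs but also changes the set of normal forms (making ${}_a\twocell{1}{}_{\f(a)}\twocell{1}{}_b$ a normal form, which is exactly what is needed downstream for Theorem~\ref{thm:main_theory}); one must be careful that this does not create new overlaps. In practice, since every $3$-cell of $\TPN^{++}$ is a "product-type associativity/naturality" cell whose source is a length-$2$ composite of product $2$-cells, overlaps are constrained and the enumeration is finite and mechanical — but it is exactly the sort of exhaustive case check that is the substance of the lemma, and I would carry it out by tabulating, for each ordered pair of the five $3$-cells, all ways their sources can share a $2$-cell.
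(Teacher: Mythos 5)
Your outline has the right overall shape (termination, then confluence via critical branchings, then the bijections with generating cells), but the whole argument is pitched one dimension too high, and this is not a cosmetic slip: it changes what actually has to be verified.

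The $2$-Squier condition of depth $2$ for a $(4,2)$-polygraph lives at the level of the $2$-cells rewriting $1$-cells. Concretely: (i) the $2$-cells $\twocell{prodC}$, $\twocell{prodD}$, $\twocell{fonctF}$, $\twocell{fonctG}$, $\twocell{transfo}$ must form a terminating and confluent rewriting system on the $1$-cells (words of objects); (ii) the critical \emph{pairs} of these $2$-cells must be in bijection with the $3$-cells $\twocell{assocC}$, $\twocell{assocD}$, $\twocell{img_prodF}$, $\twocell{img_prodG}$, $\twocell{transfo_nat}$, each $3$-cell filling its pair; (iii) the critical \emph{triples} of these $2$-cells must be in bijection with the $4$-cells $\twocell{pentaC}$, $\twocell{pentaD}$, $\twocell{img_assocF}$, $\twocell{img_assocG}$, $\twocell{transfo_prod}$. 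Your Step 1 instead proposes to reuse the derivation $d_{\f,\g}$ of Proposition \ref{prop:TPN_termine}, which proves $3$-termination (termination of the $3$-cells acting on $2$-cells); your parenthetical gloss that ``$2$-termination here means termination of the $3$-cells over the $2$-cells'' is exactly backwards. By the paper's definitions, $2$-termination is well-foundedness of $\rightarrow^*_2$ on $\Sigma_1^*$, and the paper proves it by a separate, genuinely new argument: a functor $\tau:\TPN[\f,\g]_1^*\to\mathbb N^3$ with the lexicographic order, under which every generating $2$-cell strictly decreases (e.g. $\tau(\s(\twocell{transfo}))=(0,2,0)>(0,1,1)=\tau(\t(\twocell{transfo}))$). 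Nothing in Proposition \ref{prop:TPN_termine} gives you this.

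The same off-by-one error propagates into Steps 2 and 3: you enumerate overlaps between the \emph{sources of the $3$-cells} and match them with the $4$-cells. That is the $3$-Squier condition — precisely the condition that \emph{fails} for $\TPN[\f,\g]$ (the non-confluent critical pair of $3$-cells displayed at the end of Section \ref{subsec:pseudo_nat_transfo}) and whose failure is the reason the paper introduces the $2$-Squier condition of depth $2$ in the first place. What must be enumerated are the overlaps of $2$-cell rewriting steps on words: e.g. the critical pair $(\twocell{fonctG}, \twocell{1\ *0\ transfo})$ on the word ${}_a\twocell{1}{}_b\twocell{1}{}_{\g(b)}$, filled by $\twocell{transfo_nat}$, and the critical triple $(\twocell{prodC *0 1}, \twocell{1 *0 fonctG}, \twocell{2 *0 transfo})$ on ${}_a\twocell{1}{}_b\twocell{1}{}_c\twocell{1}{}_{\g(c)}$, filled by $\twocell{transfo_prod}$. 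The substance of the paper's proof is the explicit exhibition of these confluence diagrams and filling diagrams at the $1$-cell/$2$-cell level. As written, your verification plan would check the wrong diagrams and would miss the actual termination obligation, so the proof does not go through without reworking every step down one dimension.
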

\begin{proof}
\subsubsection*{The $2$-Squier condition}

Let us start by showing the $2$-termination of the $(4,2)$-polygraph $\TPN^{++}[\f,\g]$.

We define a functor  $\tau: \TPN[\f,\g]_1^* \to \mathbb N^3$, where compositions in $\mathbb N^3$ are given by component-wise addition, by defining:
\begin{itemize}
\item For all $a,b \in \CC$, $\tau({}_a \twocell{1} {}_b) = (1,0,0)$.
\item For all $a \in \CC$, $\tau({}_a \twocell{1} {}_{\f(a)}) = (0,1,0)$.
\item For all $a \in \CC$, $\tau({}_a \twocell{1} {}_{\g(a)}) = (0,2,0)$.
\item For all $a,b \in \DD$, $\tau({}_a \twocell{1} {}_b) = (0,0,1)$.
\end{itemize}
The lexicographic order on $\mathbb N^3$ induces a noetherian ordering on  $\TPN[\f,\g]^*_1$. Moreover the $2$-cells are indeed decreasing for this order:

\[
 \tau(\s(\twocell{prodC})) = (2,0,0) > (1,0,0) = \tau(\t(\twocell{prodC}))
\qquad
 \tau(\s(\twocell{prodD})) = (0,0,2) > (0,0,1) = \tau(\t(\twocell{prodD}))
\]

\[
 \tau(\s(\twocell{fonctF})) = (1,1,0) > (0,1,1) = \tau(\t(\twocell{fonctF}))
\qquad
 \tau(\s(\twocell{fonctG})) = (1,2,0) > (0,2,1) = \tau(\t(\twocell{fonctG}))
\]

\[
 \tau(\s(\twocell{transfo}))= (0,2,0) > (0,1,1) = \tau(\t(\twocell{transfo}))
\]

The following diagrams show both the $2$-confluence of $\TPN^{++}[\f,\g]$ and the correspondence between critical pairs and $3$-cells:

\[
\xymatrix @C = 3em @R = 2em {
& 
{}_a \twocell{1} {}_c \twocell{1} {}_d 
\ar@2 [rd] ^{\twocell{prodC}}
&
\\
{}_a \twocell{1} {}_b \twocell{1} {}_c \twocell{1} {}_d
\ar@2 [ru] ^{\twocell{prodC *0 1}}
\ar@2 [rd] _{\twocell{1 *0 prodC}} 
\ar@{} [rr] |-{\twocell{assocC}}
& &
{}_a \twocell{1} {}_d
\\
&
{}_a \twocell{1} {}_b \twocell{1} {}_d
\ar@2 [ru] _{\twocell{prodC}}
&
}
\qquad
\xymatrix @C = 3em @R = 2em {
& 
{}_a \twocell{1} {}_c \twocell{1} {}_d 
\ar@2 [rd] ^{\twocell{prodD}}
&
\\
{}_a \twocell{1}{}_ b \twocell{1} {}_c \twocell{1} {}_d
\ar@2 [ru] ^{\twocell{prodD *0 1}}
\ar@2 [rd] _{\twocell{1 *0 prodD}} 
\ar@{} [rr] |-{\twocell{assocD}}
& &
{}_a \twocell{1} {}_d
\\
&
{}_a \twocell{1} {}_b \twocell{1} {}_d
\ar@2 [ru] _{\twocell{prodD}}
&
}
\]

\[
\xymatrix @!C = 1.5em @R = 3em {
&
{}_a \twocell{1} {}_b \twocell{1} {}_{\f(b)} \twocell{1} {}_{\f(c)}
\ar@2 [rr] ^-{\twocell{fonctF *0 1}}
& 
\ar@{} [dd] |-{\twocell{img_prodF}}
&
{}_a \twocell{1} {}_{\f(a)} \twocell{1} {}_{\f(b)} \twocell{1} {}_{\f(c)}
\ar@2 [rd] ^-{\twocell{1 *0 prodD}}
&
\\
{}_a \twocell{1} {}_b \twocell{1} {}_c \twocell{1} {}_{\f(c)}
\ar@2 [ru] ^-{\twocell{1 *0 fonctF}}
\ar@2 [rrd] _-{\twocell{prodC *0 1}}
& & & &
{}_a \twocell{1} {}_{\f(a)} \twocell{1} {}_{\f(c)}
\\
& &
{}_a \twocell{1} {}_c \twocell{1} {}_{\f(c)}
\ar@2 [rru] _{\twocell{fonctF}}
}
\qquad
\xymatrix @!C = 1.5em @R = 3em {
&
{}_a \twocell{1} {}_b \twocell{1} {}_{\g(b)} \twocell{1} {}_{\g(c)}
\ar@2 [rr] ^-{\twocell{fonctG *0 1}}
& 
\ar@{} [dd] |-{\twocell{img_prodG}}
&
{}_a \twocell{1} {}_{\g(a)} \twocell{1} {}_{\g(b)} \twocell{1} {}_{\g(c)}
\ar@2 [rd] ^-{\twocell{1 *0 prodD}}
&
\\
{}_a \twocell{1} {}_b \twocell{1} {}_c \twocell{1} {}_{\g(c)}
\ar@2 [ru] ^-{\twocell{1 *0 fonctG}}
\ar@2 [rrd] _-{\twocell{prodC *0 1}}
& & & &
{}_a \twocell{1} {}_{\g(a)} \twocell{1} {}_{\g(c)}
\\
& &
{}_a \twocell{1} {}_c \twocell{1} {}_{\g(c)}
\ar@2 [rru] _{\twocell{fonctG}}
}
\]

\[
\xymatrix @C = 3em @R = 1.5em {
&
{}_a \twocell{1} {}_{\g(a)} \twocell{1} {}_{\g(b)}
\ar@2 [r] ^-{\twocell{transfo *0 1}}
&
{}_a \twocell{1} {}_{\f(a)} \twocell{1} {}_{\g(a)} \twocell{1} {}_{\g(b)}
\ar@2 [rd] ^-{\twocell{1 *0 prodD}}
&
\\
{}_a \twocell{1} {}_b \twocell{1} {}_{\g(b)}
\ar@2 [ru] ^-{\twocell{fonctG}}
\ar@2 [rd] _-{\twocell{1 *0 transfo}}
\ar@{} [rrr] |-{\twocell{transfo_nat}}
& & &
{}_a \twocell{1} {}_{\f(a)} \twocell{1} {}_{\g(b)}
\\
&
{}_a \twocell{1} {}_b \twocell{1} {}_{\f(b)} \twocell{1} {}_{\g(b)} 
\ar@2 [r] _-{\twocell{fonctF *0 1}}
&
{}_a \twocell{1} {}_{\f(a)} \twocell{1} {}_{\f(b)} \twocell{1} {}_{\g(b)} 
\ar@2 [ru] _-{\twocell{1 *0 prodD}}
&
}
\]

\subsubsection*{The $2$-Squier condition of depth $2$}

The following diagrams show the bijection between critical triples and $4$-cells.

\[
\xymatrix @C = 2.5em @R = 3em {
&
{}_a \twocell{1} {}_c \twocell{1} {}_d \twocell{1} {}_e
\ar@2 [rr] ^-{\twocell{prodC *0 1}}
\ar@{} [rd] |-{\twocell{assocC *0 1}}
& &
{}_a \twocell{1} {}_d \twocell{1} {}_e
\ar@2 [rd] ^-{\twocell{prodC}}
\ar@{} [dd] |-{\twocell{assocC}}
& & &
{}_a \twocell{1} {}_c \twocell{1} {}_d \twocell{1} {}_e
\ar@2 [rr] ^-{\twocell{prodC *0 1}}
\ar@{} [rrrd] |-{\twocell{assocC}}
\ar@2 [rd] |-{\twocell{1 *0 prodC}}
\ar@{} [dd] |-{=}
& &
{}_a \twocell{1} d \twocell{1} {}_e
\ar@2 [rd] ^-{\twocell{prodC}}
&
\\
{}_a \twocell{1} {}_b \twocell{1} {}_c \twocell{1} {}_d \twocell{1} {}_e
\ar@2 [ru] ^-{\twocell{prodC *0 2}}
\ar@2 [rr] |-{\twocell{1 *0 prodC *0 1}}
\ar@2 [rd] _-{\twocell{2 *0 prodC}}
& &
{}_a \twocell{1} {}_b \twocell{1} {}_d \twocell{1} {}_e
\ar@{} [ld] |-{\twocell{1 *0 assocC}}
\ar@2 [ru] |-{\twocell{prodC *0 1}}
\ar@2 [rd] |-{\twocell{1 *0 prodC}}
&
&
{}_a \twocell{1} {}_e
\ar@4 [r] ^-{\twocell{pentaC}}
&
{}_a \twocell{1} {}_b \twocell{1} {}_c \twocell{1} {}_d \twocell{1} {}_e
\ar@2 [ru] ^-{\twocell{prodC *0 2}}
\ar@2 [rd] _-{\twocell{2 *0 prodC}}
& & 
{}_a  \twocell{1} {}_c \twocell{1} {}_e
\ar@2 [rr] |-{\twocell{prodC}}
\ar@{} [rd] |-{\twocell{assocC}}
& &
{}_a  \twocell{1} {}_e
\\
&
{}_a  \twocell{1} {}_b  \twocell{1} {}_c  \twocell{1} {}_e
\ar@2 [rr] _-{\twocell{1 *0 prodC}}
& &
{}_a  \twocell{1} {}_b  \twocell{1} {}_e
\ar@2 [ru] _-{\twocell{prodC}}
& & &
{}_a  \twocell{1} {}_b  \twocell{1} {}_c  \twocell{1} {}_e
\ar@2 [rr] _-{\twocell{1 *0 prodC}}
\ar@2 [ru] |-{\twocell{prodC *0 1}}
& &
{}_a  \twocell{1} {}_b  \twocell{1} {}_e
\ar@2 [ru] _-{\twocell{prodC}}
&
}
\]

\[
\xymatrix @C = 2.5em @R = 3em {
&
{}_a \twocell{1} {}_c \twocell{1} {}_d \twocell{1} {}_e
\ar@2 [rr] ^-{\twocell{prodD *0 1}}
\ar@{} [rd] |-{\twocell{assocD *0 1}}
& &
{}_a \twocell{1} {}_d \twocell{1} {}_e
\ar@2 [rd] ^-{\twocell{prodD}}
\ar@{} [dd] |-{\twocell{assocD}}
& & &
{}_a \twocell{1} {}_c \twocell{1} {}_d \twocell{1} {}_e
\ar@2 [rr] ^-{\twocell{prodD *0 1}}
\ar@{} [rrrd] |-{\twocell{assocD}}
\ar@2 [rd] |-{\twocell{1 *0 prodD}}
\ar@{} [dd] |-{=}
& &
{}_a \twocell{1} {}_d \twocell{1} {}_e
\ar@2 [rd] ^-{\twocell{prodD}}
&
\\
{}_a \twocell{1} {}_b \twocell{1} {}_c \twocell{1} {}_d \twocell{1} {}_e
\ar@2 [ru] ^-{\twocell{prodD *0 2}}
\ar@2 [rr] |-{\twocell{1 *0 prodD *0 1}}
\ar@2 [rd] _-{\twocell{2 *0 prodD}}
& &
{}_a \twocell{1} {}_b \twocell{1} {}_d \twocell{1} {}_e
\ar@{} [ld] |-{\twocell{1 *0 assocD}}
\ar@2 [ru] |-{\twocell{prodD *0 1}}
\ar@2 [rd] |-{\twocell{1 *0 prodD}}
&
&
{}_a \twocell{1} {}_e
\ar@4 [r] ^-{\twocell{pentaD}}
&
{}_a \twocell{1} {}_b \twocell{1} {}_c \twocell{1} {}_d \twocell{1} {}_e
\ar@2 [ru] ^-{\twocell{prodD *0 2}}
\ar@2 [rd] _-{\twocell{2 *0 prodD}}
& & 
{}_a  \twocell{1} {}_c \twocell{1} {}_e
\ar@2 [rr] |-{\twocell{prodD}}
\ar@{} [rd] |-{\twocell{assocD}}
& &
{}_a  \twocell{1} {}_e
\\
&
{}_a  \twocell{1} {}_b  \twocell{1} {}_c  \twocell{1} {}_e
\ar@2 [rr] _-{\twocell{1 *0 prodD}}
& &
{}_a  \twocell{1} {}_b  \twocell{1} {}_e
\ar@2 [ru] _-{\twocell{prodD}}
& & &
{}_a  \twocell{1} {}_b  \twocell{1} {}_c  \twocell{1} {}_e
\ar@2 [rr] _-{\twocell{1 *0 prodD}}
\ar@2 [ru] |-{\twocell{prodD *0 1}}
& &
{}_a  \twocell{1} {}_b  \twocell{1} {}_e
\ar@2 [ru] _-{\twocell{prodD}}
&
}
\]

\[
\xymatrix @C = 2em @R = 2.4em {
&
{}_a \twocell{1} {}_c \twocell{1} {}_d \twocell{1} {}_{\f(d)}
  \ar@2 [rr] ^{\twocell{prodC *0 1}}
&&
{}_a \twocell{1} {}_d \twocell{1} {}_{\f(d)}
  \ar@2 @/^/ [rrd] ^{\twocell{fonctF}}
&&&
\\ 
{}_a \twocell{1} {}_b \twocell{1} {}_c \twocell{1} {}_d \twocell{1} {}_{\f(d)}
  \ar@2 [ur] ^{\twocell{prodC *0 2}}
  \ar@2 [rr] ^{\twocell{1 *0 prodC *0 1}}
  \ar@2 [d] _{\twocell{2 *0 fonctF}}
  \ar@{} [urrr] |-{\twocell{assocC *0 1}}
  \ar@{} [ddrrr]|-{\twocell{1 *0 img_prodF}}
&&
{}_a \twocell{1} {}_b \twocell{1} {}_d \twocell{1} {}_{\f(d)}
  \ar@2 [ru] ^{\twocell{prodC *0 1}}
  \ar@2 @/^/ [rdd] ^{\twocell{1 *0 fonctF}}
  \ar@{} [rrr] |-{\twocell{img_prodF}}
&&&
{}_a \twocell{1} {}_{\f(a)} \twocell{1} {}_{\f(d)}
\\ 
{}_a \twocell{1} {}_b \twocell{1} {}_c \twocell{1} {}_{\f(c)} \twocell{1} {}_{\f(d)}
  \ar@2 [rd] _{\twocell{1 *0 fonctF *0 1}}
&&&&&
{}_a \twocell{1} {}_{\f(a)} \twocell{1} {}_{\f(b)} \twocell{1} {}_{\f(d)}
  \ar@2 [u] _{\twocell{1 *0 prodD}}
&
\\ 
&
{}_a \twocell{1} {}_b \twocell{1} {}_{\f(b)} \twocell{1} {}_{\f(c)} \twocell{1} {}_{\f(d)}
  \ar@2 [rr] ^{\twocell{2 *0 prodD}}
&
\ar@4 []!<0pt,-10pt>;[d]!<0pt,20pt> ^{\twocell{img_assocF}}
&
{}_a \twocell{1} {}_b \twocell{1} {}_{\f(b)} \twocell{1} {}_{\f(d)}
  \ar@2 [rru] _{\twocell{fonctF *0 1}}
&&&
\\
&
{}_a \twocell{1} {}_c \twocell{1} {}_d \twocell{1} {}_{\f(d)}
  \ar@2 [rr] ^{\twocell{prodC *0 1}}
  \ar@2 [rd] _{\twocell{1 *0 fonctF}}
  \ar@{} [rrrd] ^-{\twocell{img_prodF}}
&&
{}_a \twocell{1} {}_d \twocell{1} {}_{\f(d)}
  \ar@2 @/^/ [rrd] ^{\twocell{fonctF}}
&&& \\ 
{}_a \twocell{1} {}_b \twocell{1} {}_c \twocell{1} {}_d \twocell{1} {}_{\f(d)}
  \ar@2 [ur] ^{\twocell{prodC *0 2}}
  \ar@2 [d] _{\twocell{2 *0 fonctF}}
  \ar@{} [rr] |-{=}
&&
{}_a \twocell{1} {}_c \twocell{1} {}_{\f(c)} \twocell{1} {}_{\f(d)}
  \ar@2 [r] ^{\twocell{fonctF *0 1}}
&
{}_a \twocell{1} {}_{\f(a)} \twocell{1} {}_{\f(c)} \twocell{1} {}_{\f(d)}
  \ar@2 [rr] ^{\twocell{1 *0 prodD}}
&&
{}_a \twocell{1} {}_{\f(a)} \twocell{1} {}_{\f(d)} 
\\ 
{}_a \twocell{1} {}_b \twocell{1} {}_c \twocell{1} {}_{\f(c)} \twocell{1} {}_{\f(d)}
  \ar@2 [rd] _{\twocell{1 *0 fonctF *0 1}}
  \ar@2 [rru] ^<<<<<{\twocell{prodC *0 2}}
&&
{}_a \twocell{1} {}_{\f(a)} \twocell{1} {}_{\f(b)} \twocell{1} {}_{\f(c)} \twocell{1} {}_{\f(d)}
  \ar@2 [ur]  _{\twocell{1 *0 prodD *0 1}}
  \ar@2 [rrr] ^{\twocell{2 *0 prodD}}
  \ar@{} [rrru] |-{\twocell{assocD}}
&&&
{}_a \twocell{1} {}_{\f(a)} \twocell{1} {}_{\f(d)}
  \ar@2 [u] _{\twocell{1 *0 prodD}}
& \\ 
  \ar@{} [rrruu] ^-{\twocell{img_prodF *0 1}}
&
{}_a \twocell{1} {}_b \twocell{1} {}_{\f(b)} \twocell{1} {}_{\f(c)} \twocell{1} {}_{\f(d)}
  \ar@2 [rr] _{\twocell{2 *0 prodD}}
  \ar@2 [ru] _{\twocell{fonctF *0 2}}
  \ar@{} [rrru] |-{=}
&&
{}_a \twocell{1} {}_b \twocell{1} {}_{\f(b)} \twocell{1} {}_{\f(d)}
  \ar@2 [urr] _{\twocell{fonctF *0 1}}
&&&
}
\]
\[
\xymatrix @C = 2em @R = 2.4em {
&
{}_a \twocell{1} {}_c \twocell{1} {}_d \twocell{1} {}_{\g(d)}
  \ar@2 [rr] ^{\twocell{prodC *0 1}}
&&
{}_a \twocell{1} {}_d \twocell{1} {}_{\g(d)}
  \ar@2 @/^/ [rrd] ^{\twocell{fonctG}}
&&&
\\ 
{}_a \twocell{1} {}_b \twocell{1} {}_c \twocell{1} {}_d \twocell{1} {}_{\g(d)}
  \ar@2 [ur] ^{\twocell{prodC *0 2}}
  \ar@2 [rr] ^{\twocell{1 *0 prodC *0 1}}
  \ar@2 [d] _{\twocell{2 *0 fonctG}}
  \ar@{} [urrr] |-{\twocell{assocC *0 1}}
  \ar@{} [ddrrr]|-{\twocell{1 *0 img_prodG}}
&&
{}_a \twocell{1} {}_b \twocell{1} {}_d \twocell{1} {}_{\g(d)}
  \ar@2 [ru] ^{\twocell{prodC *0 1}}
  \ar@2 @/^/ [rdd] ^{\twocell{1 *0 fonctG}}
  \ar@{} [rrr] |-{\twocell{img_prodG}}
&&&
{}_a \twocell{1} {}_{\g(a)} \twocell{1} {}_{\g(d)}
\\ 
{}_a \twocell{1} {}_b \twocell{1} {}_c \twocell{1} {}_{\g(c)} \twocell{1} {}_{\g(d)}
  \ar@2 [rd] _{\twocell{1 *0 fonctG *0 1}}
&&&&&
{}_a \twocell{1} {}_{\g(a)} \twocell{1} {}_{\g(b)} \twocell{1} {}_{\g(d)}
  \ar@2 [u] _{\twocell{1 *0 prodD}}
&
\\ 
&
{}_a \twocell{1} {}_b \twocell{1} {}_{\g(b)} \twocell{1} {}_{\g(c)} \twocell{1} {}_{\g(d)}
  \ar@2 [rr] ^{\twocell{2 *0 prodD}}
&
\ar@4 []!<0pt,-10pt>;[d]!<0pt,20pt> ^{\twocell{img_assocG}}
&
{}_a \twocell{1} {}_b \twocell{1} {}_{\g(b)} \twocell{1} {}_{\g(d)}
  \ar@2 [rru] _{\twocell{fonctG *0 1}}
&&&
\\
&
{}_a \twocell{1} {}_c \twocell{1} {}_d \twocell{1} {}_{\g(d)}
  \ar@2 [rr] ^{\twocell{prodC *0 1}}
  \ar@2 [rd] _{\twocell{1 *0 fonctG}}
  \ar@{} [rrrd] ^-{\twocell{img_prodG}}
&&
{}_a \twocell{1} {}_d \twocell{1} {}_{\g(d)}
  \ar@2 @/^/ [rrd] ^{\twocell{fonctG}}
&&& \\ 
{}_a \twocell{1} {}_b \twocell{1} {}_c \twocell{1} {}_d \twocell{1} {}_{\g(d)}
  \ar@2 [ur] ^{\twocell{prodC *0 2}}
  \ar@2 [d] _{\twocell{2 *0 fonctG}}
  \ar@{} [rr] |-{=}
&&
{}_a \twocell{1} {}_c \twocell{1} {}_{\g(c)} \twocell{1} {}_{\g(d)}
  \ar@2 [r] ^{\twocell{fonctG *0 1}}
&
{}_a \twocell{1} {}_{\g(a)} \twocell{1} {}_{\g(c)} \twocell{1} {}_{\g(d)}
  \ar@2 [rr] ^{\twocell{1 *0 prodD}}
&&
{}_a \twocell{1} {}_{\g(a)} \twocell{1} {}_{\g(d)} 
\\ 
{}_a \twocell{1} {}_b \twocell{1} {}_c \twocell{1} {}_{\g(c)} \twocell{1} {}_{\g(d)}
  \ar@2 [rd] _{\twocell{1 *0 fonctG *0 1}}
  \ar@2 [rru] ^<<<<<{\twocell{prodC *0 2}}
&&
{}_a \twocell{1} {}_{\g(a)} \twocell{1} {}_{\g(b)} \twocell{1} {}_{\g(c)} \twocell{1} {}_{\g(d)}
  \ar@2 [ur]  _{\twocell{1 *0 prodD *0 1}}
  \ar@2 [rrr] ^{\twocell{2 *0 prodD}}
  \ar@{} [rrru] |-{\twocell{assocD}}
&&&
{}_a \twocell{1} {}_{\g(a)} \twocell{1} {}_{\g(d)}
  \ar@2 [u] _{\twocell{1 *0 prodD}}
& \\ 
  \ar@{} [rrruu] ^-{\twocell{img_prodG *0 1}}
&
{}_a \twocell{1} {}_b \twocell{1} {}_{\g(b)} \twocell{1} {}_{\g(c)} \twocell{1} {}_{\g(d)}
  \ar@2 [rr] _{\twocell{2 *0 prodD}}
  \ar@2 [ru] _{\twocell{fonctG *0 2}}
  \ar@{} [rrru] |-{=}
&&
{}_a \twocell{1} {}_b \twocell{1} {}_{\g(b)} \twocell{1} {}_{\g(d)}
  \ar@2 [urr] _{\twocell{fonctG *0 1}}
&&&
}
\]

\[
\xymatrix @C = 2em @R = 2.5em {
&
{}_a \twocell{1} {}_c \twocell{1} {}_{\g(c)} 
  \ar@2 [r] ^-{\twocell{fonctG}}
  \ar@{} [dd] |-{\twocell{img_prodG}}
&
{}_a \twocell{1} {}_{\g(a)} \twocell{1} {}_{\g(c)} 
  \ar@2 [r] ^-{\twocell{transfo *0 1}}
  \ar@{} [rd] |-{=}
&
{}_a \twocell{1} {}_{\f(a)} \twocell{1} {}_{\g(a)} \twocell{1} {}_{\g(c)} 
  \ar@2 [rr] ^-{\twocell{1 *0 prodD}}
& &
{}_a \twocell{1} {}_{\f(a)} \twocell{1} {}_{\g(c)}
\\ 
&&
{}_a \twocell{1} {}_{\g(a)} \twocell{1} {}_{\g(b)} \twocell{1} {}_{\g(c)} 
  \ar@2 [u] ^{\twocell{1 *0 prodD}}
  \ar@2 [r] _-{\twocell{transfo *0 2}}
&
{}_a \twocell{1} {}_{\f(a)} \twocell{1} {}_{\g(a)} \twocell{1} {}_{\g(b)} \twocell{1} {}_{\g(c)} 
  \ar@2 [u] _{\twocell{2 *0 prodD}}
  \ar@2 [rd] _{\twocell{1 *0 prodD *0 1}}
&& \\ 
{}_a \twocell{1} {}_b \twocell{1} {}_c \twocell{1} {}_{\g(c)} 
  \ar@2 [ruu] ^{\twocell{prodC *0 1}}
  \ar@2 [r] ^{\twocell{1 *0 fonctG}}
  \ar@2 [rdd] _{\twocell{2 *0 transfo}}
&
{}_a \twocell{1} {}_b \twocell{1} {}_{\g(b)} \twocell{1} {}_{\g(c)} 
  \ar@2 [ru] ^{\twocell{fonctG *0 1}}
  \ar@2 [rd] _{\twocell{1 *0 transfo *0 1}}
  \ar@{} [rrr] |-{\twocell{transfo_nat}}
&&& 
{}_a \twocell{1} {}_{\f(a)} \twocell{1} {}_{\g(b)} \twocell{1} {}_{\g(c)} 
  \ar@2 [uur] _{\twocell{1 *0 prodD}}
  \ar@{} [rd] |-{\twocell{assocD}}
  \ar@{} [uu] |-{\twocell{assocD}}
&
\\ 
&&
{}_a \twocell{1} {}_b \twocell{1} {}_{\f(b)} \twocell{1} {}_{\g(b)} \twocell{1} {}_{\g(c)} 
  \ar@2 [r] ^-{\twocell{fonctF *0 2}}
  \ar@2 [rd] _{\twocell{2 *0 prodD}}
&
{}_a \twocell{1} {}_{\f(a)} \twocell{1} {}_{\f(b)} \twocell{1} {}_{\g(b)} \twocell{1} {}_{\g(c)} 
  \ar@2 [ru] ^{\twocell{1 *0 prodD *0 1}}
  \ar@2 @/_/ [rr] _{\twocell{2 *0 prodD}}
& &
{}_a \twocell{1} {}_{\f(a)} \twocell{1} {}_{\f(b)} \twocell{1} {}_{\g(c)}
  \ar@2 [uuu] _{\twocell{1 *0 prodD}} 
\\ 
&
{}_a \twocell{1} {}_b \twocell{1} {}_c \twocell{1} {}_{\f(c)} \twocell{1} {}_{\g(c)} 
  \ar@2 [r] _-{\twocell{1 *0 fonctF *0 1}}
  \ar@{} [ru] |-{\twocell{1 *0 transfo_nat}}
&
{}_a \twocell{1} {}_b \twocell{1} {}_{\f(b)} \twocell{1} {}_{\f(c)} \twocell{1} {}_{\g(c)} 
  \ar@2 [r] _-{\twocell{2 *0 prodD}}
&
{}_a \twocell{1} {}_b \twocell{1} {}_{\f(b)} \twocell{1} {}_{\g(c)} 
  \ar@2 @/_5ex/ [rru] _{\twocell{fonctF *0 1}}
  \ar@{} [ru] _{=}
& \\ 
&&&
\ar@4 [d] ^{\twocell{transfo_prod}}
&&\\
&&&&&\\
&
{}_a \twocell{1} {}_c \twocell{1} {}_{\g(c)} 
  \ar@2 [r] ^-{\twocell{fonctG}}
  \ar@2 [ddr] _{\twocell{1 *0 transfo}}
  \ar@{} [dddd] |-{=}
&
{}_a \twocell{1} {}_{\g(a)} \twocell{1} {}_{\g(c)} 
  \ar@2 [r] ^-{\twocell{transfo *0 1}}
&
{}_a \twocell{1} {}_{\f(a)} \twocell{1} {}_{\g(a)} \twocell{1} {}_{\g(c)} 
  \ar@2 [rr] ^-{\twocell{1 *0 prodD}}
& &
{}_a \twocell{1} {}_{\f(a)} \twocell{1} {}_{\g(c)}
\\ 
&&&&
{}_a \twocell{1} {}_{\f(a)} \twocell{1} {}_{\f(c)} \twocell{1} {}_{\g(c)}
  \ar@2 [ru] ^{\twocell{1 *0 prodD}}
& \\ 
{}_a \twocell{1} {}_b \twocell{1} {}_c \twocell{1} {}_{\g(c)} 
  \ar@2 [ruu] ^{\twocell{prodC *0 1}}
  \ar@2 [rdd] _{\twocell{2 *0 transfo}}
&
&
{}_a \twocell{1} {}_c \twocell{1} {}_{\f(c)} \twocell{1} {}_{\g(c)}
  \ar@2 [rur] ^{\twocell{fonctF *0 1}}
  \ar@{} [ruu] ^-{\twocell{transfo_nat}}
&&&
\\ 
&&
&&
{}_a \twocell{1} {}_{\f(a)} \twocell{1} {}_{\f(b)} \twocell{1} {}_{\f(c)} \twocell{1} {}_{\g(c)} 
  \ar@2 [uu] ^{\twocell{1 *0 prodD *0 1}}
  \ar@2 [r] ^{\twocell{2 *0 prodD}}
  \ar@{} [ruu] |-{\twocell{assocD}}
& 
{}_a \twocell{1} {}_{\f(a)} \twocell{1} {}_{\g(b)} \twocell{1} {}_{\g(c)} 
  \ar@2 [uuu] _{\twocell{1 *0 prodD}}
\\ 
&
{}_a \twocell{1} {}_b \twocell{1} {}_c \twocell{1} {}_{\f(c)} \twocell{1} {}_{\g(c)} 
  \ar@2 [r] _-{\twocell{1 *0 fonctF *0 1}}
  \ar@2 [uur] ^{\twocell{prodC *0 2}}
  \ar@{} [uuurrr] |-{\twocell{img_prodF}}
&
{}_a \twocell{1} {}_b \twocell{1} {}_{\f(b)} \twocell{1} {}_{\f(c)} \twocell{1} {}_{\g(c)} 
  \ar@2 [r] _-{\twocell{2 *0 prodD}}
  \ar@2 [rru] ^{\twocell{fonctF *0 2}}
  \ar@{} [rrru] _-{=}
&
{}_a \twocell{1} {}_b \twocell{1} {}_{\f(b)} \twocell{1} {}_{\g(c)} 
  \ar@2 @/_5ex/ [rru] _{\twocell{fonctF *0 1}}
&
}
\]
\end{proof}

\begin{prop}\label{prop:coh_tpn++}
For every $3$-cells $A, B \in \TPN^{++}[\f,\g]^{*(2)}$ whose $1$-target is of the form ${}_a \twocell{1} {}_{\f(a)} \twocell{1} {}_b$, there exists a $4$-cell $\alpha: A \qfl B  \in \TPN[\f,\g]^{*(2)}$.
\end{prop}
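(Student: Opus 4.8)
The plan is to apply Theorem \ref{thm:main_theory} to the $(4,2)$-polygraph $\TPN^{++}[\f,\g]$, which by Lemma \ref{lem:verif_thm_hypothese} satisfies the $2$-Squier condition of depth $2$. That theorem yields a $4$-cell between any two parallel $3$-cells of $\TPN^{++}[\f,\g]^{*(2)}$ whose $1$-target is a normal form, so the whole argument reduces to the single observation that every $1$-cell of the shape ${}_a \twocell{1} {}_{\f(a)} \twocell{1} {}_b$ (with $a \in \CC$ and $b \in \DD$) is a normal form for $\TPN^{++}[\f,\g]$.

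First I would verify this by inspecting the $1$-sources of the $2$-cells of $\TPN^{++}[\f,\g]$, namely the product cells $\twocell{prodC}$, $\twocell{prodD}$, $\twocell{fonctF}$, $\twocell{fonctG}$ and $\twocell{transfo}$ of Table \ref{table:Liste_cellule}. A rewriting step of $\TPN^{++}[\f,\g]$ acting on ${}_a \twocell{1} {}_{\f(a)} \twocell{1} {}_b$ would require the $1$-source of one of those cells to occur as a factor of this $1$-cell. But $\twocell{prodC}$ (resp. $\twocell{prodD}$) needs two composable $1$-cells of $\Bi[\CC]$ (resp. $\Bi[\DD]$); $\twocell{fonctF}$ and $\twocell{fonctG}$ need a $1$-cell of $\Bi[\CC]$ followed by a ``functor'' $1$-cell; and $\twocell{transfo}$ needs a single ``transformation'' $1$-cell ${}_a\twocell{1}{}_{\g(a)}$, i.e. the functor $1$-cell of $\PF[\g]$. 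None of these patterns appears in ${}_a \twocell{1} {}_{\f(a)} \twocell{1} {}_b$, whose two constituent $1$-cells are the functor $1$-cell ${}_a\twocell{1}{}_{\f(a)}$ of $\PF[\f]$ and a plain $1$-cell of $\Bi[\DD]$. In particular, the generator ${}_a\twocell{1}{}_{\f(a)}$ coming from $\PF[\f]$ is kept distinct in $\TPN[\f,\g]$ from the generator ${}_a\twocell{1}{}_{\g(a)}$ coming from $\PF[\g]$ (even when $\f(a)=\g(a)$), so $\twocell{transfo}$ still cannot fire. Hence no rewriting step applies and ${}_a \twocell{1} {}_{\f(a)} \twocell{1} {}_b$ is a normal form.

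With this in hand, given parallel $3$-cells $A, B \in \TPN^{++}[\f,\g]^{*(2)}$ whose common $1$-target is of the form ${}_a \twocell{1} {}_{\f(a)} \twocell{1} {}_b$, Theorem \ref{thm:main_theory} provides a $4$-cell $\alpha : A \qfl B$ in $\TPN^{++}[\f,\g]^{*(2)}_4$, which is in particular a $4$-cell of $\TPN[\f,\g]^{*(2)}_4$ since $\TPN^{++}[\f,\g]$ is a sub-$(4,2)$-polygraph of $\TPN[\f,\g]$; this is the required cell.

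I do not anticipate a genuine obstacle: all the mathematical content sits in Lemma \ref{lem:verif_thm_hypothese} and Theorem \ref{thm:main_theory}, and the only thing to carry out here is the routine normal-form check above. The sole point needing a little care is keeping the functor $1$-cells of $\PF[\f]$ and $\PF[\g]$ distinct as generators, which is precisely what prevents a $\twocell{transfo}$ step from applying to a $1$-cell of the stated shape and thereby makes that $1$-cell a normal form.
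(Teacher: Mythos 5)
Your proof is correct and follows exactly the paper's argument: invoke Lemma \ref{lem:verif_thm_hypothese} to apply Theorem \ref{thm:main_theory} to $\TPN^{++}[\f,\g]$, and observe that ${}_a \twocell{1} {}_{\f(a)} \twocell{1} {}_b$ is a normal form. The paper merely asserts the normal-form claim, whereas you verify it by inspecting the sources of the product $2$-cells; that check (including the remark that the functor $1$-cells of $\PF[\f]$ and $\PF[\g]$ are distinct generators) is accurate.
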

\begin{proof}
Thanks to Lemma \ref{lem:verif_thm_hypothese}, we can apply  Theorem \ref{thm:main_theory} to  $\TPN^{++}[\f,\g]^{*(2)}$, and there exists a $4$-cell $\alpha: A \qfl B$ in $\TPN^{++}[\f,\g]^{*(2)}$ for every $3$-cells $A,B \in \TPN^{++}[\f,\g]^{*(2)}$ whose $1$-target is a normal form. In particular the $1$-cells of the form ${}_a \twocell{1} {}_{\f(a)} \twocell{1} {}_b$ are normal forms. 
\end{proof}

\subsection{Adjunction of the units $2$-cells}
\label{subsec:2_unit_elim}

\begin{defn}
Let $\TPN^u[\f,\g]$ be the sub-$3$-polygraph of $\TPN[\f,\g]$ containing the same $1$- and $2$-cells, and whose only $3$-cells are the unit cells from Table \ref{table:Liste_cellule}.

A $2$-cell $h \in \TPN[\f,\g]^*_2$ is said \emph{unitary} if it is generated by the sub-$2$-polygraph of $\TPN[\f,\g]$ whose only $2$-cells are $\twocell{unitC}$ and $\twocell{unitD}$.
\end{defn}

\begin{lem}\label{lem:unit_normalisables}
Let $h \in \TPN[\f,\g]_2^*$ whose target is of the form ${}_a \twocell{1} {}_{\f(a)} \twocell{1} {}_b$, where $a \in \CC$ and $b \in \DD$. 

If there is a decomposition $h = h_1 \star_1 h_2$, where $h_1 \in \TPN^u[\f,\g]^*$ and $h_2 \in \TPN[\f,\g]^*$ are not identities, and $h_1$ is a unitary $2$-cell, then there is a $3$-cell $A \in \TPN^u[\f,\g]^*_3$ of source $h$ which is not an identity.
\end{lem}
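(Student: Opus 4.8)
The statement to prove is Lemma~\ref{lem:unit_normalisables}: given $h \in \TPN[\f,\g]_2^*$ whose target is ${}_a \twocell{1} {}_{\f(a)} \twocell{1} {}_b$, and a decomposition $h = h_1 \star_1 h_2$ with $h_1$ a non-identity unitary $2$-cell in $\TPN^u[\f,\g]^*$ and $h_2$ a non-identity $2$-cell, we must produce a non-identity $3$-cell $A \in \TPN^u[\f,\g]^*_3$ with source $h$. The plan is to exploit the fact that $h_1$, being unitary, is a $0$-composite of identities with copies of $\twocell{unitC}$ and $\twocell{unitD}$ whiskered by $1$-cells, and then $1$-composed; so its \emph{target} contains at least one occurrence of a ``fresh'' arrow produced by a unit $2$-cell, sitting immediately to the left (via $\star_1$) of the image under $h_2$. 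The key point is that the target of $h$ is ${}_a \twocell{1} {}_{\f(a)} \twocell{1} {}_b$, a length-$2$ word, so $h_2$ must \emph{consume} whatever $h_1$ introduced.

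**Main steps.** First I would analyze the structure of the unitary $2$-cell $h_1$: since its only generating $2$-cells are $\twocell{unitC}$ and $\twocell{unitD}$, each rewriting step of $h_1$ strictly increases the length of the underlying $1$-cell (a unit cell $1_x \Rightarrow {}_x\twocell{1}{}_x$ adds one letter). In particular, as $h_1$ is not an identity, $\l(\t(h_1)) > \l(\s(h_1))$, and $\t(h_1)$ contains at least one subword ${}_x \twocell{1} {}_x$ (with $x \in \CC$ or $x \in \DD$) which did not come from $\s(h_1)$. Second, I would show that this newly-created identity-arrow ${}_x\twocell{1}{}_x$ must then be eliminated by $h_2$, because $\t(h_2) = \t(h) = {}_a\twocell{1}{}_{\f(a)}\twocell{1}{}_b$ has only two letters and the source ${}_a\twocell{1}{}_{\f(a)}$ resp. ${}_{\f(a)}\twocell{1}{}_b$ of those letters cannot be such a unit ${}_x\twocell{1}{}_x$ (since $a \neq \f(a)$ in general and, more to the point, the target word's letters are determined). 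Concretely, the outermost structure of $h_1$ identifies a whiskering context $u_1 \star_0 \twocell{unitC}_x \star_0 u_2$ (or with $\twocell{unitD}$) appearing as a factor; composing with the appropriate unit $3$-cell $\twocell{lunitC}$, $\twocell{runitC}$ (or the $\twocell{D}$-versions), which are precisely the $3$-cells of $\TPN^u[\f,\g]$, gives a non-identity $3$-cell $A : h \Rrightarrow h'$ whose source is $h$. Third, I would check that $A$ is indeed generated by $\TPN^u[\f,\g]^*_3$: its generating $3$-cell is $\twocell{lunitC}$, $\twocell{runitC}$, $\twocell{lunitD}$ or $\twocell{runitD}$, all of which are unit cells from Table~\ref{table:Liste_cellule}, hence in $\TPN^u[\f,\g]_3$; and the $0$- and $1$-composition contexts are available in $\TPN^u[\f,\g]^*$ since it shares all $1$- and $2$-cells with $\TPN[\f,\g]$.

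**The main obstacle.** The delicate point is the combinatorial argument that the unit introduced by $h_1$ \emph{survives} to be the source of a unit $3$-cell: a priori $h_1$ could, for instance, produce $\twocell{unitC}$ at a position which $h_2$ then acts on via a non-unit cell (like $\twocell{prodC}$ or $\twocell{fonctF}$) in a way that does not leave a clean ${}_x\twocell{1}{}_x$ factor adjacent to the rest. I expect one needs a more careful bookkeeping: decompose $h_1$ as $h_1 = h_1' \star_1 (u_1 \star_0 c \star_0 u_2)$ where $c$ is the \emph{last} unit generator applied, so that the fresh arrow ${}_x\twocell{1}{}_x$ produced by $c$ is literally present in $\t(h_1) = \s(h_2)$ as a factor $v \star_0 ({}_x\twocell{1}{}_x) \star_0 w$. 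Then since $\t(h_2)$ has length $2$ and its letters are ${}_a\twocell{1}{}_{\f(a)}$ and ${}_{\f(a)}\twocell{1}{}_b$ — neither of which is an endo-arrow ${}_x\twocell{1}{}_x$ — the first rewriting step of $h_2$ acting at that position, or some step reachable before that factor is touched, must be a unit-elimination; applying instead the corresponding $3$-cell $\twocell{lunitC}/\twocell{runitC}/\twocell{lunitD}/\twocell{runitD}$ at the very factor $v \star_0 ({}_x\twocell{1}{}_x) \star_0 w$ inside $\s(h) $ — wait, $\s(h) = \s(h_1)$, not containing that factor — so the correct move is: $A$ is applied at $h$ by using that $\t(h_1)$ is a subexpression reachable from $\s(h)$, and the unit $3$-cell is whiskered appropriately and precomposed (via $\star_2$) with $h_1$ itself. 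I would make this precise by taking $A := h_1 \star_2 (v \star_0 \gamma_x \star_0 w) \star_2 (\text{rest})$ where $\gamma_x$ is the relevant $3$-cell of $\TPN^u[\f,\g]$, noting this is a legitimate $3$-cell of source $\s(h_1 \star_1 h_2) = \s(h) = h$; it is non-identity because $\gamma_x$ is. The verification that such a $\gamma_x$ exists (i.e., that the fresh $\twocell{unitC}$ or $\twocell{unitD}$ is at a position matching the left/right unit law pattern once we account for how $h_2$ is forced to consume it) is the real content, and I would handle it by the length/target-word argument sketched above, possibly via induction on $\l(h_2)$.
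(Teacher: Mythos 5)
Your overall strategy is the same as the paper's: isolate the last unit generator of $h_1$, push it through $h_2$ by the exchange law until it meets a step that consumes the fresh endo-arrow, and recognize that interaction as the source of a unit $3$-cell; the paper organizes exactly this as an induction on the length of $h_2$ (after first whiskering away all but the last step of $h_1$). The obstacle you flag — that the consuming step might not leave a ``clean'' redex — is in fact dispatched by a finite check that you do not carry out: every generating $2$-cell of $\TPN[\f,\g]$ having an endo-arrow ${}_x\twocell{1}{}_x$ among its source letters ($\twocell{prodC}$, $\twocell{prodD}$ in either argument, $\twocell{fonctF}$, $\twocell{fonctG}$ in their first argument; $\twocell{transfo}$ and the second arguments of $\twocell{fonctF}$, $\twocell{fonctG}$ never accept one), precomposed with the whiskered unit $2$-cell creating that letter, is literally the source of one of $\twocell{lunitC}$, $\twocell{runitC}$, $\twocell{lunitD}$, $\twocell{runitD}$, $\twocell{img_unitF}$, $\twocell{img_unitG}$. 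The paper lists these six composites explicitly; this list is the actual content of the interacting case.

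The genuine gap is in your second step. Your justification that the fresh arrow ${}_x\twocell{1}{}_x$ \emph{must} be consumed rests on the claim that neither letter of $\t(h) = {}_a\twocell{1}{}_{\f(a)}\twocell{1}{}_b$ is an endo-arrow, and that claim is false when $b = \f(a)$: the second letter is then ${}_{\f(a)}\twocell{1}{}_{\f(a)}$, which is precisely the output of $\twocell{unitD}_{\f(a)}$. So a priori the fresh letter could survive untouched into the target and your argument would produce no redex at all. Excluding this requires an additional observation — for instance that the only $2$-cell of $\TPN[\f,\g]_2^*$ whose target is the single letter ${}_a\twocell{1}{}_{\f(a)}$ is an identity (no generating $2$-cell has a one-letter target equal to the distinguished arrow), so a surviving terminal fresh letter would force $h_2$ to be an identity, contradicting the hypothesis. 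The paper avoids the issue entirely by making the base case of the induction $\l(h_2)=1$: there $h_2$ is forced by the shape of the target to be one of $\twocell{1 *0 prodD}$, $\twocell{fonctF}$, $\twocell{transfo}$, and any unit $2$-cell whose target is the source of such a cell necessarily lands inside its active zone, so interaction cannot be postponed forever. Since your proposal justifies its central step by a false claim and explicitly defers the inductive bookkeeping (``the verification \ldots is the real content''), the proof is not complete as written. A minor further point: the formula $A := h_1 \star_2 (v \star_0 \gamma_x \star_0 w) \star_2 (\text{rest})$ does not typecheck, $h_1$ being a $2$-cell; the intended cell is a $\star_1$-whiskering of $\gamma_x$ by the surrounding $2$-cells of a suitable rebracketing of $h$.
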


\begin{proof}
Let us start with the case where $h_1$ is of length $1$. We reason by induction on the length of $h_2$. If $h_2$ is of length $1$, since the target of $h_2$ is of the form ${}_a \twocell{1} {}_{\f(a)} \twocell{1} {}_b$, $h_2$ is one of the following $2$-cells:
\[
\twocell{1 *0 prodD} \qquad \twocell{fonctF} \qquad \twocell{transfo}
\]
Hence $h$ is one of the following $2$-cells:
\[
\twocell{(1 *0 unitD *0 1) *1 (1 *0 prodD)} \qquad
\twocell{(2 *0 unitD) *1 (1 *0 prodD)} \qquad
\twocell{ (unitC *0 1) *1 fonctF}
\]
And all of these $2$-cells are indeed the sources of $3$-cells in $\TPN^u[\f,\g]^*_3$.

In the general case, let us write $h_2 = h_0 \star_1 h'_2$, where $h_0$ is of length $1$. Two cases can occur.
\begin{itemize}
\item If there exist $1$-cells $u, u', v$ and $v'$ and $2$-cells $h'_0:u \Rightarrow u' \in \TPN^u[\f,\g]^*$ and $h'_1: v \Rightarrow v' \in \TPN[\f,\g]^*$ such that $h_1 = h'_1 \star_0 u$ (resp. $h_1 = u \star_0 h'_1$) and $h_0 = v' \star_0 h'_0$ (resp. $h_0 = h'_0 \star_0 v'$).

Then $h = (h'_1 \star_0 h'_0) \star_1 h'_2$ (resp. $h = (h'_0 \star_0 h'_1 ) \star_1 h'_2$), and we can apply the induction hypothesis to $(h'_1 \star_0 u') \star_1 h'_2$ (resp. $( u' \star_0 h'_1 ) \star_1 h'_2$).

\item Otherwise, $h_1 \star_1 h_0$ is one of the following $2$-cells,
\[
\twocell{dots *0 ((unitC *0 1) *1 prodC) *0 dots} \qquad
\twocell{dots *0 ((unitD *0 1) *1 prodD) *0 dots} \qquad
\twocell{dots *0 ((1 *0 unitC) *1 prodC) *0 dots} \qquad
\twocell{dots *0 ((1 *0 unitD) *1 prodD) *0 dots} \qquad
\twocell{dots *0 ((unitC *0 1) *1 fonctF) *0 dots} \qquad
\twocell{dots *0 ((unitC *0 1) *1 fonctG) *0 dots}
\]
and all of them are sources of $3$-cells in $\TPN^u[\f,\g]^*$.
\end{itemize}

In the case general case where $h_1$ is of any length, let $h'_1,h''_1 \in \TPN[\f,\g]^*_2$ with $h''_1$ of length $1$ such that $h_1 = h'_1 \star_1 h''_1$. Then there is a non-empty $3$-cell $A' \in \TPN^u[\f,\g]_3^*$ of source $h''_1 \star_1 h_2$, and one can take the $3$-cell $h'_1 \star_1 A'$.
\end{proof}

\begin{lem}\label{lem:distinct_FN_TPN-}
Let $h$ be a $2$-cell in $\TPN[\f,\g]^*$ whose target is of the form ${}_a \twocell{1} {}_{\f(a)} \twocell{1} {}_b$, with $a \in \CC$ and $b \in \DD$. 

If $h$ is a normal form for $\TPN^u[\f,\g]$, then one of the following holds:
\begin{itemize}
\item The $2$-cell $h$ equals the composite  $\twocell{1 *0 unitD}$.
\item The $2$-cell $h$ is in $\TPN^{++}[\f,\g]^*$.
\end{itemize}
\end{lem}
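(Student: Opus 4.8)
The plan is to analyze the normal forms of $\TPN^u[\f,\g]$ whose target has the form ${}_a \twocell{1} {}_{\f(a)} \twocell{1} {}_b$ by induction on the length of $h$, using Lemma \ref{lem:unit_normalisables} to rule out the appearance of unitary $2$-cells in any "removable" position. First I would treat the base cases: if $h$ is of length $0$ it is an identity, but identities on $1$-cells of the target shape are impossible unless $b = \f(a)$ and the target is ${}_a\twocell{1}{}_{\f(a)}\twocell{1}{}_b$, which is not an identity of a single $1$-cell, so $h$ has positive length; if $h$ has length $1$, I would enumerate the generating $2$-cells of $\TPN[\f,\g]$ whose target matches the prescribed shape (essentially $\twocell{1 *0 prodD}$, $\twocell{fonctF}$, $\twocell{transfo}$, plus the unit cells $\twocell{1 *0 unitD}$ and $\twocell{unitC *0 1}$ — wait, more carefully, those with target a length-$2$ composite ${}_a\twocell{1}{}_{\f(a)}\twocell{1}{}_b$) and observe directly which are normal forms for $\TPN^u$. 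The only length-$1$ normal form among these that is not in $\TPN^{++}[\f,\g]^*$ is $\twocell{1 *0 unitD}$; the others ($\twocell{1 *0 prodD}$, $\twocell{fonctF}$, $\twocell{fonctG}$, $\twocell{transfo}$) are product cells, hence in $\TPN^{++}[\f,\g]^*$.

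Next, for the inductive step, I would write $h = h_1 \star_1 h_2$ with $h_2$ a rewriting step (length $1$) whose target equals the target of $h$, so $h_2$ is one of the generating $2$-cells listed above, and $h_1$ is strictly shorter. The key observation is that $h_1$ is again a normal form for $\TPN^u[\f,\g]$ (a subcell of a normal form is a normal form, since any $3$-cell of $\TPN^u$ with source a subcell of $h_1$ would extend by the ambient context to a $3$-cell with source $h$), and the target of $h_1$ is the source of $h_2$. I would then case on $h_2$. If $h_2$ is a product cell, I would need to check that $h_1$'s target is also of a shape to which the induction hypothesis applies, or otherwise invoke Lemma \ref{lem:but_ou_PF} / the classification of weight-$1$ cells to pin down the possible shapes; the delicate point is that the source of $h_2$ need not itself be of the form ${}_a\twocell{1}{}_{\f(a)}\twocell{1}{}_b$, so I would strengthen the induction to cover all the relevant target shapes simultaneously (the list of weight-$1$ targets given at the start of Section \ref{sec:proof_of_coherence}), or equivalently run the induction relative to the whole of $\PF[\f,\g]$ augmented with $\twocell{transfo}$. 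In each case I would combine the induction hypothesis for $h_1$ with the fact that $h_2$ is itself either a product cell or $\twocell{1 *0 unitD}$ to conclude that $h$ lies in $\TPN^{++}[\f,\g]^*$ or equals $\twocell{1 *0 unitD}$.

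The place where Lemma \ref{lem:unit_normalisables} does the real work is in excluding the "bad" compositions: whenever a unitary sub-$2$-cell $h_1$ (one built only from $\twocell{unitC}$ and $\twocell{unitD}$) sits to the left of some $h_2$ so that $h_1 \star_1 h_2$ has target of the prescribed shape, Lemma \ref{lem:unit_normalisables} produces a non-identity $3$-cell of $\TPN^u[\f,\g]^*$ with source $h$, contradicting that $h$ is a $\TPN^u$-normal form. So in the inductive analysis, whenever the factorization $h = h_1 \star_1 h_2$ would introduce a unit $2$-cell in a reducible position, I get a contradiction; the surviving configurations are exactly the ones where the only unit cell present is the outermost $\twocell{1 *0 unitD}$ glued on at the very end (which is irreducible because there is nothing to its right to form a redex), or where no unit cells appear at all.

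The main obstacle I anticipate is bookkeeping the exact set of generating $2$-cells whose target is of the form ${}_a\twocell{1}{}_{\f(a)}\twocell{1}{}_b$ and verifying, for each, which compositions are $\TPN^u$-reducible; this requires a careful but routine enumeration of the redexes of $\TPN^u$ (the sources of its $3$-cells: $\twocell{(unitC *0 1) *1 prodC}$, $\twocell{(1 *0 unitC) *1 prodC}$, $\twocell{(unitD *0 1) *1 prodD}$, $\twocell{(1 *0 unitD) *1 prodD}$, $\twocell{(unitC *0 1) *1 fonctF}$, $\twocell{(unitC *0 1) *1 fonctG}$ and their contextual extensions) and checking the pattern-matching against each candidate $h_2$. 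The other subtlety is making the induction hypothesis strong enough: rather than inducting only over cells of target ${}_a\twocell{1}{}_{\f(a)}\twocell{1}{}_b$, I expect to need the statement for all targets arising as sources of the length-$1$ cells that can appear as $h_2$, which forces a short preliminary lemma (or a direct appeal to Lemma \ref{lem:but_ou_PF}) describing those intermediate shapes.
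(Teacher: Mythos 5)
Your overall strategy (induction on the length of $h$, base case by enumeration of the length-one cells with the prescribed target, and Lemma \ref{lem:unit_normalisables} to kill any reducible unitary configuration) matches the paper's, but your choice of decomposition creates a genuine gap. You peel off the \emph{last} rewriting step, writing $h = h_1 \star_1 h_2$ with $h_2$ of length $1$, so that the induction hypothesis must be applied to the prefix $h_1$, whose target is the \emph{source} of $h_2$ and hence is generally not of the form ${}_a \twocell{1} {}_{\f(a)} \twocell{1} {}_b$. You correctly sense that this forces a strengthening of the statement, but the strengthening you propose --- running the induction over ``the list of weight-$1$ targets'' --- does not work: for instance, when $h_2 = \twocell{1 *0 prodD}$ its source is ${}_a \twocell{1} {}_{\f(a)} \twocell{1} {}_{b'} \twocell{1} {}_b$, which has weight $2$, and iterating backwards the intermediate shapes are not confined to any of the lists you cite. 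Since the whole inductive step rests on an induction hypothesis you have not actually formulated (and whose candidate formulation fails), the argument as proposed does not close.

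The paper avoids this entirely by decomposing in the other direction: $h = h_1 \star_1 h_2$ with $h_1$ of length $1$ and $h_2$ the \emph{suffix}, which therefore has the same target as $h$, so the induction hypothesis applies to $h_2$ verbatim with no strengthening. The case analysis then falls out of the two possible conclusions for $h_2$: if $h_2 = \twocell{1 *0 unitD}$ then $h_1$ must be an identity (no non-identity $2$-cell has target ${}_a \twocell{1} {}_{\f(a)}$), and if $h_2 \in \TPN^{++}[\f,\g]^*$ then either $h_1$ is also in $\TPN^{++}[\f,\g]^*$ (done) or $h_1$ is unitary, in which case Lemma \ref{lem:unit_normalisables} contradicts normality --- exactly the use of that lemma you envisage, but now in a setting where the induction actually goes through. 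If you want to salvage your version, you would have to prove the statement simultaneously for every target shape reachable backwards from ${}_a \twocell{1} {}_{\f(a)} \twocell{1} {}_b$ under the length-one generators, which is considerably more bookkeeping than the one-line fix of reversing the decomposition.
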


\begin{proof}
We reason by induction on the length of $h$. If $h$ is of length $1$, the cells of $\TPN[\f,\g]_2^*$ of length $1$ and of target ${}_a \twocell{1} {}_{\f(a)} \twocell{1} {}_b$ are:
\[
\twocell{1 *0 prodD} \qquad
\twocell{transfo} \qquad
\twocell{fonctF} \qquad
\twocell{1 *0 unitD}
\]

Otherwise, let us write $h = h_1 \star_1 h_2$, where $h_1$ is of length $1$. We can apply the induction hypothesis to $h_2$, which leads us to distinguish three cases:
\begin{itemize}
\item If $h_2 = \twocell{1 *0 unitD}$, then $h_1$ is a $2$-cell in $\TPN[\f,\g]^*_2$ whose target is of the form ${}_a \twocell{1} {}_{\f(a)}$. The only such cell is the identity, and $h = h_2 =  \twocell{1 *0 unitD}$.
\item If $h_1$ and $h_2$ are in $\TPN^{++}[\f,\g]^*$, then $h$ is in $\TPN^{++}[\f,\g]^*$.
\item Lastly, if $h_2$ is in $\TPN^{++}[\f,\g]^*$ and $h_1$ is in $\TPN^u[\f,\g]^*$, then because of Lemma 
\ref{lem:unit_normalisables}, $h$ is the source of a $3$-cell in $\TPN^u[\f,\g]^*$ of length $1$, which is impossible since, by hypothesis, $h$ is a normal form for $\TPN^u[\f,\g]$.
\end{itemize}
\end{proof}

\begin{defn}
Let $\TPN^+[\f,\g]$ be the sub-$4$-polygraph of $\TPN[\f,\g]$ containing $\TPN^{++}[\f,\g]$, together with the $2$-cells $\twocell{unitC}$ and $\twocell{unitD}$.

In particular a $3$-cell in the free $(3,2)$-category $\TPN^+[\f,\g]^{*(2)}$ is in $\TPN^{++}[\f,\g]^{*(2)}$ if and only if its $2$-source is in $\TPN^{++}[\f,\g]^{*(2)}$ too.
\end{defn}

\begin{prop}\label{prop:restriction_TPN++}
For every parallel $3$-cells $A,B \in \TPN^{+}[\f,\g]^{*(2)}$ whose $1$-target is of the form ${}_a \twocell{1} {}_{\f(a)} \twocell{1} {}_b$ and whose $2$-source is a normal form for $\TPN^u[\f,\g]$, there exists a $4$-cell $\alpha: A \qfl B \in \TPN[\f,\g]^{*(2)}$.


\end{prop}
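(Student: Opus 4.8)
The plan is to reduce the case of $3$-cells in $\TPN^+[\f,\g]^{*(2)}$ to the already-treated case of $\TPN^{++}[\f,\g]^{*(2)}$ (Proposition \ref{prop:coh_tpn++}) by exploiting the structural classification of normal forms for $\TPN^u[\f,\g]$ given by Lemma \ref{lem:distinct_FN_TPN-}. Let $A,B : h \Rrightarrow h' \in \TPN^+[\f,\g]^{*(2)}$ be parallel, with $h'$ of the form ${}_a \twocell{1} {}_{\f(a)} \twocell{1} {}_b$ and with $h$ a normal form for $\TPN^u[\f,\g]$. First I would invoke Lemma \ref{lem:distinct_FN_TPN-}: either $h$ is in $\TPN^{++}[\f,\g]^*$, or $h$ equals the composite $\twocell{1 *0 unitD}$. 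In the first case, by the remark following the definition of $\TPN^+[\f,\g]$, any $3$-cell of $\TPN^+[\f,\g]^{*(2)}$ whose $2$-source lies in $\TPN^{++}[\f,\g]^{*(2)}$ is itself in $\TPN^{++}[\f,\g]^{*(2)}$; hence $A$ and $B$ are both $3$-cells of $\TPN^{++}[\f,\g]^{*(2)}$, and Proposition \ref{prop:coh_tpn++} directly supplies the desired $4$-cell $\alpha : A \qfl B \in \TPN[\f,\g]^{*(2)}$.

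The remaining case is $h = \twocell{1 *0 unitD}$. Here I would argue that the hom-category of $3$-cells from $\twocell{1 *0 unitD}$ to $h'$ (modulo $4$-cells) is in bijection with the one from an appropriate $\TPN^{++}$ cell, via precomposition with a fixed $3$-cell that "introduces" the unit. Concretely, the $2$-cell $\twocell{1 *0 unitD} : {}_a \twocell{1} {}_{\f(a)} \Rightarrow {}_a \twocell{1} {}_{\f(a)} \twocell{1} {}_{\f(a)}$ has $1$-target of the form ${}_a \twocell{1} {}_{\f(a)} \twocell{1} {}_{\f(a)}$, so $h'$ must actually be ${}_a \twocell{1} {}_{\f(a)} \twocell{1} {}_{\f(a)}$, i.e. $b = \f(a)$. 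The $3$-cells $A, B$ then go from $\twocell{1 *0 unitD}$ to a $2$-cell of $\TPN^{++}[\f,\g]^*$ (namely the identity $2$-cell of source ${}_a \twocell{1} {}_{\f(a)} \twocell{1} {}_{\f(a)}$, after possible rewriting). Since $\twocell{1 *0 unitD}$ is a unitary cell and $h'$ a product cell, one can compose $A$ (resp. $B$) with a suitable $3$-cell built from $\twocell{img_unitF}$, $\twocell{runitD}$, $\twocell{lunitD}$ — these are precisely the unit $4$-cells' defining $3$-cells — to obtain parallel $3$-cells with $2$-source in $\TPN^{++}[\f,\g]^{*(2)}$. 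Applying Proposition \ref{prop:coh_tpn++} to those and then transporting back the coherence $4$-cell along the composition (using invertibility of all $4$-cells in the $(4,2)$-setting) yields $\alpha : A \qfl B$.

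The main obstacle I anticipate is making the second case fully rigorous: one must check that the chosen $3$-cell introducing the unit is itself coherent in the relevant sense, i.e. that the "transport" of a $4$-cell along precomposition with a $3$-cell is well-behaved, which ultimately relies on the fact that in a $(4,2)$-category $3$-cells are invertible and $4$-composition with units behaves functorially. A cleaner route, which I would pursue if the direct argument gets unwieldy, is to observe that the subcategory of $\TPN^+[\f,\g]^{*(2)}$ spanned by $3$-cells whose $2$-source is the normal form $\twocell{1 *0 unitD}$ and whose $1$-target is ${}_a \twocell{1} {}_{\f(a)} \twocell{1} {}_{\f(a)}$ is a one-object groupoid once we quotient by $4$-cells, and to exhibit explicitly a generating set for it (coming from $\twocell{img_unitF}$, $\twocell{trianC}$-type relations, and the unit $4$-cells of the target bicategory) all of whose members are identified by $4$-cells of $\TPN[\f,\g]^{*(2)}$; this amounts to a small finite verification. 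Either way, the heart of the matter is the unit case, the product case being immediate from Proposition \ref{prop:coh_tpn++}.
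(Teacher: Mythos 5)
Your first case is exactly the paper's argument: when the $2$-source lies in $\TPN^{++}[\f,\g]_2^*$, the $3$-cells $A$ and $B$ lie in $\TPN^{++}[\f,\g]^{*(2)}_3$ and Proposition \ref{prop:coh_tpn++} concludes. The gap is in your treatment of the case where the $2$-source is $\twocell{1 *0 unitD}$. There is no transport argument to make: since $\TPN^{+}[\f,\g]_3 = \TPN^{++}[\f,\g]_3$ consists only of the product $3$-cells, whose $2$-sources and $2$-targets all have length at least $2$, no rewriting step of $\TPN^{+}[\f,\g]$ (nor the inverse of one) applies to the length-one $2$-cell $\twocell{1 *0 unitD}$. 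Hence the only $3$-cell of $\TPN^{+}[\f,\g]^{*(2)}$ with that source is the identity, so $A = B$ and $\alpha = 1_A$ works. This is the observation the paper uses, and missing it is what forces you into the complicated second paragraph.

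Moreover, the reduction you sketch for that case does not go through as stated. You propose to compose $A$ and $B$ with $3$-cells built from $\twocell{img_unitF}$, $\twocell{runitD}$, $\twocell{lunitD}$ so as to land on a $2$-source in $\TPN^{++}[\f,\g]^{*(2)}$ and then apply Proposition \ref{prop:coh_tpn++}. But every $3$-cell of $\TPN^u[\f,\g]$ with target $\twocell{1 *0 unitD}$ (e.g.\ $\twocell{img_unitF}$) has a source containing $\twocell{unitC}$ or $\twocell{unitD}$, which are not $2$-cells of $\TPN^{++}[\f,\g]$; and $\twocell{1 *0 unitD}$ is a normal form for $\TPN^u[\f,\g]$, so it cannot be rewritten forward either. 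So the precomposed cells never have their $2$-source in $\TPN^{++}[\f,\g]_2^*$, and Proposition \ref{prop:coh_tpn++} does not apply to them. The "finite verification" you mention as a fallback would in effect be re-proving the triviality of the hom-set, which the length argument above gives for free.
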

\begin{proof}

Given such $3$-cells $A$ and $B$, we use Lemma \ref{lem:distinct_FN_TPN-} to distinguish two cases:

If the source of $A$ and $B$ is $\twocell{1 *0 unitD}$, the only $3$-cell in $\TPN^{+}[\f,\g]^{*(2)}$ with source $\twocell{1 *0 unitD}$ is the identity. So $A=B$ and we can take $\alpha = 1_A$.

Otherwise, the source of $A$ and $B$ lies in $\TPN^{++}[\f,\g]_2^*$, so  $A$ and $B$  lie in $\TPN^{++}[\f,\g]^{*(2)}_3$. Proposition \ref{prop:coh_tpn++} allows us to conclude.
\end{proof}

\subsection{Adjunction of the units $3$-cells}
\label{subsec:3_unit_elim}

In this section, we consider the rewriting system formed by the $3$-cells of $\TPN^u[\f,\g]$. Since it is a sub-$3$-polygraph of $\TPN[\f,\g]$ (which $3$-terminates by Proposition \ref{prop:TPN_termine}), $\TPN^u[\f,\g]$ is $3$-terminating. The fact that it is $3$-confluent is a consequence of the following more general Lemma:

\begin{lem}\label{lem:norm_TPN}
Let $A \in \TPN[\f,\g]_3^{*}$ and $B \in \TPN^u[\f,\g]_3^*$.
There exist $3$-cells $A' \in \TPN[\f,\g]_3^{*}$ and $B' \in \TPN^u[\f,\g]^{*}$ and a $4$-cell $\alpha_{A,B} \in \TPN[\f,\g]_4^{*(2)}$ of the following shape:
\[
\xymatrix @R=5em @C=5em{
\ar@3 [r] ^{A} 
\ar@3 [d] _{B} 
&
\ar@3 [d] ^{B'} 
\ar@4 []!<-10pt,-10pt>;[ld]!<10pt,10pt> ^{\alpha_{A,B}}
\\
\ar@3 [r] _{A'} &
}
\]
\end{lem}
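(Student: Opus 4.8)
The plan is to prove Lemma~\ref{lem:norm_TPN} by a standard local-confluence argument, using the fact that $\TPN[\f,\g]$ is $3$-terminating (Proposition~\ref{prop:TPN_termine}) to bootstrap from local confluence to the stated diagram. First I would reduce to the case where $A$ and $B$ are rewriting steps (length $1$). For the general case, write $B = B_1 \star_2 \cdots \star_2 B_n$ as a composite of rewriting steps in $\TPN^u[\f,\g]_3^*$ and $A = A_1 \star_2 \cdots \star_2 A_m$ as a composite of rewriting steps in $\TPN[\f,\g]_3^*$, and tile the diagram: apply the length-$1$ case repeatedly, pasting the resulting elementary $4$-cells together, and at each stage observe that the horizontal side stays inside $\TPN[\f,\g]_3^*$ and the vertical side stays inside $\TPN^u[\f,\g]_3^*$ (since the $4$-cells $\twocell{transfo_unit}$, $\twocell{pentaD}$, $\twocell{trianD}$ etc. that appear have targets built from unit $3$-cells on the appropriate side). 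Because $\TPN[\f,\g]$ is $3$-terminating, a straightforward Newman-style induction on $\s(A) = \s(B)$ along $\rightarrow^+_3$ then closes any diverging pair of finite rewriting sequences, which is exactly the confluence diagram claimed.

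The heart of the matter is therefore the local case: $A: h \Rrightarrow h_1$ a rewriting step of $\TPN[\f,\g]$ and $B: h \Rrightarrow h_2$ a rewriting step of $\TPN^u[\f,\g]$, with the same $2$-source $h$. Classify the local branching $(A,B)$ as aspherical, Peiffer, or overlapping. If it is aspherical or Peiffer, the branching is confluent by the usual canonical fillings, and the resulting square is filled by an identity or a trivial $4$-cell; one checks that in the Peiffer case the ``$A$-side'' of the completed square is a whiskering of $A$ (hence in $\TPN[\f,\g]_3^*$) and the ``$B$-side'' is a whiskering of $B$ (hence in $\TPN^u[\f,\g]_3^*$). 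If it is overlapping, it is an instance of a critical branching, so it suffices to inspect the critical pairs of $\TPN[\f,\g]$ where at least one of the two steps is a unit $3$-cell ($\twocell{runitC}$, $\twocell{lunitC}$, $\twocell{runitD}$, $\twocell{lunitD}$, $\twocell{img_unitF}$, $\twocell{img_unitG}$, or one of the unit cells interacting with $\twocell{transfo_nat}$): each of these is confluent, the confluence diagram being filled by the corresponding $4$-cell of $\TPN[\f,\g]$ (the $\twocell{trianC}$, $\twocell{trianD}$, $\twocell{img_runitF}$, $\twocell{img_lunitF}$, $\twocell{img_runitG}$, $\twocell{img_lunitG}$, $\twocell{transfo_unit}$ cells, together with the auxiliary cells $\omega_i$ from Theorem~\ref{thm:BiCat_coh}), possibly composed with whiskerings. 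In each case one verifies that the branch completing $A$ is a $3$-cell of $\TPN[\f,\g]$ and the branch completing $B$ is a $3$-cell of $\TPN^u[\f,\g]$ — this is the side condition that makes the lemma usable in Section~\ref{subsec:3_unit_elim}.

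The main obstacle I expect is bookkeeping rather than conceptual difficulty: one must check, for every critical pair involving a unit $3$-cell, that the chosen filling splits the square into a horizontal leg lying in $\TPN[\f,\g]_3^*$ and a vertical leg lying in $\TPN^u[\f,\g]^*$, and in the composite (non-local) case that this splitting is preserved when the elementary $4$-cells are pasted. The delicate point is that $\TPN^u[\f,\g]$ is only a sub-\emph{$3$}-polygraph (it has the same $2$-cells but only the unit $3$-cells), so when a critical pair is resolved one has to be careful that the ``normalisation'' steps produced on the $B$-side never require a product $3$-cell; inspection of the critical pairs shows this holds because each unit $3$-cell rewrites a unit-decorated $2$-cell to one with strictly fewer $\twocell{unitC}/\twocell{unitD}$ occurrences, and the $4$-cells $\twocell{trianC}$, $\twocell{trianD}$, $\twocell{img_lunitF}$, $\twocell{img_runitF}$, $\twocell{transfo_unit}$, $\dots$ relate two such normalisations. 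Once this is verified, the $3$-confluence of $\TPN^u[\f,\g]$ asserted at the start of Section~\ref{subsec:3_unit_elim} follows by taking $A \in \TPN^u[\f,\g]_3^*$ as well.
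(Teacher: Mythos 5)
Your proposal is correct and follows essentially the same route as the paper: reduce to local branchings by noetherian induction on the common $2$-source using $3$-termination, handle aspherical and Peiffer branchings by canonical fillings, and check the critical pairs involving a unit $3$-cell one by one (the only genuinely new one being the $\twocell{transfo_nat}$/$\twocell{img_unitF}$ overlap, filled by $\twocell{transfo_unit}$), while tracking that the completing legs stay in $\TPN[\f,\g]_3^*$ and $\TPN^u[\f,\g]_3^*$ respectively. Your explicit insistence on verifying that side condition at each critical pair is, if anything, slightly more careful than the paper's own wording.
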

\begin{proof}
Let us start by the case where $(A,B)$ is a critical pair of $\TPN[\f,\g]_3$. If $A$ and $B$ are in $\PF[\f,\g]^*_3$, the result holds because $\PF[\f,\g]$ is $3$-convergent. Otherwise, the only critical pair left is the following one:

\[
\begin{tikzpicture}

\matrix (m) [matrix of math nodes, 
			nodes in empty cells,
			column sep = 2cm, 
			row sep = 1cm] 
{
\twocell{(unitC *0 transfo) *1 (fonctF *0 1) *1 (1 *0 prodD)}
& 
\twocell{(unitC *0 1) *1 fonctG *1 (transfo *0 1) *1 (1 *0 prodD)} 
\\
& 
\twocell{(transfo *0 unitD) *1 (1 *0 prodD)}
\\
\twocell{transfo *1 (1 *0 unitD *0 1) *1 (1 *0 prodD)}
& 
\twocell{transfo}
\\
};
\triplearrow{arrows = {-Implies}}
{(m-1-1) -- node [above] {$\twocell{transfo_nat}$} (m-1-2)}

\triplearrow{arrows = {-Implies}}
{(m-1-2) -- node [right] {$\twocell{img_unitG}$} (m-2-2)}

\triplearrow{arrows = {-Implies}}
{(m-2-2) -- node [right] {$\twocell{runitD}$} (m-3-2)}

\triplearrow{arrows = {-Implies}}
{(m-1-1) -- node [left] {$\twocell{img_unitF}$} (m-3-1)}

\triplearrow{arrows = {-Implies}}
{(m-3-1) -- node [below] {$\twocell{lunitD}$} (m-3-2)}

\path (m-1-1) -- node {$\twocell{transfo_unit}$} (m-3-2);
\end{tikzpicture}
\]

Let us now study the case where $(A,B)$ is a local branching of $\TPN[\f,\g]_3$. We distinguish three cases depending on the shape of the branching:
\begin{itemize}
\item If $(A,B)$ is an aspherical branching, then one can take identities for $A'$ and $B'$, and $\alpha=1_A$.
\item If $(A,B)$ is a Peiffer branching, let $A'$ and $B'$ be the canonical fillers of the confluence diagram of $(A,B)$, and $\alpha$ be an identity.
\item Lastly, if $(A,B)$ is an overlapping branching, let us write $(A,B) = (f \star_1 u A_1 v \star_1 g,f \star_1 u B_1 v \star_1 g)$, where $(A_1,B_1)$ is a critical pair. Let $A'_1$, $B'_1$ and $\alpha_1$ be the cells associated with $(A_1,B_1)$. We then define $A' := f \star_1 u A'_1 v \star_1 g$, $B' := f \star_1 u B'_1 v \star_1 g$ and $\alpha_1 := f \star_1 u \alpha_1 v \star_1 g$.
\end{itemize}

In the general case, we reason by noetherian induction on $h = \s(A) = \s(B)$, using the $3$-termination of $\TPN[\f,\g]$. 
\begin{itemize}
\item If $A$ or $B$ is an identity, then the result holds immediately.
\item Otherwise, we write $A = A_1 \star_2 A_2$ and $B = B_1 \star_2 B_2$, where $A_1$ and $B_1$ are of length $1$. We now build the following diagram:
\[
\xymatrix @R=4em @C=4em{
h
\ar@3 [r] ^{A_1}
\ar@3 [d] _{B_1}
\ar@{} [rd] |{\alpha_{A_1,B_1}}
&
\ar@3 [r] ^{A_2}
\ar@3 [d] ^<<<{B'_1}
\ar@{} [rd] |{\alpha_{A_2,B'_1}}
&
\ar@3 [d] ^{B''_{1}}
\\
\ar@3 [r] ^{A'_1}
\ar@3 [d] _{B_2}
\ar@{} [rd] |{\alpha_{A'_1,B_2}}
&
\ar@3 [r] ^{A'_2}
\ar@3 [d] ^>>>{B'_2}
\ar@{} [rd] |{\alpha_{A'_2,B'_2}}
&
\ar@3 [d] ^{B''_{2}}
\\
\ar@3 [r] _{A''_1}
&
\ar@3 [r] _{A''_2}
&
}
\]
\end{itemize}
In this diagram, $\alpha_{A_1,B_1}$ is obtained thanks to our study of the local branchings. The existence of $\alpha_{A_2,B'_1}$ and $\alpha_{A'_1,B_2}$ (followed by $\alpha_{A'_2,B'_2}$) then follows from the induction hypothesis.
\end{proof}

\begin{lem}\label{lem:FN_stables}
Let $f,g$ be $2$-cells of $\TPN[\f,\g]^*$, and $A:f \Rrightarrow g$ a $3$-cell of $\TPN^+[\f,\g]^{*}$. If $f$ is a normal form for $\TPN^u[\f,\g]$, then so is $g$.
\end{lem}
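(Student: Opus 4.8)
The plan is to show that the property ``is a normal form for $\TPN^u[\f,\g]$'' is invariant under the $3$-cells of $\TPN^+[\f,\g]$. Recall that $\TPN^+[\f,\g]$ contains $\TPN^{++}[\f,\g]$ together with the two unit $2$-cells $\twocell{unitC}$ and $\twocell{unitD}$, but its only $3$-cells are those of $\TPN^{++}[\f,\g]$, i.e.\ the product $3$-cells $\twocell{assocC}$, $\twocell{assocD}$, $\twocell{img_prodF}$, $\twocell{img_prodG}$ and $\twocell{transfo_nat}$ (with context). Since normal forms for $\TPN^u[\f,\g]$ are exactly the $2$-cells admitting no non-identity rewrite by the unit $3$-cells, and being a normal form is equivalently characterised by not containing any of the (finitely many) sources of the rewriting steps of $\TPN^u[\f,\g]$ as a ``sub-$2$-cell'', it suffices to analyse how a product $3$-cell can create such a pattern in its target that was not already present in its source.

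First I would reduce to the case where $A$ is a single rewriting step, i.e.\ $A = w_1 \star_1 (u \star_0 A_0 \star_0 v) \star_1 w_2$ for a generating product $3$-cell $A_0$ of $\TPN^{++}[\f,\g]$ and context $1$-cells $u,v,w_1,w_2$: indeed, a general $3$-cell of $\TPN^+[\f,\g]^{*}$ is a $\star_2$-composite of such steps and their inverses, and both a step and its inverse preserve the property once we know each step does (for a step $A_0$, $\s(A_0)$ is a normal form iff $\t(A_0)$ is, because the sources of $\TPN^u$-rewriting steps appearing in $\t(A_0)$ must come from the context, hence already sit in $\s(A_0)$; and symmetrically). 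Concretely, the source and target of each product $3$-cell differ only in the product cells $\twocell{prodC}, \twocell{prodD}, \twocell{fonctF}, \twocell{fonctG}, \twocell{transfo}$ involved — no unit $2$-cell $\twocell{unitC}$ or $\twocell{unitD}$ is created or destroyed — so any occurrence in $g = \t(A)$ of a source-pattern of a $\TPN^u$-step (all of which contain a unit $2$-cell immediately composed with a product cell) would, by tracking the unit $2$-cell through $A$, yield such an occurrence in $f = \s(A)$. This is essentially the same bookkeeping already performed in Lemma \ref{lem:distinct_FN_TPN-} and Lemma \ref{lem:unit_normalisables}.

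More precisely, I would argue by contraposition: suppose $g$ is \emph{not} a normal form for $\TPN^u[\f,\g]$. By Lemma \ref{lem:unit_normalisables} (and its local analysis), $g$ admits a decomposition exhibiting a unitary sub-$2$-cell immediately followed by a product or functor cell; this decomposition is witnessed by a unit $2$-cell generator occurring in $g$ adjacent (after suitable use of the sesqui-category structure) to one of $\twocell{prodC},\twocell{prodD},\twocell{fonctF},\twocell{fonctG}$. Since $A_0$ is a product $3$-cell, each unit generator occurring in $\t(A_0)$ already occurs in $\s(A_0)$, in the ``same position'' relative to the unchanged part of the diagram; the only subtlety is that $A_0$ may rearrange which product cell a given unit cell is adjacent to. A finite case check over the five shapes of product $3$-cells shows that in every case, whenever the target presents a $\TPN^u$-redex, so does the source — equivalently, none of $\twocell{assocC},\twocell{assocD},\twocell{img_prodF},\twocell{img_prodG},\twocell{transfo_nat}$ can turn a unit-free neighbourhood of a unit cell into a unit redex. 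Hence $f = \s(A)$ is not a normal form either, contradiction.

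The main obstacle I anticipate is the case analysis for $\twocell{transfo_nat}$ and the two image-of-product cells $\twocell{img_prodF}$, $\twocell{img_prodG}$: these rewrite a single $\twocell{fonctF}$-type cell preceded by a $\twocell{prodC}$ into a configuration with two copies of $\twocell{fonctF}$-type cells and a $\twocell{prodD}$, so one must check carefully that no new adjacency of the form ``$\twocell{unitC}$ (resp.\ $\twocell{unitD}$) followed by $\twocell{prodC}$/$\twocell{prodD}$/$\twocell{fonctF}$/$\twocell{fonctG}$'' is created that did not correspond, via the context $u,v,w_1,w_2$, to one already present in the source. Once this finite verification is done — mirroring the patterns enumerated in the proof of Lemma \ref{lem:unit_normalisables} — the Lemma follows. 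I would keep the exposition short by invoking that enumeration rather than redrawing all diagrams.
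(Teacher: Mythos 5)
Your proposal is correct and follows essentially the same route as the paper: argue by contraposition, reduce to a single rewriting step $A_0$ (no inverses are actually needed, since $\TPN^+[\f,\g]^{*}$ is the free $4$-category), and then check by a finite case analysis that every $\TPN^u$-redex in the target of $A_0$ comes from one in its source — the paper organises this check as an analysis of the local branching $(A_0^{-1},B)$ into Peiffer and overlapping cases and then lists the relevant critical pairs explicitly, which is exactly the "tracking the unit cell" verification you defer to. The only cosmetic difference is that you claim the normal-form property is preserved in both directions for a single step, whereas only the direction "target redex implies source redex" is used or established in the paper.
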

\begin{proof}
We prove this result by contrapositive. We are going to show that for any $A \in \TPN^+[\f,\g]^*$ and $B \in \TPN^u[\f,\g]^*$ two $3$-cells of length $1$ such that $\t(A) = \s(B)$, there exists $B' \in \TPN^u[\f,\g]^*$ of length $1$ and of source $\s(A)$:
\[
\xymatrix @R=3em @C=3em {
\ar@3 [d] _{B} 
&
\ar@3 [l] _{A}
\ar@3 [d] ^{B'}
\\
&
}
\]

Two cases can occur depending on the shape of the branching $(A^{-1},B)$:
\begin{itemize}
\item If it is a Peiffer branching, then the required cell is provided by the canonical filling.
\item If it is an overlapping branching, then it is enough to check the underlying critical pair.
\end{itemize}
It remains to examine those critical pairs:

\[
\xymatrix @R=3em @C=3em{
\twocell{(1 *0 unitC *0 1) *1 (1 *0 prodC) *1 prodC}
\ar@3 [d] _{\twocell{lunitC}}
&
\twocell{(1 *0 unitC *0 1) *1 (prodC *0 1) *1 prodC}
\ar@3 [l] _{\twocell{assocC}}
\ar@3 [d] ^{\twocell{runitC}}
\\
\twocell{prodC}
& 
\twocell{prodC}
}
\qquad
\xymatrix @R=3em @C=3em{
\twocell{(1 *0 unitD *0 1) *1 (1 *0 prodD) *1 prodD}
\ar@3 [d] _{\twocell{lunitD}}
&
\twocell{(1 *0 unitD *0 1) *1 (prodD *0 1) *1 prodD}
\ar@3 [l] _{\twocell{assocD}}
\ar@3 [d] ^{\twocell{runitD}}
\\
\twocell{prodD}
& 
\twocell{prodD}
}
\]

\[
\xymatrix @R=3em @C=3em{
\twocell{(unitC *0 2) *1 (1 *0 prodC) *1 prodC}
\ar@3 [d] _{\twocell{lunitC}}
&
\twocell{(unitC *0 2) *1 (prodC *0 1) *1 prodC}
\ar@3 [d] ^{\twocell{lunitC}}
\ar@3 [l] _{\twocell{assocC}}
\\
\twocell{prodC}
& 
\twocell{prodC}
}
\qquad
\xymatrix @R=3em @C=3em{
\twocell{(unitD *0 2) *1 (1 *0 prodD) *1 prodD}
\ar@3 [d] _{\twocell{lunitD}}
&
\twocell{(unitD *0 2) *1 (prodD *0 1) *1 prodD}
\ar@3 [d] ^{\twocell{lunitD}}
\ar@3 [l] _{\twocell{assocD}}
\\
\twocell{prodD}
& 
\twocell{prodD}
}
\qquad
\xymatrix @R=3em @C=3em{
\twocell{(2 *0 unitC) *1 (1 *0 prodC) *1 prodC}
\ar@3 [d] _{\twocell{runitC}}
&
\twocell{(2 *0 unitC) *1 (prodC *0 1) *1 prodC}
\ar@3 [d] ^{\twocell{runitC}}
\ar@3 [l] _{\twocell{assocC}}
\\
\twocell{prodC}
& 
\twocell{prodC}
}
\qquad
\xymatrix @R=3em @C=3em{
\twocell{(2 *0 unitD) *1 (1 *0 prodD) *1 prodD}
\ar@3 [d] _{\twocell{runitD}}
&
\twocell{(2 *0 unitD) *1 (prodD *0 1) *1 prodD}
\ar@3 [d] ^{\twocell{runitD}}
\ar@3 [l] _{\twocell{assocD}}
\\
\twocell{prodD}
& 
\twocell{prodD}
}
\]

\[
\xymatrix @C = 3em @R = 3em {
\twocell{(1 *0 unitC *0 1) *1 (1 *0 fonctF) *1 (fonctF *0 1) *1 (1 *0 prodD)}
\ar@3 [d] _-{\twocell{img_unitF}}
&
\twocell{(1 *0 unitC *0 1) *1 (prodC *0 1) *1 fonctF}
\ar@3 [l] _-{\twocell{img_prodF}}
\ar@3 [d] ^-{\twocell{runitC}}
\\
\twocell{(fonctF *0 unitD) *1 (1 *0 prodD)}
&
\twocell{fonctF}
}
\qquad
\xymatrix @C = 3em @R = 3em {
\twocell{(1 *0 unitC *0 1) *1 (1 *0 fonctG) *1 (fonctG *0 1) *1 (1 *0 prodD)}
\ar@3 [d] _-{\twocell{img_unitG}}
&
\twocell{(1 *0 unitC *0 1) *1 (prodC *0 1) *1 fonctG}
\ar@3 [l] _-{\twocell{img_prodG}}
\ar@3 [d] ^-{\twocell{runitC}}
\\
\twocell{(fonctG *0 unitD) *1 (1 *0 prodD)}
&
\twocell{fonctG}
}
\qquad
\xymatrix @C = 3em @R = 3em {
\twocell{(unitC *0 fonctF) *1 (fonctF *0 1) *1 (1 *0 prodD)}
\ar@3 [d] _{\twocell{img_unitF}}
&
\twocell{(unitC *0 2) *1 (prodC *0 1) *1 fonctF}
\ar@3 [l] _-{\twocell{img_prodF}}
\ar@3 [d] ^-{\twocell{lunitC}}
\\
\twocell{fonctF *1 (1 *0 unitD *0 1) *1 (1 *0 prodD)}
&
\twocell{fonctF}
}
\qquad
\xymatrix @C = 3em @R = 3em {
\twocell{(unitC *0 fonctG) *1 (fonctG *0 1) *1 (1 *0 prodD)}
\ar@3 [d] _{\twocell{img_unitG}}
&
\twocell{(unitC *0 2) *1 (prodC *0 1) *1 fonctG}
\ar@3 [l] _-{\twocell{img_prodG}}
\ar@3 [d] ^-{\twocell{lunitC}}
\\
\twocell{fonctG *1 (1 *0 unitD *0 1) *1 (1 *0 prodD)}
&
\twocell{fonctG}
}
\]

\[
\xymatrix @C = 3em @R = 3em {
\twocell{(unitC *0 1) *1 fonctG *1 (transfo *0 1) *1 (1 *0 prodD)} 
\ar@3 [d] _-{\twocell{img_unitG}}
&
\twocell{(unitC *0 transfo) *1 (fonctF *0 1) *1 (1 *0 prodD)}
\ar@3 [d] ^-{\twocell{img_unitF}}
\ar@3 [l] _-{\twocell{transfo_nat}}
\\
\twocell{(transfo *0 unitD) *1 (1 *0 prodD)}
&
\twocell{transfo *1 (1 *0 unitD *0 1) *1 (1 *0 prodD)}
}
\]
\end{proof}

\begin{lem}\label{lem:sourceFN}
Let $A \in \TPN[\f,\g]^*_3$. If the source of $A$ is a formal form for $\TPN^u[\f,\g]$, then $A$ is in $\TPN^+[\f,\g]^*_3$.
\end{lem}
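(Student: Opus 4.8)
The statement to prove is: if $A \in \TPN[\f,\g]^*_3$ has source $\s(A)$ a normal form for $\TPN^u[\f,\g]$, then $A$ lies in $\TPN^+[\f,\g]^*_3$. I would prove this by induction on the length $\l(A)$ of $A$ as a composite of rewriting steps, combining it with a careful inspection of the $3$-cells of $\TPN[\f,\g]$ that are \emph{not} in $\TPN^+[\f,\g]$, namely (by Table \ref{table:Liste_cellule}) the unit $4$-cell $\twocell{transfo_unit}$ lives in dimension $4$ so is irrelevant here, and in dimension $3$ the only $3$-cell of $\TPN[\f,\g]$ outside $\TPN^+[\f,\g]$ is... actually $\TPN^+[\f,\g]$ contains all product $3$-cells and all unit $3$-cells, so $\TPN^+[\f,\g]_3 = \TPN[\f,\g]_3$; the difference between $\TPN^+$ and $\TPN$ is only in dimension $4$. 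Hence the real content is that no $3$-cell of length $1$ usable here forces us out of the $(4,3)$-sub-structure — wait, since $\TPN^+[\f,\g]^{*}_3$ is the free $3$-category on the \emph{same} $3$-cells, the claim $A \in \TPN^+[\f,\g]^*_3$ is automatic unless $A$ uses a $2$-cell not in $\TPN^+[\f,\g]_2$. But $\TPN^+[\f,\g]_2 = \TPN[\f,\g]_2$. So I must re-read: the point must be that we restrict to the $(4,2)$ setting, and $\TPN^+[\f,\g]$ is a $(4,2)$-polygraph whose $3$-cells, when we pass to $\TPN^+[\f,\g]^{*(2)}$, may differ — no. Let me instead take the intended reading literally and prove it by structural induction on $A$.

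\begin{lem}\label{lem:sourceFN}
Let $A \in \TPN[\f,\g]^*_3$. If the source of $A$ is a normal form for $\TPN^u[\f,\g]$, then $A$ is in $\TPN^+[\f,\g]^*_3$.
\end{lem}
\begin{proof}[Proof plan]
The plan is to argue by induction on the length $\l(A)$, reducing to rewriting steps. If $\l(A)=0$ then $A$ is an identity $1_f$ with $f = \s(A)$ a normal form for $\TPN^u[\f,\g]$; since $\TPN^+[\f,\g]^*_2 = \TPN[\f,\g]^*_2$, the identity $1_f$ already lies in $\TPN^+[\f,\g]^*_3$, so the base case holds. For the inductive step, write $A = A_1 \star_2 A_2$ with $A_1$ a rewriting step (length $1$) and $\l(A_2) = \l(A) - 1$. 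First I would show that $A_1 \in \TPN^+[\f,\g]^*_3$: since $\s(A_1) = \s(A)$ is a normal form for $\TPN^u[\f,\g]$, the rewriting step $A_1$ cannot be (a whiskering of) one of the $3$-cells of $\TPN^u[\f,\g]$ applied with source $\s(A)$ — that is exactly what "normal form" rules out — so $A_1$ is a whiskering of a product $3$-cell or a pseudonaturality $3$-cell, all of which belong to $\TPN^+[\f,\g]$. Then $\t(A_1) = \s(A_2)$, and by Lemma \ref{lem:FN_stables} (applied to $A_1$, which is in $\TPN^+[\f,\g]^*$) the $2$-cell $\t(A_1)$ is again a normal form for $\TPN^u[\f,\g]$. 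Applying the induction hypothesis to $A_2$ gives $A_2 \in \TPN^+[\f,\g]^*_3$, and hence $A = A_1 \star_2 A_2 \in \TPN^+[\f,\g]^*_3$.

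The one subtlety is that $A$ is \emph{a priori} an element of the free $(3,2)$-category, so it may also involve formal inverses of $2$-cells and composites of $3$-cells that are not yet in rewriting-step form; I would first normalise the decomposition of $A$ into rewriting steps using that every $3$-cell of a free $(3,2)$-category is a $\star_2$-composite of rewriting steps and inverses of rewriting steps. For an inverse rewriting step $A_1^{-1}$ with $\s(A_1^{-1}) = \s(A)$ a normal form, I claim this case cannot occur unless $A_1^{-1}$ itself is in $\TPN^+[\f,\g]$: if $A_1$ is (a whiskering of) a $3$-cell of $\TPN^u[\f,\g]$ then $\t(A_1) = \s(A_1^{-1}) = \s(A)$ would be the target of a $\TPN^u$-rewriting step; by Lemma \ref{lem:FN_stables} applied in the contrapositive direction this is compatible with $\s(A)$ being a normal form only when $A_1$ rewrites a $2$-cell to an equal one — but the $3$-cells of $\TPN^u[\f,\g]$ are genuine (non-identity) rewriting steps, and the critical-pair analysis in the proof of Lemma \ref{lem:FN_stables} shows the target of such a cell is never a normal form, contradiction. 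So the inverse case is vacuous, and only the forward case above is needed.

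The main obstacle I expect is purely bookkeeping: handling the $(4,2)$-setting cleanly, i.e.\ making precise the normal-form decomposition of a $3$-cell in a free $(3,2)$-category into whiskered rewriting steps and their inverses, and checking that Lemma \ref{lem:FN_stables} is exactly the tool that propagates the "normal form for $\TPN^u[\f,\g]$" hypothesis along such a decomposition. There is no hard combinatorics here beyond the already-completed critical-pair inspections; the essential input is Lemma \ref{lem:FN_stables}, and the argument is a straightforward induction once that is in hand.
\end{proof}
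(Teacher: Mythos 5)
Your inductive argument in the middle paragraph is exactly the paper's proof: write $A = A_1 \star_2 A_2$ with $A_1$ a rewriting step, observe that $A_1$ cannot be a whiskering of a $3$-cell of $\TPN^u[\f,\g]$ because its source $\s(A_1)=\s(A)$ is a normal form for $\TPN^u[\f,\g]$, hence $A_1 \in \TPN^+[\f,\g]^*_3$; then invoke Lemma \ref{lem:FN_stables} to see that $\s(A_2) = \t(A_1)$ is again a normal form for $\TPN^u[\f,\g]$, and conclude by induction on the length. That part is correct and is the argument the paper gives.

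Two surrounding claims are wrong and should be deleted. First, your opening assertion that ``$\TPN^+[\f,\g]$ contains all product $3$-cells and all unit $3$-cells, so $\TPN^+[\f,\g]_3 = \TPN[\f,\g]_3$'' contradicts the definition: $\TPN^+[\f,\g]_3 = \TPN^{++}[\f,\g]_3$ consists of the product $3$-cells only ($\twocell{assocC}$, $\twocell{assocD}$, $\twocell{img_prodF}$, $\twocell{img_prodG}$, $\twocell{transfo_nat}$), while the unit $3$-cells --- which are exactly the $3$-cells of $\TPN^u[\f,\g]$ --- are excluded; the inclusion $\TPN^{+}[\f,\g]_3 \subset \TPN[\f,\g]_3$ displayed at the start of Section \ref{sec:proof_of_coherence} is strict, and the whole content of the lemma is that a $3$-cell whose source is a $\TPN^u$-normal form never uses a unit $3$-cell. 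Your actual induction tacitly uses the correct definition (you split length-one cells into ``whiskering of a $\TPN^u$-cell'' versus ``whiskering of a product cell''), so only the preamble needs fixing. Second, the paragraph about inverse rewriting steps is both unnecessary and incorrectly argued. It is unnecessary because the statement concerns $\TPN[\f,\g]^*_3$, the $3$-cells of the \emph{free} category generated by the polygraph, in which no formal inverses of $3$-cells occur; this is precisely why the lemma is stated for $\TPN[\f,\g]^{*}_3$ rather than $\TPN[\f,\g]^{*(2)}_3$. And the contradiction you derive there is spurious: a $2$-cell that is the \emph{target} of a $\TPN^u$-rewriting step can perfectly well be a normal form --- in a terminating rewriting system normal forms arise exactly as such targets --- so nothing prevents an inverse unit step from having a normal form as its source. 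The inverse case is vacuous for the right reason, not for the one you give.
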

\begin{proof}
We reason by induction on the length of $A$:
\begin{itemize}
\item If $A$ is an identity, then it is in $\TPN^+[\f,\g]$.
\item Otherwise, let us write $A = A_1 \star_2 A_2$, where $A_1$ is of length $1$. Since the source of $A$ is a normal form for $\TPN^u[\f,\g]$, the $3$-cell $A_1$ can only be in $\TPN^+[\f,\g]^*$. 

According to Lemma \ref{lem:FN_stables}, the normal forms for $\TPN^u[\f,\g]$ are stable when rewritten by $\TPN^+[\f,\g]^*$. Hence the source $A_2$ is a normal form for $\TPN^u[\f,\g]$, and by induction hypothesis, $A_2$ is in $\TPN^+[\f,\g]^*$. By composition, so is $A$.
\end{itemize}
\end{proof}

\begin{lem}\label{lem:nom_TPN_32}
Let $A$ be a $3$-cell in $\TPN[\f,\g]^{*(2)}$. There exist $C_1,C_2 \in \TPN^u[\f,\g]_3^{*}$ whose target is a normal form for $\TPN^u[\f,\g]$, a $3$-cell $A' \in \TPN^+[\f,\g]_3^{*(2)}$ and a $4$-cell $\alpha \in \TPN[\f,\g]_4^{*(2)}$ of the following shape:

\[
\xymatrix @R=5em @C=5em{
\ar@3 [r] ^{A} 
\ar@3 [d] _{C_1} 
&
\ar@3 [d] ^{C_2} 
\ar@4 []!<-10pt,-10pt>;[ld]!<10pt,10pt> ^{\alpha}
\\
\ar@3 [r] _{A'} &
}
\]

\end{lem}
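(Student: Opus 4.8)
The plan is to use Lemma~\ref{lem:norm_TPN} as the one substantial ingredient, together with the fact that $\TPN^u[\f,\g]$ is $3$-terminating (being a sub-$3$-polygraph of $\TPN[\f,\g]$, which $3$-terminates by Proposition~\ref{prop:TPN_termine}). In particular every $2$-cell $h$ admits a $3$-cell $h \Rrightarrow \widehat h$ in $\TPN^u[\f,\g]^*$ with $\widehat h$ a normal form for $\TPN^u[\f,\g]$; uniqueness of $\widehat h$ will not be needed. I would prove a slightly stronger statement, in which the normalising $3$-cell on the source is prescribed: for every $3$-cell $A$ in $\TPN[\f,\g]^{*(2)}$ and every $C_1 \colon \s(A) \Rrightarrow \widehat{\s(A)}$ in $\TPN^u[\f,\g]^*$ with $\widehat{\s(A)}$ a normal form, there exist $C_2 \in \TPN^u[\f,\g]^*$ whose target is a normal form, a $3$-cell $A' \in \TPN^+[\f,\g]_3^{*(2)}$, and a $4$-cell $\alpha \colon A \star_2 C_2 \qfl C_1 \star_2 A'$ in $\TPN[\f,\g]_4^{*(2)}$. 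Taking for $C_1$ any normalising $3$-cell of $\s(A)$ then gives the Lemma.

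First I would treat the case where $A$ lies in the free $3$-category $\TPN[\f,\g]_3^*$, i.e.\ involves no formal inverses of $3$-cells. Applying Lemma~\ref{lem:norm_TPN} to the pair $(A, C_1)$ yields a $3$-cell $A' \in \TPN[\f,\g]_3^*$ with $\s(A') = \widehat{\s(A)}$, a $3$-cell $C_2 \in \TPN^u[\f,\g]^*$ with $\s(C_2) = \t(A)$ and $\t(C_2) = \t(A')$, and a $4$-cell $\alpha \colon A \star_2 C_2 \qfl C_1 \star_2 A'$ in $\TPN[\f,\g]_4^{*(2)}$. Since $\s(A') = \widehat{\s(A)}$ is a normal form for $\TPN^u[\f,\g]$, Lemma~\ref{lem:sourceFN} gives $A' \in \TPN^+[\f,\g]_3^*$, and then Lemma~\ref{lem:FN_stables} shows that $\t(A') = \t(C_2)$ is again a normal form. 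This settles this case; moreover the $C_2$ so produced is itself a normalising $3$-cell for $\t(A)$, which is what makes the induction below go through.

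For a general $3$-cell $A$ in $\TPN[\f,\g]^{*(2)}$, I would write $A = A_0 \star_2 L$, where $L$ is the last factor of an expression of $A$ as a $\star_2$-composite of rewriting steps and their formal inverses, and induct on the number of factors (the empty composite being handled by $C_2 := C_1$ and $A' := 1_{\widehat{\s(A)}}$). The induction hypothesis applied to $A_0$, with the prescribed $C_1$, yields a normalising $3$-cell $C' \colon \t(A_0) \Rrightarrow \widehat{\t(A_0)}$ in $\TPN^u[\f,\g]^*$, a $3$-cell $A'_0 \in \TPN^+[\f,\g]_3^{*(2)}$, and a $4$-cell $\alpha_0 \colon A_0 \star_2 C' \qfl C_1 \star_2 A'_0$. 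If $L$ is a rewriting step, then $L \in \TPN[\f,\g]_3^*$ and the previous paragraph applied to $L$ with source-normaliser $C'$ provides $C_2$, a $3$-cell $A'_L \in \TPN^+[\f,\g]_3^*$, and a $4$-cell $\alpha_L \colon L \star_2 C_2 \qfl C' \star_2 A'_L$; one then sets $A' := A'_0 \star_2 A'_L$ and builds $\alpha$ by $\star_2$-whiskering and $\star_3$-composing $\alpha_0$ and $\alpha_L$. If instead $L = B^{-1}$ for a rewriting step $B \colon \t(A) \Rrightarrow \t(A_0)$, I would apply the previous paragraph to the cell $B \star_2 C' \in \TPN[\f,\g]_3^*$, with source-normaliser any normalising $3$-cell $C_2$ of $\t(A)$: since the target of $B \star_2 C'$ is already the normal form $\widehat{\t(A_0)}$, this produces a $3$-cell $X \in \TPN^+[\f,\g]_3^*$ with $X \colon \widehat{\t(A)} \Rrightarrow \widehat{\t(A_0)}$ and a $4$-cell $\beta \colon B \star_2 C' \qfl C_2 \star_2 X$. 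Whiskering $\beta$ on the left by $B^{-1}$ and on the right by $X^{-1}$, and using $B^{-1} \star_2 B = 1$ and $X \star_2 X^{-1} = 1$, turns $\beta$ into a $4$-cell $C' \star_2 X^{-1} \qfl B^{-1} \star_2 C_2$; its inverse plays the role of $\alpha_L$ with $A'_L := X^{-1}$, and once more $A' := A'_0 \star_2 X^{-1} \in \TPN^+[\f,\g]_3^{*(2)}$ and $\alpha$ is obtained by pasting with $\alpha_0$.

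The hard part is the handling of the formal inverses: Lemma~\ref{lem:norm_TPN} only completes a branching $(A,B)$ with $B$ an honest $\TPN^u$-rewriting $3$-cell, so for a factor of the form $B^{-1}$ one cannot apply it directly and must instead normalise the composite $B \star_2 C'$ and then invert the resulting $\TPN^+$-cell. Keeping the intermediate normalising $3$-cells $C'$, $C_2$, \dots\ compatible along the decomposition $A = A_0 \star_2 L$ is exactly what the strengthened statement, with the source-normaliser carried as a parameter, arranges. The remaining verifications are routine: each $\star_2$-whiskering, each $\star_3$-composite and each $\star_2$-inverse of $4$-cells keeps the result in $\TPN[\f,\g]_4^{*(2)}$, and each composite built from $A'_0$, $A'_L$ and $X^{-1}$ keeps $A'$ in $\TPN^+[\f,\g]_3^{*(2)}$.
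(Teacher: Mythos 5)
Your proposal is correct, and it rests on the same ingredients as the paper's proof: decompose $A$ into a zigzag of forward cells and formal inverses, push a $\TPN^u$-normalisation across each forward piece via Lemma \ref{lem:norm_TPN}, and use Lemmas \ref{lem:sourceFN} and \ref{lem:FN_stables} to land in $\TPN^+[\f,\g]$. The organisation differs in one substantive way. The paper writes $A = A_1^{-1}\star_2 B_1 \star_2 \cdots \star_2 A_n^{-1}\star_2 B_n$, chooses a normalising cell $D_i$ at each source $\s(A_i)=\s(B_i)$ independently, and then must reconcile the two normalising cells $D''_i$ and $D'_{i+1}$ produced at each junction $\t(B_i)=\t(A_{i+1})$: they are only parallel (by convergence of $\TPN^u[\f,\g]$), so the paper invokes the $3$-coherence of $\PF[\f,\g]$ to obtain gluing $4$-cells $\gamma_i : D''_i \qfl D'_{i+1}$. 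Your strengthened statement, which carries the source-normaliser as a parameter and feeds the output normaliser of one factor into the next, makes the cells at each junction coincide on the nose, so the $\gamma_i$ and the appeal to $3$-coherence of $\PF[\f,\g]$ disappear — a small but genuine economy. The price is your slightly more involved treatment of an inverse factor $L=B^{-1}$ (normalising $B\star_2 C'$, observing the target normaliser is forced to be an identity, then whiskering by $B^{-1}$ and $X^{-1}$ and inverting), where the paper simply reads the square for $A_i$ backwards in the final pasting; both are legitimate in the $(4,2)$-setting where $\star_2$-whiskering of $4$-cells and the identities $B^{-1}\star_2 B = 1$ hold strictly.
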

\begin{proof}
Let us write $A = A_1^{-1} \star_2 B_1 \star_2 A_2^{-1} \ldots \star_2 A_n^{-1} \star_2 B_n$, where the $A_i$ and $B_i$ are in $\TPN[\f,\g]_3^*$. For every $i \leq n$, we chose a $3$-cell $D_i \in \TPN^u[\f,\g]_3^*$ of source $\s(A_i) = \s(B_i)$ and of target a normal form for $\TPN^u[\f,\g]$.

According to Lemma \ref{lem:norm_TPN}, there exist for every $i$ some $3$-cells $A'_i$, $B'_i$ in $\TPN[f,g]^*$, $D'_i \in \TPN^u[\f,\g]_3^*$ and $D''_i \in \TPN^u[\f,\g]_3^*$ and some $4$-cells $\alpha_i$ and $\beta_i$ in $\TPN[\f,\g]^{*(2)}$ of the form:

\[
\xymatrix @R=5em @C=5em{
\ar@3 [r] ^{A_i} 
\ar@3 [d] _{D_i} 
&
\ar@3 [d] ^{D'_i} 
\ar@4 []!<-10pt,-10pt>;[ld]!<10pt,10pt> ^{\alpha_i}
\\
\ar@3 [r] _{A'_i} &
}
\qquad
\xymatrix @R=5em @C=5em{
\ar@3 [r] ^{B_i} 
\ar@3 [d] _{D_i} 
&
\ar@3 [d] ^{D''_i} 
\ar@4 []!<-10pt,-10pt>;[ld]!<10pt,10pt> ^{\beta_i}
\\
\ar@3 [r] _{B'_i} &
}
\]

The following is a consequence of the target of $D_i$ being a normal form for $\TPN^u[\f,\g]$:
\begin{itemize}
\item Using Lemma \ref{lem:sourceFN}, $A'_i$ and $B'_i$ are in $\TPN^+[\f,\g]^*$, 
\item Using Lemma \ref{lem:FN_stables}, the target $A'_i$ and $B'_i$ (thus of  $D'_i$ and $D''_i$) are normal forms for $\TPN^u[\f,\g]$.
\item Since $\TPN^u[\f,\g]$ is $3$-convergent, for any $i < n$, the cells $D''_i$ and $D'_{i+1}$ are parallel.
\end{itemize}

Since $\TPN^u[\f,\g]$ is a sub-polygraph of $\PF[\f,\g]$ which is $3$-coherent, there exists, for every $i < n$, a $4$-cell $\gamma_{i}: D''_i \qfl D'_i$ in  $\PF[\f,\g]^{*(2)}_4$. 

We can now conclude the proof of this Lemma by taking $C_1 = D'_1$, $C_2 = D''_n$ and $A' = (A'_1)^{-1} \star_2 B'_2 \star_2 \ldots \star_2 (A'_n)^{-1} \star_2 B'_n$, and by defining $\alpha$ as the following composite:

\[
\begin{tikzpicture}
\matrix(m) [matrix of math nodes, 
			nodes in empty cells,
			column sep = 1.5cm, 
			row sep = 3cm] 
{
& & & & & & & & & \\
& & & & & & & & & \\
};
\triplearrow{->}
{(m-1-2) -- node [above] {$A_1$} (m-1-1)}
\triplearrow{->}
{(m-1-2) -- node [above] {$B_1$} (m-1-3)}
\triplearrow{->}
{(m-1-4) -- node [above] {$A_2$} (m-1-3)}
\triplearrow{->}
{(m-1-4) -- node [above] {$B_2$} (m-1-5)}
\triplearrow{->}
{(m-1-6) -- node [above] {$A_3$} (m-1-5)}
\triplearrow{->}
{(m-1-7) -- node [above] {$B_{n-1}$} (m-1-8)}
\triplearrow{->}
{(m-1-9) -- node [above] {$A_n$} (m-1-8)}
\triplearrow{->}
{(m-1-9) -- node [above] {$B_n$} (m-1-10)}

\triplearrow{->}
{(m-1-1) -- node [left] {$D'_1$} (m-2-1)}
\triplearrow{->}
{(m-1-2) -- node [left, near start] {$D'_1$} (m-2-2)}
\triplearrow{->, bend right}
{(m-1-3) to node [left, near start] {$D''_1$} (m-2-3)}
\triplearrow{->, bend left}
{(m-1-3) to node [right, near end] {$D'_2$} (m-2-3)}
\triplearrow{->}
{(m-1-4) -- node [right, near start] {$D_2$} (m-2-4)}
\triplearrow{->, bend right}
{(m-1-5) to node [right] {$D''_2$} (m-2-5)}
\triplearrow{->, bend left}
{(m-1-8) to node [left] {$D'_n$} (m-2-8)}
\triplearrow{->}
{(m-1-9) -- node [right, near start] {$D_n$} (m-2-9)}
\triplearrow{->}
{(m-1-10) -- node [right] {$D''_n$} (m-2-10)}

\triplearrow{->}
{(m-2-2) -- node [below] {$A'_1$} (m-2-1)}
\triplearrow{->}
{(m-2-2) -- node [below] {$B'_1$} (m-2-3)}
\triplearrow{->}
{(m-2-4) -- node [below] {$A'_2$} (m-2-3)}
\triplearrow{->}
{(m-2-4) -- node [below] {$B'_2$} (m-2-5)}
\triplearrow{->}
{(m-2-6) -- node [below] {$A'_3$} (m-2-5)}
\triplearrow{->}
{(m-2-7) -- node [below] {$B'_{n-1}$} (m-2-8)}
\triplearrow{->}
{(m-2-9) -- node [below] {$A'_n$} (m-2-8)}
\triplearrow{->}
{(m-2-9) -- node [below] {$B'_n$} (m-2-10)}

\path (m-1-6) -- node {$\ldots$} (m-1-7);
\path (m-1-6) -- node {$\ldots$} (m-2-7);
\path (m-2-6) -- node {$\cdots$} (m-2-7);

\path (m-1-1) -- node {$\alpha_1$} (m-2-2);
\path (m-1-2) -- node [below left] {$\beta_1$} (m-2-3);
\path (m-1-3) -- node {$\gamma_1$} (m-2-3);
\path (m-1-3) -- node [above right] {$\alpha_2$} (m-2-4);
\path (m-1-4) -- node [below left] {$\beta_2$} (m-2-5);
\path (m-1-8) -- node [above right] {$\alpha_n$} (m-2-9);
\path (m-1-9) -- node {$\beta_n$} (m-2-10);

\end{tikzpicture}
\]

\end{proof}

We can now conclude the proof Theorem \ref{thm:TPNat_coh}.
\begin{repthm}{thm:TPNat_coh}[Coherence for pseudonatural transformations]
Let $\CC$ and $\DD$ be sets, and $\f,\g: \CC \to \DD$ applications.

Let $A,B \in \TPN[\f,\g]^{*(2)}_3$ be two parallel $3$-cells whose $1$-target is of weight $1$.

There is a $4$-cell $\alpha:A \qfl B \in \TPN[\f,\g]^{*(2)}_4$.
\end{repthm}
\begin{proof}
Let $A,B \in \TPN[\f,\g]^{*(2)}_3$ be two parallel $3$-cells whose $1$-target is ${}_a \twocell{1} {}_{\f(a)} \twocell{1} {}_b$. We are going to build a $4$-cell $\alpha: A \qfl B \in \TPN[\f,\g]^{*(2)}_4$.

According to Lemma \ref{lem:nom_TPN_32}, there exist $C_1,C_2,C'_1,C'_2 \in \TPN^u[\f,\g]^*$ whose targets are normal forms for $\TPN^u[\f,\g]$, $A',B' \in \TPN^+[\f,\g]^{*(2)}$ and $\alpha_1,\alpha_2 \in TPN[\f,\g]^{*(2)}_4$ such that we have the diagrams:

\[
\xymatrix @R=5em @C=5em{
\ar@3 [r] ^{A} 
\ar@3 [d] _{C_1} 
&
\ar@3 [d] ^{C'_1} 
\ar@4 []!<-10pt,-10pt>;[ld]!<10pt,10pt> ^{\alpha_1}
\\
\ar@3 [r] _{A'} &
}
\qquad
\xymatrix @R=5em @C=5em{
\ar@3 [r] ^{B} 
\ar@3 [d] _{C_2} 
&
\ar@3 [d] ^{C'_2} 
\ar@4 []!<-10pt,-10pt>;[ld]!<10pt,10pt> ^{\alpha_2}
\\
\ar@3 [r] _{B'} &
}
\]

The $3$-cells $A$ and $B$ are parallel, and the $3$-cells $C_1$ and $C_2$ (resp. $C'_1$ and $C'_2$) have the same source and have a normal form for $\TPN^u[\f,\g]$ as target. Since $\TPN^u[\f,\g]$ is $3$-convergent, this implies that the $3$-cells $C_1$ and $C_2$ (resp. $C'_1$ and $C'_2$) are parallel. This has two consequences:
\begin{itemize}
\item The critical pairs of $\TPN^u[\f,\g]$ already appeared in $\PF[\f,\g]$, and we showed that they admit fillers. Hence there exist cells $\beta_1 : C_1 \qfl C_2$ and $\beta_2: C'_1 \qfl C'_2$ in $\TPN[\f,\g]^{*(2)}_4$. 
\item The $3$-cells $A'$ and $B'$ are parallel, their $1$-target is still ${}_a \twocell{1} {}_{\f(a)} \twocell{1} {}_b$, and their $2$-source is a normal form for $\TPN^u[\f,\g]$. So by Proposition \ref{prop:restriction_TPN++} there exists a $4$-cell $\gamma: A' \qfl B'$.
\end{itemize} 

To conclude, we define $\alpha$ as the following composite (where we omit the context of the $4$-cells):
\[
\begin{tikzpicture}
\matrix (m) [matrix of math nodes, 
			nodes in empty cells,
			column sep = 1cm, 
			row sep = .5cm] 
{
 &  &  &  & & & & & & \\
 &  &  &  & & & & & & \\
 &  &  &  & & & & & & \\
 &  &  &  & & & & & & \\
 &  &  &  & & & & & & \\
};
\triplearrow{arrows = {-Implies}, rounded corners}{(m-3-1) -- (m-5-2.base) -- node [below] {$B$} (m-5-9.base) -- (m-3-10)}
\triplearrow{arrows = {-Implies}, rounded corners}{(m-3-1) -- (m-1-2.base) -- node [above] {$A$} (m-1-9.base) -- (m-3-10)}
\triplearrow{arrows = {-Implies}, rounded corners}{(m-3-1) -- (m-4-2.base) -- node [below] {$C_2$} (m-4-3.base) -- (m-3-4)}
\triplearrow{arrows = {-Implies}, rounded corners}{(m-3-1) -- (m-2-2.base) -- node [above] {$C_1$} (m-2-3.base) -- (m-3-4)}
\triplearrow{arrows = {-Implies}, rounded corners}{(m-3-4) -- (m-2-5.base) -- node [below right] {$A'$} (m-2-6.base) -- (m-3-7)}
\triplearrow{arrows = {-Implies}, rounded corners}{(m-3-4) -- (m-4-5.base) -- node [above left] {$B'$} (m-4-6.base) -- (m-3-7)}
\triplearrow{arrows = {Implies-}, rounded corners}{(m-3-7) -- (m-4-8.base) -- node [below] {$C'_2$} (m-4-9.base) -- (m-3-10)}
\triplearrow{arrows = {Implies-}, rounded corners}{(m-3-7) -- (m-2-8.base) -- node [above] {$C'_1$} (m-2-9.base) -- (m-3-10)}
\path  (m-3-1) -- node {$\beta_1$} (m-3-4);
\path  (m-3-4) -- node {$\gamma$} (m-3-7);
\path  (m-3-7) -- node {$\beta_2$} (m-3-10);
\path  (m-2-1) -- node {$\alpha_1$} (m-1-10);
\path  (m-4-1) -- node {$\alpha_2^{-1}$} (m-5-10);
\end{tikzpicture}
\]
\end{proof}

\newpage

\section{Transformation of a $(4,2)$-polygraph into a $(4,3)$-white-polygraph}
\label{sec:transfo_polygraph}

The proof of Theorem \ref{thm:main_theory} will occupy the rest of this article. We start with a $(4,2)$-polygraph $\A$ satisfying the hypotheses of Theorem \ref{thm:main_theory}. Let $S_\A$ be the set of all $2$-cells in $\A_2^*$ whose target is a normal form. Then proving Theorem \ref{thm:main_theory} consists in showing that $\A$ is $S_\A$-coherent.

In this section we successively transform $\A$ four times, leading to five pointed $(4,3)$-white-categories, namely $(\A^{*(2)},S_\A)$, $(\B^{\w(2)},S_\B)$, $(\C^{\w(3)},S_\C)$, $(\D^{\w(3)},S_\D)$ and $(\E^{\w(3)},S_\E)$, and we show each time that the new pointed $(4,3)$-white-category is stronger than the previous one. A brief description of each pointed $(4,3)$-white-category can be seen in Table \ref{table:Liste_transfo}.  Finally in Section \ref{subsec:retournement}, we perform a number of Tietze-transformations on the $4$-white-polygraph $\E$, leading to a $4$-white-polygraph $\F$. 

Thanks to Lemma \ref{lem:translation_coh} and Proposition \ref{prop:tietze_invariance}, we know that in order to show that $\A^{*(2)}$ is $S_\A$-coherent, it is enough to show that $\F^{\w(3)}$  is $S_\E$-coherent. This will be done in Section \ref{sec:proof_final}.

\begin{table}[H]
\centering
\begin{tabular}{|c|c|c|}
\hline
  Name & Description  & Commentary \\
  \hline
  & $\A_2$ &  \\
  $(\A^{*(2)}, S_\A)$ & $\A_3$   &  \\
  & $\A_4$  &  \\
  \hline
  &  $\A_2$  &  Weakening of the \\
  $(\B^{\w(2)},S_\B)$ & $\A_3 \cup K$   &  exchange law \footnotemark \\
  & $\A_4 \cup L$  & \\
  \hline
  & $\A_2 $  & Weakening \\
  $(\C^{\w(3)},S_\C)$ & $\A_3 \cup \A_3^{op} \cup K \cup K^{op}$  &   of the invertibility  \\
  & $\A_4 \cup L \cup \{\rho_A, \lambda_A \}$ &  of $3$-cells \\
  \hline
  & $\A_2 \cup \A_2^{op}$ &  Adjunction of \\
  $(\D^{\w(3)},S_\D)$ & $\A_3 \cup \A_3^{op} \cup K \cup K^{op}$  &  formal inverses  \\
  & $\A_4 \cup L \cup \{\rho_A, \lambda_A\}$ &  to $2$-cells \\
  \hline
  & $\A_2 \cup \A_2^{op}$ &  Adjunction \\
  $(\E^{\w(3)},S_\E)$ & $\A_3 \cup  \A_3^{op} \cup K \cup  K^{op} \cup \{ \eta_f, \epsilon_f \} $ &  of connections  \\
  & $\A_4 \cup L \cup \{\rho_A, \lambda_A\} \cup \{\tau_f, \sigma_f\}$ & between $2$-cells \\
  \hline
\end{tabular}
\caption{List of the successive transformations of $\A$.} 
\label{table:Liste_transfo}
\end{table}
\footnotetext{The sets $K$ and $L$ will be defined in Section \ref{subsec:affaiblissement}}

\begin{ex}
We have already shown in Section \ref{sec:application} that for every sets $\CC$, $\DD$ and for every applications $\f,\g:\CC \to \DD$, the $(4,2)$-polygraph $\TPN^{++}[\f,\g]$ satisfies the hypothesis of Theorem \ref{thm:main_theory}.

In what follows, we will use as a running example the polygraph $\A = \Ass$ which consists of one $0$-cell, one $1$-cell $\twocell{1}$, one $2$-cell $\twocell{prodC}: \twocell{2} \Rightarrow \twocell{1}$, one $3$-cell $\twocell{assocC}: \twocell{(prodC *0 1) *1 prodC} \Rrightarrow \twocell{(1 *0 prodC) *1 prodC}$, and one $4$-cell $\twocell{pentaC}$:

\[
\xymatrix @C = 2.5em @R = 3em {
&
\twocell{3}
\ar@2 [rr] ^-{\twocell{prodC *0 1}}
\ar@{} [rd] |-{\twocell{assocC *0 1}}
& &
\twocell{2}
\ar@2 [rd] ^-{\twocell{prodC}}
\ar@{} [dd] |-{\twocell{assocC}}
& & &
\twocell{3}
\ar@2 [rr] ^-{\twocell{prodC *0 1}}
\ar@{} [rrrd] |-{\twocell{assocC}}
\ar@2 [rd] |-{\twocell{1 *0 prodC}}
\ar@{} [dd] |-{=}
& &
\twocell{2}
\ar@2 [rd] ^-{\twocell{prodC}}
&
\\
\twocell{4}
\ar@2 [ru] ^-{\twocell{prodC *0 2}}
\ar@2 [rr] |-{\twocell{1 *0 prodC *0 1}}
\ar@2 [rd] _-{\twocell{2 *0 prodC}}
& &
\twocell{3}
\ar@{} [ld] |-{\twocell{1 *0 assocC}}
\ar@2 [ru] |-{\twocell{prodC *0 1}}
\ar@2 [rd] |-{\twocell{1 *0 prodC}}
&
&
\twocell{1} \:
\ar@4 [r] ^-{\twocell{pentaC}}
&
\: \twocell{4}
\ar@2 [ru] ^-{\twocell{prodC *0 2}}
\ar@2 [rd] _-{\twocell{2 *0 prodC}}
& & 
\twocell{2}
\ar@2 [rr] |-{\twocell{prodC}}
\ar@{} [rd] |-{\twocell{assocC}}
& &
\twocell{1}
\\
&
\twocell{3}
\ar@2 [rr] _-{\twocell{1 *0 prodC}}
& &
\twocell{2}
\ar@2 [ru] _-{\twocell{prodC}}
& & &
\twocell{3}
\ar@2 [rr] _-{\twocell{1 *0 prodC}}
\ar@2 [ru] |-{\twocell{prodC *0 1}}
& &
\twocell{2}
\ar@2 [ru] _-{\twocell{prodC}}
&
}
\]

In particular, $\Ass$ satisfies the $2$-Squier condition of depth $2$. The $2$-category $\Ass^*_2$ is $2$-convergent and its only normal form is the $1$-cell $\twocell{1}$.

The corresponding set $S_\A$ is then the set of $2$-cells in $\Ass^*_2$ from any $1$-cell $\twocell{dots}$ to $\twocell{1}$.
\end{ex}

\subsection{Weakening of the exchange law}
\label{subsec:affaiblissement}

We construct dimension by dimension a $(4,2)$-white-polygraph $\B$, together with a white-functor $F : \B^{\w(2)} \to \A^{*(2)}$. We then define a subset $S_\B$ of $\B^{\w(2)}$ and show (Proposition \ref{prop:b_stronger_a}) using $F$ that $(\B^{\w(2)},S_\B)$ is stronger than $(\A^{*(2)},S_\A)$.

In low dimensions, we set  $\B_i = \A_i$, for every $i \leq 2$, and the functor $F$ is the identity on generators.

\begin{lem}\label{lem:F_2_surj}
The functor $F : \B^\w \to \A^*$ is $2$-surjective. 
\end{lem}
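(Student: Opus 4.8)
The statement asserts that $F : \B^{\w} \to \A^{*}$ is $2$-surjective, i.e.\ that for any parallel $1$-cells $s,t \in \B_1 = \A_1$, the map $F : \B^{\w}_2(s,t) \to \A^{*}_2(s,t)$ is surjective. The plan is to proceed by induction on the length of a $2$-cell of $\A^{*}$, using the fact that $\B$ and $\A$ agree in dimensions $\le 2$ and that $F$ is the identity on generators in those dimensions.

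First I would note that, since $\B_i = \A_i$ for $i \le 2$ and $F$ is the identity on the generators, every generating $2$-cell of $\A$ (with any $0$-composition context) lifts along $F$: given a rewriting step $u \star_0 \phi \star_0 v$ in $\A^{*}$, the same formal data defines a $2$-cell of $\B^{\w}$ mapping onto it under $F$. The only subtlety is that in $\B^{\w}$ the interchange law between $\star_0$ and $\star_1$ is weakened, so a $2$-cell of $\A^{*}$ has many formal preimages; we only need to exhibit one. Concretely, take an arbitrary $2$-cell $g \in \A^{*}_2(s,t)$ and write it, using $\star_1$-composition, as a composite of rewriting steps $g = g_1 \star_1 \cdots \star_1 g_n$ (a length-$n$ decomposition, possibly with identities); this is possible because every $2$-cell of a free $2$-category is such a composite.

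Then I would argue by induction on $n$. For $n = 0$, $g$ is an identity $1_s$, and $1_s \in \B^{\w}_2$ maps to it. For $n = 1$, $g$ is a rewriting step $u \star_0 \phi \star_0 v$ with $\phi \in \A_2 = \B_2$; the corresponding whiskered generator in $\B^{\w}$ is a preimage. For $n > 1$, write $g = g_1 \star_1 g'$ with $g'$ of length $n-1$ and $\s(g') = \t(g_1)$; by the induction hypothesis there are $h_1, h' \in \B^{\w}_2$ with $F(h_1) = g_1$ and $F(h') = g'$. Since $F$ commutes with sources and targets and $F$ is the identity on $1$-cells (which are unchanged), $\s(h') = \t(h_1)$, so $h_1 \star_1 h'$ is well defined in $\B^{\w}$ and $F(h_1 \star_1 h') = g_1 \star_1 g' = g$. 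This exhibits a preimage of $g$, completing the induction and the proof.

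The main point to be careful about — though not really an obstacle — is that $F$ is a white-functor rather than an ordinary $2$-functor, so one must only use $\star_1$-composites and whiskerings (never the derived $\star_0$-composition of $2$-cells) when building the preimage, and check that $F$ preserves exactly these operations, which it does by the definition of $\L^{\w}$ applied to the identity morphism of $2$-polygraphs $\B_{\le 2} = \A_{\le 2}$. No essential difficulty arises because nothing in dimensions $\ge 3$ is involved in a statement about $2$-cells.
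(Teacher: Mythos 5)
Your proof is correct and is essentially an unfolded version of the paper's argument: the paper simply observes that $\A_2^*$ is, by construction, the quotient of $\B_2^\w$ by the interchange relations and that $F$ is the corresponding canonical projection, hence surjective. Your explicit induction — decomposing a $2$-cell of $\A_2^*$ into a $\star_1$-composite of whiskered generators and lifting each factor — establishes the same fact constructively and is perfectly valid.
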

\begin{proof}
By construction, $\A_2^*$ is the quotient of $\B_2^\w$ by the equivalence relation generated by:

\[
(f \star_0 v) \star_i (u' \star_0 g ) = (u \star_0 g ) \star_i (f \star_0 v').
\]
And $F$ is the canonical projection induced by the quotient.
\end{proof}

In what follows, we suppose chosen a section $i: \A^* \to \B^\w$ of $F$, which is possible thanks to Lemma \ref{lem:F_2_surj}.

We extend $\B$ into a $3$-white-polygraph and $F: \B^{\w} \to \A^{*}$ into a $3$-white-functor by setting $\B_3 := \A_3 \cup K$:
\begin{itemize} 
\item For every $3$-cell $A \in \A_3$, the source and target of $A$ in $\B_2^\w$ are respectively $\s^\B(A) := i(\s^\A(A))$ and $\t^\B(A):=i(\t^\A(A))$.
\item The set $K$ is the set of $3$-cells $A_{fv,ug}$, of shape:
\[
\xymatrix @C = 4em @R = 1.5em{
& 
\ar@2 @/^/ [rd] ^{u'g}
\ar@3 [dd] ^{A_{fv,ug}}
& \\
\ar@2 @/^/ [ru] ^{fv}
\ar@2 @/_/ [rd] _{ug} 
& 
& \\
&  
\ar@2 @/_/ [ru] _{fv'}
& \\
}
\]
for every strict Peiffer branching $(fv,ug)$, where $f:u \Rightarrow u'$ and $g : v \Rightarrow v'$ are rewriting steps.
\end{itemize}

The image of a cell of $\B_3$ under $F$ is defined as follows:
\begin{itemize}
\item For every strict Peiffer branching $(fv , ug )$,  $F(A_{fv,ug}) := 1_{f \star_0 g}$
\item For every $3$-cell $A$ in $\A_3$, $F(A) := A$.
\end{itemize}

\begin{lem}\label{lem:carac_surjectivite_F}
Let $f,g \in \B^{\w}_2$. There exists a $3$-cell $A : f \Rrightarrow g$ in $K_3^{\w(2)}$ if and only if the equality $F(f) = F(g)$ holds in $\A_2^*$.
\end{lem}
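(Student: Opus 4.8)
The statement asserts a characterization of when two $2$-cells $f,g \in \B_2^\w$ are connected by a $3$-cell built purely from the Peiffer cells $K$ (and their inverses, since we work in $K_3^{\w(2)}$): this happens exactly when $F(f) = F(g)$ in $\A_2^*$. The plan is to prove the two implications separately. For the forward direction ($\Rightarrow$), suppose $A : f \Rrightarrow g$ lies in $K_3^{\w(2)}$. Since $F$ is a white-functor, it sends $A$ to a $3$-cell $F(A) : F(f) \Rrightarrow F(g)$ in $\A_3^{*(2)}$. But every generating $3$-cell of $K$ is sent by $F$ to an identity ($F(A_{fv,ug}) = 1_{f \star_0 g}$), and $F$ preserves $\star_2$, $\star_3$, whiskering, and inverses; hence $F(A)$ is an identity $3$-cell. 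An identity $3$-cell has equal source and target, so $F(f) = F(g)$ in $\A_2^*$. (One should check that $F$ of a composite of whiskered Peiffer cells and their inverses is a composite of whiskered identities, which is again an identity — this is a routine induction on the length of $A$ as an element of the free $(3,2)$-white-category.)

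For the backward direction ($\Leftarrow$), suppose $F(f) = F(g)$ in $\A_2^*$. Recall from the construction that $\A_2^* = \B_2^\w / {\equiv}$, where $\equiv$ is the congruence generated by the exchange relations $(f \star_0 v) \star_1 (u' \star_0 g) = (u \star_0 g) \star_1 (f \star_0 v')$, and $F$ is the quotient projection. Thus $F(f) = F(g)$ means $f$ and $g$ are related by a finite zig-zag of elementary exchange moves applied inside some context. The key observation is that each \emph{elementary} exchange move — replacing a subterm $(p \star_0 v) \star_1 (u' \star_0 q)$ by $(u \star_0 q) \star_1 (p \star_0 v')$ where $p : u \Rrightarrow u'$, $q : v \Rrightarrow v'$ — is realized, \emph{when $p$ and $q$ are rewriting steps}, precisely by the generating $3$-cell $A_{pv,uq} \in K$ (or its inverse, depending on the direction of the move), suitably whiskered by the surrounding context. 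For a move between general $2$-cells $p, q$ (not necessarily rewriting steps), one decomposes $p$ and $q$ into composites of rewriting steps and builds the required $3$-cell as an iterated composite of whiskered $K$-cells; this is where one uses that $K_3^{\w(2)}$ is closed under $\star_2$, $\star_3$, whiskering, and inversion. Concatenating the $3$-cells obtained for each step of the zig-zag yields a $3$-cell $A : f \Rrightarrow g$ in $K_3^{\w(2)}$.

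I would organize the backward direction as an induction: first prove the claim for a single elementary exchange move applied at the top level, then extend to moves applied in an arbitrary context (by whiskering the $3$-cell constructed at top level), and finally to an arbitrary zig-zag (by $\star_2$-composing, inverting where a move goes "backwards"). A small lemma handling the decomposition of an exchange move between general $2$-cells into moves between rewriting steps — essentially the statement that the exchange congruence on $\B_2^\w$ is generated by exchanges of rewriting steps — is the technical heart; it follows from the explicit description of $\B_2^\w$ as the free sesquicategory (white $2$-category) on the $2$-polygraph $\B_2 = \A_2$, in which every $2$-cell is a $\star_1$-composite of whiskered generators.

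\textbf{Main obstacle.} The delicate point is the backward direction: one must be careful that a single exchange move in $\B_2^\w$, which may swap two arbitrary $2$-cells past each other, can be resolved into $K$-cells, since the generators of $K$ only directly witness the swap of two \emph{rewriting steps}. The resolution requires decomposing the two $2$-cells being swapped into rewriting steps, performing the swaps one rewriting-step at a time (an interchange "staircase"), and checking that the resulting iterated composite of whiskered $K$-cells is well-typed and indeed has the correct source and target. Keeping track of the contexts and the typing through this staircase argument is the part that demands care; everything else (functoriality of $F$, $F$ killing $K$, closure of $K_3^{\w(2)}$ under the categorical operations) is routine.
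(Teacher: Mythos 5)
Your proposal is correct and follows essentially the same route as the paper: the forward direction uses that $F$ sends every cell of $K$ to an identity, and the backward direction uses that $\A_2^*$ is the quotient of $\B_2^\w$ by the exchange congruence, which is generated by the Peiffer cells of $K$. The one point you develop more carefully than the paper — decomposing an exchange of arbitrary $2$-cells into a staircase of exchanges of rewriting steps witnessed by whiskered $K$-cells — is exactly the step the paper asserts without detail, and your treatment of it is sound.
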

\begin{proof}
Let $f,g \in \B^\w_2$. The image of any cell in $K^{\w(2)}_3$ by $F$ is an identity. So if there exists a $3$-cell $A:f \Rrightarrow g$ in $K^{\w(2)}_3$, necessarily $F(f) = F(g)$.

Conversely, the set $\A_2^*$ is the quotient of $\B_2^\w$ by the equivalence relation generated by:
\[ 
f \s(g) \star_1 \t(f) g =  \s(f) g \star_1 f \t(g), \] 
for $f,g \in \B_2^\w$.  The $3$-cells $A_{fu,vg}$, where $(fu,vg)$ is a strict Peiffer branching, generate this relation, and they are in $K$. Hence the result.
\end{proof}

\begin{lem}\label{lem:F_3_surj}
The functor $F : \B^{\w(2)} \to \A^{\w(2)}$ is $3$-surjective.
\end{lem}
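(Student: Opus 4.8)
The functor $F : \B^{\w(2)} \to \A^{\w(2)}$ is $3$-surjective.

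\medskip

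The plan is to combine the $2$-surjectivity of $F$ (Lemma~\ref{lem:F_2_surj}, with the chosen section $i$) with the characterisation of the fibres of $F$ in dimension $2$ provided by Lemma~\ref{lem:carac_surjectivite_F}. Fix a pair of parallel $2$-cells $f,g \in \B_2^\w$ and a $3$-cell $A : F(f) \Rrightarrow F(g)$ in $\A_3^{*(2)}$; I must produce a $3$-cell $\tilde A : f \Rrightarrow g$ in $\B_3^{\w(2)}$ with $F(\tilde A) = A$. The first step is to lift $A$ along the section. Writing $A$ as a composite of rewriting steps and their inverses, $A = A_1^{\pm} \star_2 \cdots \star_2 A_m^{\pm}$ with each $A_k \in \A_3$ a rewriting step (a generating $3$-cell in some $1$- and $2$-dimensional context), one lifts each $A_k = u' \star_0 (w \star_1 B \star_1 w') \star_0 v'$ (for $B \in \A_3$ a generator and $u',v' \in \A_1^*$, $w,w' \in \A_2^*$) to the cell $\tilde A_k := u' \star_0 (i(w) \star_1 B \star_1 i(w')) \star_0 v'$ of $\B_3$, using that $\B_3$ contains $\A_3$ and that $F(i(w)) = w$. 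By construction $F(\tilde A_k) = A_k$.

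\medskip

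The difficulty — and this is the main obstacle — is that the lifted composite $\tilde A := \tilde A_1^{\pm} \star_2 \cdots \star_2 \tilde A_m^{\pm}$ need not be composable in $\B^{\w(2)}$, nor need its source and target coincide with $f$ and $g$: the intermediate $2$-cells of the factorisation of $A$ in $\A^{*(2)}$ only lift to the fibre of $F$, and passing from one generating $3$-cell to the next in $\A^{*(2)}$ may involve an application of the exchange law that is no longer free in $\B^\w$. Here is exactly where Lemma~\ref{lem:carac_surjectivite_F} is used: if $h_{k} \in \B_2^\w$ denotes a chosen lift (via $i$) of the $k$-th intermediate $2$-cell $\t(A_1 \star_2 \cdots \star_2 A_k)$ of the factorisation, then $\t(\tilde A_k)$ and $h_k$ are both in the fibre $F^{-1}(\t(A_1\star_2\cdots\star_2 A_k))$, hence parallel, so Lemma~\ref{lem:carac_surjectivite_F} supplies a $3$-cell $C_k : \t(\tilde A_k) \Rrightarrow h_k$ in $K^{\w(2)}_3$, whose image under $F$ is an identity. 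Symmetrically one obtains $3$-cells in $K^{\w(2)}_3$ reconciling $\s(\tilde A_1)$ with $f$ (both lift $F(f)$, once we fix $h_0 := f$) and $\t(\tilde A_m)$ with $g$.

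\medskip

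With these reconciling $3$-cells in hand, I assemble the desired lift as the $\star_2$-composite
\[
\tilde A \;:=\; C_0^{-1} \star_2 \tilde A_1^{\pm} \star_2 C_1 \star_2 C_1^{-1} \star_2 \cdots \star_2 C_{m-1} \star_2 C_{m-1}^{-1} \star_2 \tilde A_m^{\pm} \star_2 C_m,
\]
where inverses exist because $\B^{\w(2)}$ is a $(4,2)$-white-category (all $3$-cells are invertible for $\star_2$) and all cells involved are parallel after the reconciliations, so the composite is well-defined with source $f$ and target $g$. Applying $F$, each $C_k$ and each $C_k^{-1}$ maps to an identity $3$-cell while $\tilde A_k^{\pm}$ maps to $A_k^{\pm}$, so $F(\tilde A) = A_1^{\pm}\star_2\cdots\star_2 A_m^{\pm} = A$. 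This exhibits $A$ in the image of $F : \B_3^{\w(2)}(f,g) \to \A_3^{*(2)}(F(f),F(g))$; since $f,g,A$ were arbitrary, $F$ is $3$-surjective. The only routine verifications left are that the contexts match up so that each $\star_2$ in the big composite is legal and that $C_k^{-1}\star_2 C_k$ telescopes correctly — these are straightforward bookkeeping given Lemma~\ref{lem:carac_surjectivite_F}.
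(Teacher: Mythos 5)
Your argument is correct and follows essentially the same route as the paper's: lift each generating step of $A$ by applying the section $i$ to its context, then bridge the mismatched intermediate $2$-cells (which all lie in the same fibre of $F$) with cells of $K^{\w(2)}_3$ supplied by Lemma~\ref{lem:carac_surjectivite_F}, whose images under $F$ are identities; the paper merely packages the same idea as a structural induction (identities, length-one cells, closure under $\star_2$ and inversion). One notational slip: in your displayed composite the factor $C_k^{-1}$ should not be the inverse of $C_k : \t(\tilde A_k) \Rrightarrow h_k$ (as written, $C_k \star_2 C_k^{-1}$ cancels and the composite is not composable) but a second, independent cell $h_k \Rrightarrow \s(\tilde A_{k+1})$ furnished by the same lemma — or, more directly, a single bridging cell $\t(\tilde A_k) \Rrightarrow \s(\tilde A_{k+1})$.
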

\begin{proof}
Let $E$ be the set of $3$-cells $A \in \A_3^{*(2)}$ such that, for every $f,g \in \B_2^\w$ in the preimage of $\s(A)$ and $\t(A)$ under $F$, there exists a $3$-cell $B:f \Rrightarrow g \in \B_3^{\w(2)}$ satisfying $F(B) = A$. Let us show that $E = \A_3^{*}$. We already know that $E$ contains the identities thanks to Lemma \ref{lem:carac_surjectivite_F}.

The $3$-cells of length $1$ in $A^*_3$ are in $E$. Indeed, let $A \in \A_3^{*}$ be a $3$-cell of length $1$, and $f,g \in \B_2^\w$  such that $F(f) = \s(A)$ et $F(g) = \t(A)$. There exist $u,v \in \A_1^*$, $f',g' \in \A_2^*$, and $A' \in \A_3$ such that
\[
A= f' \star_1 (uA'v) \star_1 g'.
\]
Let $\tilde u$, $\tilde v$, $\tilde f$, $\tilde g$ be in the preimages  respectively of $u$, $v$, $f'$, $g'$ under $F$ (they exist thanks to Lemma \ref{lem:F_2_surj}), and let $B_1 := \tilde f \star_1 (\tilde u A' \tilde v) \star_1 \tilde g \in \B_3^{\w(2)}$. By construction,  $F(B_1) = A$, which leads to the equalities:
\[
F(\s(B_1)) =  F(f) \qquad F(\t(B_1)) = F(g).
\]
Thus, according to Lemma \ref{lem:carac_surjectivite_F}, there exist $3$-cells $C_1 : f \Rrightarrow \s(B_1) \in K_3^{\w(2)}$ and $C_2 : \t(B_1) \Rrightarrow g \in K_3^{\w(2)}$. Let $B := C_1 \star_2 B_1 \star_2 C_2$: by  construction, $B$ has the required source and target, and moreover:
\[
F(B) = F(C_1) \star_2 F(B_1) \star_2 F(C_2) = 1_{F(f)} \star_2 A \star_2 1_{F(g)} = A.
\]

The set $E$ is stable under composition. Indeed let $A_1,A_2 \in E$ such that $\t(A_1) = \s(A_2)$, and $f,g \in \B_2^\w$ satisfying $F(f) = \s(A_1)$ and $F(g) = \t(A_2)$. Since $F$ is $2$-surjective, there exists $h \in \B_2^\w$ in the inverse image of $\t(A_1)$ under $F$. Since $A_1$ (resp. $A_2$) is in $E$, there exists a cell $B_1$ (resp. $B_2$) in $\B_3^{\w(2)}$ such that $F(B_1) = A_1$ (resp. $F(B_2) = A_2$), $\s(B_1) = f$ (resp. $\s(B_2) = h$) and $\t(B_1) = h$ (resp. $\t(B_2) = g$). Let $B := B_1 \star_2 B_2$: we get:
\[
\s(B) = f \qquad F(B) = A_1 \star_2 A_2 \qquad \t(B) = g
\] 

The set $E$ is stable under $2$-composition. Indeed let $A \in E$ and $f,g \in \B_2^\w$ such that $F(f) = \s(A^{-1})$ and $F(g) = \t(A^{-1})$. There exists $B \in \B^{\w(2)}$ such that:
\[
\s(B) = g \qquad F(B) = A \qquad \t(B) =  f.
\]
Hence the cell $B^{-1}$ satisfies the required property.
\end{proof}

We now extend $\B$ into a $(4,2)$-white-polygraph and $F: \B^{\w(2)} \to \A^{*(2)}$ into a $4$-white-functor by setting  $\B_4 = \A_4 \cup L$:
\begin{itemize}
\item For every $3$-cell $A \in \A_4$, the source and target of $A$ in $\B_3^{\w(2)}$ are respectively $\s^\B(A) := i(\s^\A(A))$ and $\t^\B(A):=i(\t^\A(A))$, where $i$ is a chosen section of the application $F_3 : \B^{\w(2)}_3 \to \A^{\w(2)}_3$ (which exists since $F$ is $3$-surjective). And we set $F(A) := A$.
 
\item For every $3$-fold strict Peiffer branching $(f,g,h)$, the set $L$ contains a $4$-cell $A_{f,g,h}$, whose shape depends on the form of the branching $(f,g,h)$. If $(f,g,h) = (f'v,g'v,uh')$, with $(f',g')$ a critical pair, and $h' : v \Rightarrow v'$ then $A_{f,g,h}$ is of the following shape:

\[
\xymatrix @C = 3em @R = 3em {
&
\ar@2 [rr]
\ar@{} [rd] |-{A_{f',g'}v}
& &
\ar@2 [rd]
\ar@{} [dd] |-{A}
& & &
\ar@2 [rr]
\ar@{} [rrrd] |-{B}
\ar@2 [rd]
\ar@{} [dd] |-{A_{f'v,uh'}}
& &
\ar@2 [rd]
&
\\
\ar@2 [ru] ^{f'v}
\ar@2 [rr] |{g'v}
\ar@2 [rd] _{uh'}
& &
\ar@{} [ld] |-{A_{g'v,uh'}}
\ar@2 [ru]
\ar@2 [rd]
&
&
\ar@4 [r] _{A_{f,g,h}}
&
\ar@2 [ru] ^{f'v}
\ar@2 [rd] _{uh'}
& & 
\ar@2 [rr]
\ar@{} [rd] |-{A_{f',g'}v'}
& &
\\
&
\ar@2 [rr]
& &
\ar@2 [ru]
& & &
\ar@2 [ru]
\ar@2 [rr]
& &
\ar@2 [ru]
&
}
\]
where $A$ and $B$ are in $K^{\w(2)}_3$. And we define $F(A_{f,g,h}) := 1_{A_{f',g'} \star_0 h'}$.

If $(f,g,h) = (f'v,ug',uh')$, with $(g,h)$ a critical pair, and $f' : u \Rightarrow u'$ then $A_{f,g,h}$ is of the following shape:
\[
\xymatrix @C = 3em @R = 3em {
&
\ar@2 [rr]
\ar@{} [rd] |-{A_{f'v,ug'}}
& &
\ar@2 [rd]
\ar@{} [dd] |-{A}
& & &
\ar@2 [rr]
\ar@{} [rrrd] |-{u'A_{g',h'}}
\ar@2 [rd]
\ar@{} [dd] |-{A_{f'v,uh'}}
& &
\ar@2 [rd]
&
\\
\ar@2 [ru] ^{f'v}
\ar@2 [rr] |{ug'}
\ar@2 [rd] _{uh'}
& &
\ar@{} [ld] |-{uA_{g',h'}}
\ar@2 [ru]
\ar@2 [rd]
&
&
\ar@4 [r] _{A_{f,g,h}}
& 
\ar@2 [ru] ^{f'v}
\ar@2 [rd] _{uh'}
& & 
\ar@2 [rr]
\ar@{} [rd] |-{B}
& &
\\
&
\ar@2 [rr]
& &
\ar@2 [ru]
& & &
\ar@2 [ru]
\ar@2 [rr]
& &
\ar@2 [ru]
&
}
\]
where $A$ and $B$ are in $K^{\w(2)}_3$. And we define $F(A_{f,g,h}) := 1_{f' \star_0 A_{g',h'}}$.

If $(f,g,h) = (f'vw,ug'w,uh'w)$, then $A_{f,g,h}$ is of the following shape, where $A$ and $B$ are in $K^{\w(2)}_3$:
\[
\xymatrix @C = 3em @R = 3em {
&
\ar@2 [rr]
\ar@{} [rd] |-{A_{f'v,ug'}w}
& &
\ar@2 [rd]
\ar@{} [dd] |-{A_{f'v'w,uv'h}}
& & &
\ar@2 [rr]
\ar@{} [rrrd] |-{u'A_{g'w,vh'}}
\ar@2 [rd]
\ar@{} [dd] |-{A_{f'vw,uvh'}}
& &
\ar@2 [rd]
&
\\
\ar@2 [ru] ^{f'vw}
\ar@2 [rr] |{ug'w}
\ar@2 [rd] _{uvh'}
& &
\ar@{} [ld] |-{uA_{g'w,vh'}}
\ar@2 [ru]
\ar@2 [rd]
&
&
\ar@4 [r] ^{A_{f,g,h}}
& 
\ar@2 [ru] ^{f'vw}
\ar@2 [rd] _{uvh'}
& & 
\ar@2 [rr]
\ar@{} [rd] |-{A_{f'v,ug'}w'}
& &
\\
&
\ar@2 [rr]
& &
\ar@2 [ru]
& & &
\ar@2 [ru]
\ar@2 [rr]
& &
\ar@2 [ru]
&
}
\]
And we define $F(A_{f,g,h}) := 1_{f' \star_0 g' \star_0 h'}$.
\end{itemize}

Let now $S_\B$ be the set of all $2$-cells in $\B^\w$ whose $1$-target is a normal form. 

\begin{prop}\label{prop:b_stronger_a}
The pointed $(4,3)$-white-category $(\B^{\w(2)},S_\B)$ is stronger than $(\A^{*(2)},S_\A)$.
\end{prop}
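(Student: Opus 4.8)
The strategy is to verify the hypothesis of Lemma~\ref{lem:strength_lemma}: it suffices to exhibit a $2$-functor $F : \B^{\w(2)} \restriction S_\B \to \A^{*(2)} \restriction S_\A$ that is $0$-surjective and $1$-surjective, and the strength of $(\B^{\w(2)}, S_\B)$ over $(\A^{*(2)}, S_\A)$ follows immediately. The functor $F$ is the $4$-white-functor $F : \B^{\w(2)} \to \A^{*(2)}$ constructed dimension by dimension in this section; the first task is to check that it restricts to a $2$-functor between the pointed restrictions. For this I must observe that $F$ sends $S_\B$ into $S_\A$: by definition $F$ is the identity on $1$-cells (since $\B_i = \A_i$ for $i \leq 1$ and $F$ is the identity on generators in low dimensions), so a $2$-cell of $\B^\w$ whose $1$-target is a normal form is sent to a $2$-cell of $\A^*$ with the same $1$-target, hence still a normal form. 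Consequently $F$ maps $0$-cells, $1$-cells and $2$-cells of $\B^{\w(2)} \restriction S_\B$ (which are respectively the $2$-, $3$- and $4$-cells of $\B^{\w(2)}$ with $1$-source/target in $S_\B$) into the corresponding cells of $\A^{*(2)} \restriction S_\A$, and it is compatible with $\star_2$ and $\star_3$ by functoriality, so it is a genuine $2$-functor between the restrictions.

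\textbf{Surjectivity.} The remaining point is to show that this restricted $2$-functor is $0$-surjective and $1$-surjective. For $0$-surjectivity: given a $2$-cell $h \in \A^*_2$ whose $1$-target is a normal form (i.e.\ an object of $\A^{*(2)} \restriction S_\A$), Lemma~\ref{lem:F_2_surj} gives a preimage $\tilde h \in \B_2^\w$ with $F(\tilde h) = h$; since $F$ preserves $1$-sources and $1$-targets and is the identity on $1$-cells, $\tilde h$ has the same $1$-target as $h$, which is a normal form, so $\tilde h \in S_\B$, i.e.\ $\tilde h$ is an object of $\B^{\w(2)} \restriction S_\B$ mapping to $h$. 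For $1$-surjectivity: given parallel $0$-cells $f, g$ of $\B^{\w(2)} \restriction S_\B$ (hence $f,g \in S_\B$) and a $1$-cell of $\A^{*(2)} \restriction S_\A$ from $F(f)$ to $F(g)$, that is a $3$-cell $A \in \A_3^{*(2)}$ with $\s(A) = F(f)$, $\t(A) = F(g)$ (and $1$-source/target a normal form), Lemma~\ref{lem:F_3_surj} (the $3$-surjectivity of $F : \B^{\w(2)} \to \A^{*(2)}$) provides a $3$-cell $B : f \Rrightarrow g$ in $\B_3^{\w(2)}$ with $F(B) = A$; since $\s(B) = f$ and $\t(B) = g$ lie in $S_\B$, the cell $B$ is a $1$-cell of $\B^{\w(2)} \restriction S_\B$ with $F(B) = A$. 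This establishes both surjectivity properties.

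\textbf{Conclusion.} Applying Lemma~\ref{lem:strength_lemma} to the $2$-functor $F : \B^{\w(2)} \restriction S_\B \to \A^{*(2)} \restriction S_\A$ just shown to be $0$- and $1$-surjective, we conclude that $(\B^{\w(2)}, S_\B)$ is stronger than $(\A^{*(2)}, S_\A)$, as claimed.

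\textbf{Expected main obstacle.} Almost all the genuine work has been front-loaded into Lemmas~\ref{lem:F_2_surj}, \ref{lem:carac_surjectivite_F} and \ref{lem:F_3_surj}, whose proofs already appear above; the proof of the proposition itself is essentially bookkeeping. The one place that warrants care is the verification that $F$ restricts correctly to the pointed structures --- i.e.\ that the chosen sections $i$ used to define sources and targets of the $\A_3$- and $\A_4$-cells of $\B$, together with the extra $3$-cells in $K$ and $4$-cells in $L$, do not disturb the identity-on-$1$-cells property on which the containment $F(S_\B) \subseteq S_\A$ rests. Since $K$ and $L$ consist of cells of dimension $\geq 3$ and the constructions of $\B_3, \B_4$ leave $\B_1 = \A_1$ untouched, this is immediate, so no real difficulty arises.
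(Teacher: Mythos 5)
Your proof is correct and follows essentially the same route as the paper: check that $F$ sends $S_\B$ into $S_\A$ so that it restricts to the pointed subcategories, invoke Lemmas~\ref{lem:F_2_surj} and~\ref{lem:F_3_surj} for $0$- and $1$-surjectivity, and conclude by the strength criterion. You correctly cite Lemma~\ref{lem:strength_lemma} for the final step, where the paper's own proof cites Lemma~\ref{lem:translation_coh} (apparently a slip, since that lemma transfers coherence rather than establishing the ``stronger'' relation).
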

\begin{proof}
The functor $F$ sends normal forms on normal forms. Hence by restriction it induces a $2$-functor  $F \restriction S_\B : \B^{\w(2)} \restriction S_\B \to \A^{*(2)} \restriction S_\A$. 

Lemmas \ref{lem:F_2_surj} and \ref{lem:F_3_surj} show that it is  $k$-surjective  for every $k < 2$. Hence we can conclude using Lemma \ref{lem:translation_coh}.
\end{proof}

\begin{ex}\label{ex:assoc}
In the case where $\A = \Ass$, the set $K$ contains in particular the following $3$-cells, associated respectively to the strict Peiffer branchings $(\twocell{prodC *0 2} \quad , \quad \twocell{2 *0 prodC})$ and $(\twocell{prodC *0 3}\quad , \quad \twocell{3 *0 prodC})$:
\[
\xymatrix @C = 6em{
\twocell{(prodC *0 2) *1 (1 *0 prodC)} 
\ar@3 [r] _{\twocell{exchange_z}}
&
\twocell{(2 *0 prodC) *1 (prodC *0 1)}
}
\qquad
\xymatrix @C = 6em{
\twocell{(prodC *0 3) *1 (2 *0 prodC)} 
\ar@3 [r] _{\twocell{exchange_u}}
&
\twocell{(3 *0 prodC) *1 (prodC *0 2)}
}
\]
In $L$, the $4$-cell associated to the strict Peiffer branching $ (\twocell{prodC *0 3}\quad , \quad \twocell{1 *0 prodC *0 2} \quad , \quad \twocell{3 *0 prodC})$ is the following:

\[
\xymatrix @C = 6em @R = 2em {
&
\twocell{(1 *0 prodC *0 2) *1 (prodC *0 2) *1 (1 *0 prodC)}
\ar@3 [r] ^*+{\twocell{(1 *0 prodC *0 2) *1 exchange_z}}
\ar@{} [r] _*+{}="src"
&
\twocell{(1 *0 prodC *0 2) *1 (2 *0 prodC) *1 (prodC *0 1)}
\ar@3 [rd] ^*+{\twocell{(1 *0 exchange_z) *1 (prodC *0 1)}}
&
\\
\twocell{(prodC *0 3) *1 (prodC *0 2) *1 (1 *0 prodC)} 
\ar@3 [ru] ^*+{\twocell{(assocC *0 2) *1 (1 *0 prodC)}}
\ar@3 [rd] _*+{\twocell{(prodC *0 3) *1 exchange_z}}
& & &
\twocell{(3 *0 prodC) *1 (1 *0 prodC *0 1) *1 (prodC *0 1)}
\\
&
\twocell{(prodC *0 3) *1 (2 *0 prodC) *1 (prodC *0 1)}
\ar@3 [r] _*+{\twocell{exchange_u *1 (prodC *0 1)}}
\ar@{} [r] ^*+{}="tgt"
&
\twocell{(3 *0 prodC) *1 (prodC *0 2) *1 (prodC *0 1)}
\ar@3 [ru] _*+{\twocell{(3 *0 prodC) *1 (assocC *0 1)}}
&
\ar@4 "src";"tgt"
}
\]
\end{ex}

\subsection{Weakening of the invertibility of $3$-cells}
\label{subsec:weak_inverse_3}

We construct dimension by dimension a $4$-white-polygraph $\C$, together with a $3$-white-functor $G : \C^{\w(3)} \to \B^{\w(2)}$. We then define a subset $S_\C$ of $\C^{\w(3)}$ and show (Proposition \ref{prop:c_stronger_b}) using $G$ that $(\C^{\w(3)},S_\C)$ is stronger than $(\B^{\w(2)},S_\B)$.

In low dimensions, we set $\C_i = \B_i$ for $i \leq 2$, with the functor $G$ being the identity.

We extend $\C$ into a $3$-white-polygraph by setting $\C_3 := \B_3 \cup \B_3^{op}$, where the set $\B_3^{op}$ contains, for every $A \in \B_3$, a cell denoted by $A^{op}$, whose source and target are given by the equalities:
\[
\s(A^{op}) = \t(A) \qquad \t(A^{op}) = \s(A)
\]
And the functor $G:\C^{\w} \to \B^{\w(2)}$ is defined as follows for every $A \in \B_3$: 
\[
G(A) = A \qquad G(A^{op}) = A^{-1}.
\]

\begin{lem}\label{lem:G_3_surj}
The functor $G: \C^{\w(3)} \to \B^{\w(2)}$ is $3$-surjective.
\end{lem}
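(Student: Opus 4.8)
The plan is to show that every $3$-cell of $\B^{\w(2)}$ lies in the image of $G$, in the strong sense that for any two $2$-cells of $\C^{\w(3)}$ (recall $\C_2 = \B_2$ and $G$ is the identity there) lying in the preimages of its source and target, there is a $3$-cell between them mapped onto it by $G$. Since $G$ is the identity on $2$-cells, the matching of source and target is automatic, so I really just need: every $3$-cell $H \in \B^{\w(2)}_3$ is $G(\tilde H)$ for some $\tilde H \in \C^{\w(3)}_3$. I would prove this by structural induction on the way $H$ is built as an element of the free $(3,2)$-white-category, i.e. on its expression as an iterated $\star_0/\star_1/\star_2$-composite (and inverse) of generators and identities.

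First I would treat the base cases. Identities $1_f$ for $f \in \B^\w_2$ lift to $1_f \in \C^{\w(3)}_3$, since $\C$ and $\B$ agree in dimensions $\le 2$ and $G$ is the identity there. A rewriting step, i.e. a whiskered generator $u \star_0 (w \star_1 A \star_1 w') \star_0 v$ with $A \in \B_3$, lifts to $u \star_0 (w \star_1 A \star_1 w') \star_0 v$ in $\C^{\w(3)}_3$, using the generator $A \in \B_3 \subseteq \C_3$ and the fact that $G(A) = A$; the whiskering $1$-cells and $2$-cells are common to both categories. Then I would handle the inductive step for composites: if $H = H_1 \star_i H_2$ for $i \in \{0,1,2\}$ and each $H_j = G(\tilde H_j)$, then $\tilde H_1 \star_i \tilde H_2$ is well-defined in $\C^{\w(3)}$ (its source/target data match because $G$ commutes with $\s_i$ and $\t_i$ and is injective on the relevant lower-dimensional cells, where it is in fact the identity) and $G(\tilde H_1 \star_i \tilde H_2) = H_1 \star_i H_2 = H$ by functoriality. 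The crucial extra case, which does not reduce to functoriality, is the \emph{inverse}: if $H = H_0^{-1}$ with $H_0 = G(\tilde H_0)$, I would not try to invert $\tilde H_0$ in $\C^{\w(3)}$ (the $3$-cells there are not invertible), but instead invoke that it suffices to lift generators-with-inverse: formally, $H$ is an iterated composite of rewriting steps and \emph{inverses of rewriting steps}, and an inverse of a rewriting step $u \star_0 (w \star_1 A \star_1 w') \star_0 v$ lifts to $u \star_0 (w \star_1 A^{op} \star_1 w') \star_0 v$, since by construction $G(A^{op}) = A^{-1}$ and $G$ commutes with whiskering and with $\star_1$.

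So the cleanest organisation is: first observe that every $3$-cell of $\B^{\w(2)}_3$ is a $\star_2$-composite of cells each of which is either a rewriting step or the inverse of one (this is just the explicit description of $\L^{\w(n)}$ recalled earlier, where every $(n+1)$-cell is a composite of the $c_{uAv}$ and their formal inverses $c_{u\bar Av}$); then lift rewriting steps via $A \mapsto A$ and their inverses via $A \mapsto A^{op}$; then close under $\star_2$ by functoriality. The main (and really the only) obstacle is bookkeeping: checking that the source and target constraints line up at each composition so that the lifted composite is actually well-typed in $\C^{\w(3)}$ — but this is immediate because in dimensions $\le 2$ the polygraphs $\C$ and $\B$ coincide and $G$ is literally the identity there, while in dimension $3$ the generators $A$ and $A^{op}$ were defined precisely to have the source/target that $G$ then sends to $\s(H_0), \t(H_0)$ and $\t(H_0), \s(H_0)$ respectively. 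I would write this out as a short induction, exactly parallel to the proof of Lemma~\ref{lem:F_3_surj}, noting that here the situation is in fact easier because there is no analogue of the $K$-cells to insert: the lifting is completely rigid.
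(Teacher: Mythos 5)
Your proposal is correct and amounts to the same argument as the paper's, which simply observes that $\B_3^{\w(2)}$ is the quotient of $\C_3^\w$ by the relations $A^{op}\star_2 A = 1$ and $A\star_2 A^{op} = 1$, with $G$ the canonical projection; your explicit lifting of $c_{uAv}$ to $c_{uAv}$ and of $c_{u\bar Av}$ to $c_{uA^{op}v}$, followed by closure under composition, is precisely the unpacked form of that observation. The only cosmetic remark is that in a white-category the composites of $3$-cells to handle are $\star_1$ and $\star_2$ together with whiskering by $1$-cells (there is no $\star_0$ of $3$-cells), but your induction covers these all the same.
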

\begin{proof}
By definition, $\B_3^{\w(2)}$ is the quotient of $\C_3^\w$ by the relations $A^{op} \star_2 A = 1$ and $A \star_2 A^{op} = 1$, and $G$ is the corresponding canonical projection.
\end{proof}

We extend $\C$ into a $4$-white-polygraph by setting $\C_4: = \B_4 \cup \{\rho_A,\lambda_A |A \in \B_3\}$, where the applications source and target $\s,\t : \C_4 \to \C^\w_3$ are defined as follows:
\begin{itemize}
\item For $A \in \B_4$, the cell $\s^\C(A)$ (resp. $\t^\C(A)$) is any cell in the preimage of  $\s^\B(A)$ under $G$, which is non-empty thanks to Lemma \ref{lem:G_3_surj}. And we set $G(A) := A$.
\item For every $A \in \B_3$, the cells $\rho_A$ and $\lambda_A$ have the following shape:
\[
\xymatrix @C = 3em @R = 1.5em{
& 
\ar@3 [d] ^{A}
& & & 
\ar@3 [dd] ^{1_{\s(A)}}
&
\\
\ar@2 @/^5ex/ [rr] ^{\s(A)}
\ar@2 @/_5ex/ [rr] _{\s(A)} 
\ar@2 [rr] |<<<<<<<<{\t(A)}
& 
\ar@3 [d] ^{A^{op}}
& 
\ar@4 [r] ^{\rho_A}
&
\ar@2 @/^5ex/ [rr] ^{\s(A)}
\ar@2 @/_5ex/ [rr] _{\s(A)} 
& &
\\
& 
& & & & \\
}
\qquad
\xymatrix @C = 3em @R = 1.5em{
& 
\ar@3 [d] ^{A^{op}}
& & & 
\ar@3 [dd] ^{1_{\t(A)}}
&
\\
\ar@2 @/^5ex/ [rr] ^{\t(A)}
\ar@2 @/_5ex/ [rr] _{\t(A)} 
\ar@2 [rr] |<<<<<<<<{\s(A)}
& 
\ar@3 [d] ^{A}
& 
\ar@4 [r] ^{\lambda_A}
&
\ar@2 @/^5ex/ [rr] ^{\t(A)}
\ar@2 @/_5ex/ [rr] _{\t(A)} 
& &
\\
& 
& & & & \\
}
\]
And we set $G(\rho_A) := 1_{\s(A)}$ and $G(\lambda_A) := 1_{\t(A)}$.
\end{itemize}

Let $S_\C$ be the set of all $2$-cells in $\C^\w$ whose $2$-target is a normal form.

\begin{prop}\label{prop:c_stronger_b}
The pointed $(4,3)$-white-category $(\C^{\w(3)},S_\C)$ is stronger than $(\B^{\w(2)},S_\B)$.
\end{prop}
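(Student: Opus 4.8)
The plan is to apply Lemma \ref{lem:strength_lemma}: it suffices to produce a $2$-functor $G \restriction S_\C : \C^{\w(3)} \restriction S_\C \to \B^{\w(2)} \restriction S_\B$ which is $0$-surjective and $1$-surjective, since the lemma then upgrades this to the required surjective functor between the quotient $1$-categories. First I would check that $G$ restricts correctly: the underlying $1$- and $2$-cells of $\C$ and $\B$ coincide, so $G$ is the identity in dimensions $\leq 2$, and in particular $G$ sends a $2$-cell whose $1$-target is a normal form to a $2$-cell whose $1$-target is the same normal form. Thus $G$ maps $S_\C$ into $S_\B$ (indeed bijectively, since $S_\C = S_\B$ as sets of $2$-cells) and maps $3$-cells and $4$-cells with $2$-source and $2$-target in $S_\C$ to $3$-cells and $4$-cells with source/target in $S_\B$, so the restriction $G \restriction S_\C$ is a well-defined $2$-functor.

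Next I would verify $0$-surjectivity: the $0$-cells of $\C^{\w(3)} \restriction S_\C$ are the $2$-cells of $\C_2^\w = \B_2^\w$ lying in $S_\C = S_\B$, and $G$ is the identity on them, so this is immediate. For $1$-surjectivity I need: given parallel $2$-cells $f, g \in S_\B$ and a $3$-cell $A : f \Rrightarrow g$ in $\B_3^{\w(2)}$, find a $3$-cell $\tilde A : f \Rrightarrow g$ in $\C_3^{\w(3)}$ with $G(\tilde A) = A$. This is exactly the statement that $G$ is $3$-surjective, which is Lemma \ref{lem:G_3_surj} (and more precisely its proof, which exhibits $\B_3^{\w(2)}$ as a quotient of $\C_3^\w$ with $G$ the canonical projection, so every $3$-cell of $\B$ lifts). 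One only needs to observe that the lift can be chosen with source exactly $f$ and target exactly $g$: since $G$ is the identity on $2$-cells, the canonical section from the quotient description preserves source and target on the nose.

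Having checked these two surjectivity properties, I would invoke Lemma \ref{lem:strength_lemma} to conclude that $(\C^{\w(3)}, S_\C)$ is stronger than $(\B^{\w(2)}, S_\B)$, which completes the proof. I do not expect a genuine obstacle here: the content is entirely packaged into Lemma \ref{lem:G_3_surj} and the observation that $G$ acts as the identity in dimensions $0$, $1$, $2$, so the only thing to be careful about is bookkeeping — that ``$2$-target is a normal form'' for cells of $\C^{\w(3)}$ corresponds under $G$ to ``$1$-target is a normal form'' after passing to the restriction categories $\C \restriction S_\C$ and $\B \restriction S_\B$, where the dimension shifts by one. The mildly delicate point, if any, is making sure the lifted $3$-cell has the prescribed source and target rather than merely $G$-equivalent ones, but this follows because $G$ is injective (indeed bijective) on $2$-cells.
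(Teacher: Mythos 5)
Your proof is correct and follows essentially the same route as the paper: restrict $G$ to the pointed subcategories, observe it is $0$- and $1$-surjective there (the former because $G$ is the identity on $2$-cells and $S_\C = S_\B$, the latter via Lemma \ref{lem:G_3_surj} and the fact that $\B_3^{\w(2)}$ is the quotient of $\C_3^\w$ with $G$ the canonical projection, so lifts preserve source and target on the nose), and conclude by the strength criterion. The only difference is cosmetic: you correctly invoke Lemma \ref{lem:strength_lemma}, whereas the paper's proof cites Lemma \ref{lem:translation_coh} at this point, which appears to be a mislabelled reference to the same criterion.
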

\begin{proof}
The functor $G$ restricts into a functor $G \restriction S_\C :\C^{\w(3)} \restriction S_\C \to \B^{\w(2)}\restriction S_\B $, which is $i$-surjective for $i <2$ thanks to Lemma \ref{lem:G_3_surj}. Hence we can conclude thanks to Lemma \ref{lem:translation_coh}.
\end{proof}

\begin{ex}
In the case where $\A = \Ass$, let $A = \twocell{assocC}$. The set $\C_3$ contains the following $3$-cell:
\[
\twocell{assocC}^{op}:
\xymatrix @C = 4em{
\twocell{(1 *0 prodC) *1 (prodC)} 
\ar@3 [r]
&
\twocell{(prodC *0 1) *1 (prodC)}
}
\]
And the following cells lie in $\C_4$, where $A = \twocell{assocC}$:

\[
\xymatrix @!C = 4em @!R = 3em{
&
\twocell{(1 *0 prodC) *1 (prodC)}
\ar@3 @/^/ [rd] ^{\twocell{assocC}^{op}}
\ar@4 []!<0pt,-15pt>;[d] ^*+{\rho_A}
&
\\
\twocell{(prodC *0 1) *1 (prodC)}
\ar@2{-} @/_/ [rr] _{}
\ar@3 @/^/ [ru] ^{\twocell{assocC}}
& &
\twocell{(prodC *0 1) *1 (prodC)}
}
\qquad
\xymatrix @!C = 4em @!R = 3em{
&
\twocell{(prodC *0 1) *1 (prodC)}
\ar@3 @/^/ [rd] ^{\twocell{assocC}}
\ar@4 []!<0pt,-15pt>;[d] ^*+{\lambda_A}
&
\\
\twocell{(1 *0 prodC) *1 (prodC)}
\ar@2{-} @/_/ [rr] _{}
\ar@3 @/^/ [ru] ^{\twocell{assocC}^{op}}
& &
\twocell{(1 *0 prodC) *1 (prodC)}
}
\]
\end{ex}

\subsection{Adjunction of formal inverses to $2$-cells}
\label{subsec:adjunc_2cell}
Let $\D$ be the $4$-white-polygraph defined as follows:
\[
\text{for every $i \neq 2$, } \D_i := \C_i \qquad 
 \D_2 := \C_2 \cup \bar \C_2,
 \]
where for every $f \in \C_2$, the set $\bar \C_2$ contains a cell $\bar f$ with source $\t(f)$ and with target $\s(f)$. Let $S_\D$ be the set of all $2$-cells of the sub-$2$-white-category $\C_2^\w$ of $\D_2^\w$ whose target is a normal form.

\begin{nota}
The application $\C_2 \to \bar{\C_2}$ extends into an application $\C_2^\w \to \bar{\C_2}^\w$ which exchanges the source and targets of the $2$-cells. 

We denote a $2$-cell $f$ by
$\xymatrix{
\ar@2 [r] ^{f}
& }
$ if $f$ is in $\B_2^\w$, by $\xymatrix{
 & \ar@2 [l] _{f}}
$ if  $\bar f$ is in $\B_2^\w$, and by
$\xymatrix{
\ar@{<=>} [r] ^{f}
& }
$ if $f$ is any cell in $\C_2^\w$.
\end{nota}

\begin{prop}
The pointed $(4,3)$-white-category $(\D^{\w(3)},S_\D)$ is stronger than $(\C^{\w(3)},S_\C)$.
\end{prop}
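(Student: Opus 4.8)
The claim is that adjoining formal inverses to the $2$-cells of $\C$ yields a pointed $(4,3)$-white-category $(\D^{\w(3)},S_\D)$ that is stronger than $(\C^{\w(3)},S_\C)$. Since $\D_i = \C_i$ for all $i \neq 2$, and the new $2$-cells $\bar f$ are never required as source or target of any $3$-cell (the $3$-cells of $\D$ are exactly those of $\C$, whose boundaries live in $\C_2^\w$), the sub-$2$-white-category $\C_2^\w$ of $\D_2^\w$ is ``saturated from above'': every $3$- and $4$-cell of $\D$ whose $2$-boundary meets $\C_2^\w$ lies entirely inside $\C^{\w(3)}$. The natural move, in view of Lemma~\ref{lem:strength_lemma}, is to exhibit a $2$-functor $H : \D^{\w(3)} \restriction S_\D \to \C^{\w(3)} \restriction S_\C$ which is $0$-surjective and $1$-surjective. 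In fact I expect the cleanest route is to observe that $\C^{\w(3)}$ embeds into $\D^{\w(3)}$ via the inclusion of white-polygraphs $\iota : \C \hookrightarrow \D$, and to use the results of Section~\ref{subsec:inj_functors} to show this inclusion induces an injective white-functor, so that $\C^{\w(3)} \restriction S_\C$ is literally a full sub-$2$-category of $\D^{\w(3)} \restriction S_\D$.

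\textbf{Key steps.} First I would check that $\iota : \C \to \D$ satisfies the hypotheses of Proposition~\ref{prop:inject_libres_gen}: it is an injective morphism of $(4,3)$-white-polygraphs, it is a bijection in dimensions $0$ and $1$ (indeed the identity), and in every dimension $j > 2$ the map $\iota_j$ is a bijection (again the identity on $3$- and $4$-cells); only in dimension $2$ is it a proper inclusion $\C_2 \hookrightarrow \C_2 \cup \bar\C_2$. Hence Proposition~\ref{prop:inject_libres_gen} gives that each $\iota_j^{\w(3)} : \C_j^{\w(3)} \to \D_j^{\w(3)}$ is injective with image closed by divisors. Second, I would identify the image: since the $2$-cells of $\D$ whose ``image'' (in the sense of the running discussion preceding Lemma~\ref{lem:equiv_inject_lg_0}, using that any element of the formal language whose image is a $c_{uAv}$ is itself such an element) comes from $\C$ are precisely those built without using any $\bar f$, the image of $\iota_2^{\w(3)}$ is exactly the set of $2$-cells of $\D_2^\w$ generated by $\C_2$, i.e.\ $S_\D$ is by definition contained in this image, and more generally $\C_2^\w \subseteq \D_2^\w$ is the image. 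In dimensions $3$ and $4$ the images are all of $\D_3^{\w(3)}$ and $\D_4^{\w(3)}$ respectively, again because $\iota$ is bijective there and (for dimension $3$, which is the place where formal inverses of $3$-cells appear) the free $(4,3)$-white-category construction for $\D$ over its $2$-skeleton only ever uses $2$-cells as boundary data, all of which already lie in $\C_2^\w$ when the $3$-cell does. Third, I would build the desired $2$-functor $H : \D^{\w(3)}\restriction S_\D \to \C^{\w(3)}\restriction S_\C$ as a (partial) inverse to $\iota^{\w(3)}\restriction$: on $0$-cells, a $2$-cell $f \in S_\D$ is by definition in $\C_2^\w$, so set $H(f) = (\iota_2^{\w(3)})^{-1}(f)$; on $1$-cells (resp.\ $2$-cells) a $3$-cell (resp.\ $4$-cell) of $\D^{\w(3)}$ with $2$-source and $2$-target in $S_\D \subseteq \C_2^\w$ is, by the image computation, in the image of $\iota_3^{\w(3)}$ (resp.\ $\iota_4^{\w(3)}$), so define $H$ by taking the unique preimage. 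Injectivity of $\iota^{\w(3)}$ makes these well-defined, functoriality is immediate since $\iota^{\w(3)}$ is a white-functor, and $H$ is bijective (hence $0$- and $1$-surjective) onto $\C^{\w(3)}\restriction S_\C$ because $\iota^{\w(3)}$ restricts to a bijection between $\C^{\w(3)}\restriction S_\C$ and $\D^{\w(3)}\restriction S_\D$. Then Lemma~\ref{lem:strength_lemma} finishes.

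\textbf{The main obstacle.} The only delicate point is the image computation in dimension $3$: I need that every $3$-cell (and $4$-cell) of $\D^{\w(3)}$ whose $2$-boundary lies in $\C_2^\w$ is actually in the image of $\C^{\w(3)}$, i.e.\ that passing to the free $(4,3)$-white-category over the larger $2$-skeleton $\D_2$ does not secretly create new $3$-cells between old $2$-cells. This is where the properties listed before Lemma~\ref{lem:equiv_inject_lg_0} and the ``$n$-discriminating'' machinery of Lemma~\ref{lem:img_foncteur_libre} and Proposition~\ref{prop:img_foncteur_gen} do the work: one checks that $\iota$ is $n$-discriminating on $\D_j$ for $j > 2$ — which is trivially true since $\iota_j$ is a bijection there, so condition~\eqref{enum:image} always holds — and then Proposition~\ref{prop:img_foncteur_gen} with $k_0 = 2$ yields that $\iota_j^{\w(3)}$ is $2$-discriminating on $\D_j^{\w(3)}$ for all $j \geq 2$. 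Concretely, a $3$- or $4$-cell whose $2$-boundary (and a fortiori whose $\le 2$-dimensional source/target) lies in the image of $\iota_2^{\w(3)} = \C_2^\w$ is itself in the image. Granting this, the rest is bookkeeping. I would also remark, as the paper does in the parallel cases, that $\iota^{\w(3)}$ sends normal forms to normal forms (here trivially, since the set of $2$-cells of $\C$ and its rewriting structure are untouched), so the restriction of $\iota^{\w(3)}$ to the pointed subcategories is legitimate.
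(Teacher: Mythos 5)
Your proposal is correct and follows essentially the same route as the paper: the inclusion $\iota:\C\hookrightarrow\D$ is injective with bijections in all dimensions except $2$, so Proposition \ref{prop:inject_libres_gen} gives injectivity of the induced white-functor and Proposition \ref{prop:img_foncteur_gen} with $k_0=2$ gives that any cell whose $2$-boundary lies in $\C_2^\w$ is already in $\C^{\w(3)}$. The paper states the conclusion as the equality $\D^{\w(3)}\restriction S_\D=\C^{\w(3)}\restriction S_\C$ rather than packaging the inverse bijection as an explicit functor $H$ fed to Lemma \ref{lem:strength_lemma}, but this is only a cosmetic difference.
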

\begin{proof}
Let us show that $\D^{\w(3)} \restriction S_\D =\C^{\w(3)}\restriction S_\C $. Let $\iota : \C^{\w(3)} \to \D^{\w(3)}$ be the canonical inclusion functor. Since the only cells added are in dimension $2$, $\iota$ satisfies the hypotheses of Proposition \ref{prop:inject_libres_gen}, thus $\C^{\w}$ is a sub-$4$-white-category of $\D^\w$, which gives us an inclusion $\C^{\w(3)}\restriction S_\C 
 \subseteq \D^{\w(3)} \restriction S_\D$.
 
Let us show the reverse inclusion. Let $f \in \D^{\w(3)}$ be an $i$-cell ($i \geq 2$), and suppose that $f$ is in $\D^{\w(3)} \restriction S_\D$. In particular $\t_2(f)$ and $\s_2(f)$ are in $\C_2^\w$. Since $\iota$ also satisfies the hypotheses of Proposition \ref{prop:img_foncteur_gen}, with $k_0 = 2$, it is $2$-discriminating on $\D_i^{\w(3)}$. Thus $f$ is in $\C^{\w(3)}$, and in $\C^{\w(3)}\restriction S_\C$ since its $1$-target is a normal form.
\end{proof}

\begin{ex}
In the case where $\A = \Ass$, the set $\D_2$ contains one additional $2$-cell:
\[
\twocell{inv_prodC} := \overline{\twocell{prodC}}
\]
And the following cells are composites in $\D^\w$:
\[
\twocell{prodC *1 inv_prodC}
\qquad
\twocell{(inv_prodC *0 1) *1 (1 *0 prodC)}
\qquad
\xymatrix{
\twocell{(inv_prodC *0 1) *1 (prodC *0 1) *1 prodC}
\ar@3 [r] ^{\twocell{(inv_prodC *0 1) *1 assocC}}
&
\twocell{(inv_prodC *0 1) *1 (1 *0 prodC) *1 prodC}
}
\]
Note that the equality $\D^{\w(3)} \restriction S_\D =\C^{\w(3)}\restriction S_\C$ implies that none of these composites belongs to $\D^{\w(3)} \restriction S_\D$.
\end{ex}

\subsection{Adjunction of connections between $2$-cells}

Let $\E$ be the following $4$-white-polygraph:
\begin{itemize}
\item For $i = 0,1,2$, $\E_i = \D_i$,
\item For $i = 3$, $\E_3 = \D_3 \cup \{\eta_f,\epsilon_f | f \in \C_2 \}$.
\item For $i = 4$, $\E_4 = \D_4 \cup \{\tau_f, \sigma_f |f \in \C_2 \}$.
\end{itemize}
The cells $\eta_f$, $\epsilon_f$, $\tau_f$ and $\sigma_f$ have the following shape:
\begin{multicols}{2}
\begin{itemize}
\item $\epsilon_f : \bar f \star_1 f \Rrightarrow 1_{\t(f)}$ 

\[
\xymatrix @!C = 4em @!R = 3em{
&
\ar@2 @/_/ [ld] _f
\ar@2 @/^/ [rd] ^f
\ar@3 []!<0pt,-15pt>;[d] ^{\epsilon_f}
&
\\
\ar@2{-} @/_/ [rr] _{1_{\t(f)}}
& &
}
\]

\item $\eta_f : 1_{\s(f)} \Rrightarrow f \star_1 \bar f$

\[
\xymatrix @!C=4em @!R = 3em{
\ar@2 @/^/ [rr] ^{1_{\s(f)}}
\ar@2 @/_/ [rd] _f
&
\ar@3 [];[d]!<0pt,15pt> ^{\eta_f}
&
\ar@2 @/^/ [ld] ^f
\\
& &
}
\]
\end{itemize}
\end{multicols}
\begin{itemize}
\item $\tau_f : (\bar f \star_1 \eta_f) \star_2 (\epsilon_f \star_1 \bar f) \qfl 1_{\bar f}$ 
\[
\xymatrix @C = 4em @R = 1.5em{
&
\ar@2 @/_/ [ldd] _f
\ar@2{-} [rr]
\ar@2 [rdd] |-{f}
\ar@3  [dd] _{\epsilon_f}
&
\ar@3 [dd] _{\eta_f}
&
\ar@2 @/^/ [ldd] ^{f}
&
&
\ar@2 @/_/ [ldd] _f
\ar@2{-} [rr]
\ar@{} [rdd] |-{1_{\bar f}}
&
&
\ar@2 @/^/ [ldd] ^f
\\
& & &
\ar@4 [r] ^{\tau_f}
& & & &
\\
\ar@2{-} [rr]
& &
& & 
\ar@2{-} [rr]
& & &
}
\]
\item
$\sigma_f : (\eta_f \star_1 f) \star_2 (f \star_1 \epsilon_f) \qfl 1_f$.
\[
\xymatrix @C = 4em @R = 1.5em{
\ar@2{-} [rr]
\ar@2 @/_/ [rdd] _f
&
\ar@3  [dd] _{\eta_f}
&
\ar@3 [dd] ^{\epsilon_f}
\ar@2 [ldd] |-f
\ar@2 @/^/ [rdd] ^f
&
&
\ar@2{-} [rr]
\ar@2 @/_/ [rdd] _f
\ar@{} [rrrdd] |-{1_f}
&
&
\ar@2 @/^/ [rdd] ^f
&
\\
& & &
\ar@4 [r] ^{\sigma_f}
& & & &
\\
&
\ar@2{-} [rr]
&
& & 
& 
\ar@2{-} [rr]
& &
}
\]
\end{itemize}

\begin{nota}
Let us denote by $\twocell{epsilon}$ the $3$-cell $\epsilon_f$ and $\twocell{eta}$ the $3$-cell $\eta_f$. In a similar fashion, we denote by $\twocell{sigma}$ for  $\sigma_f$ and $\twocell{tau}$ for $\tau_f$:
\[
\xymatrix {
\twocell{(1 *0 eta) *1 (epsilon *0 1)}
\ar@4 [r] ^-{\twocell{tau}}
&
\twocell{1}
}
\qquad
\xymatrix {
\twocell{(eta *0 1) *1 (1 *0 epsilon)}
\ar@4 [r] ^-{\twocell{sigma}}
&
\twocell{1}
}
\]

Let $\R := \{\sigma_f, \tau_f\}$, and $\R^\w$ (resp. $\R^{\w(3)}$) be the sub-$4$-white-category (resp. sub-$(4,3)$-white-category) of $\E^{\w(3)}$ generated by the cells in $\R$. A $4$-cell of length $1$ in $\R^\w$ is called an \emph{$\R$-rewriting step}.
\end{nota}

Let $S_\E$ be the set of all $2$-cells of the sub-$2$-white-category $\C_2^\w$ of $\E_2^\w$ whose target is a normal form. Using properties of the rewriting system induced by $\R^\w$, we are going to define a functor $K : \E^{\w(3)} \restriction S_\E \to \D^{\w(3)} \restriction S_\D$. 

\begin{lem}\label{lem:R_confluence}
Let $\alpha \in \E^{\w}_4$ and $\beta \in \R^{\w}$ of length $1$ with the same source. There exist $\alpha'\in \E^{\w}_4$ and $\beta' \in \R^{\w}$ of maximum length $1$, such that:

\[
\xymatrix @R=5em @C=5em{
\ar@4 [r] ^{\alpha} 
\ar@4 [d] _{\beta} 
&
\ar@4 [d] ^{\beta'} 
\\
\ar@4 [r] _{\alpha'} &
}
\]

\end{lem}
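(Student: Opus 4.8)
The statement is a local confluence result for the rewriting system on $4$-cells of $\E^{\w(3)}$ in which $\beta$ is restricted to be an $\R$-rewriting step (a single $\sigma_f$ or $\tau_f$ in a context) while $\alpha$ is an arbitrary $4$-cell of length $1$. The plan is to reduce to a finite case analysis over the shapes of $\alpha$ and $\beta$, organised by the classification of local branchings already in use throughout the paper (aspherical, Peiffer, overlapping), and then to handle the overlapping cases by inspecting the possible overlaps between the $3$-cell source of a $\sigma_f$ or $\tau_f$ and the $3$-cell source of $\alpha$. Since $\beta$ has length $1$, its source $\s(\beta)$ is a generating $4$-cell $\sigma_f$ or $\tau_f$ whiskered by lower-dimensional cells; the source of such a generating $4$-cell is a $3$-cell of a very rigid form, namely $(\bar f \star_1 \eta_f)\star_2(\epsilon_f \star_1 \bar f)$ or $(\eta_f \star_1 f)\star_2(f \star_1 \epsilon_f)$. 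This rigidity is what makes the case analysis finite.

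\textbf{Key steps.} First I would split on the relationship between $\s(\alpha)$ and $\s(\beta)$ inside the common source $3$-cell. If the branching $(\alpha,\beta)$ is aspherical, take $\alpha' = 1$, $\beta' = 1$; if it is a Peiffer branching (the two redexes occupy disjoint $\star_2$- or $\star_1$- or $\star_0$-factors of the source), then $\alpha$ and $\beta$ commute and one takes $\alpha' = \alpha$ in the appropriate whiskered context and $\beta'$ likewise, both of length exactly $1$, with the confluence square closing by the exchange/associativity equalities available in a white-category. The substantive work is the overlapping case. Here I would observe that the source of a $\sigma_f$ or $\tau_f$ involves the $3$-cells $\eta_f$ and $\epsilon_f$ composed along $\star_2$; an overlap with $\s(\alpha)$ forces $\alpha$ to act on (a whiskering of) $\eta_f$ or $\epsilon_f$ or on the underlying $2$-cell $f$. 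Since $\eta_f$ and $\epsilon_f$ are generating $3$-cells with no outgoing $3$-cells other than through the $\R$-cells themselves, and since the only $4$-cells whose source mentions $\eta_f$ or $\epsilon_f$ are precisely $\sigma_f$, $\tau_f$ (and whiskerings thereof), the genuinely overlapping branchings are between two $\R$-rewriting steps, or between an $\R$-rewriting step and a $4$-cell $\alpha$ whose $3$-source is $\eta_f \star_1 \bar f \star_1$-something of the zigzag shape. In the former subcase $\alpha$ is itself an $\R$-rewriting step and the square is closed by a direct computation with the zigzag identities (the cells $\tau_f$, $\sigma_f$ being exactly the coherence $4$-cells for the "connection" $(\eta_f,\epsilon_f)$); in each such instance both $\alpha'$ and $\beta'$ can be chosen of length $\leq 1$. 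In the latter subcase, one checks that applying $\alpha$ first and then an $\R$-step, versus the $\R$-step first and then $\alpha'$, gives the same target, again of length $\leq 1$ on each side.

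\textbf{Main obstacle.} The delicate point is bounding the lengths of $\alpha'$ and $\beta'$ by $1$: a priori, closing a confluence square might require composing several $4$-cells. I expect this to come out because $\sigma_f$ and $\tau_f$ are "terminal" in the sense that their targets $1_{\bar f}$ and $1_f$ are identities, so once an $\R$-step has fired on a given occurrence of a connection it cannot fire again there, and the residual of $\alpha$ after an $\R$-step is a single $4$-cell (the context changes but the generating $4$-cell does not duplicate). Conversely the residual of an $\R$-step after $\alpha$ is again a single $\R$-step because $\alpha$ of length $1$ contains at most one generating $3$-cell and hence cannot create a second connection. Making this precise — i.e.\ verifying that no case forces a length-$2$ closure and that the few overlap configurations (which I would enumerate explicitly, there being only a handful up to whiskering) all close with the prescribed shape — is where the real content lies, and it is essentially a bookkeeping argument once the enumeration of overlaps is pinned down.
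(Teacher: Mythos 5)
Your proposal follows essentially the same route as the paper: dispose of aspherical and Peiffer branchings directly, and for an overlapping branching observe that the source of $\alpha$ must then contain an occurrence of some $\eta_f$ or $\epsilon_f$, which forces $\alpha$ to be an $\R$-rewriting step itself; the paper then just lists the two resulting critical pairs (an $\epsilon$ flanked by two $\eta$'s, and an $\eta$ flanked by two $\epsilon$'s), both of which close with $\alpha'$ and $\beta'$ identities, so the length bound is immediate. One small point: your second overlapping subcase (an $\R$-step against a non-$\R$ cell $\alpha$ whose source only meets the zigzag through the underlying $2$-cell $f$) is vacuous by your own observation that only $\sigma_f$ and $\tau_f$ have sources mentioning $\eta_f$ or $\epsilon_f$ --- such a configuration is a Peiffer branching, not an overlap --- so the enumeration you defer reduces to exactly those two diagrams.
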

\begin{proof}

The result holds whenever $(\alpha,\beta)$ is a Peiffer or aspherical branching. 

If $(\alpha,\beta)$ is an overlapping branching, then the source of $\alpha$ must contain an $\eta_f$ or an $\epsilon_f$. The only cells of length $1$ in $\E_4^\w$ that satisfy this property are those in $\R^\w$. Hence $\alpha$ is in $\R^\w$. Thus the branching $(\alpha,\beta)$ is one of the following two, and both of them satisfy the required property:

\[
\xymatrix {
\twocell{ (eta *0 eta) *1 (1 *0 epsilon *0 1)}
\ar@4 @/^10pt/ [rr] ^-{\twocell{eta *1 (1 *0 tau)}}
\ar@4 @/_10pt/ [rr] _-{\twocell{eta *1 (sigma *0 1)}}
& &
\twocell{eta}
}
\qquad
\xymatrix {
\twocell{(1 *0 eta *0 1) *1 (epsilon *0 epsilon)}
\ar@4 @/^10pt/ [rr] ^-{\twocell{(tau *0 1) *1 epsilon}}
\ar@4 @/_10pt/ [rr] _-{\twocell{(1 *0 sigma) *1 epsilon}}
& &
\twocell{eta}
}
\]

\end{proof}

\begin{lem}
The rewriting system generated by $\R$ is $4$-convergent.
\end{lem}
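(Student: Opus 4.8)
The plan is to establish the two defining properties of $4$-convergence for the rewriting system generated by $\R$: $4$-termination and $4$-confluence.

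For $4$-termination, I would define a weight function on the $3$-cells of $\E^{\w}$ counting the number of $\eta_f$ and $\epsilon_f$ cells that appear (with the obvious additivity under the compositions $\star_1$ and $\star_2$, and sending the cells $\eta_f$, $\epsilon_f$ to $1$ and all other generating $3$-cells to $0$). Each of the two generating $4$-cells $\tau_f$ and $\sigma_f$ has a source containing exactly one $\eta_f$ and one $\epsilon_f$, hence of weight $2$, and a target $1_{\bar f}$ (resp. $1_f$) of weight $0$; so every $\R$-rewriting step strictly decreases this weight. Since the weight lies in $\mathbb N$ with its well-founded ordering and is preserved by the $3$-cell context in which a rewriting step occurs, there can be no infinite sequence of $\R$-rewriting steps. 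This gives $4$-termination of $\R$.

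For $4$-confluence, by Proposition \ref{prop:confluence_local} it suffices (once termination is known) to check that every critical $2$-fold branching of $\R$ is confluent. But Lemma \ref{lem:R_confluence} already handles the local branchings involving at least one $\R$-rewriting step: whenever $\alpha \in \R^\w$ is of length $1$ and $\beta \in \R^\w$ is of length $1$ with the same source, the proof of that lemma exhibits a common reduct, either immediately (aspherical or Peiffer branchings) or via one of the two explicit overlapping branchings displayed there, which close up with $\R$-rewriting steps of length at most $1$ on each side. These two overlaps (with sources $\twocell{(eta *0 eta) *1 (1 *0 epsilon *0 1)}$ and $\twocell{(1 *0 eta *0 1) *1 (epsilon *0 epsilon)}$) are precisely the critical branchings of $\R$, since any overlap of two $\R$-rewriting steps forces the source to contain a pattern $\bar f \star_1 f$ or $f \star_1 \bar f$ shared between the two, which only occurs in these configurations. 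Thus all critical branchings are confluent, and by Proposition \ref{prop:confluence_local} the system $\R$ is $4$-confluent.

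Combining $4$-termination and $4$-confluence yields that the rewriting system generated by $\R$ is $4$-convergent. The only genuine point requiring care — and hence the main obstacle — is the enumeration of critical branchings: one must argue that no $\R$-rewriting step can overlap an $\R$-rewriting step in any way other than the two configurations of Lemma \ref{lem:R_confluence}, using that the left-hand sides of $\tau_f$ and $\sigma_f$ are built only from $\eta_f$, $\epsilon_f$ and units, so that a nontrivial overlap must identify an $\eta$-pattern of one with an $\epsilon$-pattern of the other. Once this is granted, the argument is routine.
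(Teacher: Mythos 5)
Your proof is correct and follows essentially the same route as the paper: termination via a strictly decreasing numerical measure on $3$-cells (the paper uses the length of the $3$-cell, you count the $\eta_f$ and $\epsilon_f$ generators — the two measures agree here since the sources of $\tau_f$ and $\sigma_f$ consist only of such generators), and confluence via the local confluence established in Lemma \ref{lem:R_confluence} together with Newman's lemma. Your extra paragraph enumerating the critical branchings merely re-verifies what the proof of Lemma \ref{lem:R_confluence} already supplies, so nothing further is needed.
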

\begin{proof}
Using Lemma \ref{lem:R_confluence}, the rewriting system generated by $\R$ is locally $4$-confluent. Moreover, the cells $\sigma_f$ and $\tau_f$ decrease the length of the $3$-cells, hence the $4$-termination. 
\end{proof}

Let $A \in \E_3^{\w}$: we denote by $\hat A \in \E_{3}^{\w}$ its normal form for $\R$. Remark in particular that if $A$ is in $\D_3^\w$, then $\hat A = A$.

\begin{lem}\label{lem:norm_3_cell}
Let $A$ be a $3$-cell of $\E_3^\w$ whose target is in $\C_2^\w$. 
\begin{itemize}
\item If the source of $A$ is in $\C_2^\w$, then $\hat A$ is in $\D_3^\w$.
\item Otherwise, for every factorization of $A$ into $f_1 \star_1 \bar f \star_1 f_2$, where $f$ is a rewriting step, there exists a factorisation of $A$ into:

\[
\begin{tikzcd}[nodes in empty cells, 
			  arrows = Rightarrow,
			  column sep = 1cm, 
			  row sep = 1cm]
&
\ar[rrrd,phantom, "\epsilon_f"]
\ar[rrrd, rounded corners, no head, to path = { -- (B.center) -- (\tikztotarget)}] 
& 
& 
\ar[ll, "f"']
\ar[rd, "f" description] 
\ar[rrrd, Leftrightarrow, rounded corners,"f_2", to path = { -- (Z.center)  -- (\tikztotarget) \tikztonodes}] 
\ar[rrrd, phantom, "A_1"] 
& &
|[alias = Z]| 
& \\
\ar[rrrrrr, rounded corners, to path = { -- (Z1.base) -- (Z2.base) -- (\tikztotarget)}, "f_2"]
\ar[ru,Leftrightarrow, "f_1"] 
&  & 
|[alias = B]|
 & &
\ar[rr, Leftrightarrow]
& & \\
\ar[rrrrrru, phantom, "A_2"] 
& |[alias=Z1]| &  &  & & |[alias=Z2]| &  
\end{tikzcd}
\]

\end{itemize}
\end{lem}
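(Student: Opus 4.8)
Looking at this lemma, I need to prove a normalization statement about $3$-cells in $\E_3^{\w}$ whose target is in $\C_2^\w$, distinguishing two cases depending on whether the source lies in $\C_2^\w$.

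\textbf{Overall approach.} The plan is to proceed by induction on the length of $A$ as a composite of rewriting steps (and their formal inverses), using the explicit generators of $\E_3$. Recall that $\E_3 = \D_3 \cup \{\eta_f, \epsilon_f \mid f \in \C_2\}$ and $\D_3 = \C_3 \cup \C_3^{op} \cup K \cup K^{op}$, so the only generating $3$-cells that are \emph{not} in $\D_3^\w$ are the $\eta_f$ and $\epsilon_f$. The $\R$-normalization $\hat A$ only ever rewrites using $\sigma_f$ and $\tau_f$, which reduce the number of $\eta/\epsilon$ occurrences. So the content of the first bullet is: if the source and target of $A$ are both in $\C_2^\w$, then all occurrences of $\eta_f$ and $\epsilon_f$ in (a representative of) $A$ must eventually cancel under $\R$-reduction, leaving a cell with no $\eta/\epsilon$, i.e.\ one in $\D_3^\w$. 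The second bullet is a structural refinement: when the source is \emph{not} in $\C_2^\w$ (it contains some $\bar f$), then given a chosen factorization $f_1 \star_1 \bar f \star_1 f_2$ of the source, one can exhibit $A$ as passing through an $\epsilon_f$ on that $\bar f$ followed by cells $A_1, A_2$ of strictly smaller length.

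\textbf{Key steps.} First I would set up the induction and handle $A$ of length $0$ (an identity on a $2$-cell in $\C_2^\w$ — already in $\D_3^\w$) and length $1$. For length $1$, $A$ is a whiskered generator; if the generator is in $\D_3$ we are done for the first bullet, and if it is an $\eta_f$ or $\epsilon_f$ we check the target condition forces us into the second case and read off the factorization directly. For the inductive step, write $A = A' \star_2 A''$ with $A''$ of length $1$, or alternatively isolate the leftmost generator in the $\star_1$-direction that introduces an $\eta/\epsilon$. The crucial observation is that, because the target of $A$ lies in $\C_2^\w$ and $\C_2^\w$ is a sub-$2$-white-category closed under the relevant operations, any $\bar f$ appearing in an intermediate $2$-cell must be "created" (by an $\eta_f$) and later "destroyed" (by an $\epsilon_f$) — this is where one invokes $4$-convergence of $\R$ together with Lemma \ref{lem:R_confluence} to push the cancellation to the front. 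For the first bullet I would argue that $\hat A$, being an $\R$-normal form, contains no subterm matching the source of $\sigma_f$ or $\tau_f$, and combined with the fact that its source and target are in $\C_2^\w$, a term containing a bare $\eta_f$ or $\epsilon_f$ cannot be in normal form unless its boundary escapes $\C_2^\w$; hence $\hat A \in \D_3^\w$. For the second bullet, the chosen $\bar f$ in the source factorization must be matched by an $\epsilon_f$, and "sliding" everything before that $\epsilon_f$ into $f_1$-context and everything after into $A_1$, $A_2$ gives the displayed diagram; the length bound on $A_1$, $A_2$ follows since the $\epsilon_f$ itself is removed.

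\textbf{Main obstacle.} The hard part will be making precise the claim that an occurrence of $\bar f$ in the source that is "not cancelled internally" must be consumed by exactly one $\epsilon_f$, and that this $\epsilon_f$ can be commuted (via Peiffer/exchange moves in the white-category and the confluence of Lemma \ref{lem:R_confluence}) to sit immediately at the front of the composite in the prescribed shape. This is essentially a careful bookkeeping argument tracking which $2$-cell occurrences are "formal inverses" across the whole composite; the white-category setting helps here since $\star_0$-exchange is unavailable, so $\bar f$ occurrences cannot silently migrate. I would handle this by a sub-induction on the position of the first $\eta/\epsilon$ generator acting on the distinguished strand, reducing to the case where it is the very first $3$-cell applied, at which point the factorization is immediate and $A_1, A_2$ are defined as the remaining (shorter) pieces. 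The target-in-$\C_2^\w$ hypothesis is used throughout to guarantee that no $\bar f$ "leaks out" the right-hand end, forcing the internal cancellation structure.
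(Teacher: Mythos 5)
Your overall strategy---simultaneous induction on the length of $A$, peeling off one generator at a time, and observing that the only generator whose source can overlap the distinguished $\bar f$ is $\epsilon_f$---is the same as the paper's, and your treatment of the second bullet (case analysis on whether the first step acts inside $f_1$, inside $f_2$, or is the $\epsilon_f$ consuming $\bar f$) matches the paper's three-way split on the shape of $B_1$. The paper peels from the source end, writing $A = B_1 \star_2 B_2$ with $B_1$ of length $1$, which is the variant you should commit to: peeling from the target end makes the source-factorization bookkeeping of the second bullet unworkable.

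The genuine gap is in your argument for the first bullet. You propose to reason directly about $\hat A$: ``$\hat A$ contains no $\sigma/\tau$ redex, and a term containing a bare $\eta_f$ or $\epsilon_f$ cannot be in normal form unless its boundary escapes $\C_2^\w$, hence $\hat A \in \D_3^\w$.'' The second clause is precisely the statement to be proved, not a fact you can read off: it is not obvious that an $\R$-normal expression with boundary in $\C_2^\w$ contains no $\eta/\epsilon$, since $\eta$'s and $\epsilon$'s could a priori be interleaved with $\D_3$-cells in a way that forms no literal $\sigma_f$ or $\tau_f$ redex. The non-circular route, which the paper takes, is to prove reducibility of $A$ itself: if the source of $A$ is in $\C_2^\w$ but the first generator is not in $\D_3$, it must be an $\eta_f$ (no $\epsilon$ can fire on a source without formal inverses); the tail $B_2$ then has source $(g_1 \star_1 f) \star_1 \bar f \star_1 g_2$, and the \emph{second} bullet applied to $B_2$ with this factorization places an $\epsilon_f$ immediately after a cell $A_1$ acting only to the right of $\bar f$. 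Exchanging $A_1$ past the initial $\eta_f$ (the $\star_1/\star_2$ interchange is available in white-categories) exhibits the literal redex $(\eta_f \star_1 f) \star_2 (f \star_1 \epsilon_f)$, so $A$ rewrites by $\sigma_f$ to a strictly shorter $A'$ with $\hat A = \hat A'$, and the length induction closes. Your sketch gestures at ``pushing the cancellation to the front'' but never identifies this mechanism as the proof of the first bullet; without it, the claim about normal forms is unsupported.
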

\begin{proof}
We reason by induction on the length of $A$. If $A$ is of length $0$, then the source of $A$ is in $\C_2^\w$, and $\hat A = A $ is in $\D_3^\w$.

If $A$ is of length $n>0$, let us write $A = B_1 \star_1 B_2$, where $B_1$ is of length $1$. We can then apply the induction hypothesis to $B_2$. We distinguish three cases:
\begin{itemize}
\item If both the sources of $A$ and $B_2$ are in $\C_2^\w$, then $B_1$ is in $\D_3^\w$, and so is $\hat A = B_1 \star_2 \hat B_2$.
\item If the source of $A$ is in $\C_2^\w$ but not that of $B_2$, then $B_1$ is of the form $g_1 \star_1 \eta_f \star_1 g_2$. There hence exists a factorisation $(g_1 \star_1 f) \star_1 \bar f \star_1 g_2$ of the source of $B_2$. Applying the induction hypothesis to $B_2$, we deduce the following factorisation of $A$:

\[
\begin{tikzcd}[nodes in empty cells, 
			  arrows = Rightarrow,
			  column sep = 1cm, 
			  row sep = 1cm]
&
\ar[rd, "f" description] 
\ar[rr, no head]
& 
\ar[d, phantom, "\eta_f", near start]
& 
\ar[ld, "f" description]
\ar[rd, "f" description] 
\ar[d, phantom, "\epsilon_f", near end]
\ar[rrrd, rounded corners,"g_2", to path = { -- (Z.center)  -- (\tikztotarget) \tikztonodes}] 
\ar[rrrd, phantom, "A_1"] 
& &
|[alias = Z]| 
& \\
\ar[rrrrrr, rounded corners, to path = { -- (Z1.base) -- (Z2.base) -- (\tikztotarget)}]
\ar[ru, "g_1"] 
&  & 
\ar[rr, no head]
& &
\ar[Leftrightarrow, rr]
& & \\
\ar[rrrrrru, phantom, "A_2"] 
& |[alias=Z1]| &  &  & & |[alias=Z2]| &  
\end{tikzcd}
\]

In particular, $A$ is the source of an $\R$-rewriting step. Let $A'$ be its target, which is thus of length smaller than $A$. Applying the  induction hypothesis to $A'$, we get that $\hat A = \hat A'$ is in $\D_3^\w$.
\item There remains the case where the source of $A$ is not an element of $\C_2^\w$. 
\end{itemize}

In order to treat this last case, let us fix a factorisation  $f_1 \star_1 \bar f \star_1 f_2$ of the source of $A$, where $f$ is of length $1$. We distinguish three cases depending on the form of $B_1$.
\begin{itemize}
\item If $B_1 = f_1 \star_1 \bar f \star_1 B'_1$, where $B'_1$ is a $3$-cell of length $1$ from $f_2$ to $g_2 \in \D_2^\w$, then we get a factorisation of the source of $B$ into $f_1 \star_1 \bar f \star_1 g_2$. Let us apply the induction hypothesis to $B_2$: there exist $A'_1, A'_2 \in \E_3^\w$ and $g'_2 \in \D_2^\w$ such that: 
\[
B_2 = (f_1 \star_1 \bar f \star_1 A'_1) \star_2 (f_1 \star_1 \epsilon_f \star_1 g'_2) \star_2 A'_2
\]
Thus $A$ factorises as follows, which is of the required form by setting $A_1 = B'_1 \star_2 A'_1$ and $A_2 = A'_2$:

\[
\begin{tikzcd}[nodes in empty cells, 
			  arrows = Rightarrow,
			  column sep = 1cm, 
			  row sep = 1cm]
&
\ar[rrd, no head, rounded corners, to path = {-- (B.center) -- (\tikztotarget)}] 
\ar[rrd, "\epsilon_f", phantom]
& 
& 
\ar[ll, "f"']
\ar[d, "f" description]
\ar[rrrd, Leftrightarrow, rounded corners,"f_2", to path = { -- (Z.center)  -- (\tikztotarget) \tikztonodes}] 
\ar[rrrd, Leftrightarrow, "g_2" description]
& &
& |[alias = Z]|  \\
\ar[rrrrrr, rounded corners, to path = { -- (Z1.base) -- (Z2.base) -- (\tikztotarget)}]
\ar[ru, Leftrightarrow, "f_1"] 
&  & 
|[alias = B]|
& 
\ar[Leftrightarrow, rrr]
\ar[rrru, "A'_1" near start, "B'_1" near end, phantom]
&
& & \\
\ar[rrrrrru, phantom, "A_2"] 
& |[alias=Z1]| &  &  & & |[alias=Z2]| &  
\end{tikzcd}
\]

\item If $B_1 = B'_1 \star_1 \bar f \star_1 f_2$, where $B'_1$ is a $3$-cell of length $1$ from $f_1$ to $g_1 \in \D_2^\w$. Then the source of $B$ factorises into $g_1 \star_1 \bar f \star_1 f_2$. Applying the induction hypothesis to  $B_2$, there exist $A'_1, A'_2 \in \E_3^\w$ and $f'_2 \in \D_2^\w$ such that: 
\[
B_2 = (g_1 \star_1 \bar f \star_1 A'_1) \star_2 (g_1 \star_1 \epsilon_f \star_1 f'_2) \star_2 A'_2
\]
We get the required factorisation of $A$ by setting $A_1 = A'_1$ and $A_2 = (B'_1 \star_1 f'_2) \star_2 A'_2$.

\[
\begin{tikzcd}[nodes in empty cells, 
			  arrows = Rightarrow,
			  column sep = 1cm, 
			  row sep = 1cm]
&
|[alias = Z0]| 
& 
\ar[rrd, no head, rounded corners, to path = {-- (B.center) -- (\tikztotarget)}]
\ar[rrd, phantom, "\epsilon_f"]
& 
\ar[l, "f"']
\ar[rd, "f" description] 
\ar[rrrd, rounded corners,Leftrightarrow, "g_2", to path = { -- (Z.center)  -- (\tikztotarget) \tikztonodes}] 
\ar[rrrd, phantom, "A'_1"] 
& &
|[alias = Z]| 
& \\
\ar[rrrrrr, rounded corners, to path = { -- (Z1.base) -- (Z2.base) -- (\tikztotarget)}]
\ar[rru, Leftrightarrow, "f_1", rounded corners, to path = { -- (Z0.center) \tikztonodes -- (\tikztotarget) }] 
\ar[rru, Leftrightarrow, "g_1"', rounded corners, to path = { -- (A.center) -- (\tikztotarget) \tikztonodes }] 
\ar[rru, "B'_1", phantom]
& 
|[alias = A]| 
& 
& 
|[alias = B]|
&
\ar[Leftrightarrow, rr]
& & \\
\ar[rrrrrru, phantom, "A_2"] 
& |[alias=Z1]| &  &  & & |[alias=Z2]| &  
\end{tikzcd}
\]

\item Otherwise, we have $B_1 = f_1 \star_1 \epsilon_f \star_1 f'_2$, with $f_2 = f \star_1 f'_2$. We then get the required factorisation of $A$ by setting $A_1 = 1_{f'_2}$ and $A_2 = B_2$.

\[
\begin{tikzcd}[nodes in empty cells, 
			  arrows = Rightarrow,
			  column sep = 1cm, 
			  row sep = 1cm]
&
\ar[rrrr, no head, rounded corners, to path = { -- (A1.base) -- (A2.base) -- (\tikztotarget)}]
& 
\ar[rrd, phantom, "\epsilon_f"]
& 
\ar[ll, "f"']
\ar[rr, "f"]
& 
&
\ar[rd, Leftrightarrow, "f'_2"] 
& \\
\ar[rrrrrr, rounded corners, to path = { -- (Z1.base) -- (Z2.base) -- (\tikztotarget)}]
\ar[ru, Leftrightarrow, "f_1"] 
& 
& 
|[alias = A1]|
& &
|[alias = A2]|
& & \\
\ar[rrrrrru, phantom, "B_2"] 
& |[alias=Z1]| &  &  & & |[alias=Z2]| &  
\end{tikzcd}
\]

\end{itemize}
\end{proof}

\begin{lem}\label{lem:simpl_4_cell}
Let $\beta \in \R^\w$, and $\alpha$ be a $4$-cell $\E^\w_4$ of same source. There exist $\alpha'\in \E^\w_4$ and $\beta' \in \R^\w$ of maximum length that of $\beta$ such that we have the following square:

\[
\xymatrix @R=5em @C=5em{
\ar@4 [r] ^{\alpha} 
\ar@4 [d] _{\beta} 
&
\ar@4 [d] ^{\beta'} 
\\
\ar@4 [r] _{\alpha'} &
}
\]

\end{lem}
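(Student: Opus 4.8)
### Plan of proof for Lemma \ref{lem:simpl_4_cell}

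The plan is to reduce the general statement to the special case already proven in Lemma \ref{lem:R_confluence}, where $\beta$ is an $\R$-rewriting step (length $1$), by an induction on the length of $\beta$. The base case $\beta$ of length $0$ is trivial: take $\alpha' = \alpha$ and $\beta' = 1$. For the base case $\beta$ of length $1$ (an $\R$-rewriting step), we need a slight strengthening of Lemma \ref{lem:R_confluence}: there, $\alpha$ was assumed to be of length $1$, whereas here $\alpha$ is an arbitrary $4$-cell of $\E^\w_4$. So first I would prove this strengthening by a secondary induction on the length of $\alpha$. Writing $\alpha = \alpha_1 \star_3 \alpha_2$ with $\alpha_1$ of length $1$, one applies Lemma \ref{lem:R_confluence} to the pair $(\alpha_1,\beta)$ to get $\alpha_1' \in \E^\w_4$ and $\beta'' \in \R^\w$ of length at most $1$ with $\alpha_1 \star_3 \beta'' = \beta \star_3 \alpha_1'$; then one applies the induction hypothesis to $(\alpha_2,\beta'')$ — legitimate because $\beta''$ has length at most $1$ — to close the lower square. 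Vertical pasting of the two squares yields $\alpha' := \alpha_1' \star_3 \alpha_2'$ and $\beta' := \beta'''$, both of the required shape, with $\beta'$ still of length at most $1$.

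With this strengthened base case in hand, the induction on the length of $\beta$ proceeds cleanly. Write $\beta = \beta_1 \star_3 \beta_2$ with $\beta_1$ an $\R$-rewriting step and $\beta_2 \in \R^\w$ of strictly smaller length. Apply the base case to the branching $(\alpha,\beta_1)$: this gives $\alpha'' \in \E^\w_4$ and $\beta_1' \in \R^\w$ of length at most $1$, together with a confluence square
\[
\xymatrix @R=4em @C=4em{
\ar@4 [r] ^{\alpha} \ar@4 [d] _{\beta_1} & \ar@4 [d] ^{\beta_1'} \\
\ar@4 [r] _{\alpha''} &
}
\]
Then apply the induction hypothesis to the branching $(\alpha'',\beta_2)$, obtaining $\alpha' \in \E^\w_4$ and $\beta_2' \in \R^\w$ of length at most that of $\beta_2$, with $\alpha'' \star_3 \beta_2' = \beta_2 \star_3 \alpha'$. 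Pasting the two squares vertically along $\alpha''$ produces the desired square for $(\alpha,\beta)$, with $\beta' := \beta_1' \star_3 \beta_2'$ of length at most $1 + \operatorname{length}(\beta_2) = \operatorname{length}(\beta)$, as required.

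I expect the main obstacle to be the strengthened base case, i.e. handling an arbitrary $\alpha$ against a single $\R$-rewriting step. The delicate point is making sure that when $\beta''$ in the secondary induction turns out to have length $0$ (an identity), the lower square degenerates correctly and no spurious length is introduced; and conversely that when $\alpha_1$ and $\beta_1$ genuinely overlap, the two concrete branchings exhibited in Lemma \ref{lem:R_confluence} are indeed the only overlapping cases — this relies on the observation, already made in the proof of Lemma \ref{lem:R_confluence}, that the only length-$1$ cells of $\E^\w_4$ whose source contains an $\eta_f$ or $\epsilon_f$ are those of $\R^\w$. Everything else is routine pasting of commutative squares of $4$-cells in the free $(4,3)$-white-category $\E^{\w(3)}$, using the interchange laws available there. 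The bookkeeping on lengths is straightforward once the two inductions are set up in the right order (length of $\alpha$ inside, length of $\beta$ outside).
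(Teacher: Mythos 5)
Your proposal is correct and follows essentially the same route as the paper: a double induction on the lengths of $\beta$ and $\alpha$, reducing the atomic case to Lemma \ref{lem:R_confluence} and pasting confluence squares, with the same length bookkeeping ($\beta'$ of length at most that of $\beta$). The only difference is organizational — you nest the induction on $\alpha$ inside the one on $\beta$, whereas the paper decomposes both at once into a $2\times 2$ grid — which amounts to the same lexicographic induction on the pair of lengths.
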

\begin{proof}
We reason using a double induction on the lengths of $\beta$ and $\alpha$. If $\beta$ (resp. $\alpha$) is an identity, then the result holds by setting $\alpha' = \alpha$ (resp. $\beta' = \beta$).

Otherwise, let us write $\alpha = \alpha_1 \star_3 \alpha_2$ and $\beta = \beta_1 \star_3 \beta_2$, where $\alpha_1$ and $\beta_1$ are of length $1$. We can then construct the following diagram:

\[
\xymatrix @R=4em @C=4em{
\ar@4 [r] ^*+{\alpha_1}
\ar@4 [d] _*+{\beta_1}
&
\ar@4 [r] ^*+{\alpha_2}
\ar@4 [d] ^*+{\beta'_1}
&
\ar@4 [d] ^*+{\beta''_{1}}
\\
\ar@4 [r] ^*+{\alpha'_1}
\ar@4 [d] _*+{\beta_2}
&
\ar@4 [r] _*+{\alpha'_2}
\ar@4 [d] _*+{\beta'_2}
&
\ar@4 [d] ^*+{\beta''_{2}}
\\
\ar@4 [r] _*+{\alpha''_1}
&
\ar@4 [r] _*+{\alpha''_2}
&
}
\]
The $4$-cells $\alpha'_1$ and $\beta'_1$ exist thanks to Lemma \ref{lem:R_confluence}. We can then apply the induction hypothesis to the $4$-cells $\alpha_2$ and $\beta'_1$ (resp. $\alpha'_1$ and $\beta_2$) and we construct this way the cells $\alpha'_2$ and $\beta''_1$ (resp. $\alpha''_1$ and $\beta'_2$). Lastly, we apply the induction hypothesis to $\alpha'_2$ et $\beta'_2$ in order to construct $\alpha''_2$ and $\beta''_2$.
\end{proof}

\begin{lem}
The application $A \mapsto \hat A$ extends into a $1$-functor $K : \E^{\w(3)} \restriction S_\E \to \D^{\w(3)} \restriction S_\D$, which is the identity on objects.
\end{lem}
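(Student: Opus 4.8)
The plan is to exhibit $K$ as a genuine $1$-functor by checking it respects the two compositions ($\star_2$ and $\star_3$) available in the $(2,1)$-category $\E^{\w(3)} \restriction S_\E$, and that it lands in $\D^{\w(3)} \restriction S_\D$. The underlying map on objects is the identity on $2$-cells of $\C_2^\w$ whose target is a normal form; here there is nothing to prove since $S_\E = S_\D$ as subsets of $\C_2^\w$. On $1$-cells (i.e. $3$-cells $A : f \Rrightarrow g$ of $\E^{\w(3)}$ with $f, g \in \C_2^\w$, the target being a normal form), we set $K(A) = \hat A$, and the first key point is that $\hat A$ really is a cell of $\D^{\w(3)} \restriction S_\D$: its source $f$ and target $g$ are already in $\C_2^\w$ by assumption, and Lemma~\ref{lem:norm_3_cell} (first bullet, applied since $\s(A) \in \C_2^\w$ and $\t(A) \in \C_2^\w$) gives that $\hat A \in \D_3^\w$, hence $\hat A \in \D^{\w(3)}$; its $1$-target is the normal form $g$, so $\hat A$ lies in $\D^{\w(3)} \restriction S_\D$.

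Next I would verify functoriality. For identities, if $A = 1_f$ with $f \in \C_2^\w$, then $A \in \D_3^\w$ already, so $\hat A = A = 1_f$, which is the identity $1$-cell at the object $f$ in $\D^{\w(3)} \restriction S_\D$. For composition along $\star_2$: given composable $A : f \Rrightarrow g$ and $B : g \Rrightarrow h$ in $\E^{\w(3)} \restriction S_\E$, I must show $\widehat{A \star_2 B} = \hat A \star_2 \hat B$. The right-hand side is a well-defined cell of $\D_3^\w$ (composable since $\hat A : f \Rrightarrow g$ and $\hat B : g \Rrightarrow h$, and $\D_3^\w$ is closed under $\star_2$), and it is already an $\R$-normal form because $\D_3^\w \subseteq \E_3^\w$ contains no $\eta_f$ or $\epsilon_f$, hence no $\R$-redex. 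Since the $\R$-rewriting system is $4$-convergent (so normal forms are unique) and $A \star_2 B$ rewrites to $\hat A \star_2 \hat B$ (apply the rewriting to each factor), uniqueness forces $\widehat{A \star_2 B} = \hat A \star_2 \hat B$. This also shows $K$ is well-defined on the quotient $\E^{\w(3)} \restriction S_\E$: two $3$-cells identified there differ by a $4$-cell $\alpha$, and I need $\hat A = \hat B$ whenever there is $\alpha : A \qfl B$; by $4$-confluence it suffices to handle $\alpha$ of length $1$, and Lemma~\ref{lem:simpl_4_cell} lets one propagate $\R$-reductions across such an $\alpha$, so that $A$ and $B$ have a common $\R$-reduct, whence equal normal forms.

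The main obstacle I anticipate is precisely this well-definedness on the quotient: one must argue that applying a single $4$-cell $\alpha$ of $\E^{\w(3)}$ to $A$ does not change the $\R$-normal form. Lemma~\ref{lem:simpl_4_cell} is the tool — it says that an $\R$-rewriting sequence $\beta$ and an arbitrary $4$-cell $\alpha$ sharing a source can be completed into a square with the horizontal side still an $\R$-rewriting sequence of no greater length. Iterating $\beta$ up to the full $\R$-normalisation of $\s(\alpha) = A$ (which terminates by $4$-convergence of $\R$), one obtains a $4$-cell between $\hat A$ and some $\R$-reduct of $\t(\alpha) = B$; since $\hat A \in \D_3^\w$ is already $\R$-normal, and that reduct of $B$ further $\R$-reduces to $\hat B$, one gets a $4$-cell $\hat A \qfl \hat B$ in $\E^{\w(3)}$ between two $\D_3^\w$-cells. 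Because $\D_3^\w$ cells are $\R$-normal and the $4$-cells of $\R^\w$ strictly shorten $3$-cells, any such $4$-cell must have trivial $\R$-component, hence $\hat A$ and $\hat B$ are related by a $4$-cell of $\D^{\w(3)}$; but what we actually need is the stronger literal equality $\hat A = \hat B$ in $\D_3^\w$, which follows from uniqueness of $\R$-normal forms once we know $A$ and $B$ have a common $\R$-reduct. Finally, composition along $\star_3$ is vacuous on $\E^{\w(3)} \restriction S_\E$ after passing to the quotient (the $2$-cells of this $(2,1)$-category are $4$-cells, and $K$ as a $1$-functor need only be defined on objects and $1$-cells), so no further check is required, and $K$ is the desired $1$-functor.
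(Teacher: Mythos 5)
Your core argument is correct and is essentially the paper's own proof: $\hat A$ has the same boundary as $A$, lands in $\D_3^\w$ by the first bullet of Lemma~\ref{lem:norm_3_cell} since $\s(A),\t(A)\in\C_2^\w$, identities are preserved because cells of $\D_3^\w$ are already $\R$-normal, and $\widehat{A\star_2 B}=\hat A\star_2\hat B$ because $\hat A\star_2\hat B$ is an $\R$-normal form reachable from $A\star_2 B$ and $\R$ is $4$-convergent. That is all the lemma requires.

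However, your digression on ``well-definedness on the quotient'' rests on a misconception and contains a false claim. The $1$-cells of $\E^{\w(3)}\restriction S_\E$ are honest $3$-cells of $\E^{\w(3)}$, not equivalence classes modulo $4$-cells, so there is no quotient to check here; that step belongs to the subsequent proposition, where one passes to $\overline{\E^{\w(3)}\restriction S_\E}$. More importantly, the assertion that a $4$-cell $\alpha:A\qfl B$ forces $\hat A=\hat B$ ``since $A$ and $B$ have a common $\R$-reduct'' is wrong: Lemma~\ref{lem:simpl_4_cell} completes $\alpha$ against an $\R$-reduction only up to a square whose other horizontal side is again an arbitrary $4$-cell, so what one obtains is a $4$-cell $\hat A\qfl\hat B$ in $\D^{\w(3)}_4$, not an equality. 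Indeed $A$ and $B$ may both already lie in $\D_3^\w$ and be distinct parallel cells joined by a $4$-cell of $\D_4^{\w(3)}$, in which case $\hat A=A\neq B=\hat B$. The correct conclusion, and the one the paper proves next, is only $\overline{K(A)}=\overline{K(B)}$. Drop the digression (or restate it at the level of the quotient categories) and the proof stands.
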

\begin{proof}
The application $A \mapsto \hat A$ does not change the source or target. Moreover, given a $3$-cell $A \in \E^{\w(3)}$, if $A$ is in $\E^{\w(3)} \restriction S_\E$ then in particular the source and target of $A$ are in $\C_2^\w$. Thus $\hat A$ is in $\D_3^\w \restriction S_\D$ (Lemma \ref{lem:norm_3_cell}).

Let $A$, $B$ be $3$-cells in $\E^{\w(3)}$ which belong to $\E^{\w(3)} \restriction S_\E$. We just showed that $\hat A$ and $\hat B$ are in $\D_3^\w \restriction S_\D$, hence so is $\hat A \star_2 \hat B$. So $\hat A \star_2 \hat B$ is a normal form for $\R$ which is attainable from $A \star_2 B$. Since $\R$ is $4$-convergent, this means that $\widehat{A \star_2 B} = \hat A \star_2 \hat B$. So $A \mapsto\hat A$ does indeed define a functor.
\end{proof}

\begin{prop}
The pointed $(4,3)$-category $(\E,S_\E)$ is stronger than $(\D,S_\D)$.
\end{prop}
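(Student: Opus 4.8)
The strategy is to apply Lemma~\ref{lem:strength_lemma}: it suffices to extend the $1$-functor $K:\E^{\w(3)}\restriction S_\E\to\D^{\w(3)}\restriction S_\D$ just constructed into a $2$-functor that is $0$-surjective and $1$-surjective. On $0$-cells and $1$-cells $K$ is already given ($K$ is the identity on $2$-cells and sends a $3$-cell $A$ to its $\R$-normal form $\hat A$), so the only thing to supply is the value of $K$ on $4$-cells, after which functoriality and the two surjectivity conditions must be checked.

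The surjectivity conditions are immediate. Since $\E_2=\D_2$, the sets $S_\E$ and $S_\D$ coincide and $K$, being the identity on $2$-cells, is $0$-surjective. For $1$-surjectivity, recall that $\D$ is a $(4,3)$-white-polygraph, so $(\D^{\w(3)}\restriction S_\D)_1\subseteq\D_3^\w$; and for $B\in\D_3^\w$ one has $\hat B=B$, so $K$ restricts to the identity on $(\D^{\w(3)}\restriction S_\D)_1$, which is in particular surjective.

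The heart of the matter is the extension to $4$-cells. Let $\alpha:A\qfl B$ be a $4$-cell of $\E^{\w(3)}\restriction S_\E$; its $2$-source and $2$-target lie in $S_\E\subseteq\C_2^\w$, so by Lemma~\ref{lem:norm_3_cell} the normal forms $\hat A$ and $\hat B$ lie in $\D_3^\w$. Choose a maximal $\R$-reduction $\beta:A\qfl\hat A$ (it exists since $\R$ is $4$-convergent), decompose $\alpha$ into forward generating $4$-cells and their inverses, and apply Lemma~\ref{lem:simpl_4_cell} piece by piece (using that $4$-cells are invertible in $\E^{\w(3)}$) to slide $\beta$ past $\alpha$; this produces a $4$-cell $\alpha':\hat A\qfl C$ of $\E_4^\w$ and an $\R$-reduction $\beta':B\qfl C$ of $\R^\w$. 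Now observe that no generating $4$-cell of $\E$ has an occurrence of some $\eta_f$ or $\epsilon_f$ in its target — the only generators whose source involves such cells are $\tau_f$ and $\sigma_f$, whose targets are identities — so a forward $4$-cell carries $3$-cells free of $\eta_f$ and $\epsilon_f$ to $3$-cells of the same kind. Since $\hat A$ contains no $\eta_f$ or $\epsilon_f$ and $\alpha'$ is a forward $4$-cell, every $3$-cell appearing along $\alpha'$, and in particular $C$, is free of $\eta_f$ and $\epsilon_f$; hence $\alpha'$ lies in $\D_4^\w$, and $C$, having no $\eta_f$ or $\epsilon_f$, is already an $\R$-normal form. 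As $\beta':B\qfl C$ is an $\R$-reduction and $\R$ is convergent, $C=\hat C=\hat B$. Thus $\alpha':\hat A\qfl\hat B$ lies in $\D_4^\w$, and since its $2$-source and $2$-target are those of $A$ and $B$, which lie in $S_\D$, it lies in $\D^{\w(3)}\restriction S_\D$; we set $K(\alpha):=\alpha'$.

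It remains to verify that $K$ so defined is functorial and compatible with sources and targets. Since there are no $5$-cells this is a finite check; alternatively, and more cleanly, one passes at once to the quotient $1$-categories, where the induced map $\bar K:\overline{\E^{\w(3)}\restriction S_\E}\to\overline{\D^{\w(3)}\restriction S_\D}$, $[A]\mapsto[\hat A]$, is a well-defined functor by the computation above together with the identity $\widehat{A\star_2 B}=\hat A\star_2\hat B$ from the previous lemma, and is $0$- and $1$-surjective by the remarks above. Either way we obtain what is needed — in the first case by Lemma~\ref{lem:strength_lemma}, in the second directly from the definition of \emph{stronger} — so $(\E^{\w(3)},S_\E)$ is stronger than $(\D^{\w(3)},S_\D)$. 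The main obstacle is the controlled application of Lemma~\ref{lem:simpl_4_cell}, which is stated for $4$-cells of the free $4$-white-category $\E_4^\w$, to a $4$-cell $\alpha$ of the $(4,3)$-white-category $\E^{\w(3)}$ that may contain inverses of generating $4$-cells; this is handled by decomposing $\alpha$ and treating its forward and backward parts separately, using the invertibility of $4$-cells.
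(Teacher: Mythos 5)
Your overall strategy is the paper's: normalize along $\R$, push the normalization past the $4$-cell using Lemma \ref{lem:simpl_4_cell}, observe that a cell of $\E_4^\w$ whose source lies in $\D_3^\w$ must lie in $\D_4^\w$, and conclude by uniqueness of $\R$-normal forms; your surjectivity checks are correct and agree with the paper's. The step that fails as written is the treatment of a $4$-cell $\alpha\in\E_4^{\w(3)}$ containing inverses of generators. You assert that sliding $\beta:A\qfl\hat A$ past $\alpha$ ``piece by piece, using invertibility'' still yields a \emph{forward} cell $\alpha':\hat A\qfl C$ in $\E_4^\w$, and your next step (``$\alpha'$ is a forward $4$-cell, so no $\eta_f$ or $\epsilon_f$ appears along it'') depends on that. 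But the only way to slide past an inverted generator $\gamma^{-1}:X\qfl Y$ is to normalize $Y$, apply Lemma \ref{lem:simpl_4_cell} to the forward cell $\gamma:Y\qfl X$, and invert the result; the outcome is a composite containing inverses, hence a cell of $\D_4^{\w(3)}$ and not of $\E_4^\w$ (in general there is no forward cell $\hat A\qfl\hat B$ at all, since $\D_4^\w$ is not a groupoid --- take $\alpha=\gamma^{-1}$ for a single generator $\gamma\in\A_4$ whose source and target are already $\R$-normal). Consequently the $\eta/\epsilon$-freeness argument has to be run on each backward piece separately: one must invoke Lemma \ref{lem:norm_3_cell} on each intermediate $3$-cell $Y$ (legitimate, since $Y$ is parallel to $A$ and hence lies in $\E^{\w(3)}\restriction S_\E$) to get $\hat Y\in\D_3^\w$, whence the un-inverted slid cell lies in $\D_4^\w$ and its inverse in $\D_4^{\w(3)}$, which is all that is needed.

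The paper sidesteps this bookkeeping by never assembling a single composite cell: it works only with the induced map $\bar K$ on the quotients, writes a general $4$-cell of $\E^{\w(3)}$ as a zigzag of cells of $\E_4^\w$ between parallel $3$-cells $A=A_1,\ldots,A_n=B$, applies the forward case to each leg to obtain $\overline{K(A_i)}=\overline{K(A_{i+1})}$, and chains the equalities. Since only equality in the quotient is required, no cell $\hat A\qfl\hat B$ of $\D_4^{\w(3)}$ need ever be constructed. Relatedly, your first route (extending $K$ to an honest $2$-functor on the restrictions and invoking Lemma \ref{lem:strength_lemma}) is not the ``finite check'' you suggest: $\alpha'$ depends on choices, and compatibility with both compositions is unclear; your second, quotient-level route is the correct one and is exactly the paper's, once the backward pieces are handled as above.
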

\begin{proof}
Let us show that $K$ induces a functor $\bar K: \overline{\E^{\w(3)} \restriction S_\E} \to \overline{\D^{\w(3)} \restriction S_\D}$. Let $A,B$ be $1$-cells in $\E^{\w(3)} \restriction S_\E$, and suppose $\bar A = \bar B$. Let us show that $\overline{K(A)} = \overline{K(B)}$, that is that there exists a $4$-cell $\alpha' : \hat A \qfl \hat B \in \D^{\w(3)}_4$.

Since $\bar A = \bar B$ there exists a $4$-cell $\alpha:A  \qfl B \in \E_4^{\w(3)}$. Suppose that $\alpha$ lies in $\E_4^{\w}$. Let $\beta \in \R^\w$ be a cell from $A$ to $\hat A$. Applying Lemma \ref{lem:simpl_4_cell} to $\alpha$ and $\beta$, we get cells $\alpha'$ and $\beta'$ of sources respectively $\hat A$ and $B$. Let $B'$ be their common target. By hypothesis $\hat A$ is in $\D_3^\w$, and the only cells in $\E_4^\w$ whose source is in $\D_3^\w$ are the cells in $\D_4^\w$. Thus $\alpha'$ is in $\D_4^\w$, and so is $B'$. So $B'$ is a normal form for $\R^\w$ which is attainable from $B$. By unicity of the $\R^\w$-normal-form, $B' = \hat B$, and so $\alpha'$ is a cell in $\D_4^\w$ of source $K(A)$ and of target $K(B)$, hence $\overline{K(A)} = \overline{K(B)}$.

In general if $\bar A = \bar B$, there exist $A_1,\ldots ,A_n \in \E_3^{\w}$ with $A_1 = A$, $B_n = B$ and for every $i$ there exist cells $\alpha_i : A_{2i} \qfl A_{2i-1}$ and $\beta_i : A_{2i} \to A_{2i+1}$ in $\E_4^\w$. Hence using the previous case $\overline{K(A_1)} = \ldots = \overline{K(A_n)}$, that is $K(A_1) = K(A_n)$. 

So $\bar K : \overline{\E^{\w(3)} \restriction S_\E} \to \overline{\D^{\w(3)} \restriction S_\D}$ is well defined, and it is $0$ and $1$-surjective because $K$ is. Hence $(\E,S_\E)$ is stronger than $(\D,S_\D)$.
\end{proof}

\begin{ex}
In the case where $\A = \Ass$, let $A = \twocell{prodC}$. The set $\E_3$ contains the following $3$-cells:
\[
\xymatrix{
\twocell{inv_prodC *1 prodC}
\ar@3 [r] ^{\epsilon_A}
&
\twocell{1}}
\qquad
\xymatrix{
\twocell{2}
\ar@3 [r] ^{\eta_A}
&
\twocell{prodC *1 inv_prodC}}
\]

And the set $\E_4$ the following $4$-cells:
\[
\xymatrix{
& 
\twocell{inv_prodC *1 prodC *1 inv_prodC}
\ar@3 [rd] ^{\epsilon_A}
\ar@4 [d] ^-{\tau_A}
&
\\
\twocell{inv_prodC}
\ar@3 [ru] ^{\eta_A}
\ar@{=} [rr]
& &
\twocell{inv_prodC}}
\qquad
\xymatrix{
& 
\twocell{prodC *1 inv_prodC *1 prodC}
\ar@3 [rd] ^{\epsilon_A}
\ar@4 [d] ^-{\sigma_A}
&
\\
\twocell{prodC}
\ar@3 [ru] ^{\eta_A}
\ar@{=} [rr]
& &
\twocell{prodC}}
\]
\end{ex}

\subsection{Reversing the presentation of a $(4,3)$-white-category}\label{subsec:retournement}

We start by collecting some results on the cells of $\E$.

\begin{lem}
The set $\E_3$ is composed exactly of the following cells:
\begin{itemize}
\item For every $f \in \A_2$, $3$-cells $\eta_f$ and $\epsilon_f$.
\item For every non-aspherical minimal branching $(f,g)$, a $3$-cell $A_{f,g}$ of shape:
\[
\xymatrix @C = 4em @R = 1.5em{
& 
\ar@2 @/^/ [rd] ^{ f' }
\ar@3 [dd] ^{A_{f,g}}
& \\
\ar@2 @/^/ [ru] ^{f}
\ar@2 @/_/ [rd] _{g} 
& 
& \\
&  
\ar@2 @/_/ [ru] _{g'}
& \\
}
\]
\end{itemize}
And in particular for every non-aspherical minimal branching $(f,g)$, we have $A_{f,g}^{op} = A_{g,f}$.
\end{lem}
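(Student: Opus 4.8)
The statement describes explicitly the $3$-cells of the $4$-white-polygraph $\E$, and asks us to verify that they are exactly: the connection cells $\eta_f$ and $\epsilon_f$ for $f \in \A_2$, and the cells $A_{f,g}$ for every non-aspherical minimal branching $(f,g)$, with the compatibility $A_{f,g}^{op} = A_{g,f}$. The approach is to unwind the five successive transformations $\A \leadsto \B \leadsto \C \leadsto \D \leadsto \E$ and track what happens to the set of $3$-cells at each stage, then add the Tietze-transformations of Section~\ref{subsec:retournement} if they affect dimension $3$.

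\textbf{Key steps.}
First I would recall that $\A$ satisfies the $2$-Squier condition of depth $2$, so by definition its $3$-cells $\A_3$ are exactly the cells $A_{f,g}$ filling (a representative of) each critical pair $(f,g)$ of $\A_2$. Next, at the first transformation, $\B_3 = \A_3 \cup K$ where $K$ consists of the cells $A_{fv,ug}$ for every strict Peiffer branching; combining $\A_3$ with $K$, and using the order on local branchings together with the fact that an overlapping minimal branching is exactly a critical pair, one checks that $\B_3$ ranges precisely over the cells $A_{f,g}$ for every \emph{non-aspherical minimal} branching $(f,g)$ (critical pairs from $\A_3$, Peiffer branchings from $K$). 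Then at the second transformation $\C_3 = \B_3 \cup \B_3^{op}$ adds a formal reverse $A^{op}$ for each $A \in \B_3$; here I would observe that since $A_{f,g}$ and $A_{g,f}$ are the cells associated to the two representatives $(f,g)$ and $(g,f)$ of the same symmetric branching, the reversal operation on $\C_3$ matches the swap of representatives, giving the identity $A_{f,g}^{op} = A_{g,f}$ at the level of $\C_3$ (so $\C_3$ is still indexed by non-aspherical minimal branchings, now with both orientations identified with the two representatives). The transformation to $\D$ leaves dimension $3$ unchanged ($\D_3 = \C_3$), and the transformation to $\E$ sets $\E_3 = \D_3 \cup \{\eta_f, \epsilon_f \mid f \in \C_2\}$; but $\C_2 = \B_2 = \A_2$, so these are exactly the $\eta_f, \epsilon_f$ for $f \in \A_2$. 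Finally I would check that none of the Tietze-transformations performed on $\E$ in Section~\ref{subsec:retournement} adds or removes a $3$-cell that is not already on this list — by the Remark following the four families of Tietze-transformations, these transformations act only in dimensions $3$ and $4$ through adjunction/removal of superfluous cells, so one must argue the $3$-cells reached are still of the stated form.

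\textbf{Main obstacle.}
The delicate point is the bookkeeping around the order $\subseteq$ on local branchings and the precise meaning of ``minimal non-aspherical branching'': I need the fact that every such branching is either a critical pair (hence a representative of a $4$-cell-indexing critical pair, handled by $\A_3$) or a strict Peiffer branching (handled by $K$) or obtained from one of these by adjoining a context, and that the cells $A_{f,g}$ were defined compatibly with the $\subseteq$-order so that the context-adjoined version is recovered. A second subtlety is ensuring the identification $A_{f,g}^{op} = A_{g,f}$ is coherent with the earlier \emph{choices} of sections $i$ made when defining $\s^\B, \t^\B$ and the definition of $F$ on $K$; here one argues that the $op$ operation on $\C_3$ and the representative-swap on symmetric branchings are intertwined by construction. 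I expect the proof itself to be a careful but not deep induction/case-analysis, with the orientation-matching argument ($A_{f,g}^{op} = A_{g,f}$) being the one place requiring genuine care rather than routine unwinding.
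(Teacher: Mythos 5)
Your approach matches the paper's: the proof is exactly the unwinding you describe, namely a case analysis on whether the non-aspherical minimal branching $(f,g)$ is a critical pair or a Peiffer branching, and on whether $(f,g)$ itself or $(g,f)$ is the orientation that received a cell in $\B_3$ (the Squier-chosen filling for critical pairs, the strict form $(fv,ug)$ for Peiffer branchings), the other orientation being supplied by the corresponding $op$-cell. One correction to your write-up: the claim that $\B_3 = \A_3 \cup K$ already ranges over \emph{every} non-aspherical minimal branching is false as stated --- it contains one cell per symmetric class, and it is precisely $\B_3^{op}$ in $\C_3$ that supplies the other representative, which is why $A_{f,g}^{op} = A_{g,f}$ holds essentially by definition (one simply \emph{sets} $A_{f,g} := A_{g,f}^{op}$ for the non-chosen orientation) rather than requiring the coherence check against the sections $i$ that you anticipate as the main obstacle. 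Also, your final step about the Tietze-transformations of Section \ref{subsec:retournement} is not needed: the lemma concerns $\E_3$, which is fixed before those transformations are performed (they produce $\F$, not $\E$).
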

\begin{proof}
If $(f,g)$ is a critical pair: if it was associated to a $3$-cell in $\A$ then $A_{f,g}$ is this corresponding cell. Otherwise $A_{f,g}$ is in fact the cell $A_{g,f}^{op}$ from Section \ref{subsec:weak_inverse_3}.

If $(f,g)$ is a strict Peiffer branching, then $A_{f,g}$ is the cell defined in Section \ref{subsec:affaiblissement}. Otherwise, $(g,f)$ is a strict Peiffer branching, and we set $A_{f,g} := A_{g,f}^{op}$ from Section \ref{subsec:weak_inverse_3}.
\end{proof}

\begin{lem}\label{lem:triplet_remplis}
For every minimal non-aspherical branching $(f,g,h)$, there exists a $4$-cell $A_{f,g,h} \in \E_4^{\w(3)}$ of the following shape:
\[
\xymatrix @C = 3em @R = 3em {
&
\ar@2 [rr]
\ar@{} [rd] |-{A_{f,g}}
& &
\ar@2 [rd]
\ar@{} [dd] |-{A}
& & &
\ar@2 [rr]
\ar@{} [rrrd] |-{B_1}
\ar@2 [rd]
\ar@{} [dd] |-{A_{f,h}}
& &
\ar@2 [rd]
&
\\
\ar@2 [ru] ^{f}
\ar@2 [rr] |{g}
\ar@2 [rd] _{h}
& &
\ar@{} [ld] |-{A_{g,h}}
\ar@2 [ru]
\ar@2 [rd]
&
&
\ar@4 [r] _{A_{f,g,h}}
&
\ar@2 [ru] ^{f}
\ar@2 [rd] _{h}
& & 
\ar@2 [rr]
\ar@{} [rd] |-{B_2}
& &
\\
&
\ar@2 [rr]
& &
\ar@2 [ru]
& & &
\ar@2 [ru]
\ar@2 [rr]
& &
\ar@2 [ru]
&
}
\]
\end{lem}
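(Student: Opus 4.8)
The statement asserts that every minimal non-aspherical \emph{triple} branching $(f,g,h)$ in $\E$ admits a $4$-cell $A_{f,g,h}$ of the prescribed hexagonal shape, where $A_{f,g}$, $A_{g,h}$, $A_{f,h}$ are the $3$-cells produced by the previous lemma. The strategy is to reduce to a small finite list of cases by analysing the nature of the branching, exactly mirroring the way the preceding sections produced $A_{f,g}$ and the sets $L$, $\{\rho_A,\lambda_A\}$, $\{\tau_f,\sigma_f\}$.

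\textbf{Step 1: classify the triple.} First I would distinguish which of the three pairwise sub-branchings $(f,g)$, $(g,h)$, $(f,h)$ are Peiffer and which are overlapping, and recall that minimality forbids any common whiskering context. Since the rewriting steps of $\E_3$ are of three kinds (the $3$-cells coming from $\A_3$, the Peiffer cells of the set $K$, the cells $K^{\op}$, and the connection cells $\eta_f,\epsilon_f$), a minimal non-aspherical triple either (a) lies entirely inside $\B_3^{\w(2)}$ — i.e. involves only the $\A_3$-cells and the Peiffer cells — in which case it is one of the critical triples covered by the $2$-Squier-of-depth-$2$ hypothesis on $\A$ together with the cells of $L$ from Section~\ref{subsec:affaiblissement}; or (b) involves an $\eta_f$ or $\epsilon_f$, which by the overlap analysis in Lemma~\ref{lem:R_confluence} forces the branching to overlap one of the $\tau_f$, $\sigma_f$ patterns, or a Peiffer overlap between a connection cell and an ordinary cell; or (c) involves a cell of $K^{\op}$, handled by the $\rho_A,\lambda_A$ cells of Section~\ref{subsec:weak_inverse_3} (using $A_{f,g}^{\op}=A_{g,h}$ and the $\op$-symmetry to reduce to case (a) or (b)).

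\textbf{Step 2: fill each case.} In case (a) the required $4$-cell is literally the cell of $\B_4=\A_4\cup L$ associated to the critical triple (and the axiom that $\A$ satisfies the $2$-Squier condition of depth $2$ guarantees such a $4$-cell exists and has exactly this shape, up to the canonical fillings), transported into $\E^{\w(3)}$. In cases (b)--(c), the two ``long'' paths of the hexagon differ only by insertion/removal of $\eta_f\star_1\epsilon_f$ pairs or $A\star_2 A^{\op}$ pairs, so the filling $4$-cell is obtained by composing copies of $\tau_f$, $\sigma_f$, $\rho_A$, $\lambda_A$ with suitable whiskerings, together with the Peiffer-cell fillings from $L$; this is exactly the bookkeeping already carried out in Lemma~\ref{lem:R_confluence} and Lemma~\ref{lem:simpl_4_cell}, so one can cite those rather than redo the diagram chases. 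Throughout, one uses that $A_{f,g}$, $A_{g,h}$, $A_{f,h}$ are forced (the preceding lemma pins them down), so the only freedom is in the choice of the intermediate $3$-cells $A$, $B_1$, $B_2$, and any two choices are themselves connected by a $4$-cell.

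\textbf{Main obstacle.} The real work is the exhaustive enumeration in Step~1: one must be sure that no minimal non-aspherical triple has been missed, in particular triples mixing a connection cell $\eta_f$ with a Peiffer cell $A_{gv,uh}$ or with an $\A_3$-cell, and triples where two of the three legs are $\op$-cells. I expect the cleanest way to organise this is to observe that $\E_3=\B_3\cup\B_3^{\op}\cup\{\eta_f,\epsilon_f\}$ and that $\op$ and the connection cells both behave well under the operations already studied, so that after reversing legs and absorbing connection cells via the $4$-convergence of $\R$ one is always reduced to a triple inside $\B_3^{\w(2)}$, i.e. to case (a), for which the hypothesis on $\A$ is exactly what is needed. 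Verifying that this reduction respects the hexagonal shape — that the canonical fillings $A_{f,g}$ etc.\ of the reduced triple match the originals — is the delicate point, but it follows formally from $A_{f,g}^{\op}=A_{g,f}$ and the definitions of $\rho_A,\lambda_A$.
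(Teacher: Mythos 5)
There is a genuine gap, and it starts with a misreading of what the branching $(f,g,h)$ is. In this lemma $f$, $g$ and $h$ are \emph{$2$-cells} (three rewriting steps of $\A_2$ with a common source), not $3$-cells of $\E_3$; the cells $A_{f,g}$, $A_{g,h}$, $A_{f,h}$ are the $3$-dimensional fillers of the pairwise sub-branchings. Your case analysis (a)/(b)/(c), which sorts triples according to whether they ``involve'' $\A_3$-cells, Peiffer cells of $K$, cells of $K^{\op}$, or the connection cells $\eta_f,\epsilon_f$, therefore does not apply: a leg of the branching can never be an $\eta_f$ or $\epsilon_f$, and the appeal to Lemma \ref{lem:R_confluence} (which concerns critical pairs of the $4$-dimensional rewriting system $\R$ on $\E_4$) and to Lemma \ref{lem:simpl_4_cell} is off-target. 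The correct dichotomy is simply: the $3$-fold symmetrical branching $[f,g,h]$ is either overlapping, in which case the $2$-Squier condition of depth $2$ on $\A$ provides a filler, or Peiffer, in which case the cells of $L$ from Section \ref{subsec:affaiblissement} provide one.

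The more serious omission is that both of these sources provide the hexagonal $4$-cell only for \emph{one representative} (one ordering of the three legs) of each symmetrical branching, whereas the lemma demands it for every ordering, and permuting the legs changes the prescribed boundary because $A_{f,g}$ and $A_{g,f}=A_{f,g}^{\op}$ point in opposite directions. The bulk of the paper's proof is devoted to showing that the set of orderings satisfying the property is closed under the action of the symmetric group: one explicitly constructs the filler for $(f_3,f_2,f_1)$ from the one for $(f_1,f_2,f_3)$ by conjugating with $\lambda_t^{-1}$ and $\rho_s$ and inverting, and the filler for $(f_2,f_1,f_3)$ by a substantial pasting diagram, then concludes because the transpositions $(1\ 2)$ and $(1\ 3)$ generate $S_3$. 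You gesture at an ``$\op$-symmetry'' reduction but never isolate this step, and your remark that any two choices of the intermediate $3$-cells $A$, $B_1$, $B_2$ ``are themselves connected by a $4$-cell'' assumes exactly the kind of coherence the whole construction is building towards, so it cannot be used here. Without the explicit permutation argument the proof does not go through.
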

\begin{proof}
Let us first start by showing that, for every non-aspherical $3$-fold minimal symmetrical branching $b$, there exists a representative $(f,g,h)$ of $b$ for which the property holds. If $b$ is an overlapping branching then, using the fact that $\A$ satisfies the $2$-Squier condition of depth $2$, the cell $A_{f,g,h}$ exists for some representative $(f,g,h)$ of $b$.
Otherwise $b$ is a Peiffer branching, and we conclude using the cells defined in Section \ref{subsec:affaiblissement}.

It remains to show that the set of all branchings satisfying the property is closed under the action of the symmetric group.
\begin{itemize}
\item If $(f_1,f_2,f_3)$ satisfies the property, then so does $(f_3,f_2,f_1)$. Indeed, let $A := A_{f_1,f_2,f_3}$, and let us denote its source by $s$ and its target by $t$, all we need to construct is a $4$-cell from $s^{op}$ to $t^{op}$. This is given by the following composite:
\[
\xymatrix @C = 6em @R = 3em {
s^{op}
\ar@4 [r] _-*+{s^{op} \star_2 \lambda_{t}^{-1}}
&
t^{op} \star_2 t \star_2 s^{op}
\ar@4 [r] _-*+{t^{op} \star_2 A^{-1} \star_2 s^{op}}
&
t^{op} \star_2 s \star_2 s^{op}
\ar@4 [r] _-*+{t^{op} \star_2 \rho_s}
&
t^{op}
}
\]

\item If $(f_1,f_2,f_3)$ satisfies the property, then so does $(f_2,f_1,f_3)$. Indeed, given a cell $A_{f_1,f_2,f_3}$, we can construct the following composite: 
\[
\begin{tikzpicture}

\matrix (m) [matrix of math nodes, 
			nodes in empty cells,
			column sep = 1cm, 
			row sep = 1cm] 
{
& & & & & & & & & \\
& & & & & & & & & \\
& & & & & & & & & \\
& & & & & & & & & \\
& & & & & & & & & \\
& & & & & & & & & \\
& & & & & & & & & \\
& & & & & & & & & \\
& & & & & & & & & \\
& & & & & & & & & \\
}; 
\doublearrow{arrows={-Implies}}
{(m-3-1) -- node [above left] {$f_2$} (m-2-2)}
\doublearrow{arrows={-Implies}}
{(m-3-1) -- node [fill = white] {$f_1$} (m-3-3)}
\doublearrow{arrows={-Implies}}
{(m-3-1) -- node [below left] {$f_3$} (m-4-2)}
\doublearrow{arrows={-Implies}}
{(m-2-2) -- (m-2-4)}
\doublearrow{arrows={-Implies}}
{(m-4-2) -- (m-4-4)}
\doublearrow{arrows={-Implies}}
{(m-3-3) -- (m-2-4)}
\doublearrow{arrows={-Implies}}
{(m-3-3) -- (m-4-4)}
\doublearrow{arrows={-Implies}}
{(m-2-4) -- (m-3-5)}
\doublearrow{arrows={-Implies}}
{(m-4-4) -- (m-3-5)}
\path (m-3-1) -- node {$A_{f_2,f_1}$} (m-2-4);
\path (m-3-1) -- node {$A_{f_1,f_3}$} (m-4-4);
\path (m-3-3) -- node {$B_1$} (m-3-5);

\quadarrow{arrows={-Implies}}
{(m-3-5) -- node [label={[label distance=.1cm]90:$\rho_{B_1}^{-1}$}] {} (m-3-6)}

\doublearrow{arrows={-Implies}, rounded corners}
{(m-3-6) -- (m-2-6.center) -- node [fill = white] {$f_2$} (m-1-7)}
\doublearrow{arrows={-Implies}, rounded corners}
{(m-1-7) -- (m-1-8.center) -- (m-2-9)}
\doublearrow{arrows={-Implies}, rounded corners}
{(m-4-7) -- (m-5-8.center) -- (m-5-9)}
\doublearrow{arrows={-Implies}, rounded corners}
{(m-5-9) -- (m-4-10.center) -- (m-3-10)}
\doublearrow{arrows={-Implies}}
{(m-3-6) -- node [fill = white] {$f_1$} (m-2-7)}
\doublearrow{arrows={-Implies}}
{(m-3-6) -- node [fill = white] {$f_3$} (m-4-7)}
\doublearrow{arrows={-Implies}}
{(m-2-7) -- (m-2-9)}
\doublearrow{arrows={-Implies}}
{(m-2-7) -- (m-3-8)}
\doublearrow{arrows={-Implies}}
{(m-4-7) -- (m-3-8)}
\doublearrow{arrows={-Implies}}
{(m-4-7) -- (m-4-9)}
\doublearrow{arrows={-Implies}}
{(m-3-8) -- (m-3-10)}
\doublearrow{arrows={-Implies}}
{(m-2-9) -- (m-3-10)}
\doublearrow{arrows={-Implies}}
{(m-4-9) -- (m-3-10)}
\path (m-1-7) -- node [near end] {$A_{f_2,f_1}$} (m-2-7);
\path (m-3-6) -- node {$A_{f_1,f_3}$} (m-3-8);
\path (m-2-7) -- node {$B_1$} (m-3-10);
\path (m-4-7) -- node {$B_2$} (m-3-10);
\path (m-4-9) -- node {$B_2^{op}$} (m-5-9);

\quadarrow{arrows={-Implies}}
{(m-5-8) -- node [label={[label distance=.1cm]0:$A^{-1}_{f_1,f_2,f_3}$}] {} (m-6-8)}

\doublearrow{arrows={-Implies}}
{(m-8-1) -- node [above left] {$f_2$} (m-7-2)}
\doublearrow{arrows={-Implies}}
{(m-8-1) -- node [below left] {$f_3$} (m-9-2)}
\doublearrow{arrows={-Implies}}
{(m-7-2) -- (m-7-4)}
\doublearrow{arrows={-Implies}}
{(m-7-2) -- (m-8-3)}
\doublearrow{arrows={-Implies}}
{(m-9-2) -- (m-8-3)}
\doublearrow{arrows={-Implies}}
{(m-9-2) -- (m-9-4)}
\doublearrow{arrows={-Implies}}
{(m-8-3) -- (m-8-5)}
\doublearrow{arrows={-Implies}}
{(m-7-4) -- (m-8-5)}
\doublearrow{arrows={-Implies}}
{(m-9-4) -- (m-8-5)}
\path (m-8-1) -- node {$A_{f_2,f_1}$} (m-8-3);
\path (m-7-2) -- node {$A$} (m-8-5);
\path (m-9-2) -- node {$B_2^{op}$} (m-8-5);

\doublearrow{arrows={-Implies}, rounded corners}
{(m-8-6) -- (m-7-6.center) -- node [fill = white] {$f_2$} (m-6-7)}
\doublearrow{arrows={-Implies}, rounded corners}
{(m-6-7) -- (m-6-8.center) -- (m-7-9)}
\doublearrow{arrows={-Implies}, rounded corners}
{(m-9-7) -- (m-10-8.center) -- (m-10-9)}
\doublearrow{arrows={-Implies}, rounded corners}
{(m-10-9) -- (m-9-10.center) -- (m-8-10)}
\doublearrow{arrows={-Implies}}
{(m-8-6) -- node [fill = white] {$f_1$} (m-7-7)}
\doublearrow{arrows={-Implies}}
{(m-8-6) -- node [fill = white] {$f_2$} (m-8-8)}
\doublearrow{arrows={-Implies}}
{(m-8-6) -- node [below left] {$f_3$} (m-9-7)}
\doublearrow{arrows={-Implies}}
{(m-7-7) -- (m-7-9)}
\doublearrow{arrows={-Implies}}
{(m-9-7) -- (m-9-9)}
\doublearrow{arrows={-Implies}}
{(m-8-8) -- (m-7-9)}
\doublearrow{arrows={-Implies}}
{(m-8-8) -- (m-9-9)}
\doublearrow{arrows={-Implies}}
{(m-7-9) -- (m-8-10)}
\doublearrow{arrows={-Implies}}
{(m-9-9) -- (m-8-10)}
\path (m-6-7) -- node [near end] {$A_{f_2,f_1}$} (m-7-7);
\path (m-8-6) -- node {$A_{f_1,f_2}$} (m-7-9);
\path (m-8-6) -- node {$A_{f_2,f_3}$} (m-9-9);
\path (m-8-8) -- node {$A$} (m-8-10);
\path (m-9-9) -- node {$B_2^{op}$} (m-10-9);

\quadarrow{arrows={-Implies}}
{(m-8-6) -- node [label={[label distance=.1cm]-90:$\lambda_{A_{f_1,f_2}}$}] {} (m-8-5)}

\end{tikzpicture}
\]
\end{itemize}

Since the transpositions $(1 \quad 2)$ and $(1 \quad 3)$ generate the symmetric group, the set of all branchings satisfying the property is closed under the action of the symmetric group.
\end{proof}

We are now going to apply a series of Tietze-transformations to $\E$ in order to mimic a technique known as \emph{reversing}. Reversing is a combinatorial tool to study presented monoids \cite{D11}. Reversing is particularly adapted to monoids whose presentation contains no relation of the form $su = sv$, where $s$ is a generator and $u$ and $v$ words in the free monoid, and at most one relation of the form $su = s'v$, for $s$ and $s'$ generators. The $(4,2)$-polygraph $\A$ satisfies those properties, but only up to a dimensional shift: there are no $3$-cell in $\A_3$ of the form $f \star_2 g \Rrightarrow f \star_2 h$, where $f$ is of length $1$ and $g$ and $h$ are in $\A_2^\w$, and there is at most one $3$-cell in $\A_3$ of the form $f \star_2 g \Rrightarrow f' \star_2 h$, where $f$ and $f'$ are of length $1$. Hence we adapt this method to our higher-dimensional setting.

\paragraph*{Adjunction of $3$-cells $C_{f,g}$ with its defining $4$-cell $X_{f,g}$.}
For every non-aspherical branching $(f,g)$, we add a $3$-cell $C_{f,g}$ of the following shape:
\[
\xymatrix @C = 4em @R = 1.5em{
& 
\ar@2 @/^/ [rd] ^{ g}
\ar@2 @/_/ [ld] _{f}
\ar@3 [dd] ^{C_{f,g}}
& \\
\ar@2 @/_/ [rd] _{f'}
& 
& 
\ar@2 @/^/ [ld] ^{g'} 
\\
&  
& \\
}
\]
using as defining $4$-cell a cell $X_{f,g}$ whose target is $C_{f,g}$ and whose source is the composite:
\[
\begin{tikzcd}[nodes in empty cells, 
			  arrows = Rightarrow,
			  column sep = 1cm, 
			  row sep = 1cm]
\ar[rrrd,rounded corners, no head, to path = { -- (B.center)  -- (\tikztotarget) \tikztonodes}]
\ar[rrrd, phantom, "\epsilon_f" description]
& & 
\ar[ll,"f"']
\ar[rr,"g"]
\ar[rd,"f" description]
\ar[rrrd, phantom, "A_{g,f}" description]
& & 
\ar[rd,"g'" description]
\ar[rrrd,rounded corners, no head, to path = { -- (A.center)  -- (\tikztotarget) \tikztonodes}]
\ar[rrrd, phantom, "\eta_{g'}" description]
& & 
|[alias = A]| 
& \\
& 
|[alias = B]| 
& & 
\ar[rr,"f'"']
& & & &
\ar[ll,"g'"]
\end{tikzcd}
\]

\paragraph*{Adjunction of a superfluous $4$-cell $Y_{f,g}$.}
We add a $4$-cell $Y_{f,g}$ of target $A_{g,f}$, parallel to the following $4$-cell (where the second step consists in the parallel application of $\sigma_f$ and $\sigma_{g'}$):
\[
\begin{tikzpicture}
\matrix (m) [matrix of math nodes, 
			nodes in empty cells,
			column sep = 1cm, 
			row sep = 1cm] 
{
& & & & & & & \\
& & & & & & & \\
& & & & & & & \\
& & & & & & & \\
& & & & & & & \\
& & & & & & & \\
};
\doublearrow{arrows ={-Implies}}
{(m-2-1) -- node [fill = white] {$f$} (m-2-3)}

\doublearrow{arrows ={-Implies}}
{(m-2-3) -- node (s41) [fill = white] {$f'$} (m-2-5)}

\doublearrow{arrows ={-}, rounded corners}
{(m-2-5) --  (m-2-7.center) -- (m-1-8)}

\doublearrow{arrows ={-}, rounded corners}
{(m-2-1) --  (m-1-2.center) -- (m-1-4)}

\doublearrow{arrows ={-Implies}}
{(m-1-4) -- node [above] {$g$} (m-1-6)}

\doublearrow{arrows ={-Implies}}
{(m-1-6) -- node [above] {$g'$} (m-1-8)}

\doublearrow{arrows ={-Implies}}
{(m-1-4) -- node [fill = white] {$f$} (m-2-3)}

\doublearrow{arrows ={-Implies}}
{(m-1-6) -- node [fill = white] {$g'$} (m-2-5)}

\path (m-2-1) -- node {$\eta_f$} (m-1-4);
\path (m-1-4) -- node {$C_{f,g}$} (m-2-5);
\path (m-1-6) -- node {$\epsilon_{g'}$} (m-2-7);

\doublearrow{arrows ={-}}
{(m-3-1) -- (m-3-3)}

\doublearrow{arrows ={-Implies}}
{(m-3-3) -- node (t41) [fill = white] {$g$} (m-3-5)}

\doublearrow{arrows ={-}}
{(m-3-5) -- (m-3-7)}

\doublearrow{arrows ={-Implies}}
{(m-3-1) to node [below left] {$f$} (m-4-2)}

\doublearrow{arrows ={-Implies}}
{(m-3-3) -- node [fill = white] {$f$} (m-4-2)}

\doublearrow{arrows ={-Implies}}
{(m-3-3) -- node [fill = white] {$f$} (m-4-4)}

\doublearrow{arrows ={-Implies}}
{(m-3-5) -- node [fill = white] {$g'$} (m-4-6)}

\doublearrow{arrows ={-Implies}}
{(m-3-7) -- node [fill = white] {$g'$} (m-4-6)}

\doublearrow{arrows ={-Implies}}
{(m-3-7) to node [above right] {$g'$} (m-4-8)}

\doublearrow{arrows ={-}}
{(m-4-2) -- node (s42) [near start] {} (m-4-4)}

\doublearrow{arrows ={-Implies}}
{(m-4-4) -- node [fill = white] {$f'$} (m-4-6)}

\doublearrow{arrows ={-}}
{(m-4-6) -- node (s43) [near start] {} (m-4-8)}

\path (m-3-2) -- node [near start] {$\eta_f$} (m-4-2);
\path (m-3-3) -- node [near end] {$\epsilon_f$} (m-4-3);
\path (m-3-3) -- node {$A_{g,f}$} (m-4-6);
\path (m-3-6) -- node [near start] {$\eta_{g'}$} (m-4-6);
\path (m-3-7) -- node [near end] {$\epsilon_{g'}$} (m-4-7);

\doublearrow{arrows ={-}}
{(m-5-1) -- node (t42) [near end] {} (m-5-3)}

\doublearrow{arrows ={-Implies}}
{(m-5-3) -- node [fill = white] {$g$} (m-5-5)}

\doublearrow{arrows ={-}}
{(m-5-5) -- node (t43) [near end] {} (m-5-7)}

\doublearrow{arrows ={-Implies}}
{(m-5-1) to node [below left] {$f$} (m-6-2)}

\doublearrow{arrows ={-Implies}}
{(m-5-3) -- node [fill = white] {$f$} (m-6-4)}

\doublearrow{arrows ={-Implies}}
{(m-5-5) -- node [fill = white] {$g'$} (m-6-6)}

\doublearrow{arrows ={-Implies}}
{(m-5-7) to node [above right] {$g'$} (m-6-8)}

\doublearrow{arrows ={-}}
{(m-6-2) -- (m-6-4)}

\doublearrow{arrows ={-Implies}}
{(m-6-4) -- node [below] {$f'$} (m-6-6)}

\doublearrow{arrows ={-}}
{(m-6-6) -- (m-6-8)}

\path (m-5-1) -- node {$1_f$} (m-6-4);
\path (m-5-3) -- node {$A_{g,f}$} (m-6-6);
\path (m-5-5) -- node {$1_{g'}$} (m-6-8);

\quadarrow{arrows ={-Implies}}
{(s41) -- node [right] {$X_{f,g}^{-1}$} (t41)}

\quadarrow{arrows ={-Implies}}
{(s42) -- node [right] {$\sigma_f$} (t42.north-|s42)}

\quadarrow{arrows ={-Implies}}
{(s43) -- node [right] {$\sigma_{g'}$} (t43.north-|s43)}

\end{tikzpicture}
\]
\paragraph*{Removal of the superfluous $4$-cell $X_{f,g}$.}

We remove the $4$-cell $X_{f,g}$, using the fact that it is parallel to the following composite:
\[
\begin{tikzpicture}
\matrix (m) [matrix of math nodes, 
			nodes in empty cells,
			column sep = 1cm, 
			row sep = 1cm] 
{
& & & & & & & \\
& & & & & & & \\
& & & & & & & \\
& & & & & & & \\
& & & & & & & \\
& & & & & & & \\
};
\doublearrow{arrows = {-Implies}}
{(m-1-3) -- node [above] {$f$} (m-1-1)}

\doublearrow{arrows = {-Implies}}
{(m-1-3) -- node [above] {$g$} (m-1-5)}

\doublearrow{arrows = {-}, rounded corners}
{(m-1-5) -- (m-1-7.center) -- (m-2-8)}

\doublearrow{arrows = {-}, rounded corners}
{(m-1-1) -- (m-2-2.center) -- (m-2-4)}

\doublearrow{arrows = {-Implies}}
{(m-1-3) -- node [fill = white] {$f$} (m-2-4)}

\doublearrow{arrows = {-Implies}}
{(m-1-5) -- node [fill = white] {$g'$} (m-2-6)}

\doublearrow{arrows = {-Implies}}
{(m-2-4) -- node (s41) [fill = white] {$f'$} (m-2-6)}

\doublearrow{arrows = {-Implies}}
{(m-2-8) -- node [fill = white] {$g'$} (m-2-6)}

\path (m-1-1) -- node {$\epsilon_f$} (m-2-4);
\path (m-1-3) -- node {$A_{g,f}$} (m-2-6);
\path (m-1-5) -- node {$\eta_{g'}$} (m-2-8);

\doublearrow{arrows ={-}}
{(m-3-2) -- (m-3-4)}

\doublearrow{arrows ={-Implies}}
{(m-3-4) -- node (t41)  [fill = white] {$g$} (m-3-6)}

\doublearrow{arrows ={-}}
{(m-3-6) -- node (s43) [near start] {} (m-3-8)}

\doublearrow{arrows ={Implies-}}
{(m-4-1) to node [above left] {$f$} (m-3-2)}

\doublearrow{arrows ={Implies-}}
{(m-4-3) -- node [fill = white] {$f$} (m-3-2)}

\doublearrow{arrows ={Implies-}}
{(m-4-3) -- node [fill = white] {$f$} (m-3-4)}

\doublearrow{arrows ={Implies-}}
{(m-4-5) -- node [fill = white] {$g'$} (m-3-6)}

\doublearrow{arrows ={Implies-}}
{(m-4-7) -- node [fill = white] {$g'$} (m-3-6)}

\doublearrow{arrows ={Implies-}}
{(m-4-7) to node [below right] {$g'$} (m-3-8)}

\doublearrow{arrows ={-}}
{(m-4-1) -- node (s42) [near end] {} (m-4-3)}

\doublearrow{arrows ={-Implies}}
{(m-4-3) -- node [fill = white] {$f'$} (m-4-5)}

\doublearrow{arrows ={-}}
{(m-4-5) -- node (s43) [near end] {} (m-4-7)}

\path (m-3-2) -- node [near end] {$\epsilon_f$} (m-4-2);
\path (m-3-3) -- node [near start] {$\eta_f$} (m-4-3);
\path (m-3-3) -- node {$C_{f,g}$} (m-4-6);
\path (m-3-6) -- node [near end] {$\epsilon_{g'}$} (m-4-6);
\path (m-3-7) -- node [near start] {$\eta_{g'}$} (m-4-7);

\doublearrow{arrows ={-}}
{(m-6-1) --  (m-6-3)}

\doublearrow{arrows ={-Implies}}
{(m-6-3) -- node [fill = white] {$f'$} (m-6-5)}

\doublearrow{arrows ={-}}
{(m-6-5) -- (m-6-7)}

\doublearrow{arrows ={Implies-}}
{(m-6-1) to node [above left] {$f$} (m-5-2)}

\doublearrow{arrows ={Implies-}}
{(m-6-3) -- node [fill = white] {$f$} (m-5-4)}

\doublearrow{arrows ={Implies-}}
{(m-6-5) -- node [fill = white] {$g'$} (m-5-6)}

\doublearrow{arrows ={Implies-}}
{(m-6-7) to node [below right] {$g'$} (m-5-8)}

\doublearrow{arrows ={-}}
{(m-5-2) -- (m-5-4)}

\doublearrow{arrows ={-Implies}}
{(m-5-4) -- node (t42) [fill = white] {$g$} (m-5-6)}

\doublearrow{arrows ={-}}
{(m-5-6) -- (m-5-8)}

\path (m-5-1) -- node {$1_f$} (m-6-4);
\path (m-5-3) -- node {$C_{f,g}$} (m-6-6);
\path (m-5-5) -- node {$1_{g'}$} (m-6-8);

\quadarrow{arrows ={-Implies}}
{(s41) -- node [right] {$Y_{f,g}^{-1}$} (t41)}

\quadarrow{arrows ={-Implies}}
{(s42) -- node [right] {$\tau_f$} (t42.north-|s42)}

\quadarrow{arrows ={-Implies}}
{(s43) -- node [right] {$\tau_{g'}$} (t42.north-|s43)}

\end{tikzpicture}
\]

\paragraph*{Removal of the $3$-cell $A_{g,f}$ with its defining $4$-cell $Y_{f,g}$.}

This last step is possible because $A_{g,f}$ is the target of $Y_{f,g}$ and does not appear in its source.

We denote by $\F$ the $4$-white-polygraph obtained after performing this series of Tietze-transformations for every non-aspherical branching $(f,g)$, and $\Pi : \E^{\w(3)} \to \F^{\w(3)}$ the $3$-white-functor induced by the Tietze-transformations. We still denote by $A_{g,f}$ the composite in $\F_4^{\w(3)}$, image by $\Pi$ of $A_{f,g} \in \E_4$.

\begin{ex}
In the case where $\A = \Ass$, the cells $\twocell{assocC}$ and $\twocell{assocC}^{op}$ respectively associated to the branchings $(\twocell{prodC *0 1} \quad , \quad \twocell{1 *0 prodC})$ and $(\twocell{1 *0 prodC} \quad , \quad \twocell{prodC *0 1})$ have been replaced by cells of the following shape:

\[
\xymatrix{
\twocell{(inv_prodC *0 1) *1 (1 *0 prodC)}
\ar@3 [r]
&
\twocell{prodC *1 inv_prodC}
}
\qquad
\xymatrix{
\twocell{(1 *0 inv_prodC) *1 (prodC *0 1)}
\ar@3 [r]
&
\twocell{prodC *1 inv_prodC}
}
\]
\end{ex}

\newpage

\section{Proof of Theorem \ref{thm:main_theory}}
\label{sec:proof_final}

This Section concludes the proof of Theorem \ref{thm:main_theory}. We keep the notations from Section \ref{sec:transfo_polygraph}. In Section \ref{subsec:local_coherence}, we study the $4$-cells of the $(4,3)$-white-category $\F^{\w(3)}$, and in particular study the consequences of $\A$ satisfying the $2$-Squier condition of depth $2$.

In Section \ref{subsec:ordering}, we define a well-founded ordering on $\mathbb N[\F_1^\w]$, the free commutative monoid on $\F_1^\w$. Using this ordering together with two applications $\p : \F_2^\w \to \mathbb N[\F_1^\w]$ and $\w_\eta : \F_3^\w \to \mathbb N[\F_1^\w]$, we proceed to complete the proof by induction in Section \ref{subsec:proof_final}.

\subsection{Local coherence}
\label{subsec:local_coherence}

\begin{defn}
We extend the notation $C_{f,g}$ from Section \ref{subsec:retournement} by defining, for every local branching $(f,g)$ of $\B_2^\w$, a $3$-cell of the form $C_{f,g}: \bar f \star_1 g \Rrightarrow f' \star_1 \bar g' \in \F^\w_3$, where $f'$ and $g'$ are in $\B^\w_2$.
\begin{itemize}
\item If $(f,g)$ is a minimal overlapping or Peiffer branching, then $C_{f,g}$ is already defined.
\item If $(f,g)$ is aspherical, that is $f=g$, then we set $C_{f,f} = \epsilon_f$.
\item If $(f,g)$ is not minimal, then let us write $(f,g) = (u \tilde f v, u \tilde g v)$, with $(\tilde f, \tilde g)$ a minimal branching, and we set $C_{f,g} := uC_{\tilde f, \tilde g}v$. 
\end{itemize}
\end{defn}

\begin{defn}
We say that a $3$-fold local branching $(f,g,h)$ of $\A_2$ is \emph{coherent} if there exists a $4$-cell $C_{f,g,h} \in \F_4^{\w(3)}$ of the following shape, where $A$ and $B$ are $4$-cells in $\F_3^{\w}$.

\[
\begin{tikzpicture}
\matrix (m) [matrix of math nodes, 
			nodes in empty cells,
			column sep = 1cm, 
			row sep = .7cm] 
{
& & & & & & & \\
& & & & & & & \\
& & & & & & & \\
& & & & & & & \\
};
\doublearrow{arrows = {-}}
{(m-1-2) -- (m-1-4)}

\doublearrow{arrows = {-Implies}}
{(m-1-2) -- node [above left] {$f$} (m-2-1)}
\doublearrow{arrows = {-Implies}}
{(m-1-2) -- node [fill=white] {$g$} (m-2-3)}
\doublearrow{arrows = {-Implies}}
{(m-1-4) -- node [fill = white] {$g$} (m-2-3)}
\doublearrow{arrows = {-Implies}}
{(m-1-4) -- node [above right] {$h$} (m-2-5)}

\doublearrow{arrows = {-Implies}}
{(m-2-1) -- (m-3-2)}
\doublearrow{arrows = {-Implies}}
{(m-2-3) -- (m-3-2)}
\doublearrow{arrows = {-Implies}}
{(m-2-3) -- (m-3-4)}
\doublearrow{arrows = {-Implies}}
{(m-2-5) -- node (s4) [ below right] {} (m-3-4)}

\doublearrow{arrows = {-Implies}}
{(m-3-2) -- (m-4-3)}
\doublearrow{arrows = {-Implies}}
{(m-3-4) -- (m-4-3)}

\path (m-1-3) -- node [near start] {$\eta_g$} (m-2-3);
\path (m-1-2) -- node  {$C_{f,g}$} (m-3-2);
\path (m-1-4) -- node  {$C_{g,h}$} (m-3-4);
\path (m-2-3) -- node  {$A$} (m-4-3);

\doublearrow{arrows = {-Implies}}
{(m-1-7) -- node [above left] {$f$} (m-2-6)}
\doublearrow{arrows = {-Implies}}
{(m-1-7) -- node [above right] {$h$} (m-2-8)}

\doublearrow{arrows = {-Implies}}
{(m-2-6) -- node (t4) [left] {} (m-3-6)}
\doublearrow{arrows = {-Implies}}
{(m-2-6) -- (m-3-7)}
\doublearrow{arrows = {-Implies}}
{(m-2-8) -- (m-3-7)}
\doublearrow{arrows = {-Implies}}
{(m-2-8) -- (m-3-8)}

\doublearrow{arrows = {-Implies}}
{(m-3-6) -- (m-4-7)}
\doublearrow{arrows = {-Implies}}
{(m-3-8) -- (m-4-7)}

\path (m-1-7) -- node  {$C_{f,h}$} (m-3-7);
\path (m-3-7) -- node [near start] {$B$} (m-4-7);

\quadarrow{arrows = {-Implies}}
{(s4-|m-2-5.west) -- node [below] {$C_{f,g,h}$} (s4-|t4.west)}

\end{tikzpicture}
\]
\end{defn}

\begin{lem}\label{lem:triple_coherence_local}
Every $3$-fold local branching of $\B_2^\w$ is coherent.
\end{lem}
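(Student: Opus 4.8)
The statement asserts that every $3$-fold local branching $(f,g,h)$ of $\B_2^\w$ is coherent, i.e.\ admits a filling $4$-cell $C_{f,g,h}$ of the prescribed ``hexagonal'' shape built from the cells $C_{f,g}$, $C_{g,h}$, $C_{f,h}$, an $\eta_g$, and two auxiliary $4$-cells $A,B \in \F_3^\w$. The natural strategy is to reduce to the minimal case and then invoke the results already established about $3$-fold branchings of $\A_2$. First I would observe that, just as in the definition of $C_{f,g}$, any local branching of $\B_2^\w$ decomposes as $(f,g,h) = (u\tilde f v, u\tilde g v, u\tilde h v)$ for a \emph{minimal} $3$-fold branching $(\tilde f,\tilde g,\tilde h)$ together with a context $(u,v)$. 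Coherence is preserved under whiskering: if $C_{\tilde f,\tilde g,\tilde h}$ exists, then $u C_{\tilde f,\tilde g,\tilde h} v$ is a filling for $(f,g,h)$, because $C_{u\tilde f v, u\tilde g v} = u C_{\tilde f, \tilde g} v$ by definition, and the same holds for the other pairs. So it suffices to treat minimal branchings.

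For a minimal $3$-fold branching, I would split according to its type in the classification of Section~\ref{subsec:branchings}. If $(\tilde f,\tilde g,\tilde h)$ is aspherical, then two of the three rewriting steps coincide, so two of $C_{\tilde f,\tilde g}$, $C_{\tilde g,\tilde h}$, $C_{\tilde f,\tilde h}$ are of the form $\epsilon_f$ (by the convention $C_{f,f}=\epsilon_f$) and the third is $C_{\tilde f,\tilde h}$ itself; the required filling is then obtained from the structural $4$-cells (the $\tau_f$, $\sigma_f$, $\rho$, $\lambda$ cells and the triangulator relations) — essentially a bookkeeping composite of unit/counit coherences. If it is a Peiffer branching, the $C$-cells were constructed directly in Section~\ref{subsec:affaiblissement} together with the $4$-cells $A_{f,g,h} \in L$, and after the Tietze-transformations of Section~\ref{subsec:retournement} these yield the cells $A_{g,f,\dots}$ in $\F_4^{\w(3)}$; assembling them in the pattern of $C_{f,g,h}$ is again a diagram chase using the defining equations of $C_{f,g}$ and the adjunction cells. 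The remaining, genuinely substantive case is the overlapping minimal branching.

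\textbf{The main obstacle} is the overlapping case, and there the key input is Lemma~\ref{lem:triplet_remplis}: for every minimal non-aspherical branching $(f,g,h)$ there is a $4$-cell $A_{f,g,h} \in \E_4^{\w(3)}$ filling the ``before-reversing'' hexagon built from $A_{f,g}$, $A_{g,h}$, $A_{f,h}$. The plan is to transport this along $\Pi : \E^{\w(3)} \to \F^{\w(3)}$ and then rewrite the image in terms of the reversed cells $C_{f,g}$. Concretely, each $C_{f,g}$ was introduced (via $X_{f,g}$) to be ``conjugate'' to $A_{g,f}$ through $\epsilon_f$, $\eta_{g'}$ and the triangulators; so one expresses $\Pi(A_{f,g,h})$ as a composite of the three $C$-cells pre- and post-composed with $\eta$'s, $\epsilon$'s and their coherence $4$-cells $\tau,\sigma$, exactly mirroring the four Tietze steps (adjunction of $C_{f,g}$/$X_{f,g}$, adjunction of $Y_{f,g}$, removal of $X_{f,g}$, removal of $A_{g,f}$). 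The delicate part is verifying that the resulting composite has \emph{precisely} the source and target demanded by the definition of ``coherent'' — i.e.\ that the $\eta_g$ in the middle and the boundary $C_{f,g},C_{g,h},C_{f,h}$ line up — which amounts to checking equality of two $3$-cells in $\F^\w$ obtained by different bracketings; this is where the rigidity of the reversing construction (no $3$-cell of the form $f\star_2 g \Rrightarrow f\star_2 h$, at most one of the form $f\star_2 g\Rrightarrow f'\star_2 h$) is used to guarantee the two sides match. I expect the proof to consist of: (1) the whiskering reduction, (2) the aspherical subcase by a short unit-coherence computation, (3) the Peiffer subcase citing Section~\ref{subsec:affaiblissement}, and (4) the overlapping subcase as a transport of $A_{f,g,h}$ through the four Tietze-transformations, the last being the crux.
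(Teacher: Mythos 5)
Your proposal follows essentially the same route as the paper: reduce to minimal branchings by whiskering, dispatch the aspherical case with the $\eta/\epsilon/\tau/\sigma$ (and $\rho,\lambda$) coherences, and for non-aspherical branchings (Peiffer and overlapping alike) transport the cell $A_{f,g,h}$ of Lemma~\ref{lem:triplet_remplis} through $\Pi$ and conjugate by units and counits. One small correction: in an aspherical triple only one of the three pairs collapses to an $\epsilon$ (not two), and the subcase $f=h\neq g$ genuinely requires $\Pi(\rho_{A_{f,g}})$ or $\Pi(\lambda_{A_{f,g}})$ rather than pure unit/counit bookkeeping, exactly as in the paper's explicit composite.
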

\begin{proof}
Let $(f,g,h)$ be a minimal local branching. We first treat the case where $(f,g,h)$ is an aspherical branching. If $f=g$, then $C_{f,g} = \epsilon_f$, and the following cell shows that the branching is coherent:

\[
\begin{tikzpicture}
\matrix (m) [matrix of math nodes, 
			nodes in empty cells,
			column sep = 1cm, 
			row sep = .6cm] 
{
& & & & & & &  \\
& & & & & & &  \\
& & & & & & &  \\
& & & & & & &  \\
};
\doublearrow{arrows = {-}}
{(m-1-2) -- (m-1-4)}

\doublearrow{arrows = {-Implies}}
{(m-1-2) -- node [above left] {$f$} (m-2-1)}
\doublearrow{arrows = {-Implies}}
{(m-1-2) -- node [fill=white] {$f$} (m-2-3)}
\doublearrow{arrows = {-Implies}}
{(m-1-4) -- node [fill = white] {$f$} (m-2-3)}
\doublearrow{arrows = {-Implies}}
{(m-1-4) -- node [above right] {$h$} (m-2-5)}

\doublearrow{arrows = {-}}
{(m-2-1) -- (m-3-2)}
\doublearrow{arrows = {-}}
{(m-2-3) -- (m-3-2)}
\doublearrow{arrows = {-Implies}}
{(m-2-3) -- node [fill = white] {$f'$} (m-3-4)}
\doublearrow{arrows = {-Implies}}
{(m-2-5) -- node (s4) [below right] {} 
            node [fill = white] {$h'$} (m-3-4)}

\doublearrow{arrows = {-Implies}}
{(m-3-2) -- node [fill = white] {$f'$} (m-4-3)}
\doublearrow{arrows = {-}}
{(m-3-4) -- (m-4-3)}

\path (m-1-3) -- node [near start] {$\eta_f$} (m-2-3);
\path (m-1-2) -- node  {$\epsilon_f$} (m-3-2);
\path (m-1-4) -- node  {$C_{f,h}$} (m-3-4);
\path (m-2-3) -- node  {$1_{f'}$} (m-4-3);

\doublearrow{arrows = {-Implies}}
{(m-1-7) -- node [above left] {$f$} (m-2-6)}
\doublearrow{arrows = {-Implies}}
{(m-1-7) -- node [above right] {$h$} (m-2-8)}

\doublearrow{arrows = {-Implies}}
{(m-2-6) -- node [fill = white] {$f'$} (m-3-7)}
\doublearrow{arrows = {-Implies}}
{(m-2-8) -- node [fill = white] {$h'$} (m-3-7)}

\path (m-1-7) -- node  {$C_{f,h}$} (m-3-7);

\quadarrow{arrows = {-Implies}}
{(m-2-5) -- node [below] {$\tau_f$} (m-2-6)}
\end{tikzpicture}
\]
The case where $g = h$ is symmetrical. Assume now $g \neq f,h$ and $f = h$. Then $(f,g)$ is either an overlapping or a Peiffer branching. In any case there exists either a cell $A_{f,g}$ or $A_{g,f}$ in $\E_3^\w$. In the former case, we can construct the following cell in $\F_4^{\w(3)}$.

\[
\begin{tikzpicture}
\matrix (m) [matrix of math nodes, 
			nodes in empty cells,
			column sep = 1cm, 
			row sep = .5cm] 
{
& & & & & & & & \\
& & & & & & & & \\
& & & & & & & & \\
& & & & & & & & \\
& & & & & & & & \\
& & & & & & & & \\
& & & & & & & & \\
& & & & & & & & \\
& & & & & & & & \\
};

\doublearrow{arrows = {-}}
{(m-1-4) -- (m-1-6)}

\doublearrow{arrows = {-Implies}}
{(m-1-4) -- node [above left] {$f$} (m-2-3)}
\doublearrow{arrows = {-Implies}}
{(m-1-4) -- node [fill=white] {$g$} (m-2-5)}
\doublearrow{arrows = {-Implies}}
{(m-1-6) -- node [fill = white] {$g$} (m-2-5)}
\doublearrow{arrows = {-Implies}}
{(m-1-6) -- node [above right] {$f$} (m-2-7)}

\doublearrow{arrows = {-Implies}}
{(m-2-3) -- (m-3-4)}
\doublearrow{arrows = {-Implies}}
{(m-2-5) -- (m-3-4)}
\doublearrow{arrows = {-Implies}}
{(m-2-5) -- (m-3-6)}
\doublearrow{arrows = {-Implies}}
{(m-2-7) -- (m-3-6)}

\doublearrow{arrows = {-}}
{(m-3-4) -- (m-3-6)}

\path (m-1-5) -- node [near start] {$\eta_g$} (m-2-5);
\path (m-1-4) -- node  {$C_{f,g}$} (m-3-4);
\path (m-1-6) -- node  {$C_{g,f}$} (m-3-6);
\path (m-2-5) -- node [near end] {$\epsilon_{g'}$} (m-3-5);

\doublearrow{arrows = {-}}
{(m-4-2) -- (m-4-4)}
\doublearrow{arrows = {-}}
{(m-4-4) -- (m-4-6)}

\doublearrow{arrows = {-Implies}}
{(m-4-2) -- node [fill = white] {$f$} (m-5-1)}
\doublearrow{arrows = {-Implies}}
{(m-4-2) -- node [fill = white] {$f$} (m-5-3)}
\doublearrow{arrows = {-Implies}}
{(m-4-4) -- node [fill = white] {$f$} (m-5-3)}
\doublearrow{arrows = {-Implies}}
{(m-4-4) -- node [fill=white] {$g$} (m-5-5)}
\doublearrow{arrows = {-Implies}}
{(m-4-6) -- node [fill = white] {$g$} (m-5-5)}
\doublearrow{arrows = {-Implies}}
{(m-4-6) -- node [above right] {$f$} (m-5-7)}

\doublearrow{arrows = {-}}
{(m-5-1) -- (m-5-3)}
\doublearrow{arrows = {-}}
{(m-5-7) -- (m-5-9)}

\doublearrow{arrows = {-Implies}}
{(m-5-3) -- (m-6-4)}
\doublearrow{arrows = {-Implies}}
{(m-5-5) -- (m-6-4)}
\doublearrow{arrows = {-Implies}}
{(m-5-5) -- (m-6-6)}
\doublearrow{arrows = {-Implies}}
{(m-5-7) -- (m-6-6)}
\doublearrow{arrows = {-Implies}}
{(m-5-7) -- (m-6-8)}
\doublearrow{arrows = {-Implies}}
{(m-5-9) -- (m-6-8)}

\doublearrow{arrows = {-}}
{(m-6-4) -- (m-6-6)}
\doublearrow{arrows = {-}}
{(m-6-6) -- (m-6-8)}

\path (m-4-2) -- node [near end] {$\epsilon_f$} (m-5-2);
\path (m-4-3) -- node [near start] {$\eta_f$} (m-5-3);
\path (m-4-5) -- node [near start] {$\eta_g$} (m-5-5);
\path (m-4-4) -- node  {$C_{f,g}$} (m-6-4);
\path (m-4-6) -- node  {$C_{g,f}$} (m-6-6);
\path (m-5-5) -- node [near end] {$\epsilon_{g'}$} (m-6-5);
\path (m-5-7) -- node [near end] {$\epsilon_{f'}$} (m-6-7);
\path (m-5-8) -- node [near start] {$\eta_{f'}$} (m-6-8);

\doublearrow{arrows = {-Implies}}
{(m-7-5) -- (m-8-4)}
\doublearrow{arrows = {-Implies}}
{(m-7-5) -- (m-8-6)}

\doublearrow{arrows = {-}}
{(m-8-4) -- (m-8-6)}

\doublearrow{arrows = {-Implies}}
{(m-8-4) -- (m-9-5)}
\doublearrow{arrows = {-Implies}}
{(m-8-6) -- (m-9-5)}

\path (m-7-5) -- node [near end] {$\epsilon_f$} (m-8-5);
\path (m-8-5) -- node [near start] {$\eta_{f'}$} (m-9-5);

\quadarrow{arrows = {-Implies}}
{(m-3-3) -- node [left] {$\tau^{-1}_f$} (m-4-3)}
\quadarrow{arrows = {-Implies}}
{(m-3-7) -- node [right] {$\tau^{-1}_{f'}$} (m-4-7)}
\quadarrow{arrows = {-Implies}}
{(m-6-5) -- node [right] {$\Pi(\rho_{A_{f,g}})$} (m-7-5)}

\end{tikzpicture}
\]
In the latter, we can construct the same cell, only replacing  $\Pi(\rho_{A_{f,g}})$ by $\Pi(\lambda_{A_{f,g}})$.

Suppose now that $(f,g,h)$ is not aspherical. Using the cell  $A_{f,g,h}$ described in Lemma \ref{lem:triplet_remplis}, we build the following composite in $\F^{\w(3)}_4$:

\[
\begin{tikzpicture}
\matrix (m) [matrix of math nodes, 
			nodes in empty cells,
			column sep = 1cm, 
			row sep = .5cm] 
{
& & & & & & & & & \\
& & & & & & & & & \\
& & & & & & & & & \\
& & & & & & & & & \\
& & & & & & & & & \\
& & & & & & & & & \\
& & & & & & & & & \\
& & & & & & & & & \\
& & & & & & & & & \\
& & & & & & & & & \\
& & & & & & & & & \\
& & & & & & & & & \\
& & & & & & & & & \\
& & & & & & & & & \\
& & & & & & & & & \\
& & & & & & & & & \\
& & & & & & & & & \\
& & & & & & & & & \\
};

\doublearrow{arrows = {-}}
{(m-1-4) -- (m-1-6)}

\doublearrow{arrows = {-Implies}}
{(m-1-4) -- (m-2-3)}
\doublearrow{arrows = {-Implies}}
{(m-1-4) -- (m-2-5)}
\doublearrow{arrows = {-Implies}}
{(m-1-6) -- (m-2-5)}
\doublearrow{arrows = {-Implies}}
{(m-1-6) -- (m-2-7)}

\doublearrow{arrows = {-Implies}}
{(m-2-3) -- (m-3-4)}
\doublearrow{arrows = {-Implies}}
{(m-2-5) -- (m-3-4)}
\doublearrow{arrows = {-Implies}}
{(m-2-5) -- (m-4-5)}
\doublearrow{arrows = {-Implies}}
{(m-2-5) -- (m-3-6)}
\doublearrow{arrows = {-Implies}}
{(m-2-7) -- (m-3-6)}

\doublearrow{arrows = {-}}
{(m-3-6) -- (m-3-8)}

\doublearrow{arrows = {-}}
{(m-3-4) -- (m-4-5)}
\doublearrow{arrows = {-Implies}}
{(m-3-6) -- (m-4-7)}
\doublearrow{arrows = {-Implies}}
{(m-3-8) -- (m-4-7)}

\doublearrow{arrows = {-Implies}}
{(m-4-5) -- (m-4-7)}

\path (m-1-4) -- node {$C_{f,g}$} (m-3-4);
\path (m-1-5) -- node [near start]{$\eta_g$} (m-2-5);
\path (m-1-6) -- node {$C_{g,h}$} (m-3-6);
\path (m-3-4) -- node [near end] {$\epsilon$} (m-3-5);
\path (m-3-6) -- node {$\Pi(A)$} (m-4-5);
\path (m-3-7) -- node [near start] {$\eta$} (m-4-7);

\doublearrow{arrows = {-}}
{(m-5-2) -- (m-5-4)}
\doublearrow{arrows = {-}}
{(m-5-4) -- (m-5-6)}

\doublearrow{arrows = {-Implies}}
{(m-5-2) -- (m-6-1)}
\doublearrow{arrows = {-Implies}}
{(m-5-2) -- (m-6-3)}
\doublearrow{arrows = {-Implies}}
{(m-5-4) -- (m-6-3)}
\doublearrow{arrows = {-Implies}}
{(m-5-4) -- (m-6-5)}
\doublearrow{arrows = {-Implies}}
{(m-5-6) -- (m-6-5)}
\doublearrow{arrows = {-Implies}}
{(m-5-6) -- (m-6-7)}

\doublearrow{arrows = {-}}
{(m-6-1) -- (m-6-3)}
\doublearrow{arrows = {-}}
{(m-6-7) -- (m-6-9)}

\doublearrow{arrows = {-Implies}}
{(m-6-3) -- (m-7-4)}
\doublearrow{arrows = {-Implies}}
{(m-6-5) -- (m-7-4)}
\doublearrow{arrows = {-Implies}}
{(m-6-5) -- (m-8-5)}
\doublearrow{arrows = {-Implies}}
{(m-6-5) -- (m-7-6)}
\doublearrow{arrows = {-Implies}}
{(m-6-7) -- (m-7-6)}
\doublearrow{arrows = {-Implies}}
{(m-6-7) -- (m-7-8)}
\doublearrow{arrows = {-Implies}}
{(m-6-9) -- (m-7-8)}

\doublearrow{arrows = {-}}
{(m-7-6) -- (m-7-8)}
\doublearrow{arrows = {-}}
{(m-7-8) -- (m-7-10)}

\doublearrow{arrows = {-}}
{(m-7-4) -- (m-8-5)}
\doublearrow{arrows = {-Implies}}
{(m-7-6) -- (m-8-7)}
\doublearrow{arrows = {-Implies}}
{(m-7-8) -- (m-8-9)}
\doublearrow{arrows = {-Implies}}
{(m-7-10) -- (m-8-9)}

\doublearrow{arrows = {-Implies}}
{(m-8-5) -- (m-8-7)}
\doublearrow{arrows = {-}}
{(m-8-7) -- (m-8-9)}

\path (m-5-2) -- node [near end] {$\epsilon_{f}$} (m-6-2);
\path (m-5-3) -- node [near start] {$\eta_{f}$} (m-6-3);
\path (m-5-4) -- node {$C_{f,g}$} (m-7-4);
\path (m-5-5) -- node [near start]{$\eta_g$} (m-6-5);
\path (m-5-6) -- node {$C_{g,h}$} (m-7-6);
\path (m-6-7) -- node [near end] {$\epsilon$} (m-7-7);
\path (m-6-8) -- node [near start] {$\eta$} (m-7-8);
\path (m-7-4) -- node [near end] {$\epsilon$} (m-7-5);
\path (m-7-6) -- node {$\Pi(A)$} (m-8-5);
\path (m-7-8) -- node {$1$} (m-8-7);
\path (m-7-9) -- node [near start] {$\eta$} (m-8-9);

\doublearrow{arrows = {-}}
{(m-9-3) -- (m-9-5)}

\doublearrow{arrows = {-Implies}}
{(m-9-3) -- (m-10-2)}
\doublearrow{arrows = {-Implies}}
{(m-9-3) -- (m-10-4)}
\doublearrow{arrows = {-Implies}}
{(m-9-5) -- (m-10-4)}
\doublearrow{arrows = {-Implies}}
{(m-9-5) -- (m-10-6)}

\doublearrow{arrows = {-}}
{(m-10-2) -- (m-10-4)}
\doublearrow{arrows = {-}}
{(m-10-6) -- (m-10-8)}

\doublearrow{arrows = {-Implies}, rounded corners}
{(m-10-4) -- (m-12-4.center) -- (m-13-5)}
\doublearrow{arrows = {-Implies}}
{(m-10-4) -- (m-11-5)}
\doublearrow{arrows = {-Implies}}
{(m-10-6) -- (m-11-5)}
\doublearrow{arrows = {-Implies}}
{(m-10-6) -- (m-12-6)}
\doublearrow{arrows = {-Implies}}
{(m-10-6) -- (m-11-7)}
\doublearrow{arrows = {-Implies}}
{(m-10-8) -- (m-11-7)}

\doublearrow{arrows = {-}}
{(m-11-7) -- (m-11-9)}

\doublearrow{arrows = {-}}
{(m-11-5) -- (m-12-6)}
\doublearrow{arrows = {-Implies}}
{(m-11-7) -- (m-12-8)}
\doublearrow{arrows = {-Implies}}
{(m-11-9) -- (m-12-8)}

\doublearrow{arrows = {-Implies}}
{(m-12-6) -- (m-12-8)}

\doublearrow{arrows = {-Implies}, rounded corners}
{(m-13-5) -- (m-13-7.center) -- (m-12-8)}

\path (m-9-3) -- node [near end] {$\epsilon_{f}$} (m-10-3);
\path (m-9-4) -- node [near start] {$\eta_{f}$} (m-10-4);
\path (m-9-5) -- node {$C_{f,h}$} (m-11-5);
\path (m-11-5) -- node [near end] {$\epsilon$} (m-11-6);
\path (m-10-7) -- node [near start] {$\eta$} (m-11-7);
\path (m-11-8) -- node [near start] {$\eta$} (m-12-8);
\path (m-11-7) -- node {$\Pi(B_1)$} (m-12-6);
\path (m-11-5) -- node {$\Pi(B_2)$} (m-13-5);

\doublearrow{arrows = {-Implies}}
{(m-14-5) -- (m-15-4)}
\doublearrow{arrows = {-Implies}}
{(m-14-5) -- (m-15-6)}

\doublearrow{arrows = {-}}
{(m-15-6) -- (m-15-8)}

\doublearrow{arrows = {-Implies}, rounded corners}
{(m-15-4) -- (m-17-4.center) -- (m-18-5)}
\doublearrow{arrows = {-Implies}}
{(m-15-4) -- (m-16-5)}
\doublearrow{arrows = {-Implies}}
{(m-15-6) -- (m-16-5)}
\doublearrow{arrows = {-Implies}}
{(m-15-6) -- (m-17-6)}
\doublearrow{arrows = {-Implies}}
{(m-15-6) -- (m-16-7)}
\doublearrow{arrows = {-Implies}}
{(m-15-8) -- (m-16-7)}

\doublearrow{arrows = {-}}
{(m-16-7) -- (m-16-9)}

\doublearrow{arrows = {-}}
{(m-16-5) -- (m-17-6)}
\doublearrow{arrows = {-Implies}}
{(m-16-7) -- (m-17-8)}
\doublearrow{arrows = {-Implies}}
{(m-16-9) -- (m-17-8)}

\doublearrow{arrows = {-Implies}}
{(m-17-6) -- (m-17-8)}

\doublearrow{arrows = {-Implies}, rounded corners}
{(m-18-5) -- (m-18-7.center) -- (m-17-8)}

\path (m-14-5) -- node {$C_{f,h}$} (m-16-5);
\path (m-16-5) -- node [near end] {$\epsilon$} (m-16-6);
\path (m-15-7) -- node [near start] {$\eta$} (m-16-7);
\path (m-16-8) -- node [near start] {$\eta$} (m-17-8);
\path (m-16-7) -- node {$\Pi(B_1)$} (m-17-6);
\path (m-16-5) -- node {$\Pi(B_2)$} (m-18-5);

\quadarrow{arrows = {-Implies}}
{(m-4-3) -- node [label={[label distance=.1cm]180:$\tau_f^{-1}$}] {} (m-5-3)}
\quadarrow{arrows = {-Implies}}
{(m-4-8) -- node [label={[label distance=.1cm]0:$\tau^{-1}$}] {} (m-5-8)}
\quadarrow{arrows = {-Implies}}
{(m-8-5) -- node [label={[label distance=.1cm]0:$\Pi(A_{f,g,h})$}] {} (m-9-5)}
\quadarrow{arrows = {-Implies}}
{(m-13-5) -- node [label={[label distance=.1cm]0:$\tau_f$}] {} (m-14-5)}
\end{tikzpicture}
\]

Finally, if $(f,g,h)$ is not aspherical, then there exists a $3$-fold minimal branching $(\tilde f, \tilde g, \tilde h)$ of $\B_2^\w$ and $1$-cells $u,v \in \B_1^\w$ such that $(f,g,h) = (u\tilde f v, u\tilde g v,u \tilde h v)$. Then the cell $uC_{\tilde f, \tilde g, \tilde h}v$ shows that $(f,g,h)$ is coherent.
\end{proof}

\subsection{Orderings on the cells of $\F^\w$}
\label{subsec:ordering}
\begin{defn}
Let $E$ be a set. The \emph{set of all finite multi-sets on $E$} is $\mathbb N[E]$, the free commutative monoid over $E$. For every $e \in E$, let $\v_e:\mathbb N[E] \to \mathbb N$ be the morphism of monoids that sends $e$ to $1$ and every other elements of $E$ to $0$.

If $E$ is equiped with a strict ordering $>$, we denote by $>_{m}$ the strict ordering on $\mathbb N[E]$ defined as follows: for every $f,g \in \mathbb N[E]$, one has $f >_m g$ if
\begin{itemize}
\item $f \neq g$
\item For every $e \in E$, if $\v_e(f) < \v_e(g)$, then there exists $e' > e$ such that $\v_{e'}(f) > \v_{e'}(g)$.
\end{itemize}
\end{defn}

\begin{lem}\label{lem:somme_multiset}
Let $E$ be a set and $a \in E$. The set of all $f \in \mathbb N[E]$ such that $f < a$ is equal to the set of all $f \in \mathbb N[E]$ satisfying the following implication for every $b \in E$: \[\v_b(f) > 0 \Rightarrow b < a.\]

In particular, this set is a sub-monoid of $\mathbb N[E]$.
\end{lem}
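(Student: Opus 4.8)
The claim to prove is Lemma~\ref{lem:somme_multiset}: fixing $a \in E$, the set $L_a := \{f \in \mathbb N[E] : f < a\}$ coincides with the set $M_a := \{f \in \mathbb N[E] : \forall b \in E,\ \v_b(f) > 0 \Rightarrow b < a\}$, and consequently $L_a = M_a$ is a submonoid of $\mathbb N[E]$.

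\begin{proof}
Here $a$ is identified with the multiset having $\v_a(a) = 1$ and $\v_b(a) = 0$ for $b \neq a$; recall that $f < a$ means $f <_m a$ in the multiset order.

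Let $f \in M_a$. If $f = a$ then $\v_a(f) = 1 > 0$ would force $a < a$, which is absurd; so $f \neq a$. Now take any $e \in E$ with $\v_e(f) < \v_e(a)$. Then $\v_e(a) \geq 1$, which is only possible for $e = a$ (since $a$ has no other nonzero component), so $\v_e(a) = 1$ and $\v_e(f) = 0$. We must exhibit $e' > e = a$ with $\v_{e'}(f) > \v_{e'}(a)$. If $f$ were the empty multiset, then $f = 0 \neq a$ and $0 <_m a$ holds trivially (every $e$ with $\v_e(f) < \v_e(a)$ is $e = a$, and... ) — more carefully: if $f = 0$, then for $e = a$ we need some $e' > a$ with $\v_{e'}(0) > \v_{e'}(a)$, i.e. $0 > 0$, which fails. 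So we cannot have $f = 0$ unless we argue differently; instead note that when $f \neq 0$, pick any $b$ with $\v_b(f) > 0$; by definition of $M_a$ we have $b < a$, and $\v_b(a) = 0 < \v_b(f)$, so $b$ is not a witness of type "$\v < \v$"; rather we use such a $b$ as the required $e'$: indeed for $e = a$ we need $e' > a$... this does not match since $b < a$. The cleanest route is to verify $f <_m a$ directly using the characterization of the multiset order: $f <_m a$ iff there is a multiset $X \neq \emptyset$ with $X \subseteq a$ (as multisets) and $f = (a - X) + Y$ where every element of $Y$ is $<$ every element of $X$. Taking $X = a$ (so $a - X = 0$) and $Y = f$, the condition becomes: every $b$ with $\v_b(f) > 0$ satisfies $b < a$, which is exactly $f \in M_a$. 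Hence $M_a \subseteq L_a$.

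Conversely, let $f \in L_a$, i.e. $f <_m a$. Suppose for contradiction there is $b \in E$ with $\v_b(f) > 0$ and $b \not< a$. Using again the standard characterization, $f <_m a$ gives $X \neq \emptyset$, $X \subseteq a$, $f = (a - X) + Y$ with all elements of $Y$ strictly below all elements of $X$. Since $X \subseteq a$ and $a$ has support $\{a\}$, necessarily $X = a$ (the only nonempty sub-multiset), so $a - X = 0$ and $f = Y$. Then $\v_b(f) = \v_b(Y) > 0$ means $b$ occurs in $Y$, hence $b < a$, contradicting $b \not< a$. Therefore $L_a \subseteq M_a$, and the two sets are equal.

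Finally, $M_a$ is visibly closed under the monoid operation: if $f, g \in M_a$ and $\v_b(f + g) > 0$, then $\v_b(f) > 0$ or $\v_b(g) > 0$, so $b < a$ in either case; and the empty multiset lies in $M_a$ vacuously. Thus $L_a = M_a$ is a submonoid of $\mathbb N[E]$.
\end{proof}

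The only real subtlety is that the definition of $>_m$ given in the paper is the Dershowitz–Manna style "for every $e$ with fewer copies in $f$, some larger $e'$ has more copies", and one should translate it into the equivalent "remove a nonempty sub-multiset, add only strictly smaller elements" form; once that translation is in hand, the proof is the short argument above. I expect that translation step to be the main (and essentially only) obstacle, and it is routine.
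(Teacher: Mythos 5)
Your proof has a genuine gap, and it originates in a misreading of the definition of $>_m$. The relation $f < a$ means $a >_m f$, so the condition to verify is: for every $e$ with $\v_e(a) < \v_e(f)$ there exists $e' > e$ with $\v_{e'}(a) > \v_{e'}(f)$. You instead examine the elements $e$ with $\v_e(f) < \v_e(a)$ and try to produce $e' > e$ with $\v_{e'}(f) > \v_{e'}(a)$ --- that is the condition for $f >_m a$, the opposite comparison. This is why your direct attempt collapses mid-paragraph, and why you are led to the false intermediate claim that the empty multiset is not below $a$ (in fact $a >_m 0$ holds vacuously, since there is no $e$ with $\v_e(a) < \v_e(0)$). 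With the definition applied in the right direction, the inclusion $M_a \subseteq L_a$ is immediate: for $f \in M_a$ one has $\v_a(f) = 0$, hence $f \neq a$; and for any $e$ with $\v_e(a) < \v_e(f)$ one has $\v_e(f) > 0$, hence $e < a$, and $e' := a$ witnesses the condition since $\v_a(a) = 1 > 0 = \v_a(f)$. This is exactly the paper's argument.

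Having abandoned the direct route, you fall back on the characterization ``$f <_m a$ iff $f = (a - X) + Y$ with $\emptyset \neq X \subseteq a$ and every element of $Y$ below an element of $X$,'' and you explicitly defer its equivalence with the paper's definition as a ``routine translation.'' That equivalence is a genuine theorem about multiset extensions of strict orders (one direction requires a real inductive argument), and it is established nowhere in the paper: the Dershowitz--Manna ``direct'' form is the only definition available, so your proof of both inclusions rests on an unproved external fact. This is disproportionate as well as incomplete, since the lemma is a short computation straight from the given definition; for the converse inclusion $L_a \subseteq M_a$ the paper simply rules out $\v_a(f) \geq 1$ by a two-case analysis and then, for each $b$ in the support of $f$, uses $\v_b(a) = 0 < \v_b(f)$ to extract some $c > b$ with $\v_c(a) > \v_c(f)$, which forces $c = a$ and hence $b < a$. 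Your closing submonoid argument for $M_a$ is fine, but the identification $L_a = M_a$ on which it relies is not yet proved as written.
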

\begin{proof}
Let $f \in \mathbb N[E]$ such that for every $b \in E$ the implication $\v_b(f) > 0 \Rightarrow b < a$ is verified. Let us prove that $f <_m a$. Necessarily $\v_a(f) = 0$, otherwise we would have $a < a$. Thus in particular $f \neq a$. Moreover, let $b \in E$ such that $\v_b(f) > \v_b(a) \geq 0$. By definition of $f$ this implies that $b < a$, and since $0 = \v_a(f) < \v_a(a) = 1$ we get that $f < m_a$.

Conversely, let $f <_m a$. Let us show by contradiction that $\v_a(f) = 0$. If $\v_a(f) \neq 0$, we distinguish two cases:
\begin{itemize}
\item If $\v_a(f) = 1$, then since $f \neq a$, there exists   $b \neq a \in E$ such that $\v_b(f) >0$. Thus because $f < a$, there exists $c > b \in E$ such that $\v_c(f) < \v_c(a)$. So we necessarily have $\v_c(a) \geq 1$, which implies that $c = a$. The condition $\v_c(f) < \v_c(a)$ thus becomes $\v_a (f) < 1$, which contradicts the hypothesis that $\v_a(f) = 1$.
\item If $\v_a(f) > 1$, then there exists $b > a$ such that $\v_b(a) > \v_b(f)$, which is impossible. 
\end{itemize}
Hence necessarily $\v_a(f) = 0$.

Let $b \in E$ such that $\v_b(f) >0$, and let us show that $b < a$. We just showed that $b \neq a$, and so $\v_b(f) > \v_b(a)$. Thus there exists $c>b$ such that $\v_c(a) > \v_c(f)$. In particular this implies $\v_c(a) > 0$. So $c = a$ and finally $a > b$. 
\end{proof}

\begin{lem}
Let $(E,<)$ be a set equipped with a strict ordering. The relation $>_{m}$ is compatible with the monoidal structure on  $\mathbb N (E)$, that is, for every $f,f',g \in \mathbb N(E)$, if $f >_m f'$, then $f + g >_m f' + g$.
\end{lem}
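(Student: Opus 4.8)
The statement is that $>_m$ is compatible with addition: if $f >_m f'$ then $f + g >_m f' + g$. The approach is to unwind the definition of $>_m$ directly. First I would record the immediate observation that adding $g$ does not change differences of coordinates: for every $e \in E$ we have $\v_e(f+g) - \v_e(f'+g) = \v_e(f) - \v_e(f')$, since $\v_e$ is a morphism of monoids. This single identity is what makes the whole thing work.

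Now suppose $f >_m f'$. To show $f + g >_m f' + g$ I need to check the two defining clauses. For the first clause, $f + g \neq f' + g$: this follows from $f \neq f'$ by cancellativity of the free commutative monoid $\mathbb N[E]$ (equivalently, apply some $\v_e$ on which $f$ and $f'$ differ, and use the identity above to see $f+g$ and $f'+g$ still differ there). For the second clause, let $e \in E$ with $\v_e(f+g) < \v_e(f'+g)$. By the identity above this is equivalent to $\v_e(f) < \v_e(f')$. Since $f >_m f'$, there exists $e' > e$ with $\v_{e'}(f) > \v_{e'}(f')$, and again by the identity this gives $\v_{e'}(f+g) = \v_{e'}(f) + \v_{e'}(g) > \v_{e'}(f') + \v_{e'}(g) = \v_{e'}(f'+g)$, as required. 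Hence $f + g >_m f' + g$.

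I do not expect any real obstacle here; the lemma is essentially a bookkeeping consequence of the fact that $\v_e$ is additive and that $\mathbb N[E]$ is cancellative. The only mild care needed is in the first clause, to make sure that "$f \neq f'$" is genuinely preserved under translation — but that is just cancellativity, or a one-line argument via the coordinate maps $\v_e$. No new machinery beyond the definitions already in the excerpt is needed.
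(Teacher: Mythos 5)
Your proof is correct and follows essentially the same route as the paper's: both arguments use the additivity of the coordinate maps $\v_e$ to transfer the two defining clauses of $>_m$ from the pair $(f,f')$ to $(f+g,f'+g)$, with cancellativity handling the inequality $f+g \neq f'+g$. You merely spell out the cancellativity step a little more explicitly than the paper does.
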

\begin{proof}
Let $f,f',g \in \mathbb N(E)$, and suppose that $f >_m f'$. Let us show that $f + g >_m f' + g$. Firstly, $f \neq f'$, hence $f + g \neq f' + g$. 

Let $e \in E$ such that $\v_e(f+g) < \v_e(f'+g)$. Since $\v_e$ is a morphism of monoids, this implies that $\v_e(f) < \v_e(f')$. Hence there exists $e' >e$ such that $\v_{e'}(f) > \v_{e'}(f')$, and so $\v_{e'}(f+g) > \v_{e'}(f'+g)$
\end{proof}

The proof of the following theorem can be found in \cite{BN99}.

\begin{thm}
Let $(E,>)$ be a set equipped with a strict ordering. Then $>_{m}$ is a well-founded ordering if and only if $>$ is.
\end{thm}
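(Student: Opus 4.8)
The plan is to prove the two implications separately, the forward one being essentially immediate and the converse being the substantial part.

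\textbf{From $>_m$ well-founded to $>$ well-founded.} I would use the singleton embedding $a \mapsto a$ of $E$ into $\mathbb N[E]$, which is strictly monotone: if $a > b$ then, by Lemma \ref{lem:somme_multiset} applied with the element $a$, the multiset $b$ (i.e.\ the singleton) satisfies $b <_m a$. Hence any infinite strictly descending sequence $a_1 > a_2 > \cdots$ in $E$ lifts to an infinite strictly descending sequence $a_1 >_m a_2 >_m \cdots$ in $\mathbb N[E]$; contrapositively, well-foundedness of $>_m$ forces well-foundedness of $>$.

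\textbf{From $>$ well-founded to $>_m$ well-founded.} Here I would argue through accessibility (the well-founded part). A preliminary step is to check that the Huet--Oppen style definition of $>_m$ used in the paper coincides with the Dershowitz--Manna description: $g <_m f$ iff $g$ is obtained from $f$ by removing a non-empty sub-multiset $X$ and adding a multiset $Y$ each of whose elements is $<$ some element of $X$. This equivalence is the technical heart of the argument, and I expect it to be the step requiring the most care; once it is available, the structure of the multisets lying below a given one becomes transparent. Writing $\mathrm{Acc}$ for the set of $>_m$-accessible multisets, the main lemma is then: \emph{if $M \in \mathrm{Acc}$ and $a \in E$ is $>$-accessible, then $M + a \in \mathrm{Acc}$}. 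I would prove it by well-founded induction on $a$ for $>$, and, nested inside, by well-founded induction on $M$ for $>_m$. To show $M + a \in \mathrm{Acc}$ one shows every $N <_m M + a$ is accessible; using the Dershowitz--Manna form, $N$ is obtained from $M + a$ by removing a non-empty $X$ and adding elements dominated by $X$, and one splits into the case where the distinguished copy of $a$ survives in $N$ (then $N = M' + a$ with $M' <_m M$, and the inner induction hypothesis applies) and the case where it is removed (then, separating the newly added elements into those dominated inside $M$ and those dominated by $a$, one gets $N = M'' + Y_a$ with $M'' \le_m M$ accessible and every element of $Y_a$ strictly below $a$; adding the elements of $Y_a$ one at a time and invoking the outer induction hypothesis, together with Lemma \ref{lem:somme_multiset} identifying $\{h : h <_m a\}$ as a sub-monoid of $\mathbb N[E]$, yields $N \in \mathrm{Acc}$).

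Finally, I would conclude by induction on the cardinality of a finite multiset. The empty multiset is $>_m$-minimal, hence accessible: if $\emptyset >_m f$ then $f \neq \emptyset$, so $\v_b(f) > 0 = \v_b(\emptyset)$ for some $b$, yet no $b' > b$ can satisfy $0 = \v_{b'}(\emptyset) > \v_{b'}(f)$, a contradiction. And $M + a \in \mathrm{Acc}$ by the main lemma, every element of $E$ being $>$-accessible because $>$ is well-founded. Therefore every element of $\mathbb N[E]$ is accessible, i.e.\ $>_m$ is well-founded. The main obstacle is the equivalence of the two descriptions of $>_m$ and the bookkeeping in the ``removed copy of $a$'' case of the main lemma; the rest is routine well-founded induction.
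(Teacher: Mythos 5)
Your proof is correct, but there is nothing in the paper to compare it with: the author does not prove this theorem and simply refers to \cite{BN99} for it. Your argument is the standard constructive, accessibility-based proof of well-foundedness of the multiset extension (in the style of Buchholz and Nipkow): the forward direction via the singleton embedding is fine, and the key lemma --- if $M$ is $>_m$-accessible and $a$ is $>$-accessible then $M+a$ is $>_m$-accessible, proved by a nested well-founded induction with the case split on whether the distinguished copy of $a$ survives --- is exactly the right bookkeeping. The classical proof in \cite{BN99} instead works with the Dershowitz--Manna formulation and derives well-foundedness from K\"onig's lemma applied to a tree built from a hypothetical infinite descending chain; your route trades that appeal to K\"onig's lemma for the nested induction, which is arguably preferable since it is constructive. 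You are also right to isolate the equivalence between the paper's Huet--Oppen-style definition of $>_m$ and the Dershowitz--Manna description as the step needing care: it does hold for finite multisets over a transitive irreflexive relation (the inclusion of Dershowitz--Manna into Huet--Oppen uses transitivity, via the observation that a finite multiset admits no strictly ascending chain of distinct dominating elements), and some such bridge is unavoidable here because the paper defines $>_m$ in the Huet--Oppen style while both well-foundedness proofs manipulate the removed/added decomposition. One minor remark: in the removed-copy case the appeal to Lemma \ref{lem:somme_multiset} is not really what does the work; what you need is only that each element of $Y_a$ lies strictly below $a$ in $E$, so that the outer induction hypothesis lets you adjoin the elements of $Y_a$ one at a time while preserving accessibility.
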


Since $\A$ is $2$-terminating, the set $\A_1^*$ is equipped with a well-founded ordering $\Rightarrow$. This induces a well founded ordering $\Rightarrow_m$ on $\mathbb N[\A_1^*]$. We now define two applications $\p : \F_2^\w \to \mathbb N[\A_1^*]$ and $\w_\eta : \F_3^\w \to \mathbb N[\A_1^*]$. Using $\Rightarrow_m$, those applications induce well-founded orderings on $\F_2^\w$ and $\F_3^\w$. We then show a number of properties of these applications in preparation for Section \ref{subsec:partial_coh}.

\begin{defn}
We define an application $\p : \F_2^\w \to \mathbb N[\A_1^*]$:
\begin{itemize}
\item for every $f \in \F_2^\w$ of length $1$, we set $\p(f) := \s(f) + \t(f)$,
\item for every composable $f_1,f_2 \in \F_2^\w$, we set $\p(f_1 \star_1 f_2) := \p(f_1) + \p(f_2)$.
\end{itemize} 

For every $f,g \in \F_2^\w$, we set $f > g$ if $\p(f) \Rightarrow_m \p(g)$. The relation $>$ is a well-founded ordering of $\F_2^\w$.
\end{defn}

\begin{defn}
We define an application $\w_\eta : \F_3^\w \to \mathbb N[\A_1^*]$ by setting:
\begin{itemize}
\item For every $f \in \B_2^\w$ of length $1$, $\w_\eta(\eta_f) = \s(f)$.
\item For every $3$-cell $A \in \F_3$ and $u,v \in \A_1^*$, if $A$ is not an $\eta_f$ then $\w_\eta(uAv) = 0$.
\item For every $f_1,f_2 \in \F_2^\w$ and $A \in \F_3^\w$, $\w_\eta(f_1 \star_1 A \star_1 f_2) = \w_\eta(A)$.
\item For every $A_1,A_2 \in \F_3^\w$, $\w_\eta(A_1 \star_2 A_2) = \w_\eta(A_1) + \w_\eta(A_2)$.
\end{itemize}
\end{defn}

\begin{defn}
A product of the form $\bar f \star_1 g \in \F^\w_2$, where $f$ and $g$ are nonempty cells in $\B_2^\w$ is called a \emph{cavity}. It is a \emph{local cavity} if $f$ and $g$ are of length $1$. Let $C_\F$ be the set of all cavities.
\end{defn}

\begin{lem}\label{lem:source_greater}
Let $f,g \in \B_2^\w$. Suppose $f$ is not an identity and $\t(f) = \s(g)$. The following inequality holds: 
\[
\s(f) > \p(g)
\]
\end{lem}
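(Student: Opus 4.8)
The statement is: for $f,g \in \B_2^\w$ with $f$ not an identity and $\t(f) = \s(g)$, we have $\s(f) > \p(g)$, that is $\s(f) \Rightarrow_m \p(g)$ in $\mathbb N[\A_1^*]$. The plan is to prove this by induction on the length of $g$, leaning on the fact that $\A$ is $2$-terminating, which gives the well-founded ordering $\Rightarrow$ on $\A_1^*$, and on Lemma \ref{lem:somme_multiset}, which characterises the multisets $\Rightarrow_m$-below a single generator $a \in \A_1^*$ as exactly those whose support lies $\Rightarrow$-below $a$ (and tells us these form a submonoid).

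First I would reduce to the case where $g$ has length $1$. If $g = g_1 \star_1 g_2$ with $g_1, g_2$ shorter, then $\t(f) = \s(g_1)$ and $\t(g_1) = \s(g_2)$; by the induction hypothesis applied to the (still non-identity) cell $f$ and the cell $g_1$ we get $\s(f) \Rightarrow_m \p(g_1)$. For $g_2$ I would want $\s(f) \Rightarrow_m \p(g_2)$ as well; since $\s(f) \to^*_2 \t(f) = \s(g_1) \to^*_2 \t(g_1) = \s(g_2)$, the $1$-cell $\s(g_2)$ is reachable from $\s(f)$ by a (possibly empty) rewriting sequence, and one handles this by a secondary observation (below) that $\s(f) \Rightarrow_m \p(h)$ whenever $h \in \B_2^\w$ is non-identity with $\s(f) \to^*_2 \s(h)$, applied with $h = g_2$ — here $g_2$ is non-identity because $g$ is. Then, since the set of multisets $\Rightarrow_m$-below the fixed multiset $\s(f)$ is closed under addition by the monoid-compatibility of $\Rightarrow_m$ (the lemma between Lemma \ref{lem:somme_multiset} and the multiset theorem), we conclude $\s(f) \Rightarrow_m \p(g_1) + \p(g_2) = \p(g)$.

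So the core case is $g$ a rewriting step, i.e. $g = u A v$ for $1$-cells $u,v$ and a generating $2$-cell $A \in \A_2$; then $\s(g) = u\,\s(A)\,v$ and $\p(g) = u\,\s(A)\,v + u\,\t(A)\,v$ as elements of $\mathbb N[\A_1^*]$ (each summand being a single generator of $\A_1^*$ — caution: here I am using that $\A_1$ has a single $1$-cell per pair of $0$-cells so that $u\,x\,v$ is literally an element of $\A_1^*$; if $\A_1^*$ is a free monoid on $\A_1$, then I instead read $\s(f)$, $u\,\s(A)\,v$, $u\,\t(A)\,v$ as the corresponding basis elements). Now $\s(f)$ is a single generator $w \in \A_1^*$, and $\p(g)$ is a two-element multiset $\{u\,\s(A)\,v,\ u\,\t(A)\,v\}$ on $\A_1^*$. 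By Lemma \ref{lem:somme_multiset} it suffices to check $u\,\s(A)\,v \Rightarrow w$ and $u\,\t(A)\,v \Rightarrow w$, i.e. that both lie strictly below $w$ in the termination order. But $w = \s(f) \to^+_2 \t(f) = \s(g) = u\,\s(A)\,v$ since $f$ is a non-identity $2$-cell (length $\geq 1$), so $w \Rightarrow u\,\s(A)\,v$; and $u\,\s(A)\,v \to^+_2 u\,\t(A)\,v$ via the single rewriting step $g$, so $u\,\s(A)\,v \Rightarrow u\,\t(A)\,v$, whence $w \Rightarrow u\,\t(A)\,v$ by transitivity. Thus $\p(g) \Rightarrow_m w = \s(f)$, as required.

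It remains to record the auxiliary claim used above: if $h \in \B_2^\w$ is non-identity and $\s(f) \to^*_2 \s(h)$ then $\s(f) \Rightarrow_m \p(h)$. This is proved by exactly the same induction on the length of $h$: for length $1$ one argues as in the core case (the only change is that $w \to^*_2 \s(h)$ rather than $w \to^+_2$, but we still get $w \Rightarrow \s(h) \to^+_2 \t(h)$ because $h$ itself is a non-identity step); for composites one splits and uses submonoid-closure. I expect the main obstacle to be purely bookkeeping: being careful that $\p$ of a general $2$-cell is a \emph{sum over all its length-$1$ factors} of two-element multisets, so that the non-identity hypothesis on $f$ (guaranteeing $\s(f) \to^+_2 \t(f)$, a \emph{strict} rewrite) is what makes every generator appearing in $\p(g)$ strictly smaller than $\s(f)$ — if $f$ were an identity the inequality would fail, which is why that hypothesis is present. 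Everything else is a routine application of Lemma \ref{lem:somme_multiset} and the monoid-compatibility of $\Rightarrow_m$.
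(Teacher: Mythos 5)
Your overall strategy --- induction on the length of $g$, reducing everything via Lemma \ref{lem:somme_multiset} to the statement that every $1$-cell in the support of $\p(g)$ lies strictly below $\s(f)$ --- is exactly the paper's, and your treatment of the length-one case is correct. The one step that fails as written is the auxiliary claim: ``$\s(f) \Rightarrow_m \p(h)$ whenever $h$ is non-identity and $\s(f) \rightarrow^*_2 \s(h)$''. This is false when the rewriting sequence is empty: if $\s(h) = \s(f) = w$ and $h$ is a single rewriting step, then $\p(h) = w + \t(h)$ has $w$ itself in its support, and by Lemma \ref{lem:somme_multiset} one would need $w \Rightarrow w$, contradicting irreflexivity of the strict well-founded order. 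Your parenthetical justification ``we still get $w \Rightarrow \s(h)$ \ldots\ because $h$ itself is a non-identity step'' is the precise point of failure: non-identity of $h$ yields $\s(h) \rightarrow^+_2 \t(h)$ and hence $w \Rightarrow \t(h)$, but it yields nothing strict between $w$ and $\s(h)$.

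The damage is contained, because you only invoke the claim with $h = g_2$, where the rewrite from $\s(f)$ to $\s(g_2)$ factors through the non-identity cell $f$ and is therefore strict. The repair is to state the auxiliary claim with $\rightarrow^+_2$ in place of $\rightarrow^*_2$ --- at which point it is just the lemma itself applied to the pair $(f',h)$ for any non-identity $f'$ witnessing $\s(f) \rightarrow^+_2 \s(h)$. That is exactly how the paper proceeds: in the inductive step it applies the induction hypothesis to the pair $(f \star_1 g_1, g_2)$, whose first component is non-identity because $f$ is, obtaining $\s(f) = \s(f \star_1 g_1) > \p(g_2)$ directly and dispensing with any separate auxiliary statement. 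You also omit the base case where $g$ is an identity, in which $\p(g)=0 < \s(f)$ holds vacuously by Lemma \ref{lem:somme_multiset}; this is harmless but should be said.
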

\begin{proof}
We reason by induction on the length of $g$. If $g$ is empty, then $\p(g) = 0 < \s(f)$.

Otherwise, let us write $g = g_1 \star_1 g_2$, with $g_1$ of length $1$. Then $\p(g) = \p(g_1) + \p(g_2)$ and by induction hypothesis $\p(g_2) < \s(f \star_1 g_1) = \s(f)$. Moreover we have $f: \s(f) \Rightarrow \s(g_1)$ and $f \star_1 g_1 : \s(f) \Rightarrow \t(g_1)$. Hence $\s(f) > \p(g_1), \s(g_2), \t(g_2)$ and, by Lemma \ref{lem:somme_multiset}, we get $\s(f) > \p(g_1)+ \s(g_2)+ \t(g_2) = \p(g)$.

\end{proof}

\begin{lem}\label{lem:completion_decroissante}
Let $f_1,f_2,g_1,g_2 \in \B_2^\w$, with $f_1$ and $f_2$ non-empty and of same source $u$. For every $3$-cell $A : \bar f_1 \star_1 f_2 \Rrightarrow g_1 \star_1 \bar g_2 \in \F_3^\w$, the following inequalities hold:
\[
\p(\s(A)) > u > \p(\t(A)).
\]

In particular for every cell $C_{f,g}$, we have $\s(C_{f,g}) > \t(C_{f,g})$.
\end{lem}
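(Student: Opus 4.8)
\textbf{Proof plan for Lemma \ref{lem:completion_decroissante}.}

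The plan is to prove the two inequalities $\p(\s(A)) > u$ and $u > \p(\t(A))$ simultaneously by induction on the length of the $3$-cell $A$, where $A : \bar f_1 \star_1 f_2 \Rrightarrow g_1 \star_1 \bar g_2$ with $f_1, f_2$ non-empty of common source $u$. For the \emph{base case} (length $0$), $A$ is an identity, which forces $\bar f_1 \star_1 f_2 = g_1 \star_1 \bar g_2$ as a $2$-cell; since $f_1$ is non-empty, Lemma \ref{lem:source_greater} gives $u = \s(f_1) > \p(f_2)$, and since $f_1 : u \Rightarrow \t(f_1)$ is a non-identity rewriting path with $\t(f_1) = \t(\bar f_1) = \s(f_1)$... more carefully: we have $\p(\bar f_1 \star_1 f_2) = \p(f_1) + \p(f_2)$, and using that $f_1$ is non-empty one bounds $\p(f_1) = \s(f_1) + \t_1 + \dots$ against $u$ — here I would invoke Lemma \ref{lem:source_greater} applied to an initial rewriting step of $f_1$ (its source is $u$) to get $u = \s(f_1) > \p(\text{rest of } f_1 \star_1 f_2)$, hence $\p(\s(A)) > u$; and symmetrically for the target side, using that $g_2$ (or $g_1$) is related to $u$ through the $2$-cells in play. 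The precise bookkeeping of which of $f_1, f_2, g_1, g_2$ may be empty needs care, but the only genuinely used fact is Lemma \ref{lem:source_greater} together with Lemma \ref{lem:somme_multiset} (sums of things $< u$ stay $< u$).

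For the \emph{inductive step}, I would distinguish on the top-dimensional structure of $A$. If $A = A_1 \star_2 A_2$ is a $\star_2$-composite of shorter $3$-cells, then $\s(A_2) = \t(A_1)$ and transitivity of $>_m$ closes the case once we know each factor has source/target of the correct shape — but the intermediate $2$-cell need not be a cavity of the form $\bar{(\cdot)} \star_1 (\cdot)$, so in fact the cleaner route is: reduce to $A$ a single rewriting step $A = w_1 \star_1 (u' D v') \star_1 w_2$ with $D \in \F_3$ a generator (one of $\eta_f$, $\epsilon_f$, $C_{f,g}$, or an $A_{f,g}$-type cell), observe how source and target of such a step compare in $\mathbb N[\A_1^*]$, and then propagate through $\star_2$-composition using that the $\star_2$-source of $A$ is $\bar f_1 \star_1 f_2$ and its $\star_2$-target is $g_1 \star_1 \bar g_2$. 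The key computations are: $\s(\eta_f) = 1_{\s(f)}$ has $\p = 0$ while $\t(\eta_f) = f \star_1 \bar f$ has $\p = \p(f) + \p(f)$ — so $\eta_f$ \emph{increases} $\p$; dually $\epsilon_f$ decreases it; and for $C_{f,g} : \bar f \star_1 g \Rrightarrow f' \star_1 \bar g'$ coming from a branching $(f,g)$ of common source $w$, Lemma \ref{lem:source_greater} gives $w > \p(\bar f \star_1 g) / 2$-type bounds on both sides. The point is that although individual steps can go up or down, the \emph{total} source $\bar f_1 \star_1 f_2$ and total target $g_1 \star_1 \bar g_2$ both satisfy $\p < u$ by Lemma \ref{lem:source_greater} alone (applied to $f_1$, resp. to $g_1$ or $g_2$), and the non-trivial content is that $\p(\s(A)) > u$ — i.e., that the source genuinely exceeds $u$, not merely that it is some multiset. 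That lower bound $\p(\s(A)) > u$ comes from $f_1$ being non-empty: $\p(\bar f_1 \star_1 f_2)$ contains $\p(f_1) \ni u$ as the source of $f_1$'s first step, plus strictly more, so in the multiset order it dominates $u$.

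Once both inequalities are established, the final sentence ``in particular for every cell $C_{f,g}$ we have $\s(C_{f,g}) > \t(C_{f,g})$'' follows immediately: by definition $C_{f,g} : \bar f \star_1 g \Rrightarrow f' \star_1 \bar g'$ with $f, g$ rewriting steps of common source $u$, so the lemma applies verbatim to $A = C_{f,g}$ (taking $f_1 = f$, $f_2 = g$, $g_1 = f'$, $g_2 = g'$), yielding $\p(\s(C_{f,g})) > u > \p(\t(C_{f,g}))$, hence $\s(C_{f,g}) > \t(C_{f,g})$ for the induced order on $\F_2^\w$.

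\textbf{Main obstacle.} The delicate point is the lower bound $\p(\s(A)) > u$ (and the symmetric upper bound $u > \p(\t(A))$): unlike Lemma \ref{lem:source_greater}, which only gives $u > \p(g)$ for $g$ emanating from $\t(f)$, here I must compare the source cavity $\bar f_1 \star_1 f_2$ — whose $\p$-value $\p(f_1) + \p(f_2)$ includes the ``backward'' leg $f_1$ — against $u$ in the multiset order, and show it dominates. This requires unwinding $\p(f_1)$: writing $f_1$ as a composite of rewriting steps starting at $u$, its first step $f_1^{(1)} : u \to u'$ contributes $u + u'$ to $\p$, and $u' , \p(\text{remainder}) < u$ by Lemma \ref{lem:source_greater}, so $\p(f_1 \star_1 f_2) = u + (\text{stuff} < u)$, which is $>_m u$ by Lemma \ref{lem:somme_multiset}. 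Making this airtight when $f_1$ might have length exactly $1$, or when various of the four $2$-cells degenerate, is the bookkeeping-heavy part; everything else is transitivity of $>_m$ and monoidal compatibility.
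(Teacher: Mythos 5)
The statement here requires no induction on $A$ at all: the $2$-source and $2$-target of $A$ are \emph{given} in the hypothesis as $\bar f_1 \star_1 f_2$ and $g_1 \star_1 \bar g_2$, so the two inequalities are purely assertions about these two $2$-cells, and the paper's proof is a two-line direct computation. For the first inequality, $\p(\s(A)) = \p(f_1) + \p(f_2)$ contains at least two copies of $u$ (each $f_i$ is non-empty with source $u$), hence dominates $u$ in the multiset order. For the second, globularity of $A$ gives $\t(f_1) = \s(g_1)$ and $\t(f_2) = \s(g_2)$, so Lemma \ref{lem:source_greater} applied to the pairs $(f_1,g_1)$ and $(f_2,g_2)$ yields $u > \p(g_1)$ and $u > \p(g_2)$, and Lemma \ref{lem:somme_multiset} gives $u > \p(g_1)+\p(g_2) = \p(\t(A))$. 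Your ``main obstacle'' paragraph and parts of your base case contain essentially these computations, so the correct argument is present in your proposal — but it is buried inside scaffolding that does not work and that you yourself half-abandon.

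Concretely, the inductive step as you describe it would fail: as you observe, $\eta_f$ \emph{increases} $\p$ and $\epsilon_f$ decreases it, so $\p$ is not monotone along $A$ and no bound can be propagated through $\star_2$-composition; moreover the intermediate $2$-cells of a factorisation $A = A_1 \star_2 A_2$ need not have the cavity shape $\bar{(\cdot)} \star_1 (\cdot)$, so the induction hypothesis does not even apply to the factors. You notice both problems and pivot to the direct computation, which is the right move, but the write-up then contradicts itself (you assert that the source satisfies $\p < u$ ``by Lemma \ref{lem:source_greater} alone'' two clauses before asserting $\p(\s(A)) > u$). The one substantive point left vague is the justification for the target inequality: you say $g_1$, $g_2$ are ``related to $u$ through the $2$-cells in play,'' but the precise fact needed is that globularity forces $\t(f_i) = \s(g_i)$, which is exactly the hypothesis of Lemma \ref{lem:source_greater} with $\s(f_i) = u$. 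Strip out the induction, state that globularity condition, and your proposal collapses to the paper's proof.
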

\begin{proof}
Considering the first inequality, we have $\p(\s(A)) = \p(f_1) + \p(g_2) \geq 2u > u$.

Considering the second one, using Lemma \ref{lem:source_greater}, we have the inequalities  $u = \s(f_1) > \p(g_1)$ and $u = \s(f_2) > \p(g_2)$. By \ref{lem:somme_multiset}, we then have $u > \p(g_1) + \p(g_2) = \p(\t(A))$.
\end{proof}

\begin{defn}
Let $h \in \F_2^\w$. A factorisation $h = h_1 \star_1 \bar f_1 \star_1 f_2 \star_1 h_2$ of $h$, with $f_1,f_2 \in \B_2^\w$ of length $1$ and $h_1,h_2 \in \F_2^\w$ is called \emph{a cavity-factorisation} of $h$. Thus a cavity-factorisation is represented as follows:

\[
\begin{tikzpicture}
\matrix (m) [matrix of math nodes, 
			nodes in empty cells,
			column sep = 1cm, 
			row sep = .5cm] 
{
& & & & & & \\
& & & & & & \\
};
\doublearrow{arrows={Implies-Implies}}
{(m-2-1) -- node [above] {$h_1$} (m-2-3)}
\doublearrow{arrows={-Implies}}
{(m-1-4) -- node [above left] {$f_1$} (m-2-3)}
\doublearrow{arrows={-Implies}}
{(m-1-4) -- node [above right] {$f_2$} (m-2-5)}
\doublearrow{arrows={Implies-Implies}}
{(m-2-5) -- node [above] {$h_2$} (m-2-7)}

\end{tikzpicture}
\]
\end{defn}

\begin{lem}\label{lem:facto_creux}
Let $h \in \F_2^\w$ be a $2$-cell which is not an identity, and whose source and target are a normal form for $\A_2$. Then there exists a cavity-factorisation of $h$. 
\end{lem}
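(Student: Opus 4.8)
The statement to prove is Lemma~\ref{lem:facto_creux}: if $h \in \F_2^\w$ is not an identity and both $\s(h)$ and $\t(h)$ are normal forms for $\A_2$, then $h$ admits a cavity-factorisation $h = h_1 \star_1 \bar f_1 \star_1 f_2 \star_1 h_2$ with $f_1,f_2 \in \B_2^\w$ of length $1$. The key point is that $h$ lives in $\F_2^\w = \D_2^\w$, which is the free $2$-white-category on the $2$-cells $\B_2 \cup \bar{\B_2}$ (since passing from $\D$ to $\E$ to $\F$ does not change cells in dimension~$2$). So $h$ is a $\star_1$-composite of factors each of which is either a whiskered generator of $\B_2$, i.e.\ of the form $u\,f\,v$ with $f \in \B_2$, $u,v \in \B_1^\w$, or a whiskered formal inverse $u\,\bar f\,v$. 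I would first normalise this presentation: reassociate $h$ as a $\star_1$-composite $h = k_1 \star_1 k_2 \star_1 \cdots \star_1 k_m$ where each $k_i$ is a single whiskered letter (a generator or its inverse, with context), and $m \geq 1$ because $h$ is not an identity.

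The plan is then to argue that among the $k_i$ there must be a ``descent followed by ascent'', i.e.\ an index~$i$ with $k_i$ a whiskered inverse $u_i\,\bar f_i\,v_i$ immediately followed by $k_{i+1}$ a whiskered generator $u_{i+1}\,f_{i+1}\,v_{i+1}$ — this is exactly what produces a local cavity $\bar f_i \star_1 f_{i+1}$ after we record the surrounding context in $h_1$ and $h_2$. Suppose no such index existed: then once the sequence $k_1,\dots,k_m$ uses an inverse letter, it uses only inverse letters from then on (no generator can follow an inverse). Reading from the left, there is an initial segment $k_1,\dots,k_j$ consisting only of whiskered generators (possibly $j = 0$) and a final segment $k_{j+1},\dots,k_m$ consisting only of whiskered inverses (possibly empty). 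Composing the first segment gives a $2$-cell $g \in \B_2^\w$ with $\s(g) = \s(h)$, and composing the second gives a $2$-cell $g' \in \B_2^\w$ with $\t(g') = \t(h)$ (reading the inverses forwards), so that $\s(h) \to \t(g)$ via $g$ in $\B_2^\w$ and $\s(g') \to \t(h)$ via $g'$ in $\B_2^\w$. Now I use that $\s(h)$ and $\t(h)$ are normal forms for $\A_2$: under the white-functor $F : \B^\w \to \A^*$ from Section~\ref{subsec:affaiblissement}, a $2$-cell of $\B_2^\w$ with non-identity length has a source that is \emph{not} a normal form (a rewriting step strictly decreases for $\rightarrow^*_2$, by $2$-termination), so $g$ must be an identity, forcing $j = 0$; symmetrically $g'$ must be an identity, forcing the final segment to be empty and $m = 0$. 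That contradicts $h$ not being an identity. Hence a descent-then-ascent index exists, and extracting it yields the required cavity-factorisation.

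The one subtlety — and the main obstacle — is bookkeeping the $1$-cell contexts so that the extracted factorisation genuinely has the shape $h_1 \star_1 \bar f_1 \star_1 f_2 \star_1 h_2$ with $f_1, f_2$ of length~$1$ (as generators of $\B_2$, not merely as whiskered cells). At the chosen index we have $k_i = u_i\,\bar f_i\,v_i$ and $k_{i+1} = u_{i+1}\,f_{i+1}\,v_{i+1}$, and matching of $1$-sources and targets in the $2$-white-category forces the outer context to be common: writing $w$ for the longest common prefix and $w'$ for the longest common suffix of the $1$-cell data, one gets $k_i \star_1 k_{i+1} = w \star_0 (\bar f_i \star_1 f_{i+1}) \star_0 w'$ after absorbing the inner context, using the interchange-type equalities that \emph{do} hold in a white-category (namely $u \star_0 (v \star_0 \_) = (u\star_0 v)\star_0\_$ and the left/right unit laws). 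I would then set $h_1 := (k_1 \star_1 \cdots \star_1 k_{i-1}) \star_1 (w \star_0 \bar f_i \star_0 w')$'s left part and $h_2$ the symmetric right part, more precisely $h_1 := (k_1 \star_1 \cdots \star_1 k_{i-1})$ composed with the whiskering that brings $\bar f_i$ to context-free form and $h_2 := $ the analogous tail, so that $h = h_1 \star_1 \bar f_1 \star_1 f_2 \star_1 h_2$ with $f_1 := f_i$, $f_2 := f_{i+1}$ of length~$1$. Everything else is routine induction on~$m$ or direct reading of the normalised word; the content is entirely in the two observations that (i) a white-category $2$-cell is a word in whiskered letters and their inverses, and (ii) normality of $\s(h)$ and $\t(h)$ kills any purely-generator prefix or purely-inverse suffix.
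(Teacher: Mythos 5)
Your proof is correct and takes essentially the same route as the paper: the paper writes $h$ in the canonical alternating form $\bar g_1 \star_1 g_2 \star_1 \cdots \star_1 \bar g_{2n-1} \star_1 g_{2n}$, uses normality of $\s(h)$ and $\t(h)$ to force $g_1$ and $g_{2n}$ (hence $g_1$ and $g_2$) to be non-identities, and peels off the first length-$1$ factors of $g_1$ and $g_2$ to form the cavity — which is exactly the ``inverse letter followed by forward letter'' junction you locate. Two small points to tidy: the forward reading $g'$ of the purely-inverse suffix has \emph{source} $\t(h)$ (not target), which is what lets normality of $\t(h)$ kill it; and the definition of cavity-factorisation only requires $f_1,f_2$ to be length-$1$ cells of $\B_2^\w$ (i.e.\ whiskered generators), so the context-stripping ``subtlety'' you flag at the end is not actually needed.
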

\begin{proof}
By definition of $\F_2^\w$, there exist $n \in \mathbb N^*$ and $g_1,\ldots ,g_{2n} \in \B_2^\w$ all not identities, except possibly $g_1$ or $g_{2n}$, such that $h = \bar g_1 \star_1 g_2 \star_1 \ldots \star_1 \bar g_{2n-1} \star_1 g_{2n}$. 

Let us show that $g_1$ and $g_{2n}$ are not identities:
\begin{itemize}
\item If $g_1$ is an identity, then since $h$ isn't, either $n \geq 2$ or $n=1$ and $g_{2n}$ is not an identity. In both cases $g_2$ is of length at least $1$, and has $\s(h)$ as target, which contradicts the fact that $\s(h)$ is a normal form for $\A_2$.
\item The case where $g_{2n}$ is an identity is symmetric.
\end{itemize}

Therefore the $2$-cells $g_1$ and $g_2$ are of length at least $1$. So we can write $g_1 = f_1 \star_1 g'_1$ and $g_2 = f_2 \star_1 g'_2$, with $f_1,f_2 \in \B_2^\w$ of length $1$. Let $h_1 := \bar g'_1 $ and $h_2 := g'_2 \star_1 \bar g_3 \star_1 g_4 \star_1 \ldots \star_1 \bar g_{2n-1} \star_1 g_{2n}$. We finally get: $h = h_1 \star_1 \bar f_1 \star_1 f_2 \star_1 h_2$.
\end{proof}

\begin{lem}\label{lem:confluence_eta_zero}
Let $h \in \F_2^\w$ be a $2$-cell of source and target $\hat u$, a normal form for $\A_2$. There exists a $3$-cell $A : h \Rrightarrow 1_{\hat u}$ such that $\w_\eta(A) = 0$.
\end{lem}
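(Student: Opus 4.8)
The statement asks, for a $2$-cell $h \in \F_2^\w$ whose source and target both equal a fixed normal form $\hat u$, to produce a $3$-cell $A : h \Rrightarrow 1_{\hat u}$ with $\w_\eta(A) = 0$. The plan is to proceed by a well-founded induction using the ordering $>$ on $\F_2^\w$ induced by $\p$ (via $\Rightarrow_m$ on $\mathbb N[\A_1^*]$), which is well-founded since $\A$ is $2$-terminating. The base case is $h = 1_{\hat u}$, where we take $A = 1_{1_{\hat u}}$, whose $\w_\eta$-weight is $0$ by definition since it contains no $\eta_f$.

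\textbf{Induction step.} Suppose $h$ is not an identity. Since $\s(h) = \t(h) = \hat u$ is a normal form for $\A_2$, Lemma \ref{lem:facto_creux} provides a cavity-factorisation $h = h_1 \star_1 \bar f_1 \star_1 f_2 \star_1 h_2$, with $f_1, f_2 \in \B_2^\w$ of length $1$ sharing a common source, say $v$, and $h_1, h_2 \in \F_2^\w$. The local branching $(f_1, f_2)$ of $\B_2^\w$ has an associated $3$-cell $C_{f_1,f_2} : \bar f_1 \star_1 f_2 \Rrightarrow f_1' \star_1 \bar f_2' \in \F_3^\w$ (this is exactly the $C_{f,g}$ notation from Section \ref{subsec:retournement}, extended in Section \ref{subsec:local_coherence}, and note $\w_\eta(C_{f_1,f_2}) = 0$ as long as $C_{f_1,f_2}$ is not an $\eta$; when $(f_1,f_2)$ is aspherical one has $C_{f_1,f_2} = \epsilon_{f_1}$, which still has $\w_\eta = 0$). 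Whiskering by $h_1$ and $h_2$ gives a $3$-cell $h_1 \star_1 C_{f_1,f_2} \star_1 h_2 : h \Rrightarrow h'$, where $h' = h_1 \star_1 f_1' \star_1 \bar f_2' \star_1 h_2$, and $\w_\eta$ of this whiskered cell is still $0$. The key point is that $h'$ is strictly smaller than $h$ for $>$: by Lemma \ref{lem:completion_decroissante} we have $\p(\bar f_1 \star_1 f_2) \Rightarrow_m v \Rightarrow_m \p(f_1' \star_1 \bar f_2')$, so $\p(h') = \p(h_1) + \p(f_1' \star_1 \bar f_2') + \p(h_2) \Rightarrow_m \p(h_1) + \p(\bar f_1 \star_1 f_2) + \p(h_2) = \p(h)$, using that $\Rightarrow_m$ is compatible with the monoid structure of $\mathbb N[\A_1^*]$. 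Moreover $h'$ still has source and target $\hat u$ (whiskering and $C$-cells do not change the $1$-source and $1$-target of $h$, and $\hat u$ is already a normal form). By the induction hypothesis applied to $h'$, there is a $3$-cell $A' : h' \Rrightarrow 1_{\hat u}$ with $\w_\eta(A') = 0$. Then $A := (h_1 \star_1 C_{f_1,f_2} \star_1 h_2) \star_2 A'$ satisfies $A : h \Rrightarrow 1_{\hat u}$ and $\w_\eta(A) = \w_\eta(h_1 \star_1 C_{f_1,f_2} \star_1 h_2) + \w_\eta(A') = 0 + 0 = 0$, completing the induction.

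\textbf{Main obstacle.} The delicate point is making sure the strict decrease in $>$ is genuine and that the reduced cell $h'$ still has both its $1$-source and $1$-target equal to $\hat u$ so that the induction hypothesis applies — this is where one must carefully invoke Lemma \ref{lem:completion_decroissante} (the inequality $\p(\s(C_{f,g})) > \p(\t(C_{f,g}))$) together with the monotonicity of $\Rightarrow_m$ under addition, and observe that $C$-cells leave $1$-boundaries untouched. A secondary check is the bookkeeping on $\w_\eta$: one must confirm that the $3$-cells $C_{f,g}$ occurring here never contribute to $\w_\eta$, i.e. that they are never of the form $\eta_f$ (indeed $C_{f,g}$ is, by its definition in Section \ref{subsec:retournement} and its extension, either an $A_{f,g}$-type cell, an $\epsilon_f$ when $f = g$, or a whiskering thereof, none of which is an $\eta$), so that the whiskered cell has $\w_\eta = 0$.
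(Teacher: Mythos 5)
Your proof is correct and follows essentially the same route as the paper's: well-founded induction on $h$ via the ordering induced by $\p$, with the base case $h = 1_{\hat u}$, and in the inductive step a cavity-factorisation from Lemma \ref{lem:facto_creux}, the cell $C_{f_1,f_2}$ (of $\w_\eta$-weight $0$), the strict decrease from Lemma \ref{lem:completion_decroissante}, and composition with the cell given by the induction hypothesis. The extra checks you record (that $C_{f_1,f_2}$ is never an $\eta$-cell, even in the aspherical case where it is $\epsilon_{f_1}$, and that the $1$-boundaries are preserved) are correct and only make explicit what the paper leaves implicit.
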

\begin{proof}
We reason by induction on $h$ using the ordering $>$. If $h$ is minimal, then $h = 1_{\hat u}$ and we can set $A := 1_h$.

Otherwise by Lemma \ref{lem:facto_creux} there exists a cavity-factorisation $h = h_1 \star_1 f_1 \star_1 f_2 \star_1 h_2$ of $h$. Let $A_1:= C_{f_1,f_2}$: we have $\w_\eta(A_1) = 0$ and by Lemma \ref{lem:completion_decroissante}, $\s(A_1) > \t(A_1)$. Since the ordering is compatible with composition, we get $h > h_1 \star_1 \t(A_1) \star_1 h_2$. By induction hypothesis, there exists a $3$-cell $A_2:h_1 \star_1 \t(A_1) \star_1 h_2\Rrightarrow 1_{\hat u} \in \F_3^\w$ such that $\w_\eta(A_2) = 0$.

Let $A := (h_1 \star_1 A_1 \star_1 h_2) \star_1 A_2$. We have $\w_\eta(A) = \w(h_1 \star_1 A_1 \star_1 h_2) + \w(A_2) = \w(A_1) + 0 = 0$.
\end{proof}

\begin{lem}\label{lem:facto_completion}
Let $h \in \F_2^\w$ of source and target $\hat u$ a normal form for $\A_2$, and $A: h \Rrightarrow 1_{\hat u} \in \F_3^\w$. For every cavity-factorisation $h = h_1 \star_1 \bar f_1 \star_1 f_2 \star_1 h_2$, there exists a factorisation of $A = (h_1 \star_1 A_1 \star_1 h_2) \star_2 A_2$, with $A_1,A_2 \in \F_3^\w$, and either $A_1 = C_{f_1,f_2}$ or $A_1 = \bar f_1 \star_1 \eta_{f_3} \star_1 f_2$, with $f_3 \in \B_2^\w$ of length $1$.
\end{lem}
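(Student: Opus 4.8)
The statement is a structural decomposition lemma: given a $3$-cell $A : h \Rrightarrow 1_{\hat u}$ and a chosen cavity-factorisation of $h$, we want to peel off a first step $A_1$ that acts on the designated cavity $\bar f_1 \star_1 f_2$, leaving a remainder $A_2$. The natural proof is by induction on the length of $A$ as a composite of $3$-cells of length $1$. The key point to exploit is that $\t_2(A) = 1_{\hat u}$ has $\hat u$ a normal form, so $h$ cannot simply be rewritten away by $\B$-rules at the cavity: every rewriting step applied to $h$ that touches the $\bar f_1$ must be (the context of) one of the generating $3$-cells of $\F$, and the list of those is explicitly known from the lemma in Section~\ref{subsec:local_coherence} describing $\E_3$ (hence $\F_3$): the cells $\eta_g$, $\epsilon_g$, and the $C_{f,g}$.

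\textbf{Key steps.} First I would set up the induction: write $A = A' \star_2 A''$ with $A'$ of length $1$ (a single rewriting step in dimension $3$). Since $h$ is not an identity (it has a cavity-factorisation), $A$ is not the identity, so this is possible. Second, I would do a case analysis on where the rewriting step $A'$ occurs relative to the distinguished cavity $\bar f_1 \star_1 f_2$ sitting inside $h = h_1 \star_1 \bar f_1 \star_1 f_2 \star_1 h_2$. If $A'$ is disjoint from the cavity (it is a rewriting step inside $h_1$ or inside $h_2$, or more precisely of the form $h'_1 \star_1 \bar f_1 \star_1 f_2 \star_1 h'_2$ with the active generator in $h_1$ or $h_2$), then the cavity $\bar f_1 \star_1 f_2$ survives in $\t_2(A')$, I push the cavity-factorisation forward along $A'$, apply the induction hypothesis to $A'' : \t_2(A') \Rrightarrow 1_{\hat u}$ with this new cavity-factorisation, obtaining $A'' = (h''_1 \star_1 A_1 \star_1 h''_2) \star_2 A_2$, and then absorb $A'$ into the remainder, i.e. rewrite $A = (h_1 \star_1 A_1 \star_1 h_2) \star_2 (A' \star_2 \text{(rest)})$ using the exchange law in the white-category $\F^\w$ (which holds between $\star_1$ and $\star_2$, only $\star_0$-exchange being weakened). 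If instead $A'$ does touch the cavity, then by inspection of the generating $3$-cells of $\F$ whose source meets a subword $\bar f_1 \star_1 f_2$ with $f_1, f_2$ of length $1$: the only possibilities are $A' = u(C_{f_1,f_2})v$ in a context making it exactly $C_{f_1,f_2}$ on the distinguished cavity (up to the surrounding $h_1, h_2$), or $A' = \bar f_1 \star_1 \eta_{f_3} \star_1 f_2$ creating a new connection in the middle — note $\epsilon_{f_1}$ would require $f_2$ to begin with a copy matching $\bar f_1$, and if it does that is the case $C_{f_1,f_2}$ with $f_2$ of length $1$ forcing $f_1 = f_2$ and $C_{f_1,f_1} = \epsilon_{f_1}$, which is subsumed. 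In this case I take $A_1 := A'$ (restricted appropriately) and $A_2 := A''$ together with whatever remains of the context, again reassociating via the exchange law so that $A_1$ sits precisely as $h_1 \star_1 A_1 \star_1 h_2$.

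\textbf{Main obstacle.} The delicate part is the bookkeeping in the disjoint case: after pushing the cavity-factorisation through $A'$, one must check that $A'$ genuinely commutes past the chosen $A_1$ via the $\star_0$-independent exchange law, i.e. that the supports are disjoint in the $\star_1$-direction so that the white-category axioms (associativity of $\star_1$, interchange of $\star_1$ and $\star_2$) legitimately reorder the composite into $(h_1 \star_1 A_1 \star_1 h_2) \star_2 A_2$. The danger is a circular-looking situation where $A'$ itself acts on $\bar f_1$ or $f_2$ but is not one of the generators touching the cavity in an obvious way — this is why the case analysis must be anchored on the \emph{complete} list of generating $3$-cells of $\F$ from Section~\ref{subsec:retournement}, and why it is essential that $f_1, f_2$ have length $1$ so that a rewriting step touching the cavity can only be a generator applied with trivial internal context. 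Once that enumeration is pinned down, the remaining verifications are routine manipulations with the white-category structure, and I would present them compactly rather than diagrammatically.
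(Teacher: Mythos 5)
Your proposal is correct and follows essentially the same route as the paper: induction on the length of $A$, peeling off the first length-one step, splitting on whether that step already realises one of the two allowed shapes on the distinguished cavity or is disjoint from it, and in the disjoint case pushing the cavity-factorisation forward, invoking the induction hypothesis, and commuting the first step past $A_1$ via the $\star_1$/$\star_2$ interchange. Your explicit enumeration of which generators of $\F_3$ can meet the subword $\bar f_1 \star_1 f_2$ (with $\epsilon_{f_1}$ subsumed as $C_{f_1,f_1}$) is in fact spelled out more carefully than in the paper, which leaves that dichotomy implicit.
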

\begin{proof}
We reason by induction on the length of $A$. If $A$ is of length $0$, then there is no cavity-factorisation of $h  = 1_{\hat u}$ and the result holds.

If $A$ is not of length $0$, let $h = h_1 \star_1 \bar f_1 \star_1 f_2 \star_1 h_2$ be a cavity-factorisation of $h$. Let us write $A = B \star_1 C$, where $B$ is of length $1$. If $B$ is not of the required form, then either $B = B' \star_1 \bar f_1 \star_1 f_2 \star_1 h_2$, or $B = h_1 \star_1 \bar f_1 \star_1 f_2 \star_1 B'$. Let us treat the first case, the second being symmetrical. The source of $C$ admits a cavity-factorisation $\s(C) = \t(B') \star_1 \bar f_1 \star_1 f_2 \star_1 h_2$. By induction hypothesis, we can factorise $C$ as follows:
\[
C = (h'_1 \star_1 A_1 \star_1 h_2) \star_2 C',
\]
with $A_1 = C_{f_1,f_2}$ or $A_1 = \bar f_1 \star_1 \eta_{f_3} \star_1 f_2$. Let $A_2 := (B'_1 \star_1 \t(A_1) \star_1 h_2) \star_2 C_2$: we then have $A = (h_1 \star_1 A_1 \star_1 h_2) \star_2 A_2$.

\[
\begin{tikzpicture}
\matrix (m) [matrix of math nodes, 
			nodes in empty cells,
			column sep = 1cm, 
			row sep = .5cm] 
{
& & & & & & \\
& & & & & & \\
& & & & & & \\
& & & & & & \\
};
\doublearrow{arrows={Implies-Implies}}
{(m-2-1) to [bend left] node [above] {$h_1$} (m-2-3)}
\doublearrow{arrows={Implies-Implies}}
{(m-2-1) to [bend right] (m-2-3)}
\doublearrow{arrows={-Implies}}
{(m-1-4) -- node [above left] {$f_1$} (m-2-3)}
\doublearrow{arrows={-Implies}}
{(m-1-4) -- node [above right] {$f_2$} (m-2-5)}
\doublearrow{arrows={Implies-Implies}}
{(m-2-5) -- node [above] {$h_2$} (m-2-7)}

\doublearrow{arrows={Implies-Implies}, rounded corners}
{(m-2-3) -- (m-3-4.center) -- (m-2-5)}

\doublearrow{arrows={Implies-Implies}, rounded corners}
{(m-2-1) -- (m-4-2.center) --(m-4-6.center) -- (m-2-7)}

\path (m-2-1) -- node {$B$} (m-2-3);
\path (m-1-4) -- node {$A_1$} (m-3-4);
\path (m-3-4) -- node {$C'$} (m-4-4);

\end{tikzpicture}
\]
\end{proof}

\begin{lem}\label{lem:eta_borne}
Let $h \in \F_2^\w$ and $u \in \A_1^*$ such that $u > \p(h)$, $u > \s(h)$ and $u > \t(h)$. For every $3$-cell $A \in \F_3^\w$ of source $h$, the inequality $u > \w_{\eta}(A)$ holds.
\end{lem}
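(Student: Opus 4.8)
The statement is Lemma \ref{lem:eta_borne}: for $h \in \F_2^\w$ and $u \in \A_1^*$ with $u$ strictly above $\p(h)$, $\s(h)$ and $\t(h)$ (for the orderings $\Rightarrow_m$ and $\Rightarrow$ respectively), every $3$-cell $A \in \F_3^\w$ of source $h$ satisfies $u \Rightarrow_m \w_\eta(A)$. The natural approach is induction on the length $\l(A)$ of $A$ as an element of the free white-category, reducing to the case of a rewriting step (a generating $3$-cell in context) and then composing; the key technical input is Lemma \ref{lem:somme_multiset}, which tells us that ``$f \Rightarrow_m u$'' is equivalent to ``every $1$-cell appearing in the multiset $f$ is $\Rightarrow$-below $u$'', and that such multisets form a submonoid, so we can check the bound generator-by-generator and add freely.

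\textbf{Base and composition step.} If $\l(A) = 0$ then $A = 1_h$, so $\w_\eta(A) = 0 \Rightarrow_m u$ (the empty multiset is below anything nonzero, and $u > \s(h) \geq $ something forces $u \neq 0$; more carefully $0 \Rightarrow_m u$ holds whenever $u \neq 0$, and $u$ being $\Rightarrow$-above a $1$-cell guarantees $u$ is not minimal, in particular the statement is vacuous or immediate). For the inductive step write $A = B \star_2 C$ with $\l(B) = 1$, so $B$ is a rewriting step of source $h$ and target some $h'$, and $C : h' \Rrightarrow \t(A)$. Since $\w_\eta$ is additive for $\star_2$, $\w_\eta(A) = \w_\eta(B) + \w_\eta(C)$. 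By Lemma \ref{lem:somme_multiset} it suffices to bound each summand by $u$. For $\w_\eta(C)$ I would apply the induction hypothesis to $C$, which requires knowing $u \Rightarrow_m \p(h')$, $u \Rightarrow \s(h') = \s(h)$ and $u \Rightarrow \t(h') = \t(h)$: the source/target conditions are unchanged, and $u \Rightarrow_m \p(h')$ follows because the rewriting step $B$ does not increase $\p$ — indeed the $3$-cells $C_{f,g}$ strictly decrease $\p$ by Lemma \ref{lem:completion_decroissante}, and the $\eta_f$, $\epsilon_f$ cells together with the ``$op$''-type cells also behave monotonically for $\p$, which is exactly the content one extracts from the construction of $\F$ (every $3$-cell of $\F$ has $\p(\s) \geq \p(\t)$, since $\p$ only counts $\B_2^\w$-letters and the new cells either preserve or strictly reduce the multiset of such letters). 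So the real work is the one-generator bound.

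\textbf{The main obstacle: bounding $\w_\eta(B)$ for a single rewriting step.} By definition $\w_\eta(B) = 0$ unless $B$ is of the form $f_1 \star_1 \eta_f \star_1 f_2$ with $f$ of length $1$, in which case $\w_\eta(B) = \s(f)$. So I must show $u \Rightarrow \s(f)$. Here $h = \s(B)$ contains $\bar f$ as a factor: writing $h = h_1 \star_1 \overline{f} \star_1 h_2$ with $h_1, h_2 \in \F_2^\w$ — wait, $\eta_f : 1_{\s(f)} \Rrightarrow f \star_1 \bar f$ has source an \emph{identity}, so in fact the source of $B$ near that spot is $1_{\s(f)}$, meaning $\s(f)$ is a $1$-cell occurring as an object in the $1$-dimensional source of $h$. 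Concretely $\s(f) = \s_1(f_1)$-after-$f_1$, i.e. $\s(f)$ is the common $1$-boundary between $f_1$ and $f_2$ inside $\t_1(h_1)\cdots$; more usefully, $f$ being a rewriting step of $\B_2$ means $\s(f)$ is a $1$-cell that appears in the ``interior'' of $h$, and one of $u > \s(h)$, $u > \t(h)$, $u > \p(h)$ must dominate it. I expect the cleanest route is: the whiskers force $\s(f)$ to be a sub-$1$-cell of either $\s(h)$, $\t(h)$, or of the source/target of some length-one factor of $h$ (hence counted in $\p(h)$); in each case Lemma \ref{lem:somme_multiset} applied to the relevant hypothesis (together with $2$-termination making $\Rightarrow$ a strict order closed under sub-$1$-cells, via the monoid structure on $\A_1^*$ and the fact that $\A$'s $2$-cells strictly decrease) yields $u \Rightarrow \s(f)$. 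Making precise \emph{which} factor $\s(f)$ sits inside, and handling the boundary cases where $f_1$ or $f_2$ is an identity, is the delicate bookkeeping; everything else is routine additivity via Lemmas \ref{lem:somme_multiset} and \ref{lem:completion_decroissante}.
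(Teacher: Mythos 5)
Your overall skeleton (induction on the length of $A$, peeling off a length-one cell $B$, additivity of $\w_\eta$, and Lemma \ref{lem:somme_multiset} to reassemble the bounds) is the same as the paper's, but the step where you justify applying the induction hypothesis to the tail $C$ contains a genuine error. You claim that every $3$-cell of $\F$ satisfies $\p(\s)\geq\p(\t)$, so that $u >_m \p(h)$ propagates to $u >_m \p(h')$ for $h'=\t(B)$. This is false precisely for the cells that carry all the weight: $\eta_f\colon 1_{\s(f)} \Rrightarrow f\star_1\bar f$ has an \emph{identity} as source, contributing $0$ to $\p$, and target contributing $2\s(f)+2\t(f)$, so a rewriting step of type $\eta$ strictly \emph{increases} $\p$. (The cells $C_{f,g}$ and $\epsilon_f$ do decrease $\p$ by Lemma \ref{lem:completion_decroissante}, which is presumably what misled you.) As written, your induction therefore breaks down in exactly the case the lemma is about.

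The gap is repairable with material you already have, and this is what the paper does: when $B = h_1\star_1\eta_f\star_1 h_2$, one computes $\p(\t(B)) = \p(h) + 2\s(f)+2\t(f)$ and shows it is still $<_m u$, because $\s(f) < u$ --- this is the same bound you need for $\w_\eta(B)=\s(f)$ anyway --- and $\t(f)<\s(f)<u$, so the submonoid part of Lemma \ref{lem:somme_multiset} closes the argument. On that last bound, your "sub-$1$-cell" discussion is also off target and not needed: since whiskering of $\eta_f$ is by $2$-cells $h_1,h_2$ along $\star_1$, the $1$-cell $\s(f)$ is either $\s(h)$ itself (when both whiskers are identities, giving $\s(f)=\s(h)<u$ directly) or equals $\t(h_1)$, which is literally an element of the multiset $\p(h_1)\subseteq\p(h)$, so $\s(f)<u$ follows from $\p(h)<_m u$ and Lemma \ref{lem:somme_multiset} with no appeal to any closure of $\Rightarrow$ under factors of $1$-cells.
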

\begin{proof}
We reason by induction on the length of $A$. If $A$ is of length $0$, $\w_\eta(A) = 0$ and the result holds.

Otherwise, let us write $A = A_1 \star_2 A_2$, with $A_1$ of length $1$. We distinguish two cases depending on the shape of $A_1$.
\begin{itemize}
\item If $A_1 = h_1 \star_1 \eta_f \star_1 h_2$, with $h_1,h_2 \in \F_2^\w$ and $f \in \B_2^\w$ of length $1$. 

If $h_1$ and $h_2$ are empty, then $\s(A_2) = f \star_1 \bar f$. Thus $\p(\s(A_2)) = 2 \s(f) + 2 \t(f) \leq 4 \s(f) = 4 \s(h)$. Since $\s(h) < u$, using Lemma \ref{lem:somme_multiset}, we get that $\p(\s(A_2)) < u$. Applying the induction hypothesis to $A_2$, we get $\w_\eta(A_2) < u$. Moreover, $\w_\eta(A) = \w_\eta(A_1) + \w_\eta(A_2) = \s(f) + \w(A_2)$, and we showed that $\w(A_2) < u$ and $\s(f) = \s(h) <u$. Thus according to Lemma \ref{lem:somme_multiset}, we get $\w_\eta(A) < u$.

Otherwise, suppose for example that $h_1$ is not an identity (the case where $h_2$ is not an identity  being symmetrical). Then we have $\v_{\t(h_1)}(\p(h_1))>0$, so $\v_{\t(h_1)}(\p(h))>0$. Since $\p(h) < u$, we have by Lemma \ref{lem:somme_multiset} that $\s(f) = \t(h_1) < u$. So $\p(\s(A_2)) = \p(h_1) + \p(h_2) + 2 \s(f) + 2 \t(f) < \p(h) + 4 \s(f) < u$. By induction hypothesis, we thus have $\w_\eta(A_2) < u$, and finally $\w_\eta(A) = \s(f) + \w_\eta(A_2) < u$.
\item Otherwise, we have on the one hand that $\w_\eta(A_1) = 0$, and on the other hand that $\s(A_2) = \t(A_1) < \s(A_1) = h <u$ by Lemma \ref{lem:completion_decroissante}. Thus $\w_\eta(A) = \w_\eta(A_2) <u$.
\end{itemize}
\end{proof}

\begin{lem}\label{lem:eta_local_conf}
Let $(f_1,f_2,f_3)$ be a $3$-fold local branching, $u \in \A_1^*$, and $A,B\in \F_3^{\w}$ two $3$-cells  such that there exists a $4$-cell:
\[C_{f_1,f_3,f_2} : \bar f_1 \star_1 \eta_{f_3} \star_1 f_2 \star_2 (C_{f_1,f_3} \star_1 C_{f_3,f_2}) \star_2 A \qfl C_{f_1,f_2} \star_2 B.\] 
Then $\w_\eta(A),\w_\eta(B) < u$.
\end{lem}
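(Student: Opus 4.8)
The statement is a bookkeeping lemma: it asserts that if the ``$\eta$-completion'' of a local branching $(f_1,f_3,f_2)$ is filled by a $4$-cell $C_{f_1,f_3,f_2}$ of the indicated shape, then the $\eta$-weights of the two residual $3$-cells $A$ and $B$ are bounded above (in the multiset order $\Rightarrow_m$) by $u$. The natural strategy is to exhibit, for each of $A$ and $B$, a $2$-cell whose $\p$-value, source and target are all $\Rightarrow_m$-below $u$, and which is the source of the relevant $3$-cell, and then invoke Lemma~\ref{lem:eta_borne}. The key is to identify that $2$-cell from the shape of the diagram defining $C_{f_1,f_3,f_2}$.

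\textbf{Key steps.} First I would unwind the source of $C_{f_1,f_3,f_2}$: write $v$ for the common source of $f_1,f_2,f_3$, so that $\bar f_1 \star_1 \eta_{f_3}\star_1 f_2$ has source $v$ (via $\bar f_1\star_1 f_1 \star_1 \bar f_1\star_1 f_2$, since $\eta_{f_3}:1_v \Rrightarrow f_3\star_1 \bar f_3$ contributes nothing to the $1$-source and $\t(f_1)=\s(f_2)=v$... more precisely the source of $\bar f_1\star_1\eta_{f_3}\star_1 f_2$ is $\bar f_1 \star_1 f_2$). Then $A$ is a $3$-cell whose source is the target of $(C_{f_1,f_3}\star_1 C_{f_3,f_2})$, and $B$ is a $3$-cell whose source is the target of $C_{f_1,f_2}$. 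Now apply Lemma~\ref{lem:completion_decroissante} to each of $C_{f_1,f_2}$, $C_{f_1,f_3}$, $C_{f_3,f_2}$: each has source of $\p$-value $\geq 2v > v$ and target of $\p$-value $\Rightarrow_m$-strictly below $v \Rightarrow_m u$ (the last step using the hypothesis on $u$, which one should check is implicit in the lemma's setup — namely that $u$ is the object over which the local branching $(f_1,f_2,f_3)$ lives, so $u = \s(f_i) = v$, or at worst $u \geq v$). Hence $\p(\s(B)) = \p(\t(C_{f_1,f_2})) \Rightarrow_m u$, and by Lemma~\ref{lem:somme_multiset} together with additivity of $\p$, $\p(\s(A)) = \p(\t(C_{f_1,f_3})) + \p(\t(C_{f_3,f_2})) \Rightarrow_m u$ as well; the sources and targets of $\s(A)$ and $\s(B)$ are handled the same way (each is a $\p$-summand, or directly a subterm whose $\p$-value is dominated). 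Then Lemma~\ref{lem:eta_borne}, applied with $h = \s(A)$ (resp. $h = \s(B)$) and this $u$, yields $\w_\eta(A) \Rightarrow_m u$ and $\w_\eta(B) \Rightarrow_m u$.

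\textbf{Main obstacle.} The delicate point is matching the quantity ``$u$'' in the statement with the object-level data of the branching and confirming that the hypotheses of Lemma~\ref{lem:eta_borne} ($u \Rightarrow_m \p(h)$, $u \Rightarrow_m \s(h)$, $u \Rightarrow_m \t(h)$) are genuinely available; this amounts to tracking that $u$ is (at least) the $1$-source $v$ of the $f_i$ and invoking Lemma~\ref{lem:completion_decroissante} in the strict form $\p(\s(C_{\cdot,\cdot})) > v > \p(\t(C_{\cdot,\cdot}))$. A secondary nuisance is that $\s(A)$ and $\s(B)$ are composites (of $\t(C_{f_1,f_3})\star_1 \t(C_{f_3,f_2})$-type factors, whiskered by context), so one must use Lemma~\ref{lem:somme_multiset} to conclude that a sum of things $\Rightarrow_m$-below $u$ is itself $\Rightarrow_m$-below $u$, and likewise for the context $1$-cells appearing in the whiskerings — these are targets/sources of the $f_i$ and hence also dominated by $u$. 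Once these dominations are assembled, the result is immediate from Lemma~\ref{lem:eta_borne}, so I expect no genuinely hard step, only careful diagram-chasing to name the $2$-cells $\s(A)$ and $\s(B)$ correctly.
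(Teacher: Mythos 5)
Your proposal is correct and follows essentially the same route as the paper: apply Lemma~\ref{lem:completion_decroissante} to bound $\p(\t(C_{f_1,f_3}))+\p(\t(C_{f_3,f_2}))=\p(\s(A))$ and $\p(\t(C_{f_1,f_2}))=\p(\s(B))$ strictly below $u$, then conclude with Lemma~\ref{lem:eta_borne}. Your reading that $u$ is the common source of the $f_i$ is indeed the intended one, and your explicit appeal to Lemma~\ref{lem:somme_multiset} for the sum is just making precise what the paper leaves implicit.
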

\begin{proof}
Using Lemma \ref{lem:completion_decroissante}, we have $\p(\t(C_{f_1,f_2})),\p(\t(C_{f_2,f_3})),\p(\t(C_{f_1,f_3})) < u$. So $\p(\s(A)) = \p(\t(C_{f_1,f_2}) + \p(\t(C_{f_2,f_3})) < u$ et $\p(\s(B)) = \p(\t(C_{f_1,f_3})) <u$, and using \ref{lem:eta_borne}, we get $\w_\eta(A),\w_\eta(B) < u$
\end{proof}

\subsection{Partial coherence of $\F^{\w(3)}$}
\label{subsec:proof_final}
\begin{prop}\label{prop:coh_dans_F3}
For every $2$-cell $h \in \F_2^\w$ with source and target $\hat u$ a normal form for $\A_2$, and for every $3$-cells $A,B : h \Rrightarrow 1_{\hat u} \in \F_3^\w$, there exists a  $4$-cell $\alpha : A \qfl B \in \F_4^{\w(3)}$.
\end{prop}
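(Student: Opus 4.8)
The plan is to prove Proposition~\ref{prop:coh_dans_F3} by a double Noetherian induction, the outer one on the $2$-cell $h$ (for the well-founded ordering $>$ on $\F_2^\w$ induced by $\p$ and $\Rightarrow_m$) and the inner one on the natural number $\w_\eta(A)+\w_\eta(B)$ (more precisely on the multiset $\w_\eta(A)+\w_\eta(B) \in \mathbb N[\A_1^*]$, using $\Rightarrow_m$ again). If $h = 1_{\hat u}$, then both $A$ and $B$ are $3$-cells $1_{\hat u}\Rrightarrow 1_{\hat u}$ in $\F_3^\w$; here one should check directly that the only such $3$-cell (up to the relations in $\F_3^\w$, i.e.\ in $\F_3^{\w(3)}$ after quotienting, but we want a $4$-cell) admits a $4$-cell to the identity --- this base case should follow because no $3$-cell of $\F$ has source and target $1_{\hat u}$ except identities, so $A = B = 1_{1_{\hat u}}$ and $\alpha = 1_A$ works.

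For the inductive step, assume $h$ is not an identity. By Lemma~\ref{lem:facto_creux}, $h$ admits a cavity-factorisation $h = h_1 \star_1 \bar f_1 \star_1 f_2 \star_1 h_2$. Apply Lemma~\ref{lem:facto_completion} to both $A$ and $B$ with this factorisation: we obtain $A = (h_1 \star_1 A_1 \star_1 h_2)\star_2 A'$ and $B = (h_1 \star_1 B_1 \star_1 h_2) \star_2 B'$, where each of $A_1, B_1$ is either $C_{f_1,f_2}$ or of the form $\bar f_1 \star_1 \eta_{f_3} \star_1 f_2$ for some length-$1$ cell $f_3 \in \B_2^\w$. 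There are three cases to align. If $A_1 = B_1 = C_{f_1,f_2}$, then $A'$ and $B'$ are parallel $3$-cells $h_1 \star_1 \t(C_{f_1,f_2})\star_1 h_2 \Rrightarrow 1_{\hat u}$; by Lemma~\ref{lem:completion_decroissante} and compatibility of $>$ with composition, $h > h_1 \star_1 \t(C_{f_1,f_2}) \star_1 h_2$, so the outer induction hypothesis gives a $4$-cell $A' \qfl B'$, whence $A \qfl B$ by whiskering. If $A_1 = C_{f_1,f_2}$ and $B_1 = \bar f_1 \star_1 \eta_{f_3}\star_1 f_2$ (or vice versa), then $(f_1, f_2, f_3)$ is a $3$-fold local branching of $\B_2^\w$, which is coherent by Lemma~\ref{lem:triple_coherence_local}: the $4$-cell $C_{f_1,f_3,f_2}$ (a suitable orientation of $C_{f_1,f_2,f_3}$) rewrites $\bar f_1 \star_1 \eta_{f_3}\star_1 f_2 \star_2 (C_{f_1,f_3}\star_1 C_{f_3,f_2})$ into $C_{f_1,f_2}$ composed with some correcting $3$-cells. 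Using this $4$-cell (whiskered by $h_1, h_2$) one reduces $B$ to a composite whose first factor is $C_{f_1,f_2}$; one then has to absorb the tails $C_{f_1,f_3}\star_1 C_{f_3,f_2}$ and the correction $3$-cells, which live over a strictly smaller $2$-cell, so the outer induction hypothesis applies. Crucially, Lemma~\ref{lem:eta_local_conf} (together with Lemma~\ref{lem:eta_borne}) ensures that the $\w_\eta$-weight of the tails introduced is strictly below the relevant $u$, so the inner induction does not increase uncontrollably. The last case $A_1 = \bar f_1 \star_1 \eta_{f_3}\star_1 f_2$, $B_1 = \bar f_1 \star_1 \eta_{f_3'}\star_1 f_2$ is handled by first comparing both to the $C_{f_1,f_2}$-factorisation via two applications of the previous case, reducing to the first case.

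The technical heart --- and the main obstacle --- is the bookkeeping in the mixed case: after applying the coherence $4$-cell $C_{f_1,f_3,f_2}$ one must re-express the whole $3$-cell $B$ as $(h_1 \star_1 C_{f_1,f_2}\star_1 h_2)\star_2 B''$ with $B''$ parallel to $A'$, and show that $B''$ is obtained from data lying over $2$-cells strictly smaller than $h$ (so the outer IH applies) or with strictly smaller combined $\w_\eta$-weight (so the inner IH applies). This is where Lemmas~\ref{lem:source_greater}, \ref{lem:completion_decroissante}, \ref{lem:eta_borne} and \ref{lem:eta_local_conf} are all used in concert: the first two control the $\p$-weight drop under cavity-completion, while the last two guarantee that the $\eta_f$-cells created when threading the coherence $4$-cell through the cavity have bounded $\w_\eta$-weight, so that the induction is genuinely well-founded. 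I would organise the argument so that the reduction "any $A$ with cavity-factorisation first factor $\bar f_1 \star_1 \eta_{f_3}\star_1 f_2$ is $4$-cell-equivalent to one with first factor $C_{f_1,f_2}$" is isolated as an intermediate claim, proved by inner induction on $\w_\eta$, after which the main statement follows by a clean outer induction on $h$ using Lemma~\ref{lem:completion_decroissante}.
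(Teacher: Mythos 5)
Your overall strategy---cavity-factorisation via Lemma \ref{lem:facto_creux}, alignment of the first factors of $A$ and $B$ via Lemma \ref{lem:facto_completion}, a case split on whether each first factor is $C_{f_1,f_2}$ or $\bar f_1 \star_1 \eta_{f_3} \star_1 f_2$, and use of the coherent triple $C_{f_1,f_3,f_2}$ in the mixed case---is the paper's. But your induction is ordered the wrong way round, and this breaks precisely the case you identify as the technical heart. You take the lexicographic order with the $2$-cell $h$ as the \emph{outer} (primary) component and $\w_\eta(A)+\w_\eta(B)$ as the inner one, and you justify the mixed case by asserting that the correction cells ``live over a strictly smaller $2$-cell, so the outer induction hypothesis applies''. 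That assertion is false. After threading $C_{f_1,f_3,f_2}$ through the cavity, the two $3$-cells to which the induction hypothesis must be applied on the $A$-side (the tail $A_2$ and the composite built from $C_{f_1,f_3}\star_1 C_{f_3,f_2}$, the correction cell and Lemma \ref{lem:confluence_eta_zero}) both have source $h_1 \star_1 \bar f_1 \star_1 f_3 \star_1 \bar f_3 \star_1 f_2 \star_1 h_2$, whose $\p$-value is $\p(h) + 2(\s(f_3)+\t(f_3))$, strictly \emph{larger} than $\p(h)$ for $\Rightarrow_m$. With $h$ as the primary component the induction hypothesis is simply unavailable there, and the same obstruction occurs in the double-$\eta$ case.

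The paper uses the opposite lexicographic order, namely $(\w_\eta(A)+\w_\eta(B),\, \p(h))$ with the weight first: when $A_1 = B_1 = C_{f_1,f_2}$ the combined weight is unchanged and $\p(h)$ drops by Lemma \ref{lem:completion_decroissante}, while in the three cases involving an $\eta$ the combined weight drops strictly---this is exactly what Lemmas \ref{lem:eta_borne} and \ref{lem:eta_local_conf} provide, bounding the weight of the correcting branches by the common source $v$ of $f_1,f_2,f_3$, so that trading one $\eta_{f_3}$ (of weight $v$) for those branches is a net strict decrease even though the underlying $2$-cell grows. Your proposed ``intermediate claim'' does not escape this: showing that an $A$ whose first factor is $\bar f_1 \star_1 \eta_{f_3} \star_1 f_2$ is $4$-equivalent to one with first factor $C_{f_1,f_2}$ already requires the statement of the Proposition for a pair of $3$-cells over the larger $2$-cell (with smaller weight), so it cannot be isolated and then followed by ``a clean outer induction on $h$''. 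Swap the primary and secondary components of your induction and the rest of your argument goes through as in the paper.
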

\begin{proof}
We reason by induction on the couple $(\w_\eta(A)+ \w_\eta(B), \p(h))$, using the lexicographic order. If $h = 1_{\hat u}$, then $A=B = 1_h$. Thus setting $\alpha = 1_A = 1_B$ shows that the property is verified.

Suppose now that $h$ is not an identity. Using Lemma \ref{lem:facto_creux}, there exists a cavity-factorisation $h = h_1 \star_1 \bar f_1 \star_1 f_2 \star_1 h_2$. By Lemma \ref{lem:facto_completion}, there exist $A_1,A_2,B_1,B_2 \in \F_3^\w$, such that $A = (h_1 \star_1 A_1 \star_1 h_2) \star_2 A_2$ and $B = (h_1 \star_1 B_1 \star_1 h_2) \star_2 B_2$. Using this Lemma, we distinguish four cases depending on the shape of $A_1$ and $A_2$.

\paragraph*{If $A_1=B_1=C_{f_1,f_2}$.}
Then in particular we have:
\[
\s(A_2) = \s(B_2) \qquad \w_\eta(A) = \w_\eta(A_2) \qquad \w_\eta(B) = \w_\eta(B_2) \qquad \t(A_1) < \s(A_1),
\]
where the last inequality is a consequence of Lemma \ref{lem:completion_decroissante}. Hence we get $\p(\s(A_2)) = \p(h_1) + \p(\t(A_1)) + \p(h_2) < \p(h_1) + \p(\s(A_1)) + \p(h_2) = \p(h)$, and finally $(\w_\eta(A_2) + \w_\eta(B_2),\p(\s(A_2))) < (\w_\eta(A) + \w_\eta(B),h))$. Using the induction hypothesis there exists $\alpha : A_2 \qfl B_2 \in \F_4^{\w(3)}$, and by composition we construct $A_1 \star_2 \alpha : A \to B$.
\paragraph*{If $A_1= \bar f_1 \star_1 \eta_{f_3} \star_1 f_2$ and $B_1 = C_{f_1,f_2}$.}
We are going to construct the following composite:
\[
\begin{tikzpicture}
\matrix (m1) [draw = black,
			matrix of math nodes, 
			nodes in empty cells,
			column sep = .6cm, 
			row sep = .5cm] 
{
& & & & & & \\
& & & & & & \\
};
\doublearrow{arrows={Implies-Implies}}
{(m1-2-1) -- node [above] {\small $h_1$} (m1-2-3)}
\doublearrow{arrows={-Implies}}
{(m1-1-4) -- node [above left] {\small $f_1$} (m1-2-3)}
\doublearrow{arrows={-Implies}}
{(m1-1-4) -- node [above right] {\small $f_2$} (m1-2-5)}
\doublearrow{arrows={Implies-Implies}}
{(m1-2-5) -- node [above] {\small $h_2$} (m1-2-7)}

\matrix (m2) [draw = black,
			matrix of math nodes, 
			nodes in empty cells,
			column sep = .6cm, 
			row sep = .5cm,
			xshift = 4cm,
			yshift = -2cm] 
{
& & & & & & \\
& & & & & & \\
};
\doublearrow{arrows={Implies-Implies}}
{(m2-1-1) -- node [below] {\small $h_1$} (m2-1-3)}
\doublearrow{arrows={Implies-Implies}, rounded corners}
{(m2-1-3) -- (m2-2-4.center) -- (m2-1-5)}
\doublearrow{arrows={Implies-Implies}}
{(m2-1-5) -- node [below] {\small $h_2$} (m2-1-7)}

\matrix (m3) [draw = black,
			matrix of math nodes, 
			nodes in empty cells,
			column sep = .6cm, 
			row sep = .5cm,
			xshift = -4cm,
			yshift = -2 cm] 
{
& & & & & & & & \\
& & & & & & & & \\
};
\doublearrow{arrows={Implies-Implies}}
{(m3-2-1) -- node [above] {\small $h_1$} (m3-2-3)}
\doublearrow{arrows={-Implies}}
{(m3-1-4) -- node [above left] {\small $f_1$} (m3-2-3)}
\doublearrow{arrows={-Implies}}
{(m3-1-4) -- (m3-2-5)}
\doublearrow{arrows={-Implies}}
{(m3-1-6) -- (m3-2-5)}
\doublearrow{arrows={-Implies}}
{(m3-1-6) -- node [above right] {\small $f_2$} (m3-2-7)}
\doublearrow{arrows={Implies-Implies}}
{(m3-2-7) -- node [above] {\small $h_2$} (m3-2-9)}

\matrix (m4) [draw = black,
			matrix of math nodes, 
			nodes in empty cells,
			column sep = .6cm, 
			row sep = .5cm,
			yshift = -4.5cm] 
{
& & & & & & \\
& & & & & & \\
};
\doublearrow{arrows={Implies-Implies}}
{(m4-1-1) -- node [below] {\small $h_1$} (m4-1-3)}
\doublearrow{arrows={Implies-Implies}, rounded corners}
{(m4-1-3) -- (m4-2-4.center) -- (m4-1-5)}
\doublearrow{arrows={Implies-Implies}}
{(m4-1-5) -- node [below] {\small $h_2$} (m4-1-7)}

\node (end) [draw = black, yshift = -6.5cm] {$1_{\hat u}$};

\triplearrow{arrows={-Implies}, rounded corners}
{(m1.east) -| node [right, near end] {$h_1 \star_1 C_{f_1,f_2} \star_1 h_2$} (m2)}
\triplearrow{arrows={-Implies}, rounded corners}
{(m1.west) -| node [left, near end] {$h_1 \star_1 \bar f_1 \star_1 \eta_{f_3} \star_1 f_2 \star_1 h_2$} (m3)}
\triplearrow{arrows={-Implies}}
{(m3.south-|m4-2-1) to node [left] {$D_1$} (m4.north-|m4-2-1)}
\triplearrow{arrows={-Implies}}
{(m2.south-|m4-2-7) to node [right] {$D_2$} (m4.north-|m4-2-7)}
\triplearrow{arrows={-Implies}, rounded corners = 1cm}
{(m3.south-|m3-2-3) -- node [left] {$A_2$} (end-|m3-2-3) -- (end)}
\triplearrow{arrows={-Implies}, rounded corners = 1cm}
{(m2.south -| m2-2-6) -- node [right] {$B_2$} (end-|m2-2-6) -- (end)}
\triplearrow{arrows={-Implies}}
{(m4) to node [right] {$D_3$} (end)}

\path (m1) -- node [near end] {$\alpha_1$} (m4);
\path (m3.south west) -- node [near end, below left] {$\alpha_2$} (end);
\path (m2.south east) -- node [near end, below right] {$\alpha_3$} (end);
\end{tikzpicture}
\]
According to Lemma \ref{lem:triple_coherence_local}, there exists a $4$-cell \[C_{f_1,f_3,f_2} : \bar f_1 \star_1 \eta_{f_3} \star_1 f_2 \star_2 (C_{f_1,f_3} \star_1 C_{f_3,f_2}) \star_2 D'_1 \qfl C_{f_1,f_2} \star_2 D'_2,\] with $D'_1,D'_2 \in \F_4^{\w(3)}$. Let us define $D_1 := h_1 \star_1 (C_{f_1,f_3} \star_1 C_{f_3,f_2}) \star_2 D'_1) \star_1 h_2$,  $D_2 := h_1 \star_1 D'_2 \star_1 h_2$, and $\alpha_1 := h_1 \star_1 C_{f_1,f_3,f_2} \star_1 h_2$. The existence of $D_3$ is guaranteed by Lemma \ref{lem:confluence_eta_zero}, which also proves that we can choose $D_3$ such that $\w_\eta(D_3) = 0$.

In order to construct the $4$-cells $\alpha_1$ and $\alpha_2$, let us show that we can apply the induction hypothesis to the couples $(A_2, D_1 \star_2 D_3)$ and $(D_2 \star_2 D_3,B_2)$. Let $v$ be the common source of $f_1$ and $f_2$.
\begin{itemize}
\item Using Lemma \ref{lem:eta_local_conf}, $\w_\eta(D_1 \star_2 D_3)= \w_\eta(D_1) = \w_\eta(D'_1) < v$, and so:
\[
\w_\eta(A_2) + \w_\eta(D_1 \star_2 D_3) < \w_\eta(A_2) + \w(\eta_3) = \w_\eta (A) \leq \w_\eta(A) + \w_\eta(B).
\]
\item As previously $\w_\eta(D_2 \star_2 D_3)= \w_\eta(D_2) = \w_\eta(D'_2) < v$, and so:
\[
\w_\eta(B_2) + \w_\eta(D_2 \star_2 D_3) < \w_\eta(B_2) + \w(\eta_3) \leq \w_\eta(B) + \w_\eta(A).
\]
\end{itemize}

\paragraph*{If $A_1= C_{f_1,f_2}$ and $B_1 = \bar f_1 \star_1 \eta_{f_3} \star_1 f_2$.}
This case is similar to the previous one, only using $C_{f_1,f_3,f_2}^{-1}$ rather than $C_{f_1,f_3,f_2}$.

\paragraph*{If $A_1= \bar f_1 \star_1 \eta_{f_3} \star_1 f_2$ and $B_1 = \bar f_1 \star_1 \eta_{f_4} \star_1 f_2$.}
We are going to construct the following composite:
\[
\begin{tikzpicture}
\matrix (m1) [draw = black,
			matrix of math nodes, 
			nodes in empty cells,
			column sep = .6cm, 
			row sep = .5cm] 
{
& & & & & & \\
& & & & & & \\
};
\doublearrow{arrows={Implies-Implies}}
{(m1-2-1) -- node [above] {\small $h_1$} (m1-2-3)}
\doublearrow{arrows={-Implies}}
{(m1-1-4) -- node [above left] {\small $f_1$} (m1-2-3)}
\doublearrow{arrows={-Implies}}
{(m1-1-4) -- node [above right] {\small $f_2$} (m1-2-5)}
\doublearrow{arrows={Implies-Implies}}
{(m1-2-5) -- node [above] {\small $h_2$} (m1-2-7)}

\matrix (m2) [draw = black,
			matrix of math nodes, 
			nodes in empty cells,
			column sep = .6cm, 
			row sep = .5cm,
			xshift = -4cm,
			yshift = -2cm] 
{
& & & & & & & & \\
& & & & & & & & \\
};
\doublearrow{arrows={Implies-Implies}}
{(m2-2-1) -- node [above] {\small $h_1$} (m2-2-3)}
\doublearrow{arrows={-Implies}}
{(m2-1-4) -- node [above left] {\small $f_1$} (m2-2-3)}
\doublearrow{arrows={-Implies}}
{(m2-1-4) -- (m2-2-5)}
\doublearrow{arrows={-Implies}}
{(m2-1-6) -- (m2-2-5)}
\doublearrow{arrows={-Implies}}
{(m2-1-6) -- node [above right] {\small $f_2$} (m2-2-7)}
\doublearrow{arrows={Implies-Implies}}
{(m2-2-7) -- node [above] {\small $h_2$} (m2-2-9)}

\matrix (m3) [draw = black,
			matrix of math nodes, 
			nodes in empty cells,
			column sep = .6cm, 
			row sep = .5cm,
			xshift = 4cm,
			yshift = -2 cm] 
{
& & & & & & & & \\
& & & & & & & & \\
};
\doublearrow{arrows={Implies-Implies}}
{(m3-2-1) -- node [above] {\small $h_1$} (m3-2-3)}
\doublearrow{arrows={-Implies}}
{(m3-1-4) -- node [above left] {\small $f_1$} (m3-2-3)}
\doublearrow{arrows={-Implies}}
{(m3-1-4) -- (m3-2-5)}
\doublearrow{arrows={-Implies}}
{(m3-1-6) -- (m3-2-5)}
\doublearrow{arrows={-Implies}}
{(m3-1-6) -- node [above right] {\small $f_2$} (m3-2-7)}
\doublearrow{arrows={Implies-Implies}}
{(m3-2-7) -- node [above] {\small $h_2$} (m3-2-9)}

\matrix (m4) [draw = black,
			matrix of math nodes, 
			nodes in empty cells,
			column sep = .6cm, 
			row sep = .5cm,
			yshift = -4.5cm] 
{
& & & |[alias=v1]| & & & & & & & \\
& & & & & & & & & & \\
};
\doublearrow{arrows={Implies-Implies}}
{(m4-2-1) -- node [above] {\small $h_1$} (m4-2-3)}
\doublearrow{arrows={-Implies}}
{(m4-1-4) -- node [above left] {\small $f_1$} (m4-2-3)}
\doublearrow{arrows={-Implies}}
{(m4-1-4) -- (m4-2-5)}
\doublearrow{arrows={-Implies}}
{(m4-1-6) -- (m4-2-5)}
\doublearrow{arrows={-Implies}}
{(m4-1-6) -- (m4-2-7)}
\doublearrow{arrows={-Implies}}
{(m4-1-8) -- (m4-2-7)}
\doublearrow{arrows={-Implies}}
{(m4-1-8) -- node [above right] {\small $f_2$} (m4-2-9)}
\doublearrow{arrows={Implies-Implies}}
{(m4-2-9) -- node [above] {\small $h_2$} (m4-2-11)}

\node (end) [draw = black] at (0,-6.5) {$1_{\hat u}$};

\triplearrow{arrows={-Implies}, rounded corners}
{(m1.west) -| node [left, near end] {$h_1 \star_1 \bar f_1 \star_1 \eta_{f_3} \star_1 f_2 \star_1 h_2$} (m2)}
\triplearrow{arrows={-Implies}, rounded corners}
{(m1.east) -| node [right, near end] {$h_1 \star_1 \bar f_1 \star_1 \eta_{f_4} \star_1 f_2 \star_1 h_2$} (m3)}
\triplearrow{arrows={-Implies}}
{(m2.south-|m4-1-3) to node [left] {$D_1$} (m4.north-|m4-1-3)}
\triplearrow{arrows={-Implies}}
{(m3.south -| m4-1-9) to node [right] {$D_2$} (m4.north -| m4-1-9)}
\triplearrow{arrows={-Implies}, rounded corners = 1cm}
{(m2.south -| m2-2-3) -- node [left] {$A_2$} (m2-2-3 |- end) -- (end)}
\triplearrow{arrows={-Implies}, rounded corners = 1cm}
{(m3.south -| m3-2-7) -- node [right] {$B_2$} (m3-2-7 |- end) -- (end)}
\triplearrow{arrows={-Implies}}
{(m4.south) to node [right] {$D_3$} (end)}

\path (m1) -- node [near end] {$\alpha_1$} (m4);
\path (m2.south west) -- node [near end, below left] {$\alpha_2$} (end);
\path (m3.south east) -- node [near end, below right] {$\alpha_3$} (end);
\end{tikzpicture}
\]
Let us set
\[D_1 := h_1 \star_1 \bar f_1 \star_1 f_3 \star_1 \bar f_3 \star_1 \eta_{f_4} \star_1 f_2 \star_1 h_2
\]
\[
D_2 := h_1 \star_1 \bar f_1 \star_1  \eta_{f_3} \star_1 f_4 \star_1 \bar f_4 \star_1 f_2 \star_1 h_2.
\] 
We then have
\[(h_1 \star_1 A_1 \star_1 h_2) \star_2 D_1 = h_1 \star_1 \bar f_1 \star_1 \eta_{f_3} \star_1 \eta_{f_4} \star_1 f_2 \star_1 h_2 = (h_1 \star_1 B_1 \star_1 h_2) \star_1 D_2.\]
Hence we define $\alpha_1$ as an identity. Let now $D_3$ be as in Lemma \ref{lem:confluence_eta_zero}, with $\w_\eta(D_3) = 0$, and $v$ be the common source of $f_1,f_2,f_3$ and $f_4$. We then have the inequalities:
\[
\w_\eta(A_2) + \w_\eta(D_1) + \w_\eta(D_3) = \w_\eta(A_2) + v < \w_\eta(A_2) + \w_\eta(B_2) + 2v = \w_\eta(A) + \w_\eta(B),
\]
\[
\w_\eta(B_2) + \w_\eta(D_2) + \w_\eta(D_3) = \w_\eta(B_2) + v < \w_\eta(B_2) + \w_\eta(A_2) + 2v = \w_\eta(A) + \w_\eta(B).
\]
Hence we can apply the induction hypothesis to the couples $(A_2,D_1 \star_2 D_3)$ and $(D_2 \star_2 D_3,B_2)$, which provides $\alpha_2$ and $\alpha_3$.
\end{proof}

\begin{prop}
The $(4,3)$-white-category $\F^{\w(3)}$ is $S_\E$-coherent.
\end{prop}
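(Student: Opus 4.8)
The plan is to reduce $S_\E$-coherence, which concerns arbitrary parallel $3$-cells $A, B : f \Rrightarrow g$ between $2$-cells $f, g \in S_\E$ (that is, $2$-cells in $\C_2^\w$ with a common target $\hat u$ that is a normal form for $\A_2$), to the special case already handled in Proposition~\ref{prop:coh_dans_F3}, where the target of the $3$-cells is an identity on a normal form. First I would observe that, because $f \in S_\E$, its source $\s(f)$ need not be a normal form; however the constructions in Section~\ref{subsec:retournement} and Proposition~\ref{prop:coh_dans_F3} are stated for $2$-cells \emph{from} and \emph{to} normal forms, so I first need to bring the source down to a normal form. The idea is the standard ``adjoint trick'' mentioned in the introduction: given parallel $A, B : f \Rrightarrow g$ in $\F^{\w(3)}\restriction S_\E$, one forms the $2$-cell $h := \bar g \star_1 f \in \F_2^\w$, whose source and target are both $\hat u$ (since $f$ and $g$ both have target $\hat u$ and share a source). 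Wait --- more care is needed: $h$ has source $\hat u$ only if additionally $\s(g) = \s(f)$ is a normal form, which is not assumed.

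So the correct route is instead to use a $3$-cell $N : \s(f) \Rrightarrow \widehat{\s(f)}$ witnessing the convergence of $\A_2$ (lifted via the section $i$ into $\B$ and then into $\F$, using Lemma~\ref{lem:F_3_surj} and the fact that $\F_3$ contains the $C_{f,g}$ cells for all minimal branchings), together with $\eta$ and $\epsilon$ cells, to transport $A$ and $B$ to $3$-cells whose source is an identity on a normal form. Concretely: set $\hat v := \widehat{\s(f)}$ and pick $C : \bar N \star_1 \s(f) \Rrightarrow 1_{\hat v}$-style $3$-cells built out of the $C_{f',g'}$ and $\eta, \epsilon$ machinery. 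Then $A$ corresponds, via composition with these transport cells and the $\eta_{f}, \epsilon_f$ connections from Section~5.4, to a $3$-cell $\tilde A : \bar g \star_1 f \star_1 (\text{transport}) \Rrightarrow 1_{\hat u}$ (using that $\bar g \star_1 f$ can be rewritten to have source and target $\hat u$ by Lemma~\ref{lem:confluence_eta_zero}); and two parallel $A, B$ yield parallel $\tilde A, \tilde B$ with the same source-and-target-are-identity property. Applying Proposition~\ref{prop:coh_dans_F3} gives a $4$-cell $\tilde\alpha : \tilde A \qfl \tilde B$, and transporting back through the (invertible, being built from $3$-cells in a $(4,3)$-category plus the $\tau_f, \sigma_f$ adjunction cells) constructions produces the desired $\alpha : A \qfl B$ in $\F_4^{\w(3)}$.

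Concretely the key steps, in order: (1) recall that $\F^{\w(3)}\restriction S_\E$ is a $(2,1)$-category whose $0$-cells are the $2$-cells in $\C_2^\w$ with normal-form target, and spell out what a $4$-cell between parallel $3$-cells amounts to there; (2) given parallel $A, B : f \Rrightarrow g$, choose a $3$-cell $\theta : 1_{\hat u} \Rrightarrow \bar g \star_1 g$ of the form obtained by iterating $\eta$'s and $C_{f',g'}$'s (or more simply use that $\bar g \star_1 g \Rrightarrow 1_{\hat u}$ exists with $\w_\eta = 0$ by Lemma~\ref{lem:confluence_eta_zero} and invert it, legitimate in a $(4,3)$-white-category), and use it to pass from $A$ to the $3$-cell $\theta \star_2 (\bar g \star_1 A) : \bar g \star_1 f \Rrightarrow \bar g \star_1 g \Rrightarrow 1_{\hat u}$, noting the source $\bar g \star_1 f$ lies in $\F_2^\w$ but may not yet have source $\hat u$; (3) further pre-compose with a normalising $3$-cell for $\s(f)$ so that the resulting $3$-cells have source a $2$-cell $h \in \F_2^\w$ with $\s(h) = \t(h) = $ a normal form, and check that parallelism of $A, B$ is preserved; (4) invoke Proposition~\ref{prop:coh_dans_F3} to get a $4$-cell between the transported $3$-cells; (5) transport the $4$-cell back, using invertibility of all the auxiliary $3$-cells (guaranteed since we work in a $(4,3)$-white-category and the $\tau_f, \sigma_f$ cells make $f \dashv \bar f$ coherent adjoints, so whiskering by them and by $\eta, \epsilon$ is reversible up to $4$-cells).

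The main obstacle I anticipate is step (3)--(5): making the transport between ``$A : f \Rrightarrow g$'' and ``$\tilde A : h \Rrightarrow 1_{\hat u}$'' into an honest \emph{bijection on $4$-cells} (as claimed informally in the introduction, ``in $\F^{\w(3)}$, for any $2$-cells $f,g \in S_E$, $3$-cells of the form $f \Rrightarrow g$ and $4$-cells between them are in one-to-one correspondence with $3$-cells of the form $\bar g \star_1 f \Rrightarrow 1_{\hat u}$ and $4$-cells between them''). Establishing this correspondence cleanly requires the triangle identities $\tau_f, \sigma_f$ precisely so that the unit/counit cancellations hold up to $4$-cells, and one must be careful that the source $\s(f)$ is not a normal form --- so the correspondence is really with $3$-cells $\bar g' \star_1 f' \Rrightarrow 1_{\hat u}$ after normalising, and one should check Proposition~\ref{prop:coh_dans_F3}'s hypothesis ``source and target of $h$ are normal forms for $\A_2$'' is genuinely met. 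Once that bookkeeping is in place, the $4$-cell produced by Proposition~\ref{prop:coh_dans_F3} pulls back to the required $\alpha$, and since every parallel pair $A, B$ in $\F^{\w(3)}\restriction S_\E$ is covered, $\F^{\w(3)}$ is $S_\E$-coherent. Combined with the chain of ``stronger than'' relations established in Section~\ref{sec:transfo_polygraph} (via Lemmas~\ref{lem:translation_coh} and Proposition~\ref{prop:tietze_invariance}), this completes the proof of Theorem~\ref{thm:main_theory}.
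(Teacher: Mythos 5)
Your core idea --- reduce to Proposition \ref{prop:coh_dans_F3} by passing from $A,B : f \Rrightarrow g$ to $3$-cells $\bar g \star_1 f \Rrightarrow 1_{\hat u}$ via the $\eta/\epsilon$ machinery, then transport back using $\tau$ and $\sigma$ --- is exactly the paper's strategy. But your proof derails on a false alarm. You claim that $\bar g \star_1 f$ ``has source $\hat u$ only if additionally $\s(g)=\s(f)$ is a normal form.'' This is not so: the $1$-source of $\bar g \star_1 f$ is $\s_1(\bar g) = \t_1(g) = \hat u$ and its $1$-target is $\t_1(f) = \hat u$, both normal forms by hypothesis, regardless of whether the \emph{intermediate} $1$-cell $\s(f)$ is normal. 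Proposition \ref{prop:coh_dans_F3} only constrains the $1$-source and $1$-target of the $2$-cell $h$, so it applies directly to the parallel $3$-cells $(\bar g \star_1 A) \star_2 \epsilon_g$ and $(\bar g \star_1 B) \star_2 \epsilon_g$, whose common source is $\bar g \star_1 f$ and whose common target is $1_{\hat u}$. Your entire detour through a normalising cell $N$ for $\s(f)$ (which, incidentally, you typed as a $3$-cell between $1$-cells --- a dimension error) is unnecessary, and as written it is also unjustified: you would have to check that the extra transport preserves parallelism and can be undone, which you only gesture at.

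The second, smaller gap is in your steps (3)--(5): you treat the return trip as requiring a ``bijection on $4$-cells'' between the two kinds of $3$-cells, and flag this as the main obstacle. No bijection is needed. Once Proposition \ref{prop:coh_dans_F3} yields $\tilde\alpha : (\bar g \star_1 A) \star_2 \epsilon_g \qfl (\bar g \star_1 B) \star_2 \epsilon_g$, the desired $\alpha : A \qfl B$ is the single explicit composite that conjugates by the triangle identity: whisker $A$ with $\tau_g^{-1}$ to rewrite it as $(f \star_1 \eta_g$-shaped insertion$) \star_2 (\bar g \star_1 A) \star_2 \epsilon_g$ suitably whiskered, apply $\tilde\alpha$ in context, and close with $\tau_g$. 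This only uses one direction of the correspondence and the invertibility of $4$-cells in the $(4,3)$-white-category. So the statement is provable by a strictly simpler argument than the one you outline; as written, your proposal both asserts a false obstruction and leaves the machinery it invents to circumvent it incomplete.
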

\begin{proof}
Let $A,B: f \Rrightarrow h \in \F_3^\w$ whose $1$-target is a normal form $\hat u$, with $f,g \in \B_2^\w$.

The $3$-cells $(\bar h \star_1 A) \star_2 \epsilon_h$ and $(\bar h \star_1 B) \star_2 \epsilon_h$ are parallel, and their target is $1_{\hat u}$. In particular they verify the hypothesis of Proposition \ref{prop:coh_dans_F3}. So there exists $\alpha :(\bar h \star_1 A) \star_2 \epsilon_h \qfl (\bar h \star_1 B) \star_2 \epsilon_h$. Then the following composite is the required cell from $A$ to $B$:

\[
\begin{tikzpicture}

\matrix (m) [matrix of math nodes, 
			nodes in empty cells,
			column sep = 1cm, 
			row sep = .5cm] 
{
& & & & & & & & & \\
& & & & & & & & & \\
& & & & & & & & & \\
& & & & & & & & & \\
& & & & & & & & & \\
& & & & & & & & & \\
}; 
\doublearrow{arrows={-Implies}, rounded corners}
{(m-2-1) -- (m-1-2.center) -- node [above] {$f$} (m-1-3.center) -- (m-2-4)}
\doublearrow{arrows={-Implies}, rounded corners}
{(m-2-1) -- (m-3-2.center) -- node [fill = white] {$h$}(m-3-3.center) -- (m-2-4)}

\path (m-2-1) -- node {$A$} (m-2-4);

\doublearrow{arrows={-Implies}, rounded corners}
{(m-2-7) -- (m-1-8.center) -- node [above] {$f$} (m-1-9.center) -- (m-2-10)}
\doublearrow{arrows={-}}
{(m-2-5) -- (m-2-7)}
\doublearrow{arrows={-Implies}}
{(m-2-5) -- node [below left] {$h$} (m-3-6)}
\doublearrow{arrows={-Implies}}
{(m-2-7) -- node [fill = white] {$h$} (m-3-6)}
\doublearrow{arrows={-Implies}}
{(m-2-7) -- node [fill = white] {$h$} (m-2-10)}
\doublearrow{arrows={-}, rounded corners}
{(m-3-6) -- (m-3-9.center) -- (m-2-10)}

\path (m-2-6) -- node [near start] {$\eta_h$} (m-3-6);
\path (m-3-6) -- node {$\epsilon_h$} (m-2-10);
\path (m-1-8) -- node {$A$} (m-2-9);

\doublearrow{arrows={-Implies}, rounded corners}
{(m-5-7) -- (m-4-8.center) -- node [fill = white] {$f$} (m-4-9.center) -- (m-5-10)}
\doublearrow{arrows={-}}
{(m-5-5) -- (m-5-7)}
\doublearrow{arrows={-Implies}}
{(m-5-5) -- node [below left] {$h$} (m-6-6)}
\doublearrow{arrows={-Implies}}
{(m-5-7) -- node [fill = white] {$h$} (m-6-6)}
\doublearrow{arrows={-Implies}}
{(m-5-7) -- node [fill = white] {$h$} (m-5-10)}
\doublearrow{arrows={-}, rounded corners}
{(m-6-6) -- (m-6-9.center) -- (m-5-10)}

\path (m-5-6) -- node [near start] {$\eta_h$} (m-6-6);
\path (m-6-6) -- node {$\epsilon_h$} (m-5-10);
\path (m-4-8) -- node {$B$} (m-5-9);

\doublearrow{arrows={-Implies}, rounded corners}
{(m-5-1) -- (m-4-2.center) -- node [fill = white] {$f$} (m-4-3.center) -- (m-5-4)}
\doublearrow{arrows={-Implies}, rounded corners}
{(m-5-1) -- (m-6-2.center) -- node [below] {$h$}(m-6-3.center) -- (m-5-4)}

\path (m-5-1) -- node {$B$} (m-5-4);

\quadarrow{arrows={-Implies}}
{(m-2-4) -- node [above] {$\tau_h^{-1}$} (m-2-5)}
\quadarrow{arrows={-Implies}}
{(m-3-8) -- node [right] {$\alpha$} (m-4-8)}
\quadarrow{arrows={-Implies}}
{(m-5-5) -- node [above] {$\tau_h$} (m-5-4)}
\end{tikzpicture}
\]
\end{proof}

We can now complete the proof of Theorem \ref{thm:main_theory}. Indeed we showed that $\F^{\w(3)}$ is $S_\E$-coherent. Using Proposition \ref{prop:tietze_invariance}, that means that $\E^{\w(3)}$ is $S_\E$-coherent, and finally using Lemma \ref{lem:translation_coh} that $\A^{*(2)}$  is $S_\A$-coherent, that is that for every $3$-cells $A,B \in \A^{*(2)}_3$, whose $1$-target is a normal form, there exists a $4$-cell $\alpha : A \qfl B \in \A^{*(2)}_4$.

\newpage
\bibliographystyle{plain}
\bibliography{maitre}

\end{document}